\newcommand{\R}{\mathbb R}
\newcommand{\bt}{\begin{theorem}}
\newcommand{\et}{\end{theorem}}
\newcommand{\bl}{\begin{lemma}}
\newcommand{\el}{\end{lemma}}
\newcommand{\bd}{\begin{definition}}
\newcommand{\ed}{\end{definition}}
\newcommand{\bc}{\begin{corollary}}
\newcommand{\ec}{\end{corollary}}
\newcommand{\bp}{\begin{proof}}
\newcommand{\ep}{\end{proof}}
\newcommand{\bx}{\begin{example}}
\newcommand{\ex}{\end{example}}
\newcommand{\bi}{\begin{exercise}}
\newcommand{\ei}{\end{exercise}}
\newcommand{\bo}{\begin{prop}}
\newcommand{\eo}{\end{prop}}
\newcommand{\br}{\begin{remark}}
\newcommand{\er}{\end{remark}}
\newcommand{\be}{\begin{equation}}
\newcommand{\ee}{\end{equation}}
\newcommand{\ba}{\begin{align}}
\newcommand{\ea}{\end{align}}
\newcommand{\bn}{\begin{enumerate}}
\newcommand{\en}{\end{enumerate}}
\newcommand{\bg}{\begin{align*}}
\newcommand{\bcs}{\begin{cases}}
\newcommand{\ecs}{\end{cases}}
\newcommand{\bean}{\begin{eqnarray*}}
\newcommand{\eean}{\end{eqnarray*}}
\newtheorem{definition}{Definition}[section]
\newtheorem{theorem}{Theorem}[section]
\newtheorem{lemma}{Lemma}[section]
\newtheorem{prop}{Proposition}[section]
\newtheorem{remark}{Remark}[section]
\numberwithin{equation}{section}
\begin{document}
\title[Sign-changing blowing-up solutions for the critical heat equation]{Sign-changing blowing-up solutions for the critical nonlinear heat equation}

\author[M. del Pino]{Manuel del Pino}
\address{\noindent   Department of Mathematical Sciences University of Bath,
Bath BA2 7AY, United Kingdom \\
and  Departamento de
Ingenier\'{\i}a  Matem\'atica-CMM   Universidad de Chile,
Santiago 837-0456, Chile}
\email{m.delpino@bath.ac.uk}

\author[M. del Pino]{Monica Musso}
\address{\noindent   Department of Mathematical Sciences University of Bath,
Bath BA2 7AY, United Kingdom \\
and Departamento de Matem\'aticas, Universidad Cat\'olica de Chile, Macul 782-0436, Chile}
\email{m.musso@bath.ac.uk}

\author[J. Wei]{Juncheng Wei}\address{\noindent Department of Mathematics, University of British Columbia, Vancouver, B.C., Canada, V6T 1Z2} \email{jcwei@math.ubc.ca}
\author[Y. Zheng]{Youquan Zheng}\address{\noindent School of Mathematics, Tianjin University, Tianjin 300072, P. R. China} \email{zhengyq@tju.edu.cn}
\begin{abstract}
Let $\Omega$ be a smooth bounded domain in $\mathbb{R}^n$ and denote the regular part of the Green's function on $\Omega$ with Dirichlet boundary condition as $H(x,y)$. Assume that $q \in \Omega$ and $n\geq 5$. We prove that there exists an integer $k_0$ such that for any integer $k\geq k_0$ there exist initial data $u_0$ and smooth parameter functions $\xi(t)\to q$, $0<\mu(t)\to 0$  as $t\to +\infty$ such that the solution $u_q$ of the critical nonlinear heat equation
\begin{equation*}
\begin{cases}
u_t = \Delta u + |u|^{\frac{4}{n-2}}u\text{ in } \Omega\times (0, \infty),\\
u = 0\text{ on } \partial \Omega\times (0, \infty),\\
u(\cdot, 0) = u_0 \text{ in }\Omega,
\end{cases}
\end{equation*}
has the form
\begin{equation*}
u_q(x, t) \approx \mu(t)^{-\frac{n-2}{2}}\left(Q_k\left(\frac{x-\xi(t)}{\mu(t)}\right) - H(x, q)\right),
\end{equation*}
where the profile $Q_k$ is the non-radial sign-changing solution of the Yamabe equation
\begin{equation*}
\Delta Q + |Q|^{\frac{4}{n-2}}Q = 0\text{ in }\mathbb{R}^n,
\end{equation*}
constructed in \cite{delpinomussofrankpistoiajde2011}. In dimension 5 and 6, we also prove the stability of $u_q(x, t)$.
\end{abstract}
\maketitle
\section{Introduction}
Let $\Omega$ be a smooth bounded domain in $\mathbb{R}^n$ with $n\geq 3$. We consider the following critical nonlinear heat equation
\begin{equation}\label{e:main}
\begin{cases}
u_t = \Delta u + |u|^{\frac{4}{n-2}}u\text{ in } \Omega\times (0, \infty),\\
u = 0\text{ on } \partial \Omega\times (0, \infty),\\
u(\cdot, 0) = u_0 \text{ in }\Omega,
\end{cases}
\end{equation}
for a function $u:\overline{\Omega}\times [0, \infty)\to \mathbb{R}$ and smooth initial datum $u_0$ satisfying $u_0|_{\partial \Omega} = 0$.

\medskip

Problem (\ref{e:main}) can be viewed as a special case of the well-known Fujita equation
\begin{equation}\label{1.1}
u_t = \Delta u + |u|^{p-1}u
\end{equation}
with $ p>1$, which appears in many applied disciplines and become a prototype for the analysis of singularity formation in nonlinear parabolic equations.
A large amount of literature has been devoted to this problem on the asymptotic behaviour and blowing-up solutions after Fujita's seminal work \cite{fujitajfsuts1966blowup}. See,  for example, \cite{CollotMerleRaphaelCMP2017}, \cite{cortazar2016green}, \cite{delmussoweitype2}, \cite{delmussowei3d}, \cite{GigaKohn1987}, \cite{GigaKohn1985}, \cite{GigaKohn1989}, \cite{GigaMatsuiSasayama2004}, \cite{GigaMatsuiSasayama2004a}, \cite{matanomerlejfa2011threshold}, \cite{MM1}, \cite{MM2}, \cite{merlezaagduke1997stability}, \cite{schweyerjfa2012typeii} and references therein. We refer the the interested readers to \cite{quittnersouplet2007superlinear} for the corresponding background and a comprehensive survey of the results until 2007. Blowing-up phenomena for problem (\ref{1.1}) is very sensitive to the exponent $p$, the critical case $p=\frac{n+2}{n-2}$ is special in several ways, positive steady state solutions do not exist if $p< \frac{n+2}{n-2}$. Radial and positive global solutions must go to zero and bounded, see \cite{PQ1}, \cite{PQ2}, \cite{quittnersouplet2007superlinear}, they exist in the case $p> \frac{n+2}{n-2}$ with infinite energy, see
\cite{Gui1992}. Infinite time blowing-up solutions exist in that case but they exhibit entirely different nature, see \cite{PY1}, \cite{PY2}.

The motivation of this paper is twofolds. In \cite{cortazar2016green}, Cortazar, del Pino and Musso proved the following result.
Suppose $n\geq 5$, denote the Green's function of the Laplacian $\Delta$ in $\Omega$ with Dirichlet boundary value as $G(x, y)$ and the regular part of $G(x, y)$ as  $H(x,y)$.
Let $q_1,\cdots, q_k$ be $k$ distinct points in $\Omega$ such that the matrix
\begin{eqnarray}\label{positivematrix}
\hat{\mathcal{G}}(q) = \left[
\begin{matrix}
H(q_1, q_1)&-G(q_1, q_2)&\cdots & -G(q_1, q_k)\\
-G(q_2, q_1)&H(q_2, q_2)&\cdots &-G(q_2, q_k)\\
\vdots&\vdots&\ddots&\vdots\\
-G(q_k,q_1)&-G(q_k, q_2)&\cdots&H(q_k, q_k)
\end{matrix}
\right].
\end{eqnarray}
is positive definite. They proved the existence of $u_0$ and smooth parameter functions $\xi_j(t)\to q_j$, $0<\mu_j(t)\to 0$, as $t\to +\infty$, $j = 1, \cdots, k$, such that (\ref{e:main}) has an infinite time blowing-up {\it positive} solution $u_q$ whose shape can be approximately described as
\begin{equation*}
u_q \approx\sum_{j= 1}^k\alpha_{n}\left(\frac{\mu_j(t)}{\mu_j^2(t) + |x-\xi_j(t)|^2}\right)^{\frac{n-2}{2}},
\end{equation*}
with $\mu_j(t) = \beta_jt^{-\frac{1}{n-4}}(1+o(1))$, as $t \to \infty$, for
 some positive constants $\beta_j$. The profile of $u_q$ is a super-position of functions that are obtained from a fixed profile
\begin{equation}\label{positivesolution}
U(x) = \alpha_n\left(\frac{1}{1+|x|^2}\right)^{\frac{n-2}{2}},
\end{equation}
properly scaled by $\mu_j (t)$ and translated by $\xi_j (t)$. The function $U$
is the unique radially symmetric entire solution for the Yamabe equation
\begin{equation}\label{e:sign-changing bubble}
\Delta Q + |Q|^{\frac{4}{n-2}}Q = 0\text{ in }\mathbb{R}^n.
\end{equation}
The aim of this paper is to explore the possibility to construct sign-changing solutions to (\ref{e:main}) which blows-up, as $t\to \infty$,
in the form of the profile of a sign-changing entire solution to the time-independent limit problem (\ref{e:sign-changing bubble}).
Pohozaev's identity tells us that any sign-changing solution of (\ref{e:sign-changing bubble}) is non-radial. The existence of non-radial sign-changing and with arbitrary large energy elements of $\Sigma:=\left\{ Q \in {\mathcal D}^{1,2} (\R^n) \backslash \{0\}: Q \text{ satisfies } (\ref{e:sign-changing bubble})\right\}$ was first proved by W. Ding \cite{Ding1986} using variational arguments. Indeed, using stereographic projection to $S^n$, (\ref{e:sign-changing bubble}) transforms into
$$
\Delta_{S^n} v + \frac{n(n-2)}{4} ( |v|^{\frac 4{n-2}} v - v) = 0 \text{ in } S^n,
$$
(see, for example, \cite{Sch-Yau}, \cite{Heb}), Ding proved the existence of infinitely many critical points to the corresponding energy functional in the space of  functions satisfying
$$  v(x)= v(|x_1|,|x_2|), \quad x= (x_1, x_2) \in  S^n \subset \R^{n+1}= \R^k\times \R^{n+1-k}, \quad k\ge 2.$$
More explicit constructions of sign-changing solutions to (\ref{e:sign-changing bubble}) were obtained in \cite{delpinomussofrankpistoiajde2011}, \cite{delpinomussofrankpistoiajde2013}. Furthermore, \cite{MussoWei2015} proves the rigidity results (non-degeneracy) of the solutions found in \cite{delpinomussofrankpistoiajde2011}, \cite{delpinomussofrankpistoiajde2013}. Classification of solutions in $\Sigma$ plays an important role in the soliton resolution conjecture for energy critical wave equation, for example, \cite{KenigMerle2016}, \cite{KenigMerle2017} and the references therein. Therefore, a natural question is: does the infinite time blowing-up phenomenon for problem (\ref{e:main}) occur with sign-changing profiles? In this paper we show that the sign-changing blowing-up solutions with basic cell constructed in \cite{delpinomussofrankpistoiajde2011} do exist.

Our starting point is the sign-changing solutions $Q$ of (\ref{e:sign-changing bubble}) constructed in \cite{delpinomussofrankpistoiajde2011} and \cite{delpinomussofrankpistoiajde2013}. Let us describe these solutions more precisely. In \cite{delpinomussofrankpistoiajde2011}, it was proven that there exists a large positive integer $k_0$ such that $\forall k \geq k_0$, a solution $Q = Q_k$ of (\ref{e:sign-changing bubble}) exists. Furthermore, if we define the energy functional by
\begin{equation*}\label{energy1}
E(u) = {1\over 2}\int_{\R^n} |\nabla u|^2 dx - {1\over p+1} \int_{\R^n} |u|^{p+1}dx, \quad p = \frac{n+2}{n-2},
\end{equation*}
then we have
$$
E (Q_k ) =  \left\{ \begin{matrix}   (k+1) \, S_n \, \left( 1+ O(k^{2-n})  \right)  & \hbox{ if } n\geq 4\, , \\ & \\     (k+1) \, S_3 \, \left( 1+ O(k^{-1} |\log k|^{-1} \right) & \hbox{ if } n=3 \end{matrix}
  \right.
$$
as $k \to \infty$. Here $S_n$ is a positive constant depending on $n$. The function $Q=Q_k$ decays like the radial symmetrical solution $U(x)$ defined in (\ref{positivesolution}) at infinity, that is to say, we have
\begin{equation}\label{vanc1}
\lim_{|x| \to \infty} |x|^{n-2} \, Q_k (x ) = \left[ {4\over n(n-2) } \right]^{n-2 \over 4} \, 2^{n-2 \over 2} \left( 1+ d_k \right)
\end{equation}
where
$$
 d_k = \left\{ \begin{matrix}   O(k^{-1} )  & \hbox{ if } n\geq 4\, , \\ & \\     O(k^{-1} |\log k|^2 )  & \hbox{ if } n=3 \end{matrix}
  \right. \ \quad {\mbox {as}} \quad k \to \infty.
$$
Furthermore, we have
\begin{equation*}\label{at0}
Q(x) = \left[ n (n-2) \right]^{n-2 \over 4} \, \left ( 1- {n-2 \over 2} |x|^2 + O(|x|^3 ) \right) \quad {\mbox {as}} \quad |x|\to 0
\end{equation*}
and there exists $\eta >0$ (depending only on $k_0$) such that for any $k$,
\begin{equation*}\label{at00}
\eta \leq Q(x) \leq Q(0) \quad {\mbox {for all}} \quad |x|\leq {1\over 2}.
\end{equation*}
On the other hand, $Q=Q_k$ is invariant under rotation of angle ${2\pi \over k}$ in the $x_1 , x_2$ plane, i.e.,
\begin{equation}\label{sim00}
Q( e^{2\pi \over k} \bar x , x' ) = Q(\bar x , x' ), \quad \bar x= (x_1, x_3) , \quad x'= (x_3, \ldots , x_n ).
\end{equation}
It is also even in the $x_j$-coordinates, for any $j=2, \cdots , n$ and invariant under the Kelvin's transformation, namely, we have
\begin{equation}\label{sim22}
Q (x_1,\ldots,x_j, \ldots, x_{n}  ) =  Q (x_1,\ldots,-x_j, \ldots, x_{n}  ),\quad  j=2,\ldots,n
\end{equation}
and
\begin{equation}\label{sim33}
Q (x )\  = \  |x|^{2-n} Q (|x|^{-2} x).
\end{equation}

It was proved in \cite{MussoWei2015} that these solutions are non-degenerate. More precisely, fix one solution $Q=Q_k$ and define the linearized operator of (\ref{e:sign-changing bubble}) at $Q$ as
\begin{equation}\label{defL}
L(\phi ) = \Delta \phi + p |Q|^{p-1}  \phi.
\end{equation}
The invariance of any solution of (\ref{e:sign-changing bubble}) under dilation (if $u$ satisfies (\ref{e:sign-changing bubble}), then the function $\mu^{-\frac{n-2}{2}} u(\mu^{-1} x)$ solves (\ref{e:sign-changing bubble}) for all $\mu >0$), under translation (if $u$ solves (\ref{e:sign-changing bubble}), then $u(x+\xi)$ also solves (\ref{e:sign-changing bubble}) for $\xi\in\mathbb{R}^n$), together with the invariance (\ref{sim00}), (\ref{sim22}), (\ref{sim33}) produce natural kernel functions $\varphi$ of $L$, that is to say, we have
$$
L(\varphi ) = 0.
$$
These are $3n$ linearly independent functions defined as follows:
\begin{equation}\label{capitalzeta0}
z_0 (x) = {n-2 \over 2} Q(x) + \nabla Q (x) \cdot x ,
\end{equation}
\begin{equation}\label{capitalzetaj}
z_\alpha (x)  = {\partial \over \partial x_\alpha  } Q(x) , \quad {\mbox {for}} \quad \alpha=1, \ldots , n,
\end{equation}
\begin{equation}\label{capitalzeta2}
z_{n+1} (x) = -x_2  {\partial \over \partial x_1 } Q(x) + x_1  {\partial \over \partial x_2  } Q(x),
\end{equation}
\begin{equation}\label{chico1}
z_{n+2} (x) = -2 x_1 z_0 (x) + |x|^2 z_1 (x) , \quad z_{n+3} (x) =  -2 x_2 z_0 (x) + |x|^2 z_2 (x)
\end{equation}
and, for $l=3, \ldots , n$
\begin{equation}
\label{chico2}
z_{n+l+1} (x) = -x_l z_1 (x) + x_1 z_l (x), \quad z_{2n+l-1} (x) =  -x_l z_2 (x) + x_2 z_l (x).
\end{equation}
Indeed, direct computations yield that
$$
L(z_\alpha ) = 0 , \quad {\mbox {for all}} \quad \alpha = 0 , 1 , \ldots , 3n-1.
$$
The function $z_0$ defined by (\ref{capitalzeta0}) is from the invariance of (\ref{e:sign-changing bubble}) under dilation $\mu^{-{\frac{n-2}{2}}} Q (\mu^{-1} x)$. $z_i$, $i=1, \ldots , n$ defined by (\ref{capitalzetaj}) are due to the invariance of (\ref{e:sign-changing bubble}) under translation $Q (x+\xi)$. The function $z_{n+1}$ in (\ref{capitalzeta2}) is generated from the invariance of $Q$ with respect to rotation in the $(x_1 , x_2)$-plane. The functions $z_{n+2}$ and $z_{n+3}$ in (\ref{chico1}) are generated from the invariance of (\ref{e:sign-changing bubble}) with respect to the Kelvin transformation (\ref{sim33}). The functions in (\ref{chico2}) are due to the invariance of (\ref{e:sign-changing bubble}) under rotations in the $(x_1, x_l)$-plane, $(x_2, x_l)$-plane respectively.

Let us recall that the Green's function $G(x,y)$ is defined by the following Dirichlet boundary value problem
\begin{equation*}
\begin{cases}
-\Delta G(x,y) = c(n)\delta(x-y)&\text{ in }\Omega, \\
G(\cdot, y) = 0&\text{ on }\partial\Omega,
\end{cases}
\end{equation*}
where $\delta(x)$ is the Dirac measure at the origin and $c(n)$ is a constant depending on $n$ satisfying
\begin{equation*}
-\Delta \Gamma(x) = c(n)\delta(x),\quad \Gamma(x) = \frac{Q(0)}{|x|^{n-2}} = \frac{\left[n(n-2)\right]^{\frac{n-2}{4}}}{|x|^{n-2}}.
\end{equation*}
Denote the regular part of $G(x,y)$ as $H(x,y)$, namely, $H(x,y)$ satisfies the following problem
\begin{equation*}
\begin{cases}
-\Delta H(x,y) = 0&\text{ in }\Omega,\\
H(\cdot, y) = \Gamma(\cdot - y)&\text{ in }\partial\Omega.
\end{cases}
\end{equation*}
Our main result can be stated as follows.
\begin{theorem}\label{t:main}
Assume $n\geq 5$ and $q$ is a point in $\Omega$. There exists an integer $k_0$ such that, for any $k\geq k_0$, there exist an initial datum $u_0$ and smooth parameter functions $\xi(t)\to q$, $0<\mu(t)\to 0$, as $t\to +\infty$, such that the solution $u_q$ to (\ref{e:main}) has form
\begin{equation}\label{solutionfinal}
u_q(x, t) =\mu(t)^{-\frac{n-2}{2}}\left(Q_k\left(\frac{x-\xi(t)}{\mu(t)}\right) - H(x, q) + \varphi(x, t)\right),
\end{equation}
where $\varphi(x, t)$ is a bounded smooth function satisfying $\varphi(x, t)\to 0$ uniformly away from $q$ as $t\to +\infty$.
\end{theorem}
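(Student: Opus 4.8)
\emph{Proof strategy.} The plan is to carry out the construction by the inner--outer parabolic gluing scheme. Fix $k\ge k_0$, write $p=\frac{n+2}{n-2}$, and let $\mu=\mu(t)>0$ and $\xi=\xi(t)\in\Omega$ be parameter functions, to be determined together with the solution, with $\mu(t)\to 0$ and $\xi(t)\to q$. Set $U_{\mu,\xi}(x)=\mu^{-\frac{n-2}{2}}Q_k\big(\tfrac{x-\xi}{\mu}\big)$ and take as first approximation its nonlinear Dirichlet projection $u_*$, namely the function that vanishes on $\partial\Omega$ and differs from $U_{\mu,\xi}$ by a term of order $\mu^{\frac{n-2}{2}}$ built from the regular part $H$; by \eqref{sim33} the tail of $Q_k$ is exactly that of the standard bubble $U$, so this correction has the same structure as the classical projected bubble (cf.\ \cite{cortazar2016green}). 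A computation of the error $S(u_*):=-\partial_t u_*+\Delta u_*+|u_*|^{p-1}u_*$ shows that its leading contributions near $\xi$ are the time derivative of the profile, of size $\dot\mu\,\mu^{-\frac n2}$, and the bubble--background interaction, of size $\mu^{\frac{n-2}{2}}$; requiring these to cancel at main order produces the reduced relation $\dot\mu\approx -c_n\,H(q,q)\,\mu^{\,n-3}$, hence $\mu(t)\sim\beta\,t^{-\frac{1}{n-4}}$ with $\beta>0$ -- a balance that is well posed precisely because $n\ge 5$, the exponent $\frac{1}{n-4}$ being positive and the relevant integral $\int_{\mathbb R^n} z_0^2$ convergent only then.

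I then look for $u=u_*+\eta\,\phi+\psi$, where $\eta$ is a fixed cutoff equal to $1$ near $q$, $\phi=\phi(y,t)$ is the \emph{inner} unknown written in the blow-up variable $y=\frac{x-\xi}{\mu}$, and $\psi$ is the \emph{outer} unknown, a globally defined, slowly decaying function on $\Omega$ vanishing on $\partial\Omega$. Substituting and separating scales yields the gluing system: an outer equation $\partial_t\psi=\Delta\psi+\mathcal V\psi+(1-\eta)S(u_*)+(\text{terms transferred from }\phi)$ with zero Dirichlet data, where the potential $\mathcal V\sim p|u_*|^{p-1}$ is small away from $\xi$; and, in the slow time $\tau$ with $\frac{d\tau}{dt}=\mu^{-2}$, an inner equation $\partial_\tau\phi=L_Q[\phi]+\mu^{2}\,\mathcal E[\phi,\psi,\mu,\xi]$ in $\mathbb R^n$, with $L_Q(\phi)=\Delta\phi+p|Q_k|^{p-1}\phi$ the linearized Yamabe operator \eqref{defL}. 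For $(\phi,\mu,\xi)$ ranging in a suitable weighted ball, the outer equation is solved first by a fixed-point argument based on Duhamel's formula for the Dirichlet heat semigroup in weighted $L^\infty$ norms -- exploiting the smallness of $\mathcal V$ off the concentration region and the polynomial decay in $t$ of the forcing -- giving $\psi=\Psi[\phi,\mu,\xi]$, continuous in its arguments.

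The core is the inner problem, where the nondegeneracy of $Q_k$ established in \cite{MussoWei2015} -- the bounded kernel of $L_Q$ equals $\operatorname{span}\{z_0,\dots,z_{3n-1}\}$, with $z_\alpha$ as in \eqref{capitalzeta0}--\eqref{chico2} -- is indispensable. One develops a linear theory for $\partial_\tau\phi=L_Q[\phi]+h$ in parabolic weighted spaces carrying spatial decay $|y|^{-a}$ for a suitable $a\in(0,n-2)$ and a polynomial gain in $\tau$: such an equation is solvable with the required estimates provided $h$ is orthogonal, in the appropriate time-dependent $L^2$ pairing, to the kernel functions. Restricting the entire scheme to a class of functions sharing enough of the symmetries \eqref{sim00}, \eqref{sim22}, \eqref{sim33} of $Q_k$ annihilates the slowly decaying kernel directions -- in particular the Kelvin modes $z_{n+2},z_{n+3}$ and the rotation modes -- leaving essentially the dilation mode $z_0$ together with the translation directions needed to steer $\xi(t)\to q$; imposing orthogonality to these turns the inner problem into a solvable one coupled to a system of nonlocal ODEs for $\mu(t),\xi(t)$ whose principal part is the relation found in the error computation, solved by a further contraction step and yielding $\mu(t)\to 0$ at rate $t^{-1/(n-4)}$ and $\xi(t)\to q$.

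Finally one closes the scheme: with $\psi=\Psi[\phi,\mu,\xi]$ inserted, the map sending $(\phi,\mu,\xi)$ to the solution of the inner equation together with the solution of the reduced ODEs is a contraction on a small ball of the weighted space, so Banach's theorem provides $(\phi,\mu,\xi)$ and hence $u$; unwinding the definitions and collecting the corrections into $\varphi=\eta\phi+\psi+O(\mu^{\frac{n-2}{2}})$ gives the form \eqref{solutionfinal}, and the decay encoded in the weighted norms forces $\varphi(\cdot,t)\to 0$ uniformly away from $q$. I expect the main obstacle to be the inner linear theory: because $Q_k$ is non-radial, the spherical-harmonic mode decomposition that makes the analogous analysis for the radial bubble $U$ routine is unavailable, so the a priori estimates must be obtained via a blow-up/compactness argument resting on \cite{MussoWei2015}, and one must at the same time propagate enough spatial decay of $\phi$ for its feedback into the outer problem to be genuinely subordinate; pinning down the constant and sign in the reduced equation for $\mu$, so that $\mu>0$ and $\mu\to 0$, is the other delicate point.
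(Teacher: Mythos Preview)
Your overall architecture is right, and you have correctly anticipated that the inner linear theory must be done by a blow-up/compactness argument resting on the nondegeneracy of \cite{MussoWei2015}. But two steps of your plan would not go through as written.

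\textbf{The symmetry reduction is not available.} You propose to eliminate the rotation modes $z_{n+1}, z_{n+l+1}, z_{2n+l-1}$ and the Kelvin modes $z_{n+2}, z_{n+3}$ by restricting to functions sharing the symmetries \eqref{sim00}, \eqref{sim22}, \eqref{sim33} of $Q_k$. This cannot work here: the problem is posed on an arbitrary smooth bounded domain $\Omega$ with Dirichlet data, and an arbitrary point $q\in\Omega$. The Kelvin transform \eqref{sim33} does not preserve $\Omega$, the discrete rotation \eqref{sim00} in the $(x_1,x_2)$-plane need not preserve $\Omega$ or fix $q$, and the reflection symmetries \eqref{sim22} likewise have no reason to be compatible with the geometry. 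Hence no symmetry class is invariant under the full inner--outer system, and all $3n$ kernel directions survive. The paper's remedy is to enlarge the parameter set from $(\mu,\xi)$ to $A=(\mu,\xi,a,\theta)\in\mathbb R^+\times\mathbb R^n\times\mathbb R^2\times\mathbb R^{2n-3}$, building the ansatz from the $3n$-parameter family $Q_A$ in \eqref{defthetaA}; the $3n$ orthogonality conditions then become a coupled ODE system \eqref{e5:9} for $(\lambda,\xi,a,\theta)$, in which the scaling equation still dominates and produces the rate $\mu\sim t^{-1/(n-4)}$, while the $a,\theta$ equations are driven at lower order (Proposition~\ref{l5:1} and Lemma~\ref{e:orthogalityofkernels}). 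Without these extra parameters you cannot enforce the orthogonality needed for the inner linear estimate.

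\textbf{Contraction fails for $n\ge 7$.} You close by Banach's theorem, but for $n\ge 7$ one has $p-1=\tfrac{4}{n-2}<1$, so $|Q|^{p-1}$ is not $C^1$ and the dependence of the inner and outer solutions on the parameter functions is not Lipschitz. The paper handles $n\ge 7$ by Schauder's fixed-point theorem on the set $\mathcal B$ in Section~4.4, using decay in $t$ plus parabolic regularity for compactness; contraction is only recovered in dimensions $5$ and $6$ (Section~8). A smaller point: to get enough spatial decay of the outer forcing, the paper also adds nonlocal corrections $\Phi^0,\Phi^1,\Phi^{2,i}$ (solutions of auxiliary heat equations, \eqref{nonlocalterm1} and following) to cancel the slowly decaying pieces of $\dot\mu\, z_0$, $\dot\xi\cdot\nabla Q$, $\dot a_i z_{n+1+i}$; without these the error does not sit in the weighted space required by Proposition~\ref{l4:lemma4.1}.
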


Theorem \ref{t:main} exhibits new blowing-up phenomena where the profile of bubbling is sign-changing rather than the positive solution for the critical heat equation. In the case of positive bubbling, the linear operator around the basic cell contains exactly $n+1$ dimensional kernels corresponding to the rigidity motions (translation and dilation). However, in the case of sign-changing (non-radial) blowing-up solution, the kernel of the linearized operators at the basic cell includes not only the functions generated from dilation and translations, but also functions due to rotation around the sub-planes and Kelvin transform. Therefore we have to find enough parameter functions to adjust. Similar to the supercritical Bahri-Coron's problem in \cite{MussoWei2016}, our computations indicate that the dominated role played is still scaling and translations. Indeed, (\ref{solutionfinal}) has a more involved form, see (\ref{e2:3}) below for details. Note that in \cite{zhangmd}, sign-changing blow-up solutions were also constructed, but their basic cell is the positive radial solution $U(x)$ defined in (\ref{positivesolution}).

We believe that this is the {\em first} example of blowing-up solutions in nonlinear parabolic equations  whose core profile is non-radial. In a series of interesting papers, Duyckaerts, Kenig and Merle \cite{KenigMerle2016, KenigMerle2012} introduced the notion of nondegenracy for nonradial solutions of the equation (\ref{e:sign-changing bubble}) and obtained the profile decomposition for possible blow-up solutions for energy critical wave equation in general setting. Existence of bubbling solutions with the positive radial profile for the energy critical wave equations has been constructed in \cite{DK, KST}. However as far as we know there are no examples of noradial blow-up for energy critical wave equation.

To prove Theorem \ref{t:main}, we will use the {\it inner-outer gluing scheme} for parabolic problems. Gluing methods have been proven very useful in singular perturbation elliptic problems, for example, \cite{dkw2007concentration}, \cite{delwei2011Degiorgi}, \cite{delkowalczykweijdg2013entire}. Recently, this method has also been developed to various evolution problems, for instance, the construction of infinite time blowing-up solutions for energy critical nonlinear heat equation \cite{cortazar2016green}, \cite{delmussowei3d}, the formation of singularity to harmonic map flow \cite{davila2017singularity}, finite time blowing-up solutions for energy critical heat equation \cite{delmussoweitype2}, vortex dynamics in Euler flows \cite{davila2018Euler} and type II ancient solutions for the Yamabe flow \cite{del2012type}.

The proof consists of constructing an approximation to the solution with sufficiently small error, then we solve for a small remainder term using linearization around the bubble and the Schauder fixed-point arguments. In Section 2, we construct the first approximation with form (\ref{e2:3}). To get an approximation with fast decay far away from the point $q$, we add nonlocal terms to cancel the slow decay parts as in \cite{davila2017singularity}. Then we compute the error, in order to improve the approximation error near the point $q$, we have to use solvability conditions for the corresponding elliptic linearized operator around the sign-changing bubble. These conditions yield  systems of  ODEs for the parameters. (See equations (\ref{e5:9}).) Of these systems of the ODEs, the equation for the scaling parameter function plays a dominant role, from which we deduce the blow-up dynamics of our solutions.  After the approximate solution has been constructed, the full problem is solved as a small perturbation by the {\em inner-outer gluing scheme}, see Section 3. This consists of decomposing the perturbation term into form $\eta\tilde{\phi} +\psi$, where $\eta$ is a smooth cut-off function vanishing away from $q$. The tuple $(\tilde{\phi}, \psi)$ satisfy a coupled nonlinear parabolic system where the equation for $\psi$ is a small perturbation of the standard heat equation, and $\tilde{\phi}$ satisfies the parabolic linearized equation around the bubble.

When dealing with parabolic problems for $\tilde{\phi}$, a crucial step is to find a solution to the linearized parabolic equation around the bubble with sufficiently fast decay. However, it seems that the argument in \cite{cortazar2016green} for the positive bubbling of the critical heat equation does not work in our sign-changing case since we can not perform Fourier mode expansions. Inspired by the linear theory of \cite{davila2017singularity}, \cite{MussoSireWeiZhengZhou} and \cite{sireweizhenghalf}, our main contributions in this paper is to use blowing-up arguments based on the non-degeneracy of bubbles proved in \cite{MussoWei2015} and a removable of singularity property for the corresponding limit equation. As pointed out in \cite{KenigMerle2016}, the term $|Q|^{p-1} = |Q|^{\frac{4}{n-2}}$ in $L(\phi) = \Delta \phi + p |Q|^{p-1}\phi$ is not $C^1$ when the space dimension $n \geq 7$, as a result of this fact, the solution $\tilde{\phi}$, $\psi$ do not have Lipschitz property with respect to the parameter functions. This is the reason we use Schauder fixe-point theorem rather than Contraction Mapping Theorem to solve the inner-outer gluing parabolic system in Section 4. In dimension 5 and 6, $\tilde{\phi}$ and $\psi$ do have Lipschitz continuity with respect to the parameter functions, Theorem \ref{t:main} as well as a stability result for $u_q$ can be proved using the Contraction Mapping Theorem in the spirit of \cite{cortazar2016green}, see Section 8.

\section{Construction of the approximation}
\subsection{The basic cell.}
Let $O(n)$ be the orthogonal group of $n\times n$ matrices $M$ with real coefficients and $M^T M = I$, $SO(n) \subset O(n) $ be the special orthogonal group of all matrices in $O(n)$ satisfying $det(M)=1$. It is well known that $SO(n)$ is a compact group containing all rotations in $\R^n$,  and via isometry, it can be identified with a compact subset of $\R^{n (n-1) \over 2}$. Let $\hat S$ be the subgroup of $SO(n)$ generated by rotations in the $(x_1 , x_2)$-plane and $(x_j , x_\alpha)$-plane, for any $j=1,2$, $\alpha = 3, \ldots , n$. Then $\hat S$ is a compact manifold of dimension $2n-3$ without boundary. That is to say, there exists a smooth injective map $\chi : \hat S \to \R^{{n(n-1) \over 2}}$ such that $\chi ( \hat S)$ is a compact manifold without boundary of dimension $2n-3$ and $\chi^{-1} : \chi (\hat S ) \to \hat S$ is the smooth parametrization of $\hat S$ in a neighborhood of the identity map. Let us write
$$
\theta \in K =  \chi(\hat S), \quad R_\theta = \chi^{-1} (\theta )
$$
for a smooth compact manifold $K$ of dimension $2n-3$ and $R_\theta$ denotes a rotation map in $\hat S$.

Let $A = (\mu, \xi , a, \theta ) \in \mathbb{R}^+ \times \mathbb{R}^n \times \mathbb{R}^2 \times \mathbb{R}^{2n-3}$, define
\begin{equation} \label{defthetaA}
Q_A (x) = \mu^{-{n-2 \over 2}} \left| \eta_{A} (x)  \right|^{2-n} \,
Q \left( { R_\theta \left( {x-\xi \over \mu }  - a |{x-\xi \over \mu }|^2 \right) \over | \eta_{A} (x)|^2 }\right),
\end{equation}
where
\begin{equation}
\label{defetaA}
\eta_{A} (x) = {x-\xi \over |x-\xi |} - a {|x-\xi | \over \mu }
\end{equation}
and $Q$ is the fixed non-degenerate solution to problem (\ref{e:sign-changing bubble}) as described in the introduction.
It was proved in \cite{KenigMerle2016} that for any choice of $A$, $Q_A$ still satisfies (\ref{e:sign-changing bubble}), i.e.,
$$
\Delta Q_A + |Q_A |^{p-1} Q_A = 0 , \quad {\mbox {in}} \quad \R^n.
$$
Direct computations yield the following relations between the differentiation of $Q_A$ with respect to each component of $A$ and $z_\alpha$ defined in (\ref{capitalzeta0}), (\ref{capitalzetaj}), (\ref{capitalzeta2}), (\ref{chico1}) and (\ref{chico2}). Precisely, we have
\begin{equation}\label{z011}
z_0 (y) = - \, {\partial \over \partial \mu} \left[ Q_A (x) \right]_{| \mu = 1, \xi = 0, a =0 , \theta=0 }
\end{equation}
\begin{equation}\label{zj11}
z_\alpha  (y) = - \, {\partial \over \partial \xi_\alpha } \left[ Q_A (x) \right]_{| \mu = 1, \xi = 0, a =0 , \theta=0 },
\quad \alpha = 1, \ldots , n,
\end{equation}
\begin{equation}\label{z211}
z_{n+2} (y) =  \, {\partial \over \partial a_1} \left[ Q_A (x) \right]_{| \mu = 1, \xi = 0, a =0 , \theta=0 },
\end{equation}
\begin{equation}\label{z311}
z_{n+3} (y) =  \, {\partial \over \partial a_2} \left[ Q_A (x) \right]_{| \mu = 1, \xi = 0, a =0 , \theta=0 }.
\end{equation}
Let $\theta = (\theta_{12}, \theta_{13}, \ldots, \theta_{1n}, \theta_{23} , \ldots , \theta_{2n} ) $, where
$\theta_{ij}$ is the rotation in the $(i,j)$-plane, then we have
\begin{equation}\label{z411}
z_{n+1} (y) =  \, {\partial \over \partial \theta_{12}} \left[ Q_A (x) \right]_{| \mu = 1, \xi = 0, a =0 , \theta=0 }
\end{equation}
and, for $l=3, \ldots , n$,
\begin{equation}\label{z5110}
z_{n+l+1} (y) =  \, {\partial \over \partial \theta_{1l}} \left[ Q_A (x) \right]_{| \mu = 1, \xi = 0, a =0 , \theta=0 },
\end{equation}
\begin{equation}\label{z511}
z_{2n+l-1} (y) =  \, {\partial \over \partial \theta_{2l}} \left[ Q_A (x) \right]_{| \mu = 1, \xi = 0, a =0 , \theta=0 }.
\end{equation}
Following the definition in \cite{KenigMerle2016}, a solution $Q$ of (\ref{e:sign-changing bubble}) is non-degenerate if
\begin{equation}\label{nondeg}
{\mbox {Kernel}} (L) = {\mbox {Span}} \{ z_\alpha \, : \, \alpha = 0 , 1 , 2, \ldots , 3n-1 \},
\end{equation}
or equivalently, any bounded solution of $L(\varphi) = 0 $ is a linear combination of $z_\alpha $, $\alpha = 0 , \ldots , 3n-1$.
It was proved in \cite{MussoWei2015} that, the solution $Q$ is non-degenerate when the dimension satisfies some extra conditions.
Indeed, the authors showed that for all dimensions $n\leq 48$, any solution $Q=Q_k$ is non-degenerate, for dimension $n \geq 49$, there exists a subsequence of solutions $Q_{k_j}$ which is non-degenerate in the sense (\ref{nondeg}).

\subsection{Setting up the problem.}
Let $t_0 > 0$ be a sufficiently large constant, let us consider the heat equation
\begin{equation}\label{e3:1}
\begin{cases}
u_t = \Delta u + |u|^{\frac{4}{n-2}}u&\text{  in  }\Omega\times (t_0, \infty),\\
u = 0&\text{  in  }\partial \Omega\times (t_0, \infty).
\end{cases}
\end{equation}
Observe that the solution of (\ref{e3:1}) provides a solution $u(x, t) = u(x, t-t_0)$ to (\ref{e:main}). Given a fixed point $q\in \Omega$, we will find a solution $u(x, t)$ of equation (\ref{e3:1}) with approximate form
$$
u(x, t)\approx \mu(t)^{-\frac{n-2}{2}} Q\left(\frac{x-\xi(t)}{\mu(t)}\right).
$$
More precisely, let $A = A(t) = (\mu(t), \xi(t), a(t), \theta(t))\in \mathbb{R}^+\times\mathbb{R}^n\times \mathbb{R}^2\times \mathbb{R}^{2n-3}$ be the parameter functions and define the function
\begin{equation}
Q_{A(t)}(x) = \mu(t)^{-\frac{n-2}{2}}\left|\eta_{A(t)}(x)\right|^{2-n}Q\left(\frac{R_{\theta(t)}\left(\frac{x-\xi(t)}{\mu(t)} - a(t)\left|\frac{x-\xi(t)}{\mu(t)}\right|^2\right)}{|\eta_{A(t)}(x)|^2}\right),
\end{equation}
where
\begin{equation}
\eta_{A(t)}(x) = \frac{x-\xi(t)}{|x-\xi(t)|} - a(t)\frac{|x-\xi(t)|}{\mu(t)}
\end{equation}
and $Q$ is the non-degenerate solution for (\ref{e:sign-changing bubble}) described in Section 2.1. With abuse of notation when there is no ambiguity, here and in what follows, $A(t) = (\mu(t), \xi(t), a(t), \theta(t))$ will be abbreviated as $A = (\mu, \xi, a, \theta)$, $a$ is a vector in $\mathbb{R}^2$, $a = \begin{pmatrix}a_1\\ a_2\end{pmatrix}\in \mathbb{R}^2$, it is also a vector in $\mathbb{R}^n$, namely,
$$
a = \begin{pmatrix}a_1\\ a_2\\ 0\\ \cdots\\ 0\end{pmatrix}\in \mathbb{R}^n.
$$

To begin with, we assume that for a fixed positive function $\mu_0(t)\to 0$ ($t\to +\infty$) and a constant $\sigma > 0$, there hold
$$
\mu(t) = \mu_0(t) + O(\mu_0^{1+\sigma}(t))\quad\text{as}\quad t\to +\infty,
$$
$$
\xi(t) = q + O(\mu_0^{1+\sigma}(t))\quad\text{as}\quad t\to +\infty,
$$
$$
a(t) = O(\mu_0^{\sigma}(t))\quad\text{as}\quad t\to +\infty,
$$
$$
\theta(t) = O(\mu_0^{\sigma}(t))\quad\text{as}\quad t\to +\infty.
$$
In \cite{KenigMerle2016}, it was proven that for any choice of $A$, the function $Q_A$ still satisfies (\ref{e:sign-changing bubble}), namely
$$
\Delta Q_A + |Q_A|^{p-1}Q_A = 0\text{ in }\mathbb{R}^n.
$$
Let $\tilde{y} = \frac{R_{\theta(t)}\left(\frac{x-\xi(t)}{\mu(t)} - a(t)\left|\frac{x-\xi(t)}{\mu(t)}\right|^2\right)}{|\eta|^2}$ and $\eta =   \frac{x-\xi(t)}{|x-\xi(t)|} - a(t)\frac{|x-\xi(t)|}{\mu(t)}$, then we have the following expansion
\begin{equation*}
\begin{aligned}
|\eta|^2 &= \left|\frac{x-\xi(t)}{|x-\xi(t)|} - a(t)\frac{|x-\xi(t)|}{\mu(t)}\right|^2\\ &= 1 - 2 a(t)\cdot \left(\frac{x-\xi(t)}{\mu(t)}\right) + |a(t)|^2\frac{|x-\xi(t)|^2}{\mu^2(t)},
\end{aligned}
\end{equation*}
\begin{equation*}
\begin{aligned}
\frac{1}{|\eta|^2} &= \frac{1}{1 - 2 a(t)\cdot \left(\frac{x-\xi(t)}{\mu(t)}\right) + |a(t)|^2\frac{|x-\xi(t)|^2}{\mu^2(t)}}\\
&= 1 + 2 a(t)\cdot \left(\frac{x-\xi(t)}{\mu(t)}\right) +O\left(|a(t)|^2\frac{|x-\xi(t)|^2}{\mu^2(t)}\right)
\end{aligned}
\end{equation*}
and
\begin{equation*}
\begin{aligned}
\tilde{y} = \frac{R_{\theta(t)}\left(\frac{x-\xi(t)}{\mu(t)} - a(t)\left|\frac{x-\xi(t)}{\mu(t)}\right|^2\right)}{|\eta|^2}& = R_{\theta(t)}\left(\frac{x-\xi(t)}{\mu(t)}\right) + R_{\theta(t)} a(t)\left|\frac{x-\xi(t)}{\mu(t)}\right|^2\\
& \quad + O\left(|a|^2\frac{|x-\xi(t)|^3}{\mu^3(t)}\right).
\end{aligned}
\end{equation*}
Denote the error operator as
\begin{equation*}
S(u):= -u_t+\Delta u + |u|^{p-1}u,
\end{equation*}
with $p = \frac{n+2}{n-2}$ . Then the error of the first approximation $Q_A(x, t)$ can be computed as
\begin{eqnarray*}
\begin{aligned}
S(Q_A) &= -\frac{\partial }{\partial t}\left(Q_A(x, t)\right) = \mathcal{E}_0 + \mathcal{E}_1 + \mathcal{E}_2 + \mathcal{E}_3.
\end{aligned}
\end{eqnarray*}
For $y = \frac{x-\xi(t)}{\mu(t)}$, using Taylor expansion, the expressions of $\mathcal{E}_0$, $\mathcal{E}_1$, $\mathcal{E}_2$ and $\mathcal{E}_3$ are given below explicitly.
\begin{eqnarray*}
\begin{aligned}
\mathcal{E}_0 &= \frac{\dot{\mu}(t)}{\mu(t)}\mu^{-\frac{n-2}{2}}(t)\left|\eta\right|^{2-n}z_0(\tilde{y})  +\frac{\dot{\mu}(t)}{\mu(t)}\mu^{-\frac{n-2}{2}}(t)\left|\eta\right|^{2-n}z_0(\tilde{y})\left(2\tilde{y}\cdot R_\theta a\right)\\
&\quad - \frac{\dot{\mu}(t)}{\mu(t)}\mu^{-\frac{n-2}{2}}(t)\left|\eta\right|^{2-n}\left(\nabla Q(\tilde{y})\cdot \frac{R_\theta a}{|\eta|^2}\right)\left(\frac{|x-\xi(t)|^2}{\mu^2(t)}\right)\\
&= \frac{\dot{\mu}(t)}{\mu(t)}\mu^{-\frac{n-2}{2}}(t)z_0\left(y\right)\left(1 + \left(y\cdot a\right)F_0(\mu, \xi, a, \theta, y)\right),
\end{aligned}
\end{eqnarray*}
where $f$ are generic smooth bounded functions of the tuple $(\mu, \xi, a, \theta, y)$ which may different from one place to another, $F_0(\mu, \xi, a, \theta, y)$ is a smooth bounded function depending on $(\mu, \xi, a, \theta, y)$. Similarly, we have
\begin{eqnarray*}
\begin{aligned}
\mathcal{E}_1 &= \mu(t)^{-\frac{n-2}{2}}(n-2)|\eta|^{-n}\left(\eta\cdot a\right)\left(\frac{x-\xi}{|x-\xi|}\cdot \frac{\dot{\xi}}{\mu}\right)Q(\tilde{y})\\
&\quad + \mu^{-\frac{n-2}{2}}|\eta|^{2-n}\nabla Q(\tilde{y})\cdot \left[\frac{1}{|\eta|^2}R_\theta\left(\frac{\dot{\xi}}{\mu} - \frac{2 a (x-\xi)\cdot \dot{\xi}}{\mu^2}\right)\right]\\
&\quad + \mu^{-\frac{n-2}{2}}|\eta|^{2-n}\nabla Q(\tilde{y})\cdot \left(\tilde{y}\frac{2\eta}{|\eta|^2}\left( a\left(\frac{x-\xi}{|x-\xi|}\frac{\dot{\xi}}{\mu}\right)\right)\right)\\
& = \mu^{-\frac{n-2}{2}}\nabla Q\left(y\right)\cdot \frac{\dot{\xi}}{\mu(t)}\left(1 + \left(y\cdot a\right)F_1(\mu, \xi, a, \theta, y)\right)
\end{aligned}
\end{eqnarray*}
where $f$ are generic smooth bounded functions of the tuple $(\mu, \xi, a, \theta, y)$ which may different from one place to another, $F_1(\mu, \xi, a, \theta, y)$ is a smooth bounded function depending on $(\mu, \xi, a, \theta, y)$.
Furthermore, $\mathcal{E}_2 = \mathcal{E}_{21} + \mathcal{E}_{22}$, where
\begin{eqnarray*}
\begin{aligned}
\mathcal{E}_{21} &= -\mu^{-\frac{n-2}{2}}|\eta|^{-n}2\left(\dot{a}_1\cdot y\right)\left[\frac{n-2}{2}Q\left(\tilde{y}\right) + \nabla Q(\tilde{y})\cdot\tilde{y}\right]\\
& \quad + \mu^{-\frac{n-2}{2}}|\eta|^{-n}R_\theta\dot{a}_1\cdot\nabla Q(\tilde{y})\left|\frac{x-\xi}{\mu}\right|^2 \\
& \quad + \mu^{-\frac{n-2}{2}}|\eta|^{-n}2\left[\frac{n-2}{2}Q(\tilde{y}) + \nabla Q(\tilde{y})\cdot\tilde{y}\right]\left|\frac{x-\xi}{\mu}\right|^2a_1\dot{a}_1\\
& = \mu^{-\frac{n-2}{2}}\Bigg\{-2\left(\dot{a}_1\cdot y\right)\left[\frac{n-2}{2}Q\left(y\right) + \nabla Q\left(y\right)\cdot y\right] + \dot{a}_1\cdot\nabla Q\left(y\right)\left|y\right|^2\Bigg\}\times\\
&\quad\quad\quad\quad\quad\quad\quad\quad\quad\quad\quad\quad\quad\quad\quad\quad\quad\quad\quad \Bigg(1 + \left(y\cdot a\right)F_{21}(\mu, \xi, a, \theta, y)\Bigg)\\
& = \mu^{-\frac{n-2}{2}}z_{n+2}(y)\dot{a}_1\Bigg(1 + \left(y\cdot a\right)F_{21}(\mu, \xi, a, \theta, y)\Bigg)
\end{aligned}
\end{eqnarray*}
and
\begin{eqnarray*}
\begin{aligned}
\mathcal{E}_{22} &= -\mu^{-\frac{n-2}{2}}|\eta|^{-n}2\left(\dot{a}_2\cdot y\right)\left[\frac{n-2}{2}Q\left(\tilde{y}\right) + \nabla Q(\tilde{y})\cdot\tilde{y}\right]\\
& \quad + \mu^{-\frac{n-2}{2}}|\eta|^{-n}R_\theta\dot{a}_2\cdot\nabla Q(\tilde{y})\left|\frac{x-\xi}{\mu}\right|^2 \\
& \quad + \mu^{-\frac{n-2}{2}}|\eta|^{-n}2\left[\frac{n-2}{2}Q(\tilde{y}) + \nabla Q(\tilde{y})\cdot\tilde{y}\right]\left|\frac{x-\xi}{\mu}\right|^2a_2\dot{a}_2\\
& = \mu^{-\frac{n-2}{2}}\Bigg\{-2\left(\dot{a}_2\cdot y\right)\left[\frac{n-2}{2}Q\left(y\right) + \nabla Q\left(y\right)\cdot y\right] + \dot{a}_2\cdot\nabla Q\left(y\right)\left|y\right|^2\Bigg\}\times\\
&\quad\quad\quad\quad\quad\quad\quad\quad\quad\quad\quad\quad\quad\quad\quad\quad\quad\quad\quad \Bigg(1 + \left(y\cdot a\right)F_{22}(\mu, \xi, a, \theta, y)\Bigg)\\
& = \mu^{-\frac{n-2}{2}}z_{n+3}(y)\dot{a}_2\Bigg(1 + \left(y\cdot a\right)F_{22}(\mu, \xi, a, \theta, y)\Bigg).
\end{aligned}
\end{eqnarray*}
Here we identify the component $a_j$, $j=1,2$ of $a$ with the vector
$$
\begin{pmatrix}a_1\\ 0\\ 0\\ \cdots\\ 0\end{pmatrix}\in \mathbb{R}^n, \quad {\mbox {if}} \quad j=1, \quad \begin{pmatrix}0\\ a_2\\ 0\\ \cdots\\ 0\end{pmatrix}\in \mathbb{R}^n, \quad {\mbox {if}} \quad j=1,
$$
$f$ are generic smooth bounded functions of the tuple $(\mu, \xi, a, \theta, y)$ which may different from one place to another, $F_{21}(\mu, \xi, a, \theta, y)$ and $F_{22}(\mu, \xi, a, \theta, y)$ are smooth bounded functions depending on $(\mu, \xi, a, \theta, y)$.
Finally, $\mathcal{E}_{3} = \mathcal{E}_{3, 12} + \sum_{j=3}^n\mathcal{E}_{3, 1j} + \sum_{j=3}^n\mathcal{E}_{3, 2j}$, where
\begin{equation*}
\begin{aligned}
&\mathcal{E}_{3, 12}= \mu^{-\frac{n-2}{2}}|\eta|^{2-n}\nabla Q(\tilde{y})\cdot (i\tilde{y})\dot{\theta}_{12}\\
& = \mu^{-\frac{n-2}{2}}z_{n+1}(y)\dot{\theta}_{12}\left(1 + \left(y\cdot R_\theta a\right)F_{3,21}(\mu, \xi, a, \theta, y)\right)
\end{aligned}
\end{equation*}
and similarly, for $j=3, \cdots, n$,
\begin{equation*}
\mathcal{E}_{3, 1j}= \mu^{-\frac{n-2}{2}}z_{n+j+1}(y)\dot{\theta}_{1j}\left(1 + \left(y\cdot R_\theta a\right)F_{3,1j}(\mu, \xi, a, \theta, y)\right),
\end{equation*}
\begin{equation*}
\mathcal{E}_{3, 2j}= \mu^{-\frac{n-2}{2}}z_{2n+l-1}(y)\dot{\theta}_{2j}\left(1 + \left(y\cdot R_\theta a\right)F_{3,2j}(\mu, \xi, a, \theta, y)\right),
\end{equation*}
where $i$ is the rotation matrix with angle $\frac{\pi}{2}$ around the axes $x_1$, $x_2$ in $\mathcal{E}_{3, 12}$, around the axes $x_1$, $x_j$ in $\mathcal{E}_{3, 1j}$ and around the axes $x_2$, $x_j$ in $\mathcal{E}_{3, 2j}$ respectively, $F_{3, 12}(\mu, \xi, a, \theta, y)$, $F_{3, 1j}(\mu, \xi, a, \theta, y)$ and $F_{3, 2j}(\mu, \xi, a, \theta, y)$, $j = 3, \cdots, n$, are smooth bounded functions depending on $(\mu, \xi, a, \theta, y)$.

To perform the gluing method, the terms $\mu^{-\frac{n-2}{2}-1}\dot{\mu} z_{0}(y)$, $\mu^{-\frac{n-2}{2}-1}\dot{\xi}\cdot \nabla Q(y)$ and $\mu^{-\frac{n-2}{2}-1}\nabla Q(y)\cdot \left(iR_\theta\xi\right)\dot\theta$ do not have enough decay, inspired by \cite{davila2017singularity}, we should add nonlocal terms to cancel them out at main order. By the detailed construction of $Q$ (see \cite{delpinomussofrankpistoiajde2011}) and (\ref{vanc1}) we know that the main order of $z_{0}(y)$ is
$$
\frac{D_{n,k}(2-|y|^2)}{\left(1+|y|^2\right)^{\frac{n}{2}}}
$$
with $D_{n,k} = -\frac{n-2}{2}\left[ {4\over n(n-2) } \right]^{n-2 \over 4} \, 2^{n-2 \over 2} \left( 1+ d_k \right)$.
Therefore, we consider the following heat equation
\begin{equation}\label{2018320}
-\varphi_t + \Delta\varphi + \frac{\dot{\mu}}{\mu}\mu^{-(n-2)}\frac{D_{n,k}\left(2-\left|\frac{x - \xi}{\mu}\right|^2\right)}{\left(1+\left|\frac{x - \xi}{\mu}\right|^2\right)^{\frac{n}{2}}} = 0 \text{ in }\mathbb{R}^n\times (t_0, +\infty).
\end{equation}
By the Duhamel's principle, we known
\begin{equation}\label{nonlocalterm1}
\Phi^0(x,t) = -\int_{t_0}^{t}\int_{\mathbb{R}^n}p(t-\tilde{s}, x-y)\frac{\dot{\mu}(\tilde{s})}{\mu(\tilde{s})}\mu^{-(n-2)}(\tilde{s})\frac{D_{n,k}\left(2-\left|\frac{y - \xi(\tilde{s})}{\mu(\tilde{s})}\right|^2\right)}{\left(1+\left|\frac{y - \xi(\tilde{s})}{\mu(\tilde{s})}\right|^2\right)^{\frac{n}{2}}}dyd\tilde{s}
\end{equation}
provides a bounded solution for (\ref{2018320}). Here $p(t, x) = \frac{1}{(4\pi t)^{\frac{n}{2}}}e^{\frac{-|x|^2}{4t}}$ is the standard heat kernel for the heat operator $-\frac{\partial }{\partial t} + \Delta$ on $\mathbb{R}^n\times (t_0, +\infty)$. By the super-sub solution argument, $\Phi^0(x, t)$ satisfies the estimate $\Phi^0(x, t)\sim  \frac{\dot{\mu}}{\mu}\frac{\mu^{-n+4}}{1+|y|^{n-4}}$ (see Lemma \ref{l4:lemma4.1}).

To cancel the main order $\mu^{-\frac{n-2}{2}-1}\dot{\xi}\cdot \frac{E_{n,k}y}{(1+|y|^2)^{\frac{n}{2}}}$ of $\mu^{-\frac{n-2}{2}-1}\dot{\xi}\cdot \nabla Q(y)$ where $E_{n, k}$ is a constant depending on $n$ and $k$, for $y = \frac{x-\xi}{\mu}$, we consider the following heat equation
\begin{equation}\label{20183201}
-\varphi_t + \Delta\varphi + E_{n,k}\mu^{-(n-2)}\frac{1}{\left(1+|y|^2\right)^{\frac{n}{2}}}\frac{\dot{\xi}}{\mu}\cdot y = 0 \text{ in }\mathbb{R}^n\times (t_0, +\infty).
\end{equation}
The solution defined from the Duhamel's principle
\begin{equation*}
\begin{aligned}
\Phi^1(x,t) = - E_{n,k}\int_{t_0}^{t}\int_{\mathbb{R}^n}p(t-\tilde{s}, x-y)\mu^{-(n-2)}(\tilde{s})&\frac{\dot{\xi}(\tilde{s})\cdot \frac{y-\xi(\tilde{s})}{\mu(\tilde{s})}}{\mu(\tilde{s})}\times\\
&\quad\quad\quad\quad \frac{1}{\left(1+\left|\frac{y-\xi(\tilde{s})}{\mu(\tilde{s})}\right|^2\right)^{\frac{n}{2}}}dyd\tilde{s}
\end{aligned}
\end{equation*}
satisfies the estimate
$\Phi^1(x, t) \sim  \frac{|\dot{\xi}|}{\mu}\frac{\mu^{-n+4}}{1+|y|^{n-3}}$.

Similarly, for $i = 1, 2$,
we consider the heat equation
\begin{equation}\label{201832010}
-\varphi_t + \Delta\varphi + \mu^{-(n-2)}\frac{E_{n,k}|y|^2 - 2D_{n,k}(2-|y|^2)}{\left(1+|y|^2\right)^{\frac{n}{2}}}\dot{a}_i y_i = 0 \text{ in }\mathbb{R}^n\times (t_0, +\infty),
\end{equation}
which has a bounded solution given by
\begin{equation*}
\begin{aligned}
\Phi^{2,i}(x,t) = -\int_{t_0}^{t}\int_{\mathbb{R}^n}p(t-\tilde{s}, & x-y)\mu^{-(n-2)}(\tilde{s}) \dot{a}_i(\tilde{s}) \left(\frac{y-\xi(\tilde{s})}{\mu(\tilde{s})}\right)_i\times\\
&\quad\quad\quad\quad\frac{E_{n,k}\left|\frac{y-\xi(\tilde{s})}{\mu(\tilde{s})}\right|^2 - 2D_{n,k}\left(2-\left|\frac{y-\xi(\tilde{s})}{\mu(\tilde{s})}\right|^2\right)}
{\left(1+\left|\frac{y-\xi(\tilde{s})}{\mu(\tilde{s})}\right|^2\right)^{\frac{n}{2}}}dyd\tilde{s}
\end{aligned}
\end{equation*}
satisfies the estimate
$\Phi^{2, i}(x, t) \sim |\dot{a}_i|\frac{\mu^{-n+4}}{1+|y|^{n-5}}$.

Now we define $\Phi^*(x, t) = \Phi^0(x, t) + \Phi^1(x, t) + \sum_{i = 1}^2\Phi^{2, i}(x, t)$.
Since the final solution must sasify $u = 0$ in $\partial\Omega$, a better approximation than $Q_A(x, t)$ should be
\begin{equation}\label{e2:3}
u_{A}(x, t) = Q_A(x, t) + \mu^{\frac{n-2}{2}}\Phi^*(x, t) - \mu^{\frac{n-2}{2}} H(x, q).
\end{equation}
The error of $u_{A}$ can be computed as follows,
\begin{equation}\label{e2:3333}
S(u_{A}) = -\partial_t u_{A} + |u_{A}|^{p-1}u_{A} - |Q_A|^{p-1}Q_A + \mu^{\frac{n-2}{2}}\Delta\Phi^*(x, t).
\end{equation}
\subsection{The error $S(u_{A})$.}
Near the given point $q$, the following expansion holds.
\begin{lemma}\label{l2.1}
Consider the region $|x-q|\leq \varepsilon$ for $\varepsilon$ small enough, we have
\begin{equation*}
S(u_{A}) = \mu^{-\frac{n+2}{2}}(\mu E_{0} + \mu E_{1} + \mu E_{2} + \mu E_{3} + \mathcal{R})
\end{equation*}
with
\begin{equation*}
\begin{aligned}
E_{0}&= p|Q|^{p-1}\left[-\mu^{n-3} H(q, q) + \mu^{n-3}\Phi^0(q, t)\right]  +\dot{\mu}(t)\left(z_0(y) - \frac{D_{n,k}(2-|y|^2)}{\left(1+|y|^2\right)^{\frac{n}{2}}}\right),
\end{aligned}
\end{equation*}
\begin{equation*}
\begin{aligned}
E_{1} &= p|Q|^{p-1}\left[-\mu^{n-2}\nabla H(q, q)\right]\cdot y + p|Q|^{p-1}\left[\mu^{n-3}\Phi^1(q, t)\right]\\
  &\quad + \left(\nabla Q(y) - \frac{E_{n,k}y}{\left(1+|y|^2\right)^{\frac{n}{2}}}\right)\cdot \dot{\xi},
\end{aligned}
\end{equation*}
\begin{equation*}
\begin{aligned}
& E_{2} = p|Q|^{p-1}\left[\mu^{n-3}\Phi^{2, 1}(q, t) + \mu^{n-3}\Phi^{2, 2}(q, t)\right] \\
&\quad + \mu(t)\dot{a}_1 \left(-2y_1\left(z_0(y) - \frac{D_{n,k}(2-|y|^2)}{\left(1+|y|^2\right)^{\frac{n}{2}}}\right) + |y|^2\left(\frac{\partial}{\partial y_1} Q(y) - \frac{E_{n,k}y_1}{\left(1+|y|^2\right)^{\frac{n}{2}}}\right)\right)\\
&\quad + \mu(t)\dot{a}_2 \left(-2y_2\left(z_0(y) - \frac{D_{n,k}(2-|y|^2)}{\left(1+|y|^2\right)^{\frac{n}{2}}}\right) + |y|^2\left(\frac{\partial}{\partial y_2} Q(y) - \frac{E_{n,k}y_2}{\left(1+|y|^2\right)^{\frac{n}{2}}}\right)\right),
\end{aligned}
\end{equation*}
\begin{equation*}
\begin{aligned}
E_{3} =z_{n+1}(y)\mu\dot{\theta}_{12} + \sum_{j = 3}^n\left(z_{n+j+1}(y)\mu\dot{\theta}_{1j} + z_{2n+j-1}(y)\mu\dot{\theta}_{2j}\right)
\end{aligned}
\end{equation*}
and
\begin{equation*}
\mathcal{R} = (\mu_0^{n+2} + \mu_0^{n-1}\dot{\mu})f + \frac{\mu_0^{n-1}\vec{f}}{1+|y|^{2}}\cdot a+\frac{\mu_0^{n-2}\vec{g}}{1+|y|^{4}}\cdot(\xi-q) + \mu_0^{n}\dot{\xi}\cdot\vec{h},
\end{equation*}
where $f$, $\vec{f}$, $\vec{g}$ and $\vec{h}$ are smooth and bounded functions depending on the tuple of variables $(\mu_0^{-1}\mu, \xi, a, \theta, x-\xi)$.
\end{lemma}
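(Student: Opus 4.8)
The plan is to compute $S(u_A)$ directly from the identity \eqref{e2:3333}, organizing the calculation into four groups of terms matching the decomposition $E_0+E_1+E_2+E_3$ according to which parameter's time-derivative they carry, plus a remainder $\mathcal{R}$ collecting everything that is quadratically small in the parameters or has extra powers of $\mu_0$. The starting point is that $Q_A$ exactly solves the elliptic Yamabe equation for every frozen $A$, so $S(Q_A) = -\partial_t Q_A = \mathcal{E}_0+\mathcal{E}_1+\mathcal{E}_2+\mathcal{E}_3$ with the explicit expansions already recorded in Section 2.2. Adding the two extra pieces $\mu^{\frac{n-2}{2}}\Phi^*$ and $-\mu^{\frac{n-2}{2}}H(\cdot,q)$ contributes, via \eqref{e2:3333}, the term $\mu^{\frac{n-2}{2}}\Delta\Phi^*$, the potential-difference term $|u_A|^{p-1}u_A - |Q_A|^{p-1}Q_A$, and the $t$-derivatives $-\partial_t(\mu^{\frac{n-2}{2}}\Phi^*) + \partial_t(\mu^{\frac{n-2}{2}}H(\cdot,q))$.

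First I would use the defining PDEs \eqref{2018320}, \eqref{20183201}, \eqref{201832010} for the nonlocal corrections: each $\Phi$ was constructed precisely so that $-\partial_t\Phi + \Delta\Phi$ equals (minus) the slowly-decaying main part of the corresponding $\mathcal{E}_i$. Thus in the combination $\mu^{\frac{n-2}{2}}(\Delta\Phi^* ) - \partial_t(\mu^{\frac{n-2}{2}}\Phi^*)$, the leading inhomogeneous terms cancel the worst-decaying pieces of $z_0(y)$, $\nabla Q(y)$, and the $\dot a_i$-terms inside $\mathcal{E}_0,\mathcal{E}_1,\mathcal{E}_2$, leaving precisely the ``renormalized'' differences $z_0(y)-\frac{D_{n,k}(2-|y|^2)}{(1+|y|^2)^{n/2}}$, $\nabla Q(y)-\frac{E_{n,k}y}{(1+|y|^2)^{n/2}}$, etc., which decay two orders faster. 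Next, since $u_A = Q_A + \mu^{\frac{n-2}{2}}(\Phi^* - H(\cdot,q))$ and the corrector $\mu^{\frac{n-2}{2}}(\Phi^*-H)$ is of size $O(\mu^{\frac{n-2}{2}}\mu^{n-4}) = O(\mu^{\frac{n-2}{2}}\mu^{n-4})$ relative to $Q_A\sim \mu^{-\frac{n-2}{2}}$, I expand $|u_A|^{p-1}u_A - |Q_A|^{p-1}Q_A = p|Q_A|^{p-1}\mu^{\frac{n-2}{2}}(\Phi^* - H) + (\text{quadratic})$; evaluating $H$ and the $\Phi$'s at $x=q$ with a Taylor remainder in $x-q$, rescaling $y=(x-\xi)/\mu$, and using $\mu^{\frac{n-2}{2}}|Q_A|^{p-1}\sim \mu^{-2}|Q(y)|^{p-1}$ produces exactly the terms $p|Q|^{p-1}[-\mu^{n-3}H(q,q)+\mu^{n-3}\Phi^0(q,t)]$, $p|Q|^{p-1}[-\mu^{n-2}\nabla H(q,q)]\cdot y$, and their $\Phi^1,\Phi^{2,i}$ analogues appearing in $E_0,E_1,E_2$. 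The $\dot\theta$-terms in $\mathcal{E}_3$ already decay fast enough (the $z$'s there are bounded by $\nabla Q$ times a linear weight that is handled by Kelvin symmetry), so no nonlocal correction is needed and they pass straight into $E_3$.

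The bookkeeping step is to verify that every term not displayed in $E_0,\dots,E_3$ genuinely lands in $\mathcal{R}$ with the claimed structure $(\mu_0^{n+2}+\mu_0^{n-1}\dot\mu)f + \frac{\mu_0^{n-1}\vec f}{1+|y|^2}\cdot a + \frac{\mu_0^{n-2}\vec g}{1+|y|^4}\cdot(\xi-q) + \mu_0^n\dot\xi\cdot\vec h$. These come from: the $(y\cdot a)F_i$ factors multiplying each $\mathcal{E}_i$ (giving the $\vec f\cdot a$ term, with an extra $(1+|y|^2)^{-1}$ decay from pairing the linear weight against the fast-decaying renormalized profile); the Taylor remainders of $H$ and $\Phi$ in powers of $x-q = \mu y + (\xi-q)$ (giving the $\vec g\cdot(\xi-q)$ term with its $(1+|y|^4)^{-1}$ weight from $p|Q|^{p-1}$ times $|x-q|^2$); the quadratic terms in the nonlinearity expansion and the time-derivative $\dot\mu$ hitting the prefactor $\mu^{\frac{n-2}{2}}$ in front of $\Phi^*-H$ (giving $\mu_0^{n-1}\dot\mu\, f$ and $\mu_0^n\dot\xi\cdot\vec h$); and the a priori bounds $\mu = \mu_0+O(\mu_0^{1+\sigma})$, $a=O(\mu_0^\sigma)$, $\theta=O(\mu_0^\sigma)$ which let one replace $\mu$ by $\mu_0$ everywhere at the cost of absorbable lower-order errors. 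The main obstacle is not any single estimate but the sheer organization: keeping track of the precise decay rate in $|y|$ that each correction buys (this is what makes the renormalized profiles, rather than $z_0$ itself, appear in $E_0$), and confirming that the non-smoothness of $|Q|^{p-1}$ for $n\geq 7$ does not spoil the Taylor expansion of the nonlinearity at the order needed here — it does not, because the expansion is only to first order and the remainder is controlled in $C^0$ rather than $C^1$. Once the grouping is done, the identities \eqref{2018320}--\eqref{201832010} and the smoothness/boundedness of the generic functions $f, F_i$ inherited from the construction of $Q$ in \cite{delpinomussofrankpistoiajde2011} give the stated form immediately.
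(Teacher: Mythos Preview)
Your proposal is correct and follows essentially the same route as the paper's proof: both start from $S(Q_A)=\mathcal{E}_0+\cdots+\mathcal{E}_3$, use the defining heat equations \eqref{2018320}--\eqref{201832010} so that $\mu^{\frac{n-2}{2}}(\Delta\Phi^*-\partial_t\Phi^*)$ cancels the slow-decaying main parts and produces the renormalized profiles, then Taylor-expand the nonlinear difference $|u_A|^{p-1}u_A-|Q_A|^{p-1}Q_A$ to first order in $\Theta=\mu^{n-2}(\Phi^*-H)$ with $H,\Phi^*$ evaluated at $q$, and finally sweep the $(y\cdot a)F_i$ factors, the higher Taylor remainders, and the $\dot\mu$-hit on $\mu^{\frac{n-2}{2}}$ into $\mathcal{R}$. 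The paper's only cosmetic difference is that it packages the computation as $S(u_A)=S_1+S_2$ with $S_2$ rewritten through an auxiliary quantity $\Theta$ carrying the $|\eta|^{n-2}$ factor, but the substance is identical to what you outline.
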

\begin{proof}
Set
$$\tilde{y} = \frac{R_\theta\left(\frac{x-\xi(t)}{\mu(t)} - a\left|\frac{x-\xi(t)}{\mu(t)}\right|^2\right)}{|\eta|^2},$$
we have
\begin{equation*}
u_{A}(x, t) = \mu(t)^{-\frac{n-2}{2}}\left|\eta\right|^{2-n}Q\left(\tilde{y}\right) + \mu^{\frac{n-2}{2}}\Phi^*(x, t) - \mu^{\frac{n-2}{2}} H(x, q)
\end{equation*}
and
\begin{equation*}
S(u_{A}) = S_1 + S_2,
\end{equation*}
where
\begin{equation*}
\begin{aligned}
S_1 :=&\mathcal{E}_0 + \mathcal{E}_1 + \mathcal{E}_2 + \mathcal{E}_3 + \frac{n-2}{2}\mu^{\frac{n-4}{2}}\dot{\mu}H(x, q)\\
 &- \frac{n-2}{2}\mu^{\frac{n-4}{2}}\dot{\mu}\Phi^*(x, t) - \mu^{\frac{n-2}{2}}\partial_t\Phi^*(x, t),
\end{aligned}
\end{equation*}
\begin{equation*}
\begin{aligned}
S_2:=&\left|\mu(t)^{-\frac{n-2}{2}}\left|\eta\right|^{2-n}Q\left(\tilde{y}\right) + \mu^{\frac{n-2}{2}}\Phi^*(x, t) - \mu^{\frac{n-2}{2}} H(x, q)\right|^{p-1}\\
&\left(\mu(t)^{-\frac{n-2}{2}}\left|\eta\right|^{2-n}Q\left(\tilde{y}\right) + \mu^{\frac{n-2}{2}}\Phi^*(x, t) - \mu^{\frac{n-2}{2}} H(x, q)\right)\\ &-\mu(t)^{-\frac{n+2}{2}}\left|\eta\right|^{-2-n}\left|Q\left(\tilde{y}\right)\right|^{p-1}Q\left(\tilde{y}\right) + \mu^{\frac{n-2}{2}}\Delta\Phi^*(x, t).
\end{aligned}
\end{equation*}
Let
\begin{equation*}
\begin{aligned}
S_{2} = &\mu^{-\frac{n+2}{2}}\left|\eta\right|^{-2-n}\left[\left|Q(\tilde{y}) + \Theta\right|^{p-1}\left(Q(\tilde{y}) + \Theta\right) - \left|Q(\tilde{y})\right|^{p-1}Q(\tilde{y})\right],
\end{aligned}
\end{equation*}
and
\begin{equation}\label{e:2018317}
\begin{aligned}
\Theta =&  \mu^{n-2}|\eta|^{n-2}\Phi^*(x, t) -\mu^{n-2}|\eta|^{n-2}H(x, q).
\end{aligned}
\end{equation}
Observe that $|\Theta|\lesssim \mu_0^{n-2}$ when $\varepsilon$ is small enough, we may assume $Q(y)^{-1}|\Theta|< \frac{1}{2}$ in the considered region $|x-q| < \varepsilon$. Using Taylor's expansion, we obtain the following
\begin{equation*}
\begin{aligned}
S_{2} = &\mu^{-\frac{n+2}{2}}\left|\eta\right|^{-2-n}\left[p\left|Q(\tilde{y})\right|^{p-1}\Theta + p(p-1)\int_{0}^1(1-s)\left|Q(\tilde{y}) + s\Theta\right|^{p-2}ds\Theta^2\right].
\end{aligned}
\end{equation*}
Hence we have
\begin{equation*}
\begin{aligned}
\Theta = \mu^{n-2}|\eta|^{n-2}\Phi^*((|\eta|^2 R_{-\theta}\tilde{y} & +  a |y|^2)\mu + \xi, t)\\
 &- \mu^{n-2}|\eta|^{n-2}H((|\eta|^2 R_{-\theta}\tilde{y} +  a |y|^2)\mu + \xi, q).
\end{aligned}
\end{equation*}
We further expand as
\begin{equation*}
\begin{aligned}
\Theta =& -\mu^{n-2}|\eta|^{n-2}\left(H(q, q) - \Phi^*(q, t)\right)\\
& + ((|\eta|^2 R_{-\theta}\tilde{y} +  a |y|^2)\mu + \xi - q)\cdot\left[-\mu^{n-2}|\eta|^{n-2}\nabla \left(H(q, q) - \Phi^*(q, t)\right)\right]\\
& +\int_{0}^1\Big\{-\mu^{n-2}|\eta|^{n-2}D_x^2H(q + s((|\eta|^2 R_{-\theta}\tilde{y} +  a |y|^2)\mu + \xi - q), q)\Big\}\\
&\quad\quad\quad\quad\quad\quad\quad\quad\quad\quad\quad\quad\quad\quad[(|\eta|^2 R_{-\theta}\tilde{y} +  a |y|^2)\mu + \xi - q]^2(1-s)ds\\
& +\int_{0}^1\Big\{\mu^{n-2}|\eta|^{n-2}D_x^2\Phi^*(q + s((|\eta|^2 R_{-\theta}\tilde{y} +  a |y|^2)\mu + \xi - q), t)\Big\}\\
&\quad\quad\quad\quad\quad\quad\quad\quad\quad\quad\quad\quad\quad\quad[(|\eta|^2 R_{-\theta}\tilde{y} +  a |y|^2)\mu + \xi - q]^2(1-s)ds.
\end{aligned}
\end{equation*}
Therefore, we have
\begin{equation*}
\begin{aligned}
\Theta &= -\mu^{n-2}|\eta|^{n-2}H(q, q)-\mu^{n-1}|\eta|^{n}\nabla H(q, q)\cdot R_{-\theta}\tilde{y} \\
&\quad - \mu^{n-2}|\eta|^{n-2}\nabla H(q, q)\cdot (\xi - q) -\mu^{n-1}|\eta|^{n-2}\nabla H(q, q)\cdot a |y|^2\\
&\quad + \mu^{n-2}|\eta|^{n-2}\Phi^*(q, t)+\mu^{n-1}|\eta|^{n}\nabla \Phi^*(q, t)\cdot R_{-\theta}\tilde{y} \\
&\quad + \mu^{n-2}|\eta|^{n-2}\nabla \Phi^*(q, t)\cdot (\xi - q) -\mu^{n-1}|\eta|^{n-2}\nabla \Phi^*(q, t)\cdot a |y|^2\\
&\quad + \mu_0^{n}F(\mu_0^{-1}\mu, \xi, a, \theta, x-\xi)\\
& = -\mu^{n-2}\left(1 - 2 a\cdot \frac{x-\xi}{\mu(t)} + |a|^2\frac{|x-\xi|^2}{\mu^2(t)}\right)^{\frac{n-2}{2}}H(q, q)\\
&\quad -\mu^{n-1}\left(1 - 2 a\cdot \frac{x-\xi}{\mu(t)} + |a|^2\frac{|x-\xi|^2}{\mu^2(t)}\right)^{\frac{n}{2}}\nabla H(q, q)\cdot R_{-\theta}\tilde{y}\\
\end{aligned}
\end{equation*}
\begin{equation*}
\begin{aligned}
&\quad -\mu^{n-2}\left(1 - 2 a\cdot \frac{x-\xi}{\mu(t)} + |a|^2\frac{|x-\xi|^2}{\mu^2(t)}\right)^{\frac{n-2}{2}}\nabla H(q, q)\cdot (\xi - q)\\
&\quad -\mu^{n-1}\left(1 - 2 a\cdot \frac{x-\xi}{\mu(t)} + |a|^2\frac{|x-\xi|^2}{\mu^2(t)}\right)^{\frac{n-2}{2}}\nabla H(q, q)\cdot a |y|^2\\
&\quad +\mu^{n-2}\left(1 - 2 a\cdot \frac{x-\xi}{\mu(t)} + |a|^2\frac{|x-\xi|^2}{\mu^2(t)}\right)^{\frac{n-2}{2}}\Phi^*(q, t)\\
&\quad +\mu^{n-1}\left(1 - 2 a\cdot \frac{x-\xi}{\mu(t)} + |a|^2\frac{|x-\xi|^2}{\mu^2(t)}\right)^{\frac{n}{2}}\nabla \Phi^*(q, t)\cdot R_{-\theta}\tilde{y}\\
&\quad +\mu^{n-2}\left(1 - 2 a\cdot \frac{x-\xi}{\mu(t)} + |a|^2\frac{|x-\xi|^2}{\mu^2(t)}\right)^{\frac{n-2}{2}}\nabla \Phi^*(q, t)\cdot (\xi - q)\\
&\quad +\mu^{n-1}\left(1 - 2 a\cdot \frac{x-\xi}{\mu(t)} + |a|^2\frac{|x-\xi|^2}{\mu^2(t)}\right)^{\frac{n-2}{2}}\nabla \Phi^*(q, t)\cdot a |y|^2\\
&\quad + \mu_0^{n}F(\mu_0^{-1}\mu, \xi, a, \theta, x-\xi)\\
& = -\mu^{n-2}\left(1 + O(|a||y|)\right)H(q, q)\\
&\quad -\mu^{n-1}\left(1 + O(|a||y|)\right)\nabla H(q, q)\cdot y\left(1 + O(|a||y|)\right)\\
&\quad -\mu^{n-2}\left(1 + O(|a||y|)\right)\nabla H(q, q)\cdot (\xi - q)\\
&\quad -\mu^{n-1}\left(1 + O(|a||y|)\right)\nabla H(q, q)\cdot a |y|^2\\
&\quad +\mu^{n-2}\left(1 + O(|a||y|)\right)\Phi^*(q, t)\\
&\quad +\mu^{n-1}\left(1 + O(|a||y|)\right)\nabla \Phi^*(q, t)\cdot y\left(1 + O(|a||y|)\right)\\
&\quad +\mu^{n-2}\left(1 + O(|a||y|)\right)\nabla \Phi^*(q, t)\cdot (\xi - q)\\
&\quad +\mu^{n-1}\left(1 + O(|a||y|)\right)\nabla \Phi^*(q, t)\cdot a |y|^2\\
&\quad + \mu_0^{n}F(\mu_0^{-1}\mu, \xi, a, \theta, x-\xi)\\
&= -\mu^{n-2}H(q, q)-\mu^{n-1}\nabla H(q, q)\cdot y -\mu^{n-2}\nabla H(q, q)\cdot (\xi - q)\\
&\quad -\mu^{n-1}\nabla H(q, q)\cdot a |y|^2+\mu^{n-2}\Phi^*(q, t)\\
&\quad +\mu^{n-1}\nabla \Phi^*(q, t)\cdot y+\mu^{n-2}\nabla \Phi^*(q, t)\cdot (\xi - q) +\mu^{n-1}\nabla \Phi^*(q, t)\cdot a |y|^2\\
&\quad + \mu_0^{n}F(\mu_0^{-1}\mu, \xi, a, \theta, x-\xi) + \mu_0^{n-2}|a||y|F(\mu_0^{-1}\mu, \xi, a, \theta, x-\xi)\\
\end{aligned}
\end{equation*}
and
\begin{equation*}
\begin{aligned}
& p\left|Q(\tilde{y})\right|^{p-1}\Theta\\
&\quad = p\left|Q\left(R_\theta y + a\left|y\right|^2 + O\left(|a|^2|y|^3\right)\right)\right|^{p-1}\Theta\\
&\quad = p\left|Q(y) + \nabla Q(y)\cdot \left(a\left|y\right|^2 + \left(R_\theta y - y\right)\right) + O(|a|^2|y|^2)\right|^{p-1}\Theta\\
&\quad = p\left(\left|Q\right|^{p-1}(y) + O(|a||y|)\right)\Theta\\
&\quad = p\left(\left|Q\right|^{p-1}(y) + O(|a||y|)\right)\Bigg(-\mu^{n-2}H(q, q)-\mu^{n-1}\nabla H(q, q)\cdot y\\
&\quad\quad -\mu^{n-2}\nabla H(q, q)\cdot (\xi - q) -\mu^{n-1}\nabla H(q, q)\cdot a |y|^2\\
\end{aligned}
\end{equation*}
\begin{equation*}
\begin{aligned}
&\quad\quad +\mu^{n-2}\Phi^*(q, t)+\mu^{n-1}\nabla \Phi^*(q, t)\cdot y+\mu^{n-2}\nabla \Phi^*(q, t)\cdot (\xi - q)\\
&\quad\quad +\mu^{n-1}\nabla \Phi^*(q, t)\cdot a |y|^2 + \mu_0^{n}F(\mu_0^{-1}\mu, \xi, a, \theta, x-\xi)\\
&\quad\quad + \mu_0^{n-2}|a||y|F(\mu_0^{-1}\mu, \xi, a, \theta, x-\xi)\Bigg)\\
&\quad  = p\left|Q\right|^{p-1}(y)\Bigg(-\mu^{n-2}H(q, q)-\mu^{n-1}\nabla H(q, q)\cdot y\\
&\quad\quad -\mu^{n-2}\nabla H(q, q)\cdot (\xi - q) -\mu^{n-1}\nabla H(q, q)\cdot a |y|^2\\
&\quad\quad +\mu^{n-2}\Phi^*(q, t)+\mu^{n-1}\nabla \Phi^*(q, t)\cdot y+\mu^{n-2}\nabla \Phi^*(q, t)\cdot (\xi - q)\\
&\quad\quad +\mu^{n-1}\nabla \Phi^*(q, t)\cdot a |y|^2\Bigg) + \frac{\mu_0^{n-2}|a||y|}{1+|y|^4}F(\mu_0^{-1}\mu, \xi, a, \theta, x-\xi),
\end{aligned}
\end{equation*}
where the smooth functions $F$ are bounded in its arguments which may different from line to line.

Decompose $S_1$ as $S_1 = S_{11} + S_{12}$, where
\begin{equation*}
S_{11} :=\mathcal{E}_0 + \mathcal{E}_1 + \mathcal{E}_2 + \mathcal{E}_3 - \mu^{\frac{n-2}{2}}\partial_t\Phi^*(x, t),
\end{equation*}
\begin{equation*}
\begin{aligned}
S_{12} :=& \frac{n-2}{2}\mu^{\frac{n-4}{2}}\dot{\mu}H(x, q) - \frac{n-2}{2}\mu^{\frac{n-4}{2}}\dot{\mu}\Phi^*(x, t).
\end{aligned}
\end{equation*}
Observe that
\begin{equation*}
S_{12} = \mu_0^{\frac{n-2}{2}-1}\dot{\mu}F(\mu_0^{-1}\mu, \xi, a, \theta, x-\xi)
\end{equation*}
holds for a function $F$ smooth and bounded in their arguments. This proves the lemma.
\end{proof}

Recall that we are trying to find a solution with form
\begin{equation*}
u(x, t) = u_{A}(x,t) + \tilde{\phi}(x, t),
\end{equation*}
where $\tilde{\phi}$ is a small term compared with $u_{A}(x, t)$. By the relation $S(u_{A}+\tilde{\phi})=0$, the main equation can be written as
\begin{equation}\label{e2:27}
 -\partial_t\tilde{\phi} + \Delta\tilde{\phi} + p\left|u_{A}\right|^{p-1}\tilde{\phi} + S(u_{A}) + \tilde{N}_{A}(\tilde{\phi}),
\end{equation}
where
\begin{equation}\label{e2:28}
\tilde{N}_{A}(\tilde{\phi}) = \left|u_{A} + \tilde{\phi}\right|^{p-1}(u_{A} + \tilde{\phi})-\left|u_{A}\right|^{p-1}(u_{A} + \tilde{\phi}) - p\left|u_{A}\right|^{p-1}\tilde{\phi}.
\end{equation}
Note that around $q$, it is more convenient to use the self-similar form, so we write $\tilde{\phi}(x, t)$ as
\begin{equation}\label{e2:29}
\tilde{\phi}(x, t) = \mu(t)^{-\frac{n-2}{2}}\phi\left(\frac{x-\xi(t)}{\mu(t)}\right).
\end{equation}

\subsection{Improvement of the approximation.}
The largest term in the expansion for $\mu^{\frac{n+2}{2}}S(u_{A})$ is $\mu E_{0}$. To improve the approximation error near the point $q$, $\phi(y, t)$ should be the solution of the elliptic equation (at main order)
\begin{equation}\label{e2:31}
\Delta_y\phi_{0} + p|Q|^{p-1}(y)\phi_{0} = -\mu_{0}E_{0}\quad\text{in }\mathbb{R}^n,\quad \phi_0(y, t)\to 0\quad\text{as }|y|\to \infty.
\end{equation}
Equation (\ref{e2:31}) is an elliptic equation of form
\begin{equation}\label{e2:32}
L[\psi]:= \Delta_y\psi + p|Q|^{p-1}(y)\psi = h(y)\quad\text{in }\mathbb{R}^n,\quad \psi(y)\to 0\quad\text{as }|y|\to \infty.
\end{equation}
By the nondegeneracy of the basic cell $Q$ (see \cite{MussoWei2015}), we know that each bounded solution of $L[\psi] = 0$ in $\mathbb{R}^n$ is contained in the space
$$\mbox{span}\{z_0,\cdots, z_{3n-1}\}.$$
Standard elliptic theory tells us that problem (\ref{e2:32}) is solvable for $h(y) = O(|y|^{-m})$, $m > 2$, if and only if the $L^2$ orthogonal identities
\begin{equation*}
\int_{\mathbb{R}^n}h(y)z_i(y)dy = 0\quad\text{for all }i = 0,\cdots, 3n-1
\end{equation*}
hold.

For (\ref{e2:31}), we first consider the following condition,
\begin{equation}\label{e2:35}
\int_{\mathbb{R}^n}\mu^{\frac{n+2}{2}}S(u_{A})(y, t)z_{0}(y)dy = 0.
\end{equation}
We claim that, for suitable positive constant $b$ and a positive constant $c_n$ depending only on $n$, choosing $\mu = b\mu_0(t)$ , $\mu_0(t) = c_{n} t^{-\frac{1}{n-4}}$, (\ref{e2:35}) can be achieved at main order. Observe that $\dot{\mu}_0(t) = -\frac{1}{(n-4)c_{n}^{n-4}}\mu_0^{n-3}(t)$ and the main contribution to the left of (\ref{e2:35}) comes from the following term
\begin{equation*}
\begin{aligned}
E_{0j}= p|Q|^{p-1}\left[\mu^{n-3}\left(\Phi^0(q, t) - H(q, q)\right)\right]  +\dot{\mu}(t)\left(z_0(y) - \frac{D_{n,k}(2-|y|^2)}{\left(1+|y|^2\right)^{\frac{n}{2}}}\right).
\end{aligned}
\end{equation*}
Now let us compute the term $\Phi^0(q, t)$ which is given by (\ref{nonlocalterm1}). Note that the heat kernel function $p(t, x) = \frac{1}{(4\pi t)^{\frac{n}{2}}}e^{\frac{-|x|^2}{4t}}$ satisfies the following transformation law
$$p(t-\tilde{s},q-y)=(t-\tilde{s})^{-\frac{n}{2}}p\left(1,\frac{|q-y|}{(t-\tilde{s})^{\frac{1}{2}}}\right),$$
therefore we have
\begin{equation*}
\begin{aligned}
&\Phi^0(q, t)= -\int_{t_0}^{t}\int_{\mathbb{R}^n}p(t-\tilde{s}, q-y)\frac{\dot{\mu}(\tilde{s})}{\mu(\tilde{s})}\frac{\mu^{-(n-2)}(\tilde{s})D_{n,k}\left(2-\left|\frac{y-\xi(\tilde{s})}{\mu(\tilde{s})}\right|^2\right)}
{\left(1+\left|\frac{y-R_{\theta(\tilde{s})}\xi(\tilde{s})}{\mu(\tilde{s})}\right|^2\right)^{\frac{n}{2}}}dyd\tilde{s}\\
&= -(1+o(1))\int_{t_0}^{t}\int_{\mathbb{R}^n}p(t-\tilde{s}, q-y)\frac{\dot{\mu}(\tilde{s})}{\mu(\tilde{s})}\frac{\mu^{-(n-2)}(\tilde{s})D_{n,k}\left(2-\left|\frac{y-q}{\mu(\tilde{s})}\right|^2\right)}
{\left(1+\left|\frac{y-q}{\mu(\tilde{s})}\right|^2\right)^{\frac{n}{2}}}dyd\tilde{s}\\
&= -(1+o(1))\int_{t_0}^{t}\frac{1}{(t-\tilde{s})^{\frac{n}{2}}}d\tilde{s}\int_{\mathbb{R}^n}p\left(1, \frac{q-y}{(t-\tilde{s})^{\frac{1}{2}}}\right)\frac{\dot{\mu}(\tilde{s})}{\mu(\tilde{s})}\\
&\quad\quad\quad\quad\quad\quad\quad\quad \times\frac{\mu^{-(n-2)}(\tilde{s})\left(t-\tilde{s}\right)^{\frac{n}{2}}D_{n,k}\left(2-\left|\frac{(t-\tilde{s})^{\frac{1}{2}}}{\mu(\tilde{s})}
\frac{q-y}{(t-\tilde{s})^{\frac{1}{2}}}\right|^2\right)}
{\left(1+\left|\frac{(t-\tilde{s})^{\frac{1}{2}}}{\mu(\tilde{s})}\frac{q-y}{(t-\tilde{s})^{\frac{1}{2}}}\right|^2\right)^{\frac{n}{2}}
}d\frac{y-q_j}{(t-\tilde{s})^{\frac{1}{2}}}\\
\end{aligned}
\end{equation*}
\begin{equation*}
\begin{aligned}
&= -(1+o(1))\int_{t_0}^{t}\frac{\dot{\mu}(\tilde{s})}{\mu(\tilde{s})}\mu^{-(n-2)}(\tilde{s})d\tilde{s}\int_{\mathbb{R}^n}p\left(1, \frac{q-y}{(t-\tilde{s})^{\frac{1}{2}}}\right)\\
&\quad\quad\quad\quad\quad\quad\quad\quad\quad\quad\quad\quad\quad\quad\quad\quad\times\frac{D_{n,k}\left(2-\left|\frac{(t-\tilde{s})^{\frac{1}{2}}}{\mu(\tilde{s})}
\frac{q-y}{(t-\tilde{s})^{\frac{1}{2}}}\right|^2\right)}{\left(1+\left|\frac{(t-\tilde{s})^{\frac{1}{2}}}{\mu(\tilde{s})}
\frac{q-y}{(t-\tilde{s})^{\frac{1}{2}}}\right|^2\right)^{\frac{n}{2}}}
d\frac{y-q}{(t-\tilde{s})^{\frac{1}{2}}}\\
&= -(1+o(1))\int_{t_0}^{t}\frac{\dot{\mu}(\tilde{s})}{\mu(\tilde{s})}\mu^{-(n-2)}(\tilde{s})F\left(\frac{(t-\tilde{s})^{\frac{1}{2}}}{\mu(\tilde{s})}\right)d\tilde{s},
\end{aligned}
\end{equation*}
with
\begin{equation*}
F(a) = \int_{\mathbb{R}^n}p\left(1, x\right)\frac{D_{n,k}\left(2-a^2|x|^2\right)}
{\left(1+a^2|x|^2\right)^{\frac{n}{2}}}dx.
\end{equation*}
We claim that, for a suitable positive constant $c$ depending on $n$ and $b$, it holds that
\begin{equation}\label{e:201803312}
\Phi^0(q, t) = -(1+o(1))\int_{t_0}^{t}\frac{\dot{\mu}(\tilde{s})}{\mu(\tilde{s})}\mu^{-(n-2)}(\tilde{s})F\left(\frac{(t-\tilde{s})^{\frac{1}{2}}}{\mu(\tilde{s})}\right)d\tilde{s} = c(1+o(1)).
\end{equation}

Indeed, for a small positive constant $\delta$, decompose the integral $$\int_{t_0}^{t}\frac{\dot{\mu}(\tilde{s})}{\mu(\tilde{s})}\mu^{-(n-2)}(\tilde{s})F\left(\frac{(t-\tilde{s})^{\frac{1}{2}}}{\mu(\tilde{s})}\right)d\tilde{s}$$ as
\begin{equation*}
\begin{aligned}
&\int_{t_0}^{t}\frac{\dot{\mu}(\tilde{s})}{\mu(\tilde{s})}\mu^{-(n-2)}(\tilde{s})F\left(\frac{(t-\tilde{s})^{\frac{1}{2}}}{\mu(\tilde{s})}\right)d\tilde{s}\\
&=  \int_{t_0}^{t-\delta}\frac{\dot{\mu}(\tilde{s})}{\mu(\tilde{s})}\mu^{-(n-2)}(\tilde{s})F\left(\frac{(t-\tilde{s})^{\frac{1}{2}}}{\mu(\tilde{s})}\right)d\tilde{s}\\
&\quad + \int_{t-\delta}^{t}\frac{\dot{\mu}(\tilde{s})}{\mu(\tilde{s})}\mu^{-(n-2)}(\tilde{s})F\left(\frac{(t-\tilde{s})^{\frac{1}{2}}}{\mu(\tilde{s})}\right)d\tilde{s}\\
& := I_1 + I_2.
\end{aligned}
\end{equation*}
For $I_1$, we have $t-\tilde{s} >\delta$, therefore
\begin{equation*}
\begin{aligned}
0\leq -I_1 &\leq \frac{b^{4-n}}{(n-4)c_n^{n-4}}\int_{t_0}^{t-\delta}\mu^{-2}(\tilde{s})F\left(\frac{(t-\tilde{s})^{\frac{1}{2}}}{\mu(\tilde{s})}\right)d\tilde{s}\\
&\leq C\frac{b^{4-n}}{(n-4)c_n^{n-4}}\int_{t_0}^{t-\delta}\mu^{-2}(\tilde{s})\left|\frac{(t-\tilde{s})^{\frac{1}{2}}}{\mu(\tilde{s})}\right|^{-(n-2)}d\tilde{s}\\
&= \frac{C}{n-4}\int_{t_0}^{t-\delta}\frac{1}{\tilde{s}}\frac{1}{(t-\tilde{s})^{\frac{n-2}{2}}}d\tilde{s}\leq \frac{C}{(n-4)t_0}\frac{2}{n-4}\frac{1}{\delta^{\frac{n-4}{2}}}.
\end{aligned}
\end{equation*}
Note that we have used the definition $\mu_{0} = b c_{n} t^{-\frac{1}{n-4}}$ and the fact $|a|^{n-2}F(a)\leq C$.
For $$I_2 =\int_{t-\delta}^{t}\frac{\dot{\mu}(\tilde{s})}{\mu(\tilde{s})}\mu^{-(n-2)}(\tilde{s})F\left(\frac{(t-\tilde{s})^{\frac{1}{2}}}{\mu(\tilde{s})}\right)d\tilde{s},$$
after change of variables $\frac{(t-\tilde{s})^{\frac{1}{2}}}{\mu(\tilde{s})} = \hat{s}$, we have
$$
d\tilde{s} = -\frac{\mu(\tilde{s})}{\frac{1}{2}(t-\tilde{s})^{-\frac{1}{2}}+\dot{\mu}(\tilde{s})\hat{s}}d\hat{s}
$$
and
\begin{equation*}
\begin{aligned}
I_2&=\int_{t-\delta}^{t}\frac{\dot{\mu}(\tilde{s})}{\mu(\tilde{s})}\mu^{-(n-2)}(\tilde{s})F\left(\frac{(t-\tilde{s})^{\frac{1}{2}}}{\mu(\tilde{s})}\right)
d\tilde{s}\\ &=\int^{\frac{\delta^{\frac{1}{2}}}{\mu(t-\delta)}}_{0}\frac{\dot{\mu}(\tilde{s})}{\mu(\tilde{s})}\mu^{-(n-2)}(\tilde{s})F\left(\hat{s}\right)
\frac{\mu(\tilde{s})}{\frac{1}{2}(t-\tilde{s})^{-\frac{1}{2}}+\dot{\mu}(\tilde{s})\hat{s}}d\hat{s}.
\end{aligned}
\end{equation*}
Observe that for small $\delta$, $\frac{1}{2}(t-\tilde{s})^{-\frac{1}{2}}+\dot{\mu}(\tilde{s})\hat{s} =\frac{1}{2}(t-\tilde{s})^{-\frac{1}{2}}(1-\frac{2}{(n-4)\tilde{s}}(t-\tilde{s}))
> \frac{1}{2}(t-\tilde{s})^{-\frac{1}{2}}(1-\frac{2}{(n-4)\tilde{s}}\delta)$, $d\tilde{s} = \frac{\mu(\tilde{s})}{\frac{1}{2}(t-\tilde{s})^{-\frac{1}{2}}}(1+O(\delta))d\hat{s}$, hence
\begin{equation*}
I_2= -\frac{2 b^{4-n}}{(n-4)c_{n}^{n-4}}\left(\int^{\frac{\delta^{\frac{1}{2}}}{\mu(t-\delta)}}_{0}\hat{s}F\left(\hat{s}\right)
d\hat{s} + o(1)\right)
= -\frac{2 b^{4-n}}{(n-4)c_{n}^{n-4}}A + o(1)
\end{equation*}
when $\frac{\delta^{\frac{1}{2}}}{\mu(t-\delta)}$ is large enough. Here the constant $A = \int_{0}^\infty \tilde{s} F(\tilde{s})d\tilde{s} < +\infty$ since the dimension of the space satisfies $n > 4$. Hence we have
\begin{equation}\label{e:2018330}
\begin{aligned}
\Phi^0(q, t) &= -(1+o(1))\int_{t_0}^{t}\frac{\dot{\mu}(\tilde{s})}{\mu(\tilde{s})}\mu^{-(n-2)}(\tilde{s})F\left(\frac{(t-\tilde{s})^{\frac{1}{2}}}{\mu(\tilde{s})}\right)d\tilde{s} \\
&= \frac{2 b^{4-n}}{(n-4)c_{n}^{n-4}}A + o(1):= B b^{4-n}+ o(1)
\end{aligned}
\end{equation}
when $t_0$ is sufficiently large. Here the constant $B$ is $ B = B_{n}: = \frac{2}{(n-4)c_{n}^{n-4}}A$. This is (\ref{e:201803312}).

Direct computations yields that
\begin{equation}\label{e2:36}
\begin{aligned}
&\mu_0^{-(n-3)}(t)\int_{\mathbb{R}^n}E_{0}(y, t)z_{0}(y)dy \approx  c_1 b^{n-3}H(q, q)-\frac{2c_1 A +c_2}{(n-4)c_{n}^{n-4}}b
\end{aligned}
\end{equation}
with
\begin{equation*}
c_1 = -p\int_{\mathbb{R}^n}|Q|^{p-1}(y)z_0(y)dy \in (0, +\infty),
\end{equation*}
\begin{equation*}
c_2 = \int_{\mathbb{R}^n}\left(z_0(y) - \frac{D_{n,k}(2-|y|^2)}{\left(1+|y|^2\right)^{\frac{n}{2}}}\right)z_{0}(y)dy \in (0, +\infty).
\end{equation*}
Note that $c_1 < + \infty$ and $c_2 < +\infty$ are due to the assumption $n > 4$. We will prove
\begin{equation}\label{positiveness}
c_1 > 0, \quad c_2 > 0
\end{equation}
in the Appendix. Write
\begin{equation*}
\mu(t) = b\mu_0(t) = b c_{n}t^{-\frac{1}{n-4}}.
\end{equation*}
Then (\ref{e2:35}) can be satisfied at main order if the following hold
\begin{equation}\label{e2:39}
b^{n-2}H(q, q) -\frac{2c_1 A +c_2}{(n-4)c_{n}^{n-4}c_1}b^{2} = 0.
\end{equation}
Imposing $\frac{2c_1 A +c_2}{(n-4)c_{n}^{n-4}c_1}=\frac{2}{n-2}$, i.e.,
$$
c_{n}=\left[\frac{(2c_1 A +c_2)(n-2)}{2(n-4)c_1}\right]^{\frac{1}{n-4}},
$$
we get
\begin{equation}\label{e2:40}
\dot{\mu}_0(t) = -\frac{2c_1}{(2c_1A + c_2)(n-2)}\mu_0^{n-3}(t).
\end{equation}
By \eqref{e2:39} and \eqref{e2:40}, the constants $b$ should satisfy the relation
\begin{equation}\label{e:equationforb}
H(q, q)b^{n-3}  = \frac{2b}{n-2}.
\end{equation}
It is clear that (\ref{e:equationforb}) can be uniquely solved if and only if
\begin{equation}\label{e:conditionforH}
H(q, q) > 0,
\end{equation}
which holds from the maximum principle. Under the assumption (\ref{e:conditionforH}),
\begin{equation}\label{e:explicitequationforb}
b = \left(\frac{2}{(n-2)H(q, q)}\right)^{\frac{1}{n-4}}.
\end{equation}

Similarly, the relations
\begin{equation}
\int_{\mathbb{R}^n}\mu^{\frac{n+2}{2}}S(u_{A})(y, t)z_{i}(y)dy = 0, \quad i = 1,\cdots, 3n-1
\end{equation}
can be achieved at main order by choosing $\xi_0 = q$, $a_0 = (0, 0)$ and $\theta_0 = (0, \cdots, 0)$.

Now fix $\mu_0(t)$ and the constant $b$ satisfying (\ref{e:explicitequationforb}), denote
\begin{equation*}
\bar{\mu}_0 = b\mu_0(t).
\end{equation*}
Let $\Phi$ be the solution for (\ref{e2:31}) for $\mu = \bar{\mu}_0$ which is unique, then we have the following
\begin{equation*}
\Delta_y\Phi + p|Q|^{p-1}(y)\Phi = -\mu_{0}E_{0}[\mu_0, \dot{\mu}_0]\text{ in }\mathbb{R}^n,\,\,\Phi(y, t)\to 0\text{ as }|y|\to \infty.
\end{equation*}
From the definitions for $\mu_0$ and $b$, we obtain
\begin{equation*}
\mu_{0}E_{0}=-\gamma\mu_0^{n-2}q_0(y),
\end{equation*}
where $\gamma$ is positive,
\begin{equation}\label{e2:50}
q_0(y): = \frac{p|Q|^{p-1}(y)c_2b^{2}}{(n-4)c_{n}^{n-4}c_1} + \frac{b^{2}}{(n-4)c_{n}^{n-4}}\left(z_0(y) - \frac{D_{n,k}(2-|y|^2)}{\left(1+|y|^2\right)^{\frac{n}{2}}}\right)
\end{equation}
and $\int_{\mathbb{R}^n}q_0(y)z_{0}dy = 0$.

Let $p_0 = p_0(|y|)$ be the solution for $L(p_0) = q_0$. Then $p_0(y) = O(|y|^{-2})$ as $|y|\to\infty$ since (\ref{e2:50}) holds. Therefore,
\begin{equation}\label{e2:51}
\Phi(y, t) = \gamma\mu_0^{n-2}p_0(y).
\end{equation}
Thus the corrected approximation should be
\begin{equation}\label{e2:52}
u^*_{A}(x, t) = u_{A}(x, t) + \tilde{\Phi}(x, t)
\end{equation}
with
\begin{equation*}
\tilde{\Phi}(x, t) = \mu(t)^{-\frac{n-2}{2}}\Phi\left(\frac{x-\xi(t)}{\mu(t)}\right).
\end{equation*}

\subsection{Estimating the error $S(u^*_{A})$.}
In the region $|x-q| > \delta$, $S(u^*_{A})$ can be described as
\begin{equation}\label{awayq}
S(u^*_{A})(x, t) = \mu_0^{\frac{n-2}{2}-1}\dot{\mu}f_1 + \mu_0^{\frac{n+2}{2}}f_2 + \mu_0^{\frac{n-2}{2}}\dot{\xi}\cdot \vec{f}_1 + \mu_0^{\frac{n}{2}}\dot{a}\cdot \vec{f}_2 + \mu_0^{\frac{n}{2}}\dot{\theta}\cdot \vec{f}_3,
\end{equation}
where $f_1$, $f_2$, $\vec{f}_1$, $\vec{f}_2$ and $\vec{f}_3$ are smooth bounded functions depending on the tuple $(x, \mu_0^{-1}\mu, \xi, a, \theta)$.

In the region near the point $q$, direct computations yields that
\begin{equation}\label{sstar}
\begin{aligned}
S(u^*_{A}) &= S(u_{A}) - \mu^{-\frac{n+2}{2}}\mu_{0}E_{0}[\bar{\mu}_0, \dot{\mu}_{0}]+\mu^{-\frac{n+2}{2}}\Bigg\{-\mu^{2}\partial_t\Phi(y, t) \\
&+ \mu\dot{\mu}\left[\frac{n-2}{2}\Phi(y, t)+y\cdot\nabla_y\Phi\right] + \nabla_y\Phi(y, t)\cdot \mu\dot{\xi}\Bigg\}\\
& + \left|u_{A} + \tilde{\Phi}\right|^{p-1}(u_{A} + \tilde{\Phi}) - \left|u_{A}\right|^{p-1}u_A - p\mu^{-\frac{n+2}{2}}|Q(y)|^{p-1}\Phi(y, t),
\end{aligned}
\end{equation}
where $y=\frac{x-\xi}{\mu}$.
If $|x-q|\leq \delta$,
\begin{equation}\label{e2:55}
\mu^{\frac{n+2}{2}}S(u^*_{A}) = \mu^{\frac{n+2}{2}}S(u_{A}) - \mu_{0}E_{0}[\bar{\mu}_0, \dot{\mu}_{0}]+A(y),
\end{equation}
where
\begin{equation}\label{e2:56}
A = \mu_0^{n+4}f(\mu_0^{-1}\mu, \xi, a, \theta, \mu y) + \frac{\mu_0^{2n-4}}{1+|y|^{2}}g(\mu_0^{-1}\mu, \xi, a, \theta, \mu y),\quad y = \frac{x-\xi}{\mu}
\end{equation}
for smooth and bounded functions $f$ and $g$.

Now we write $\mu(t)$ as
\begin{equation*}
\mu(t) = \bar{\mu}_0 + \lambda(t).
\end{equation*}
From (\ref{e2:55}),
\begin{equation*}
S(u^*_{A}) = \mu^{-\frac{n+2}{2}}\left\{\mu_{0}\big(E_{0}[\mu, \dot{\mu}]- E_{0}[\bar{\mu}_0, \dot{\mu}_{0}]\big)+\lambda E_{0}[\mu, \dot{\mu}]+\mu E_{1}[\mu,\dot{\xi}]+R+A\right\}.
\end{equation*}

Observe that $\Phi^0$ is a nonlocal term depending on $\mu,$ $\dot{\mu}$ and we have
\begin{equation*}
\begin{aligned}
\mu^{n-3}\Phi^0[\bar{\mu}_0+\lambda, b\dot{\mu}_0&+\dot{\lambda}](q, t) - \mu^{n-3}\Phi^0[\bar{\mu}_0, b\dot{\mu}_0](q, t) =  -2A \dot{\lambda} - \mu_0^{n-4}(n-3)B\lambda
\end{aligned}
\end{equation*}
which can be deduced by similar arguments as (\ref{e:2018330}), one gets
\begin{equation*}
\begin{aligned}
&E_{0}[\bar{\mu}_0+\lambda, b\dot{\mu}_0+\dot{\lambda}]- E_{0}[\bar{\mu}_0, b\dot{\mu}_0]\\
&= \dot{\lambda}\left(z_0(y) - \frac{D_{n,k}(2-|y|^2)}{\left(1+|y|^2\right)^{\frac{n}{2}}}\right)-\mu_0^{n-4}p|Q|^{p-1}(y)\left[(n-3)b^{n-4}H(q,q)\lambda\right]\\
&\quad + \mu_0^{n-4} p|Q|^{p-1}(y)(n-3)B\lambda - p|Q|^{p-1}(y)2A \dot{\lambda}- \mu_0^{n-4}p|Q|^{p-1}(y)(n-3)B\lambda,
\end{aligned}
\end{equation*}
As for $\lambda E_{0}[\mu, \dot{\mu}]$, we have
\begin{equation*}
\begin{aligned}
\lambda E_{0}[\mu, \dot{\mu}]&=\lambda\dot{\lambda}\left(z_0(y) - \frac{D_{n,k}(2-|y|^2)}{\left(1+|y|^2\right)^{\frac{n}{2}}}\right) +\lambda b\Bigg[\dot{\mu}_0\left(z_0(y) - \frac{D_{n,k}(2-|y|^2)}{\left(1+|y|^2\right)^{\frac{n}{2}}}\right)\\
&\quad + p|Q|^{p-1}(y)\mu_0^{n-3}\left(-b^{n-4}H(q, q)\right)\Bigg] + p|Q|^{p-1}(y)b\mu_0^{n-3}B\lambda \\ &\quad -\mu_0^{n-4}p|Q|^{p-1}(y)f(\mu_0^{-1}\lambda)\lambda^2,
\end{aligned}
\end{equation*}
where $f$ is smooth and bounded in its arguments.

Combine all the estimates above, we get the expansion for $S(u^*_{A})$.
\begin{lemma}\label{l2.2}
In the region $|x-q|\leq \delta $ for a fixed small $\delta > 0$, set $\mu = \bar{\mu}_0 + \lambda$ with $|\lambda(t)|\leq \mu_0(t)^{1+\sigma}$ for some positive number $\sigma\in (0, n-4)$. When $t$ is large enough, we have the expansion of $S(u^*_{A})$ as
\begin{equation*}
\begin{aligned}
&S(u^*_{A})\\& = \mu^{-\frac{n+2}{2}}\Bigg\{\mu_{0}\dot{\lambda}\left(z_0(y) - \frac{D_{n,k}(2-|y|^2)}{\left(1+|y|^2\right)^{\frac{n}{2}}}-2Ap|Q|^{p-1}(y)\right)\\
&\quad -\mu_{0}\mu_0^{n-4}p|Q|^{p-1}(y)\left[(n-3)b^{n-4}H(q,q)\lambda\right]\\
&\quad + \left(\nabla Q(y) - \frac{E_{n,k}y}{\left(1+|y|^2\right)^{\frac{n}{2}}}\right)\cdot \dot{\xi} + p|Q|^{p-1}\left[-\mu^{n-2}\nabla H(q, q)\right]\cdot y \\
&\quad + \mu^2(t)\dot{a}_1 \left(-2y_1\left(z_0(y) - \frac{D_{n,k}(2-|y|^2)}{\left(1+|y|^2\right)^{\frac{n}{2}}}\right) + |y|^2\left(\frac{\partial}{\partial y_1} Q(y) - \frac{E_{n,k}y_1}{\left(1+|y|^2\right)^{\frac{n}{2}}}\right)\right)\\
&\quad + \mu^2(t)\dot{a}_2 \left(-2y_2\left(z_0(y) - \frac{D_{n,k}(2-|y|^2)}{\left(1+|y|^2\right)^{\frac{n}{2}}}\right) + |y|^2\left(\frac{\partial}{\partial y_2} Q(y) - \frac{E_{n,k}y_2}{\left(1+|y|^2\right)^{\frac{n}{2}}}\right)\right)\\
&\quad + \mu^2(t)\dot{\theta}_{12}z_{n+1}(y) + \sum_{j = 3}^n\left(\mu^2(t)\dot{\theta}_{1j}z_{n+j+1}(y) + \mu^2(t)\dot{\theta}_{2j}z_{2n+j-1}(y)\right)\Bigg\}\\
&\quad + \mu^{-\frac{n+2}{2}}\lambda b\Bigg[\dot{\mu}_0\left(z_0(y) - \frac{D_{n,k}(2-|y|^2)}{\left(1+|y|^2\right)^{\frac{n}{2}}}\right)\\
&\quad\quad\quad\quad\quad\quad\quad\quad\quad\quad\quad\quad\quad\quad\quad\quad + p|Q|^{p-1}(y)\mu_0^{n-3}\bigg(-b^{n-4}H(q, q) + B\bigg)\Bigg]\\
&\quad +\mu_0^{-\frac{n+2}{2}}\left[\mu_0^{n-4}p|Q|^{p-1}(y)f_1\lambda^2 +\frac{f_2}{1+|y|^{n-2}}\lambda\dot{\lambda} + \mu_0^{n+2}f_3 + \mu_0^{n-1}\dot{\mu}f_4\right]\\
&\quad +\mu_0^{-\frac{n+2}{2}}\left[\dot{\xi}\vec{f}_1 + \dot{\xi}\vec{f}_2 + \dot{\xi}\vec{f}_3\right]\\
&\quad +\mu_0^{-\frac{n+2}{2}}\left[\frac{\mu_0^{n}g_1}{1+|y|^{2}}+\frac{\mu_0^{2n-4}g_2}{1+|y|^{2}} + \frac{\mu_0^{n-2}g_3}{1+|y|^{4}}\lambda+\frac{\mu_0^{n-1}\vec{g}_1}{1+|y|^{2}}\cdot a +\frac{\mu_0^{n-2}\vec{g}_2}{1+|y|^{4}}\cdot(\xi-q)\right]\\,
\end{aligned}
\end{equation*}
where $x = \xi + \mu y$, $f_1$, $f_2$, $f_3$,$f_4$, $\vec{f}_1$, $\vec{f}_2$, $\vec{f}_3$, $g_1$, $g_2$, $g_3$ and $\vec{g}_1$, $\vec{g}_2$ are smooth bounded (vector) functions depending on the tuple of variables $(\mu_0^{-1}\mu, \xi, a, \theta, x)$.
\end{lemma}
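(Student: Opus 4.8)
\emph{Proof strategy.} The plan is to assemble the claimed expansion purely by bookkeeping: substitute $\mu=\bar{\mu}_0+\lambda$ into the identities already established above and collect terms, keeping track of powers of $\mu_0$ and of the decay in $|y|$. First I would start from \eqref{sstar} (equivalently \eqref{e2:55}), which writes $S(u^*_{A})$ as $S(u_{A})$, minus the term $\mu^{-\frac{n+2}{2}}\mu_{0}E_{0}[\bar{\mu}_0,\dot{\mu}_{0}]$, plus the three linear corrections $-\mu^2\partial_t\tilde\Phi$, $\mu\dot\mu\big[\tfrac{n-2}{2}\Phi+y\cdot\nabla_y\Phi\big]$, $\nabla_y\Phi\cdot\mu\dot\xi$ coming from differentiating $\tilde\Phi$, plus the nonlinear remainder $|u_{A}+\tilde\Phi|^{p-1}(u_{A}+\tilde\Phi)-|u_{A}|^{p-1}u_{A}-p\mu^{-\frac{n+2}{2}}|Q(y)|^{p-1}\Phi$. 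The structural point that makes the leading cancellation work is that $\Phi$ solves \eqref{e2:31} with $\mu=\bar{\mu}_0$, so the Laplacian of $\tilde\Phi$ together with the linearized potential reproduces, up to the overall power of $\mu$, precisely the leading term $\mu_{0}E_{0}$ of Lemma \ref{l2.1} evaluated at the frozen parameters $\bar{\mu}_0,\dot{\mu}_0$.

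Next I would insert the expansion of $\mu^{\frac{n+2}{2}}S(u_{A})$ from Lemma \ref{l2.1}. After the cancellation above, the surviving contribution of the $E_{0}$-block is $\mu_{0}\big(E_{0}[\mu,\dot\mu]-E_{0}[\bar{\mu}_0,\dot{\mu}_0]\big)+\lambda E_{0}[\mu,\dot\mu]$, and here I would plug in the two difference expansions derived just before the statement: the nonlocal identity $\mu^{n-3}\Phi^0[\bar{\mu}_0+\lambda,b\dot{\mu}_0+\dot\lambda](q,t)-\mu^{n-3}\Phi^0[\bar{\mu}_0,b\dot{\mu}_0](q,t)=-2A\dot\lambda-(n-3)B\,\mu_0^{n-4}\lambda+(\text{lower order})$, together with the displayed expansions of $E_{0}[\bar{\mu}_0+\lambda,b\dot{\mu}_0+\dot\lambda]-E_{0}[\bar{\mu}_0,b\dot{\mu}_0]$ and of $\lambda E_{0}[\mu,\dot\mu]$. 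The $E_{1},E_{2},E_{3}$ blocks of Lemma \ref{l2.1} already carry the $\nabla H(q,q)$, $z_\alpha$ and $\dot\xi,\dot a_i,\dot\theta_{ij}$ terms in exactly the displayed form, so those are transcribed verbatim; the pieces $p|Q|^{p-1}\mu^{n-3}\Phi^1(q,t)$, $p|Q|^{p-1}\mu^{n-3}\Phi^{2,i}(q,t)$ are, by the pointwise bounds on $\Phi^1,\Phi^{2,i}$ recorded earlier, of size $O(\mu_0|\dot\xi|)$ and $O(\mu_0^{2}|\dot a_i|)$ at $y=0$, hence absorbed into the $\dot\xi\,\vec f_j$ terms; and $\mathcal R$, $S_{12}$ feed into the generic $f_3,f_4,g_i,\vec g_i$ terms via the standing bounds $|\lambda|\le\mu_0^{1+\sigma}$, $\xi-q=O(\mu_0^{1+\sigma})$, $a=O(\mu_0^\sigma)$, $\theta=O(\mu_0^\sigma)$.

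Then I would control the nonlinear remainder by Taylor expansion around $u_{A}$: write it as $p|u_{A}|^{p-1}\tilde\Phi-p\mu^{-\frac{n+2}{2}}|Q(y)|^{p-1}\Phi$ plus quadratic-in-$\tilde\Phi$ terms, using $\tilde\Phi=\gamma\,\mu^{-\frac{n-2}{2}}\mu_0^{n-2}p_0(y)$ with $p_0(y)=O(|y|^{-2})$ and $|Q|^{p-1}(y)=O(|y|^{-4})$. The linear-in-$\tilde\Phi$ part differs from $p\mu^{-\frac{n+2}{2}}|Q(y)|^{p-1}\Phi$ only through the factor $|u_{A}|^{p-1}-\mu^{-2}|\eta|^{-4}|Q(\tilde y)|^{p-1}$, which by \eqref{e:2018317} and the smallness of $\Theta$ is $O(\mu_0^{n-2}/(1+|y|^2))$ and therefore yields a $\mu_0^{n}g_1/(1+|y|^2)$ contribution; the quadratic part yields $\mu_0^{2n-4}g_2/(1+|y|^2)$, matching \eqref{e2:56}. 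The three linear $\Phi$-corrections are likewise of lower order: $\partial_t\tilde\Phi$ costs an extra $\dot\mu_0\sim\mu_0^{n-3}$, and the $\nabla_y\Phi\cdot\mu\dot\xi$ term carries the extra $\dot\xi$, so they fall into the $\mu_0^{n-1}\dot\mu f_4$ and $\dot\xi\,\vec f_j$ terms. Collecting everything and factoring out $\mu_0^{-\frac{n+2}{2}}$ then reproduces the displayed formula.

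The step deserving the most care — and the main obstacle — is the nonlocal identity for $\Phi^0$ under the perturbation $\bar{\mu}_0\mapsto\bar{\mu}_0+\lambda$: one must re-run the Laplace-type asymptotic analysis that produced \eqref{e:2018330} with the perturbed parameters, linearize the $\tilde s$-integral defining $\Phi^0$ in $(\lambda,\dot\lambda)$, and verify that the remainder beyond $-2A\dot\lambda-(n-3)B\mu_0^{n-4}\lambda$ is genuinely absorbed into the $\mu_0^{n-4}p|Q|^{p-1}f_1\lambda^2$, $f_2\lambda\dot\lambda/(1+|y|^{n-2})$ and $\mu_0^{n-1}\dot\mu f_4$ terms, uniformly for $t$ large; once the $\mu_0$-orders of all the nonlocal pieces are pinned down, the remaining collection of terms is routine.
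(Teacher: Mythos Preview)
Your proposal is correct and follows essentially the same route as the paper: the paper's proof is literally ``Combine all the estimates above, we get the expansion for $S(u^*_{A})$'', and you have spelled out exactly that combination --- start from \eqref{sstar}/\eqref{e2:55}, use the cancellation built into \eqref{e2:31}, insert Lemma \ref{l2.1}, plug in the two displayed difference formulas for $E_0$ just preceding the lemma, and absorb the $\Phi$-corrections and nonlinear remainder into the $A(y)$ term of \eqref{e2:56}. The step you flag as the main obstacle (the nonlocal linearization of $\Phi^0$ in $(\lambda,\dot\lambda)$) is precisely the content of the paper's displayed identity $\mu^{n-3}\Phi^0[\bar\mu_0+\lambda,b\dot\mu_0+\dot\lambda](q,t)-\mu^{n-3}\Phi^0[\bar\mu_0,b\dot\mu_0](q,t)=-2A\dot\lambda-(n-3)B\mu_0^{n-4}\lambda$, obtained by rerunning the asymptotics \eqref{e:2018330} as you describe.
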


\section{The inner-outer gluing procedure}
We will find a solution for (\ref{e3:1}) with form
\begin{equation*}
u = u^*_{A} + \tilde{\phi}
\end{equation*}
when $t_0$ is large enough, the function $\tilde{\phi}(x, t)$ is small compared to $u^*_A$. To this aim, we use the \textbf{inner-outer gluing procedure}.

Write
\begin{equation*}
\tilde{\phi}(x, t) = \psi(x, t) + \phi^{in}(x, t)\quad\text{where}\quad\phi^{in}(x, t): = \eta_{R}(x,t)\tilde{\phi}(x, t)
\end{equation*}
with
\begin{equation*}
\tilde{\phi}(x, t): = \mu_{0}^{-\frac{n-2}{2}}\phi\left(\frac{x-\xi}{\mu_{0}}, t\right),\quad \mu_{0}(t) = b\mu_0(t)
\end{equation*}
and
\begin{equation*}
\eta_{R}(x, t) = \eta\left(\frac{|x-\xi|}{R\mu_{0}}\right).
\end{equation*}
In above, $\eta(\tau)$ is a (smooth) cut-off function defined on the interval $[0, +\infty)$, $\eta(\tau) = 1$ for $0\leq \tau < 1$ and $\eta(\tau)= 0$ for $\tau > 2$. $R$ is a fixed number defined as
\begin{equation}\label{e3:20}
R = t_0^{\rho}\quad \text{with}\quad 0 < \rho \ll 1.
\end{equation}
Under this ansatz, problem (\ref{e3:1}) can be written as
\begin{equation}\label{e3:5}
\begin{cases}
\partial_t\tilde{\phi} = \Delta\tilde{\phi} + p(u^*_{A})^{p-1}\tilde{\phi} + \tilde{N}(\tilde{\phi}) + S(u^*_{A})&\text{in}\quad \Omega\times (t_0, \infty),\\
\tilde{\phi} = -u^*_{A}&\text{in}\quad \partial \Omega\times (t_0, \infty),
\end{cases}
\end{equation}
where $\tilde{N}_A(\tilde{\phi})=|u_{A}^*+\tilde{\phi}|^{p-1}(u_{A}^*+\tilde{\phi})-p|u_{A}^*|^{p-1}\tilde{\phi}-|u_{A}^*|^{p-1}u_{A}^*$, $S(u^*_{A})=-\partial_t \mu^*_{A}+\Delta u^*_{A}+|u^*_{A}|^{p-1}u^*_{A}$.
Let us write $S(u^*_{A})$ as
\begin{equation*}
S(u^*_{A}) = S_{A} + S^{(2)}_{A}
\end{equation*}
where
\begin{equation*}
\begin{aligned}
&S_{A}\\
& = \mu^{-\frac{n+2}{2}}\Bigg\{\mu_{0}\dot{\lambda}\left(z_0(y) - \frac{D_{n,k}(2-|y|^2)}{\left(1+|y|^2\right)^{\frac{n}{2}}}-2Ap|Q|^{p-1}(y)\right)\\
&\quad -\mu_{0}\mu_0^{n-4}p|Q|^{p-1}(y)\left[(n-3)b^{n-4}H(q,q)\lambda\right]\\
&\quad + \lambda b\Bigg[\dot{\mu}_0\left(z_0(y) - \frac{D_{n,k}(2-|y|^2)}{\left(1+|y|^2\right)^{\frac{n}{2}}}\right) + p|Q|^{p-1}(y)\mu_0^{n-3}\bigg(-b^{n-4}H(q, q) + B\bigg)\Bigg]\\
&\quad + \left(\nabla Q(y) - \frac{E_{n,k}y}{\left(1+|y|^2\right)^{\frac{n}{2}}}\right)\cdot \dot{\xi} + p|Q|^{p-1}\left[-\mu^{n-2}\nabla H(q, q)\right]\cdot y\\
&\quad + \mu^2(t)\dot{a}_1 \left(-2y_1\left(z_0(y) - \frac{D_{n,k}(2-|y|^2)}{\left(1+|y|^2\right)^{\frac{n}{2}}}\right) + |y|^2\left(\frac{\partial}{\partial y_1} Q(y) - \frac{E_{n,k}y_1}{\left(1+|y|^2\right)^{\frac{n}{2}}}\right)\right)\\
&\quad + \mu^2(t)\dot{a}_2 \left(-2y_2\left(z_0(y) - \frac{D_{n,k}(2-|y|^2)}{\left(1+|y|^2\right)^{\frac{n}{2}}}\right) + |y|^2\left(\frac{\partial}{\partial y_2} Q(y) - \frac{E_{n,k}y_2}{\left(1+|y|^2\right)^{\frac{n}{2}}}\right)\right)\\
&\quad + \mu^2(t)\dot{\theta}_{12}z_{n+1}(y) + \sum_{j = 3}^n\left(\mu^2(t)\dot{\theta}_{1j}z_{n+j+1}(y) + \mu^2(t)\dot{\theta}_{2j}z_{2n+j-1}(y)\right)\Bigg\}.
\end{aligned}
\end{equation*}
Define
\begin{equation}\label{e3:7}
V_{A} = p\left(|u^*_{A}|^{p-1} - \left|\mu^{-\frac{n-2}{2}}Q\left(\frac{x-\xi}{\mu}\right)\right|^{p-1}\right)\eta_{R} + p(1-\eta_{R})|u^*_{A}|^{p-1},
\end{equation}
then $\tilde{\phi}$ satisfies problem (\ref{e3:5}) if

\noindent (1) $\psi$ solves the \textbf{outer problem}
\begin{equation}\label{outerproblem}
\left\{
\begin{aligned}
&\partial_t\psi =\Delta\psi + V_{A}\psi + 2\nabla\eta_{R}\nabla\tilde{\phi}+ \tilde{\phi}\big(\Delta -\partial_t\big)\eta_{R} + \tilde{N}_{A}(\tilde{\phi}) + S_{out},\text{ in }\Omega\times(t_0,\infty),\\
&\psi = -u^*_{A}\quad\text{on}\quad \partial\Omega\times (t_0,\infty),
\end{aligned}
\right.
\end{equation}
with
\begin{equation}\label{e3:8}
S_{out} = S^{(2)}_{A} + (1 - \eta_{R})S_{A}.
\end{equation}

\noindent (2) $\tilde{\phi}$ solves
\begin{equation}\label{e3:9}
\eta_{R}\partial_t\tilde{\phi} = \eta_{R}\left[\Delta\tilde{\phi} + p|Q_{\mu, \xi, \theta}|^{p-1}\tilde{\phi} + p|Q_{\mu, \xi}|^{p-1}\psi + S_{A}\right]\text{ in }B_{2R\mu}(\xi)\times (t_0, \infty),
\end{equation}
for $Q_{\mu, \xi} := \mu^{-\frac{n-2}{2}}Q\left(\frac{x-\xi}{\mu}\right)$.
In the self-similar form, (\ref{e3:9}) becomes the so-called \textbf{inner problem}
\begin{equation}\label{e3:10}
\begin{aligned}
&\mu_{0}^2\partial_t\phi = \Delta_y\phi + p|Q|^{p-1}(y)\phi + \mu_{0}^{\frac{n+2}{2}}S_{A}(\xi + \mu_{0}y, t)\\
&\quad\quad + p\mu_{0}^{\frac{n-2}{2}}\frac{\mu_{0}^{2}}{\mu^{2}}|Q|^{p-1}(\frac{\mu_{0}}{\mu}y)\psi(\xi + \mu_{0}y, t) + B[\phi] + B^0[\phi]\quad\text{in}\quad B_{2R}(0)\times (t_0, \infty),
\end{aligned}
\end{equation}
where
\begin{equation}\label{e3:11}
B[\phi]:=\mu_{0}\dot{\mu}_{0}\left(\frac{n-2}{2}\phi + y\cdot\nabla_y\phi\right) + \mu_{0}\nabla\phi\cdot \dot{\xi}
\end{equation}
and
\begin{equation}\label{e3:12}
B^0[\phi]:= p\left[|Q|^{p-1}\left(\frac{\mu_{0}}{\mu}y\right)-|Q|^{p-1}(y)\right]\phi + p\left[\mu_{0}^{2}|u^{*}_{A}|^{p-1} -|Q|^{p-1}\left(\frac{\mu_{0}}{\mu}y\right)\right]\phi.
\end{equation}

\section{Scheme of the proof}
To find a solution $(\phi, \psi)$ satisfying (\ref{outerproblem}) and (\ref{e3:10}), we proceed with the following steps.
\subsection{Linear theory for (\ref{e3:10}).}
Let us rewrite problem (\ref{e3:10}) as
\begin{eqnarray}\label{e5:1}
&& \mu_{0}^{2}\partial_t\phi = \Delta_y\phi + p|Q|^{p-1}(y)\phi + H[\lambda,\xi, a, \theta, \dot{\lambda},\dot{\xi}, \dot{a}, \dot{\theta}, \phi, \psi](y,t), y\in B_{2R}(0),
\end{eqnarray}
for $t\geq t_0$, where
\begin{equation}\label{e5:2}
\begin{aligned}
H[\lambda,\xi, a, \theta, \dot{\lambda},\dot{\xi}, \dot{a}, \dot{\theta}, \phi, \psi]:=&\mu_{0}^{\frac{n+2}{2}}S_{A}(\xi + \mu_{0}y, t)+ B[\phi] + B^0[\phi]\\
&+ p\mu_{0}^{\frac{n-2}{2}}\frac{\mu_{0}^{2}}{\mu^{2}}|Q|^{p-1}(\frac{\mu_{0}}{\mu}y)\psi(\xi + \mu_{0}y, t),
\end{aligned}
\end{equation}
the terms $B[\phi]$, $B^0[\phi]$ are defined in (\ref{e3:11}), (\ref{e3:12}) respectively. Using change of variables
\begin{equation*}\label{e5:3}
t = t(\tau),\quad \frac{dt}{d\tau} = \mu_{0}^{2}(t),
\end{equation*}
(\ref{e5:1}) becomes
\begin{eqnarray}\label{e5:4}
\partial_\tau\phi = \Delta_y\phi + p|Q|^{p-1}(y)\phi + H[\lambda,\xi, a, \theta, \dot{\lambda},\dot{\xi}, \dot{a}, \dot{\theta}, \phi, \psi](y,t(\tau))
\end{eqnarray}
for $y\in B_{2R}(0)$, $\tau\geq \tau_0$. Here $\tau_0$ the (unique) positive number such that $t(\tau_0) = t_0$.
We try to find a solution $\phi$ to the following equation
\begin{equation}\label{e5:6}
\left\{
\begin{aligned}
&\partial_\tau\phi = \Delta_y\phi + p|Q|^{p-1}(y)\phi\\
&\quad\quad\quad\quad\quad + H[\lambda,\xi, a, \theta, \dot{\lambda},\dot{\xi}, \dot{a}, \dot{\theta}, \phi, \psi](y,t(\tau)),\quad y\in B_{2R}(0),\quad\tau\geq\tau_0,\\
&\phi(y,\tau_0) = \sum_{l= 1}^Ke_{l}Z_l(y),\quad y\in B_{2R}(0),
\end{aligned}
\right.
\end{equation}
for suitable constants $e_{l}$, $l = 1, \cdots, K$. Here $Z_l$ are eigenfunctions associated to negative eigenvalues of the problem
\begin{equation*}\label{e3:13}
L(\phi) + \lambda\phi = 0,\quad \phi\in L^\infty(\mathbb{R}^n).
\end{equation*}
It was proved in \cite{KenigMerle2016} that $K$ is finite and $Z_l$ satisfies
\begin{equation*}\label{e3:14}
Z_l(x) \sim \frac{e^{-\sqrt{-\lambda}|x|}}{|x|^{\frac{N-1}{2}}}\text{ as } |x|\to \infty.
\end{equation*}
Next, we prove that (\ref{e5:6}) is solvable for $\phi$, provided $\psi$ is in suitable weighted spaces and the parameter functions $\lambda$, $\xi$, $a$, $\theta$ are chosen so that the term $H[\lambda,\xi, a, \theta, \dot{\lambda},\dot{\xi}, \dot{a}, \dot{\theta}, \phi, \psi](y,t(\tau))$  in the right hand side of (\ref{e5:6}) satisfies the following $L^2$ orthogonality conditions
\begin{equation}\label{e5:7}
\int_{B_{2R}}H[\lambda,\xi, a, \theta, \dot{\lambda},\dot{\xi}, \dot{a}, \dot{\theta}, \phi, \psi](y,t(\tau))z_l(y)dy = 0,
\end{equation}
for all $\tau\geq \tau_0$, $l = 0,1,2,\cdots,3n-1$. These conditions will impose highly nonlinearity to (\ref{e5:6}), to get a solution $\phi$, we apply the Schauder fixed-point theorem. We first need a linear theory for (\ref{e5:6}).

For $R > 0$ large but fixed, consider the following initial value problem
\begin{equation}\label{e5:31}
\left\{
\begin{aligned}
&\partial_\tau\phi = \Delta\phi + p|Q|^{p-1}(y)\phi + h(y,\tau),~ y\in B_{2R}(0),~ \tau\geq \tau_0,\\
&\phi(y,\tau_0) = \sum_{l= 1}^Ke_{l}Z_l(y),~ y\in B_{2R}(0).
\end{aligned}
\right.
\end{equation}
Set
\begin{equation*}
\nu = 1 + \frac{\sigma}{n-2},
\end{equation*}
then we have $\mu_0^{n-2+\sigma}\sim \tau^{-\nu}$.
Define the weighted norm for $h$ as
\begin{equation*}
\|h\|_{\alpha, \nu}: = \sup_{\tau > \tau_0}\sup_{y\in B_{2R}}\tau^\nu(1 + |y|^\alpha)|h(y,\tau)|.
\end{equation*}
Then the following estimates for (\ref{e5:31}) hold.
\begin{prop}\label{proposition5.1}
Suppose $\alpha\in (2, n-2)$, $\nu > 0$, $\|h\|_{2+\alpha,\nu} < +\infty$ and
\begin{equation*}\label{orthogalcondition}
\int_{B_{2R}}h(y,\tau)z_j(y)dy = 0~\text{ for all }~\tau\in (\tau_0,\infty), ~j = 0, 1, \cdots, 3n-1.
\end{equation*}
Then there exist functions $\phi = \phi[h](y, \tau)$ and $(e_1,\cdots, e_K ) = (e_1[h](\tau),\cdots, e_K[h](\tau))$ satisfying (\ref{e5:31}). Furthermore, for $\tau\in (\tau_0,+\infty)$, $y\in B_{2R}(0)$, there hold
\begin{equation}\label{e5:100}
\begin{aligned}
&(1+|y|)|\nabla_y \phi(y, \tau)|+ |\phi(y,\tau)|\lesssim \tau^{-\nu}(1+|y|)^{-\alpha}\|h\|_{2+\alpha,\nu}
\end{aligned}
\end{equation}
and
\begin{equation}\label{e5:101}
|e_l[h]|\lesssim \|h\|_{2+\alpha,\nu} \text{ for }l = 1,\cdots, K.
\end{equation}
\end{prop}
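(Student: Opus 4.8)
The plan is to reduce Proposition~\ref{proposition5.1} to two ingredients: (i) an \emph{a priori estimate}, uniform in the (large) parameter $R$, and (ii) an existence statement obtained by a limiting procedure on finite time intervals, together giving a bounded linear map $h\mapsto(\phi,e_1,\dots,e_K)$. Throughout write $L:=\Delta+p|Q|^{p-1}$ and
$$\|\phi\|_{*,\alpha,\nu}:=\sup_{\tau>\tau_0}\sup_{y\in B_{2R}}\tau^{\nu}\big[(1+|y|)^{\alpha}|\phi(y,\tau)|+(1+|y|)^{\alpha+1}|\nabla_y\phi(y,\tau)|\big].$$
The negative eigenfunctions $Z_1,\dots,Z_K$ span the finitely many exponentially \emph{unstable} directions of the flow $\partial_\tau-L$; letting the initial datum range over their span, with $e_1,\dots,e_K$ free unknowns, is exactly what kills those directions, while the hypothesis $\int_{B_{2R}}h\,z_j\,dy=0$, $j=0,\dots,3n-1$, removes the forcing along the (approximate) bounded kernel $\mathrm{span}\{z_0,\dots,z_{3n-1}\}$, which is the only obstruction to decay of the kernel modes.

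For the a priori estimate I would argue by contradiction and blow-up, following the linear theory of \cite{davila2017singularity}, \cite{MussoSireWeiZhengZhou}, \cite{sireweizhenghalf}. Suppose the bound fails: there are $R_m$ large, data $h_m$ with $\|h_m\|_{2+\alpha,\nu}\to0$, and solutions $(\phi_m,e_{\cdot,m})$ of (\ref{e5:31}) with $\|\phi_m\|_{*,\alpha,\nu}=1$; choose $(y_m,\tau_m)$ where the weighted supremum is essentially attained. Interior and boundary parabolic estimates give local compactness, and one distinguishes the usual regimes. If $y_m,\tau_m$ stay bounded one obtains a nontrivial $\phi_\infty$ on $B_{2R}\times[\tau_0,\infty)$ (or on $\R^n\times[\tau_0,\infty)$ if $R_m\to\infty$) solving $\partial_\tau\phi_\infty=L\phi_\infty$ with $|\phi_\infty|\lesssim(1+|y|)^{-\alpha}$ and $\phi_\infty(\cdot,\tau)\perp z_j$; separating variables and invoking the non-degeneracy (\ref{nondeg}) of $Q$ from \cite{MussoWei2015} (so the only bounded elements of $\ker L$ are the $z_j$), together with the $\tau$-decay, forces $\phi_\infty\equiv0$. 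If $\tau_m\to\infty$ one rescales $\tilde\phi_m(y,\tau):=\tau_m^{\nu}\phi_m(y,\tau+\tau_m)$ to a bounded, entire-in-time solution of $\partial_\tau\phi_\infty=L\phi_\infty$ with the same decay and orthogonality, again zero by a parabolic Liouville argument. If $|y_m|\to\infty$ one rescales in space as well, $\hat\phi_m(z,\tau):=|y_m|^{\alpha}\phi_m(|y_m|z,\cdot)$ (suitably time-shifted), passing to a bounded solution of the plain heat equation $\partial_\tau\hat\phi_\infty=\Delta\hat\phi_\infty$ on $(\R^n\setminus\{0\})\times\R$ with decay at infinity and an isolated possible singularity at the origin; here the removable-singularity property of the limiting equation shows $\hat\phi_\infty$ extends across $0$, and Liouville's theorem for the heat equation gives $\hat\phi_\infty\equiv0$. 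Each case contradicts $\|\phi_m\|_{*,\alpha,\nu}=1$, since the value $1$ is inherited at the limit of $(y_m,\tau_m)$; hence $\|\phi\|_{*,\alpha,\nu}\lesssim\|h\|_{2+\alpha,\nu}$ with constant independent of $R$, which contains (\ref{e5:100}).

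For existence, fix $h$ with $\|h\|_{2+\alpha,\nu}<\infty$ and $h\perp z_j$. For each $T>\tau_0$ solve the linear problem (\ref{e5:31}) on $B_{2R}\times(\tau_0,T)$ with zero terminal data (and, say, homogeneous Dirichlet data on $\partial B_{2R}$, which only affects $\phi$ near $|y|=2R$ where the weights are bounded), choosing the $K$ constants $e_l$ so that the components $\beta_l(\tau):=\int_{B_{2R}}\phi(\cdot,\tau)Z_l$ stay bounded: up to exponentially small truncation errors $\beta_l$ solves $\dot\beta_l=|\lambda_l|\beta_l+\int_{B_{2R}}hZ_l$, whose only bounded solution is $\beta_l(\tau)=-\int_\tau^{T}e^{-|\lambda_l|(s-\tau)}\big(\int_{B_{2R}}h\,Z_l\big)\,ds$; this fixes $e_l=\beta_l(\tau_0)$ and yields $|e_l|\lesssim\|h\|_{2+\alpha,\nu}$, i.e. (\ref{e5:101}). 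Existence on the finite cylinder is routine (Galerkin plus a finite-dimensional fixed point for the $e_l$), and the $z_j$-modes of $\phi$ remain controlled precisely because $h\perp z_j$. Applying the $R$- and $T$-uniform a priori bound of the previous paragraph and letting $T\to\infty$ along a subsequence produces a solution on $B_{2R}\times(\tau_0,\infty)$ with $\|\phi\|_{*,\alpha,\nu}\lesssim\|h\|_{2+\alpha,\nu}$; the gradient bound in (\ref{e5:100}) then follows from parabolic Schauder estimates on unit parabolic cylinders rescaled to the natural self-similar scale, and linearity of the whole construction gives the dependence $\phi=\phi[h]$, $e_l=e_l[h]$.

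The main obstacle is the Liouville step inside the a priori estimate: ruling out nontrivial bounded entire-in-time solutions of the linearized heat flow around $Q$ with the prescribed polynomial decay and orthogonality. This relies essentially on the non-degeneracy of the non-radial sign-changing bubble from \cite{MussoWei2015} and on the removable-singularity estimate for the limiting equation, and it is the step most sensitive to the low regularity of the potential $|Q|^{\frac{4}{n-2}}$ when $n\ge7$ — the reason the full gluing system is later closed by a Schauder-type rather than a contraction argument.
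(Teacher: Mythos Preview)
Your strategy is the same as the paper's --- a blow-up/contradiction argument for the a priori bound followed by a limiting construction for existence --- but two technical choices differ and are worth flagging.

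First, the paper does not work on $B_{2R}$ with Dirichlet data: it extends $h$ by zero to all of $\mathbb{R}^n$ and solves the auxiliary problem
\[
\partial_\tau\phi=L\phi+h-\sum_{l=1}^K c_l(\tau)Z_l,\qquad \phi(\cdot,\tau_0)=0,\qquad \int_{\mathbb{R}^n}\phi\,Z_l\,dy=0,
\]
with $c_l(\tau)=\big(\int hZ_l\big)/\big(\int Z_l^2\big)$ determined algebraically rather than by a terminal-value shooting for the $e_l$. Working on $\mathbb{R}^n$ makes the orthogonality $\int\phi\,z_j=0$ exact (no boundary terms when you test against $z_j$) and avoids the boundary-layer issues your Dirichlet choice would introduce near $|y|=2R$; it also lets the outer blow-up limit live on $\mathbb{R}^n\setminus\{\hat e\}$ with a point singularity at a unit vector $\hat e$ (not at the origin), where the paper removes the singularity by an explicit barrier $(\rho^2+Ct)^{-\alpha/2}+\varepsilon\rho^{2-n}$ rather than a general removable-singularity lemma.

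Second, your Liouville step for the ``$y$ bounded'' regime is underspecified. The paper first observes that $\tau_m\to\infty$ is forced (zero initial data plus $\|h_m\|\to0$), so the limit is an \emph{ancient} solution $\tilde\phi$ of $\partial_\tau\tilde\phi=L\tilde\phi$ on $\mathbb{R}^n\times(-\infty,0]$ with $|\tilde\phi|\le(1+|y|)^{-\alpha}$ and $\tilde\phi\perp z_j,\,Z_l$. To kill it the paper does not ``separate variables'': it uses the energy identity $\tfrac12\partial_\tau\int|\tilde\phi_\tau|^2+B(\tilde\phi_\tau,\tilde\phi_\tau)=0$ together with $\int|\tilde\phi_\tau|^2=-\tfrac12\partial_\tau B(\tilde\phi,\tilde\phi)$ and the coercivity $B\ge0$ on $\{z_j,Z_l\}^\perp$ to get $\tilde\phi_\tau\equiv0$, and only then invokes the non-degeneracy of $Q$ to conclude $\tilde\phi\equiv0$. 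Your case ``$y_m,\tau_m$ bounded'' as written would produce a limit on $[\tau_0,\infty)$, not an ancient one, and gives no contradiction without first controlling the $e_{l,m}$; the paper's ordering (force $\tau_m\to\infty$, then split on $|y_m|$) is cleaner.
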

Here and in the following of this paper, the symbol $a\lesssim b$ means $a\leq C b$ for some positive constant $C$ which is independent of $t$ and $t_0$.
The proof of Proposition \ref{proposition5.1} is given in Section 5.

\subsection{The orthogonality conditions (\ref{e5:7}).}
To apply Proposition \ref{proposition5.1}, we should choose the parameter functions $\lambda$, $\xi$, $a$ and $\theta$ such that (\ref{e5:7}) hold.

Let us fix a $\sigma\in (0, n-4)$. Given $h(t):(t_0, \infty)\to\mathbb{R}^k$ and $\delta > 0$, the weighted $L^\infty$ norm is defined as
\begin{equation*}\label{e4:30}
\|h\|_\delta:=\|\mu_0(t)^{-\delta}h(t)\|_{L^\infty(t_0, \infty)}.
\end{equation*}
In what follows, $\alpha$ is always a positive constant such that $\alpha > 2$ and $\alpha-2$ is small enough.
Also assume the parameter functions $\lambda$, $\xi$, $a$, $\theta$, $\dot{\lambda}$, $\dot{\xi}$, $\dot{a}$ and $\dot{\theta}$ satisfy the following constraints,
\begin{equation}\label{e4:21}
\|\dot{\lambda}(t)\|_{n-3+\sigma} + \|\dot{\xi}(t)\|_{n-3+\sigma} + \|\dot{a}(t)\|_{n-4+\sigma} + \|\dot{\theta}(t)\|_{n-4+\sigma}\leq \frac{c}{R^{\alpha-2}},
\end{equation}
\begin{equation}\label{e4:22}
\|\lambda(t)\|_{1+\sigma} + \|\xi(t)-q\|_{1+\sigma} + \|a(t)\|_{\sigma} + \|\theta(t)\|_{\sigma}\leq \frac{c}{R^{\alpha-2}},
\end{equation}
here $c$ is a positive constant which is independent of $R$, $t$ and $t_0$.
Let us define the norm $\|\phi\|_{n-2+\sigma,\alpha}$ of $\phi$ as the least number $M>0$ such that
\begin{equation}\label{e4:24}
(1+|y|)|\nabla_y \phi(y, t)| + |\phi(y, t)|\leq M\frac{\mu_0^{n-2+\sigma}}{1+|y|^\alpha}
\end{equation}
and $\|\psi\|_{**,\beta,\alpha}$ is the least $M > 0$ such that
\begin{equation}\label{e4:400}
|\psi(x, t)|\leq M\frac{t^{-\beta}}{1+|y|^{\alpha-2}},\quad y = \frac{|x-\xi|}{\mu}
\end{equation}
holds. Here $\beta = \frac{n-2}{2(n-4)} + \frac{\sigma}{n-4}$. We suppose $\phi$ and $\psi$ satisfy
\begin{equation}\label{e4:25}
\|\phi\|_{n-2+\sigma,\alpha}\leq ct_0^{-\varepsilon}
\end{equation}
and
\begin{equation*}
\|\psi\|_{**, \beta, \alpha}\leq \frac{ct_0^{-\varepsilon}}{R^{\alpha-2}}
\end{equation*}
for some small $\varepsilon > 0$, respectively.

Then we have the following result.
\begin{prop}\label{l5:1}
(\ref{e5:7}) is equivalent to
\begin{equation}\label{e5:9}
\left\{
\begin{aligned}
&\dot{\lambda} + \frac{1+(n-4)}{(n-4)t}\lambda = \Pi_0[\lambda,\xi, a, \theta, \dot{\lambda},\dot{\xi}, \dot{a}, \dot{\theta}, \phi, \psi](t),\\
&\dot{\xi}_l = \Pi_l[\lambda,\xi, a, \theta, \dot{\lambda},\dot{\xi}, \dot{a}, \dot{\theta}, \phi, \psi](t), \quad l = 1,\cdots, n,\\
&\dot{\theta}_{12} = \mu_0^{-1}\Pi_{n+1}[\lambda,\xi, a, \theta, \dot{\lambda},\dot{\xi}, \dot{a}, \dot{\theta}, \phi, \psi](t),\\
&\dot{a}_1 = \mu_0^{-1}\Pi_{n+2}[\lambda,\xi, a, \theta, \dot{\lambda},\dot{\xi}, \dot{a}, \dot{\theta}, \phi, \psi](t),\\
&\dot{a}_2 = \mu_0^{-1}\Pi_{n+3}[\lambda,\xi, a, \theta, \dot{\lambda},\dot{\xi}, \dot{a}, \dot{\theta}, \phi, \psi](t),\\
&\dot{\theta}_{1l} = \mu_0^{-1}\Pi_{n+l+1}[\lambda,\xi, a, \theta, \dot{\lambda},\dot{\xi}, \dot{a}, \dot{\theta}, \phi, \psi](t),\quad l = 3, \cdots, n\\
&\dot{\theta}_{2l} = \mu_0^{-1}\Pi_{2n+l-1}[\lambda,\xi, a, \theta, \dot{\lambda},\dot{\xi}, \dot{a}, \dot{\theta}, \phi, \psi](t), \quad l = 3, \cdots, n.
\end{aligned}
\right.
\end{equation}
The terms in the right hand side of (\ref{e5:9}) can be written as
\begin{equation*}
\begin{aligned}
&\Pi_0[\lambda,\xi, a, \theta, \dot{\lambda},\dot{\xi}, \dot{a}, \dot{\theta}, \phi, \psi](t) = \frac{t_0^{-\varepsilon}}{R^{\alpha-2}}\mu_0^{n-3 + \sigma}(t)f_0(t) + \frac{t_0^{-\varepsilon}}{R^{\alpha-2}}\\
&\quad \Theta_0\left[\dot{\lambda},\dot{\xi}, \mu_0\dot{a}, \mu_0\dot{\theta}, \mu_0^{n-4}(t)\lambda, \mu_0^{n-4}(\xi-q), \mu_0^{n-3}a, \mu_0^{n-3}\theta, \mu_0^{n-3+\sigma}\phi, \mu_0^{\frac{n-2}{2}+\sigma}\psi\right](t)
\end{aligned}
\end{equation*}
and for $l = 1, \cdots, 3n-1$,
\begin{equation*}
\begin{aligned}
&\Pi_l[\lambda,\xi, a, \theta, \dot{\lambda},\dot{\xi}, \dot{a}, \dot{\theta}, \phi, \psi](t)\\
&= \mu_0^{n-2}c_l\left[b^{n-2}\nabla H(q, q)\right]+ \mu_0^{n-2+\sigma}(t)f_l(t) + \frac{t_0^{-\varepsilon}}{R^{\alpha-2}}\\
&\quad \Theta_l\left[\dot{\lambda},\dot{\xi}, \mu_0\dot{a}, \mu_0\dot{\theta}, \mu_0^{n-4}(t)\lambda, \mu_0^{n-4}(\xi-q), \mu_0^{n-3}a, \mu_0^{n-3}\theta, \mu_0^{n-3+\sigma}\phi, \mu_0^{\frac{n-2}{2}+\sigma}\psi\right](t),
\end{aligned}
\end{equation*}
where $c_l$ are suitable constants, $f_l(t)$ and $\Theta_l[\cdots](t)$ ($l = 0, \cdot, 3n-1$) are bounded smooth functions for $t\in [t_0,\infty)$.
\end{prop}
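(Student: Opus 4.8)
The plan is, for each mode $l\in\{0,1,\ldots,3n-1\}$, to compute the pairing $\int_{B_{2R}}H[\lambda,\xi,a,\theta,\dot\lambda,\dot\xi,\dot a,\dot\theta,\phi,\psi](y,t(\tau))\,z_l(y)\,dy$ and to read off which parameter-derivative it principally determines. (The time change $t=t(\tau)$ is irrelevant to this algebra, so one may argue in the $t$ variable.) Writing $H=\mu_0^{\frac{n+2}{2}}S_A(\xi+\mu_0 y,\cdot)+B[\phi]+B^0[\phi]+p\mu_0^{\frac{n-2}{2}}\tfrac{\mu_0^2}{\mu^2}|Q|^{p-1}\!\big(\tfrac{\mu_0}{\mu}y\big)\psi(\xi+\mu_0 y,\cdot)$ and inserting the explicit expansion of $S_A$ furnished by Lemma \ref{l2.2} — after absorbing the corrections $(\mu_0/\mu)^{\frac{n+2}{2}}=1+O(\mu_0^\sigma)$ and $\mu^{n-2}=b^{n-2}\mu_0^{n-2}(1+O(\mu_0^\sigma))$ into lower-order remainders — each pairing splits into a principal diagonal term plus a forcing term.

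The diagonal terms are isolated using the near-orthogonality of the family $\{z_l\}$: from the fact that $Q$ is even in each $y_j$, $j\ge 2$, and invariant under the rotation of angle $2\pi/k$ in the $(y_1,y_2)$-plane, together with the decay $z_l(y)=O(|y|^{2-n})$ inherited from (\ref{vanc1}), one checks that every off-diagonal pairing of $z_i$ against the profile attached to another parameter-derivative (or against the $\nabla H(q,q)$-source) with $i\ne j$ vanishes. Hence, for $l=0$, the only surviving source in $\mu_0^{\frac{n+2}{2}}S_A$ is the $\dot\lambda$-profile, paired to $\int z_0\big(z_0-\tfrac{D_{n,k}(2-|y|^2)}{(1+|y|^2)^{n/2}}-2Ap|Q|^{p-1}\big)\,dy=c_2+2Ac_1$, together with the three $\lambda$-linear terms, evaluated via $\int p|Q|^{p-1}z_0\,dy=-c_1$ and $\int\big(z_0-\tfrac{D_{n,k}(2-|y|^2)}{(1+|y|^2)^{n/2}}\big)z_0\,dy=c_2$; the diagonal coefficient $c_2+2Ac_1$ is nonzero because $c_1,c_2>0$ (proved in the Appendix) and $A>0$. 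For $l=1,\ldots,n$ the surviving sources are the $\dot\xi_l$-profile and $-p|Q|^{p-1}\mu^{n-2}\nabla H(q,q)\cdot y$; for $l=n+1,\ldots,3n-1$ the surviving source is the corresponding $\mu^2\dot a_i$- or $\mu^2\dot\theta_{\bullet\bullet}$-profile, paired to a nonzero constant (any dependence on $R$ being harmless). Dividing each orthogonality identity by its nonzero diagonal coefficient produces the $3n$ equations of (\ref{e5:9}); the extra power $\mu^2\sim b^2\mu_0^2$ carried by the $a$- and $\theta$-sources, as opposed to $\mu_0$ for the $\dot\lambda$-source, is exactly what produces the explicit $\mu_0^{-1}$ in the last five equations.

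For $l=0$ the coefficient of $\lambda$ requires a short computation: collecting the three $\lambda$-linear contributions to $S_A$ — the linearization $-\mu_0^{n-3}p|Q|^{p-1}(n-3)b^{n-4}H(q,q)\lambda$ of $E_0$, and the pieces $\lambda b\dot\mu_0\big(z_0-\tfrac{D_{n,k}(2-|y|^2)}{(1+|y|^2)^{n/2}}\big)$ and $\lambda b\mu_0^{n-3}p|Q|^{p-1}(-b^{n-4}H(q,q)+B)$ coming from $\lambda E_0[\mu,\dot\mu]$ — pairing each with $z_0$, dividing by $c_2+2Ac_1$, and substituting $c_n^{n-4}=\tfrac{(2c_1A+c_2)(n-2)}{2(n-4)c_1}$, $b^{n-4}H(q,q)=\tfrac{2}{n-2}$ (from (\ref{e:equationforb})) and $B=\tfrac{2A}{(n-4)c_n^{n-4}}$, one finds that all contributions beyond $\tfrac{n-3}{(n-4)t}\lambda$ cancel identically; this yields the left-hand side $\dot\lambda+\tfrac{n-3}{(n-4)t}\lambda$ of the first equation. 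Analogously, expanding $\mu^{n-2}=b^{n-2}\mu_0^{n-2}(1+O(\mu_0^\sigma))$ in the $\nabla H$-source produces the explicit leading term $\mu_0^{n-2}c_l[b^{n-2}\nabla H(q,q)]$ in $\Pi_l$ for $l=1,\ldots,n$, with the analogous term carried into $\Pi_l$ for $l>n$ as well.

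Finally, the forcing terms are assembled into the asserted form using the a priori bounds (\ref{e4:21})–(\ref{e4:22}) on the parameters and (\ref{e4:24})–(\ref{e4:25}) on $\phi$, $\psi$: since $B[\phi]$, $B^0[\phi]$ and the $\psi$-coupling term are linear in $(\phi,\psi)$ with coefficients controlled by fixed positive powers of $\mu_0$, while $\phi$ and $\psi$ carry the prefactor $t_0^{-\varepsilon}$ (and, for $\psi$, also $R^{-(\alpha-2)}$), and the Taylor-remainder pieces of $S_A$ — the $\mathcal R$-terms of Lemma \ref{l2.1} and the $O(\mu_0^\sigma)$-corrections — obey the matching weighted estimates, each pairing against $z_l$ is a bounded smooth function of $t$ times the asserted power of $\mu_0$, times $\tfrac{t_0^{-\varepsilon}}{R^{\alpha-2}}$, with a bounded linear dependence on the normalized quantities $(\dot\lambda,\dot\xi,\mu_0\dot a,\mu_0\dot\theta,\mu_0^{n-4}\lambda,\mu_0^{n-4}(\xi-q),\mu_0^{n-3}a,\mu_0^{n-3}\theta,\mu_0^{n-3+\sigma}\phi,\mu_0^{\frac{n-2}{2}+\sigma}\psi)$; this is precisely the structure of the $f_l$ and $\Theta_l[\cdots]$. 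I expect the main obstacle to be the second step — establishing that the $3n\times3n$ linear system obtained by projecting onto the $z_l$ is at leading order diagonal with exactly the stated diagonal entries. This relies on the parity and $\mathbb{Z}_k$-symmetry cancellations among the $z_l$, on the sign information $c_1,c_2>0$ and $A>0$ from the Appendix (so that $c_2+2Ac_1\ne0$), and on the precise arithmetic in $b$, $c_n$, $B$ that forces the surplus $\lambda$-terms to cancel, leaving $\Pi_0$ free of any non-small term proportional to $\mu_0^{n-4}\lambda$; the remaining bookkeeping of the forcing into the $\Theta_l$-form is routine given the weighted-norm definitions.
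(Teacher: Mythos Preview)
Your overall strategy --- project $H$ against each $z_l$, isolate a leading coefficient of one parameter-derivative, divide through, and collect the rest into the $\Pi_l$ --- matches the paper's.  But there is a real gap in your second step: the claim that ``every off-diagonal pairing of $z_i$ against the profile attached to another parameter-derivative \ldots\ vanishes'' by parity and $\mathbb{Z}_k$-symmetry is false.  The function $Q$ is even in $y_2,\ldots,y_n$ but \emph{not} in $y_1$, and the $\mathbb{Z}_k$-rotation does not force the cross-integrals to zero.  Concretely, the $\dot\xi_1$-profile $\big(\partial_{y_1}Q-\tfrac{E_{n,k}y_1}{(1+|y|^2)^{n/2}}\big)$ pairs nontrivially with $z_{n+2}$, and the $\dot a_1$-profile pairs nontrivially with $z_1$; the same happens between modes $2$ and $n+3$.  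So the $3n\times 3n$ system is not diagonal at leading order: it contains two genuinely coupled $2\times 2$ blocks, in the unknowns $(\dot\xi_1,\mu_0\dot a_1)$ and $(\dot\xi_2,\mu_0\dot a_2)$, which must be inverted before one can write the equations in the decoupled form (\ref{e5:9}).  The remaining off-diagonal entries are also not zero but only $O(k^{-1})$, which is why the hypothesis $k\ge k_0$ is needed.

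The paper proceeds exactly along these lines: Lemma~\ref{e:orthogalityofkernels} computes all the relevant integrals using the explicit decomposition $Q=U-\sum_j U_j+\tilde\phi$, finding the nonzero constants $a_{1,n+2},a_{n+2,1},a_{2,n+3},a_{n+3,2}$ in the cross terms, $O(k^{-1})$ elsewhere off the diagonal, and proving that the two $2\times2$ coupling matrices are invertible.  Lemmas~\ref{l5:100}--\ref{l5:24} then write out each projection separately, and the proof concludes by inverting the two $2\times2$ blocks and absorbing the $O(k^{-1})$ couplings into the $\Theta_l$ for $k$ large.  Your treatment of the $\lambda$-equation (the arithmetic in $b,c_n,B$ that collapses the $\lambda$-coefficient to $\tfrac{n-3}{(n-4)t}$) and of the forcing-term bookkeeping is essentially correct; what is missing is the correct handling of the $(\xi_1,a_1)$ and $(\xi_2,a_2)$ couplings and the recognition that the other off-diagonal terms are $O(k^{-1})$ rather than zero.
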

The proof of Proposition \ref{l5:1} is given in Section 6.

\subsection{The outer problem.}
Let us consider the out problem (\ref{outerproblem}),
\begin{equation}\label{e4:main}
\left\{
\begin{aligned}
&\partial_t\psi =\Delta\psi + V_{A}\psi + 2\nabla\eta_{R}\nabla\tilde{\phi}+ \tilde{\phi}\big(\Delta -\partial_t\big)\eta_{R}\\
&\quad\quad\quad\quad\quad\quad\quad\quad\quad\quad\quad\quad\,\,\,\, + \tilde{N}_{A}(\tilde{\phi}) + S_{out}, \text{ in }\Omega\times(t_0,\infty),\\
&\psi = -u^*_{A}\quad\text{on}\quad \partial\Omega\times (t_0,\infty), \quad \psi(t_0, \cdot) = \psi_0\quad\text{in }\Omega,
\end{aligned}
\right.
\end{equation}
with a smooth and small initial datum $\psi_0$.

To apply the Schauder fixed-point theorem to (\ref{e4:main}) and get a solution $\psi$, we first consider the corresponding linear problem
\begin{equation}\label{e4:3}
\begin{cases}
\partial_t\psi =
\Delta\psi + V_{A}\psi + f(x, t)\quad&\text{in}\quad\Omega\times (t_0, \infty),\\
\psi = g\quad&\text{on}\quad \partial\Omega\times (t_0,\infty),\\
\psi(t_0, \cdot) = h\quad&\text{in}\quad\Omega,
\end{cases}
\end{equation}
where $f(x, t)$, $g(x, t)$ and $h(x)$ are smooth functions, $V_{\mu, \xi}$ is defined in (\ref{e3:7}). We denote $\|f\|_{*, \gamma, 2+\alpha}$ as the least $M > 0$ such that
\begin{equation}\label{e4:2}
|f(x, t)|\leq M\frac{\mu^{-2}t^{-\gamma}}{1+|y|^{2+\varsigma}},\quad y = \frac{x-\xi}{\mu}
\end{equation}
for given $\varsigma$, $\gamma > 0$.
Then the following {\itshape a priori} estimate holds for problem \eqref{e4:3}.
\begin{prop}\label{l4:lemma4.1}
Suppose $\|f\|_{*, \gamma, 2+\varsigma} < +\infty$ for some constants $\varsigma$, $\gamma > 0$, $0 < \varsigma \ll 1$, $\|h\|_{L^\infty(\Omega)} < +\infty$ and
$\|\tau^\gamma g(x, \tau)\|_{L^\infty(\partial\Omega\times (t_0,\infty))} < +\infty$. Let $\phi = \psi[f, g, h]$ be the unique solution of (\ref{e4:3}), then there exists $\delta = \delta(\Omega) > 0$ small such that, for all $(x,t)$, one has
\begin{equation}\label{e4:40}
\begin{aligned}
|\psi(x, t)|&\lesssim \|f\|_{*, \gamma, 2+\varsigma}\frac{t^{-\gamma}}{1+|y|^{\varsigma}} + e^{-\delta(t-t_0)}\|h\|_{L^\infty(\Omega)} \\
&\quad  + t^{-\gamma}\|\tau^\gamma g(x, \tau)\|_{L^\infty(\partial\Omega\times (t_0,\infty))}, \quad y=\frac{x-\xi}{\mu}
\end{aligned}
\end{equation}
and
\begin{equation}\label{e4:5}
|\nabla\psi(x, t)|\lesssim \|f\|_{*, \gamma, 2+\varsigma}\frac{\mu^{-1}t^{-\gamma}}{1+|y|^{\varsigma+1}}\text{ for } |y|\leq R.
\end{equation}
\end{prop}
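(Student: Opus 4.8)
The plan is to establish the pointwise bound \eqref{e4:40} by a comparison (barrier) argument for the operator $\partial_{t}-\Delta-V_{A}$, reducing everything to the explicit construction of a positive supersolution, and then to derive \eqref{e4:5} from interior parabolic regularity. Throughout write $y=(x-\xi(t))/\mu(t)$. The first thing I would record is the pointwise size of the potential $V_{A}$ from \eqref{e3:7} and the construction of $u^{*}_{A}$: for $|x-q|\le\delta$ one has $u^{*}_{A}=\mu^{-\frac{n-2}{2}}Q(y)\,(1+O(\mu_{0}^{\kappa}))$ for some $\kappa>0$ (the difference coming from the $a,\theta$-corrections and the $O(\mu^{\frac{n-2}{2}})$ terms $\mu^{\frac{n-2}{2}}(\Phi^{*}-H)+\widetilde\Phi$), whence
\[
|V_{A}(x,t)|\ \lesssim\ \mu^{-2}(1+|y|)^{-4}\ \ \text{for }|x-q|\le\delta,\qquad |V_{A}(x,t)|\ \lesssim\ \mu_{0}^{\kappa}\,\mu^{-2}(1+|y|)^{-4}\ \ \text{for }|y|\le R,
\]
while $|V_{A}|\lesssim\mu^{2}$ for $|x-q|\ge\delta/2$. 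On each finite slab $\Omega\times(t_{0},T)$ the coefficient $V_{A}$ is bounded, so the parabolic maximum principle applies there; hence it suffices to build a positive $\bar\psi$, with constants independent of $T$, satisfying
\[
\partial_{t}\bar\psi-\Delta\bar\psi-|V_{A}|\,\bar\psi\ \ge\ \|f\|_{*,\gamma,2+\varsigma}\,\frac{\mu^{-2}t^{-\gamma}}{1+|y|^{\,2+\varsigma}}\quad\text{in }\Omega\times(t_{0},\infty),
\]
with $\bar\psi\ge|g|$ on $\partial\Omega\times(t_{0},\infty)$ and $\bar\psi(\cdot,t_{0})\ge|h|$; comparison then gives $|\psi|\le\bar\psi$, and bounding $\bar\psi$ yields \eqref{e4:40}.

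\textbf{The supersolution.} I would take $\bar\psi=\bar\psi_{f}+\bar\psi_{g}+\bar\psi_{h}$ with
\[
\bar\psi_{f}=C_{f}\|f\|_{*,\gamma,2+\varsigma}\,t^{-\gamma}(1+|y|^{2})^{-\frac{\varsigma}{2}},\qquad \bar\psi_{g}=\|\tau^{\gamma}g\|_{L^{\infty}(\partial\Omega\times(t_{0},\infty))}\,t^{-\gamma}W,\qquad \bar\psi_{h}=\|h\|_{L^{\infty}(\Omega)}\,e^{-\delta(t-t_{0})}W,
\]
where $W=W_{\mathrm{out}}+(1+|y|^{2})^{-\frac{\varsigma}{2}}$, $W_{\mathrm{out}}$ solves $-\Delta W_{\mathrm{out}}=1$ in $\Omega$ with $W_{\mathrm{out}}=2$ on $\partial\Omega$ (so $2\le W_{\mathrm{out}}\le C(\Omega)$), $C_{f}$ is large and $\delta=\delta(\Omega)>0$ is small. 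The one elementary computation needed is $\Delta_{y}\big((1+|y|^{2})^{-\varsigma/2}\big)\le -c_{0}(1+|y|^{2})^{-(2+\varsigma)/2}$ for $0<\varsigma\ll1$, $n\ge5$, i.e.\ $-\Delta_{x}\big((1+|y|^{2})^{-\varsigma/2}\big)\ge c_{0}\mu^{-2}(1+|y|)^{-2-\varsigma}$. Since $\varsigma<2$, this positive term dominates $|V_{A}|\lesssim\mu^{-2}(1+|y|)^{-4}$: in the core $|y|\le R$ because of the extra factor $\mu_{0}^{\kappa}$; in the transition zone $R\le|y|\lesssim\mu_{0}^{-1}$ because of the gain $(1+|y|)^{2-\varsigma}\gtrsim R^{2-\varsigma}\to\infty$; and in the exterior $|x-q|\gtrsim\delta$ directly from $-\Delta W_{\mathrm{out}}=1$ together with $|V_{A}|\lesssim\mu^{2}$. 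The remaining terms ($\gamma t^{-1}$, $\delta$, and the drift terms produced when $\partial_{t}$ falls on $y=(x-\xi(t))/\mu(t)$) are of lower order: by \eqref{e4:21}--\eqref{e4:22} one has $|\dot\mu|\lesssim\mu_{0}^{n-3}$, $|\dot\xi|\lesssim\mu_{0}^{n-3+\sigma}$, $|\dot a|+|\dot\theta|\lesssim\mu_{0}^{n-4+\sigma}$, $\gamma t^{-1}\sim\mu_{0}^{n-4}$, and $\mu_{0}^{n-4}\mu^{2}(1+|y|)^{2}\lesssim\mu_{0}^{n-4}\to0$ since $|y|\lesssim\operatorname{diam}(\Omega)/\mu$. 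Thus $\bar\psi_{g},\bar\psi_{h}$ are nonnegative supersolutions of the homogeneous problem, $\bar\psi_{f}$ absorbs the inhomogeneity once $C_{f}$ and $t_{0}$ are large, $\bar\psi_{g}\ge2\|\tau^{\gamma}g\|_{L^{\infty}}t^{-\gamma}\ge|g|$ on $\partial\Omega$, and $\bar\psi_{h}(\cdot,t_{0})\ge2\|h\|_{L^{\infty}}\ge|h|$; comparison gives \eqref{e4:40}.

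\textbf{Gradient bound.} For \eqref{e4:5} I would fix $(x_{1},t_{1})$ with $|y_{1}|=|x_{1}-\xi|/\mu\le R$ and pass to self-similar space-time variables, in which the equation for $\psi$ becomes $\partial_{\tau}\psi=\Delta_{y}\psi+(\text{bounded potential})\,\psi+(\text{bounded forcing})$ on the unit parabolic cylinder about $y_{1}$ (the potential is $\mu^{2}V_{A}$, which is $O(\mu_{0}^{\kappa})$ there, and the forcing is $\mu^{2}$ times $V_{A}\psi+f$). Interior $L^{p}$/Schauder parabolic estimates bound $|\nabla_{y}\psi(y_{1},t_{1})|$ by the sup of $|\psi|$ on a slightly larger cylinder (controlled by \eqref{e4:40}) plus the sup of the forcing there, i.e.\ by $\mu^{-2}t^{-\gamma}(1+|y_{1}|)^{-\varsigma}$ up to constants; undoing the scaling inserts the factor $\mu^{-1}$ and yields \eqref{e4:5}.

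\textbf{The main obstacle} is the supersolution step: $V_{A}$ is \emph{not} uniformly bounded---it is of order $\mu^{-2}\to\infty$ near the bubble core---so it cannot be treated as a small perturbation of the heat operator, and the barrier must be chosen so that its negative Laplacian beats $V_{A}$ pointwise. This succeeds only thanks to the quantitative margin $\varsigma<2$, which makes $-\Delta_{x}\big((1+|y|^{2})^{-\varsigma/2}\big)\sim\mu^{-2}(1+|y|)^{-2-\varsigma}$ decay strictly more slowly than $|V_{A}|\sim\mu^{-2}(1+|y|)^{-4}$, reinforced by the $\mu_{0}^{\kappa}$-smallness of $V_{A}$ in the inner region; patching this barrier smoothly across the core, the transition zone $|y|\sim R$, and the exterior, while absorbing the drift terms coming from $\dot\mu,\dot\xi,\dot a,\dot\theta$, is exactly where the smallness constraints \eqref{e4:21}--\eqref{e4:22} and the choice $R=t_{0}^{\rho}$ enter.
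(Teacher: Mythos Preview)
Your approach is correct and is essentially the standard barrier/supersolution argument that the paper has in mind: the paper does not write out a proof at all but simply states ``The proof is the same as Lemma~4.1 in \cite{cortazar2016green}, so we omit it,'' and the argument in that reference is precisely a comparison argument with a barrier of the type $t^{-\gamma}(1+|y|^{2})^{-\varsigma/2}$ plus an exterior piece, followed by local parabolic regularity for the gradient. Your identification of the key mechanism---that $-\Delta_{x}\big((1+|y|^{2})^{-\varsigma/2}\big)\sim\mu^{-2}(1+|y|)^{-2-\varsigma}$ dominates $|V_{A}|$ because $\varsigma<2$, with the smallness of $V_{A}$ in the core $|y|\le R$ handling the origin---matches the cited argument exactly.
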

The proof is the same as Lemma 4.1 in \cite{cortazar2016green}, so we omit it.
This result will be applied to problem (\ref{e4:main}), as a first step, we establish the following estimates for
\begin{equation*}
f[\psi](x, t)= 2\nabla\eta_{R}\nabla\tilde{\phi}+ \tilde{\phi}\big(\Delta -\partial_t\big)\eta_{R} + \tilde{N}_{A}(\tilde{\phi}) + S_{out}.
\end{equation*}
\begin{prop}\label{propositionestimate}
We have
\begin{itemize}
\item[(1)]
\begin{equation}\label{e4:37}
|S_{out}(x, t)|\lesssim \frac{t_0^{-\varepsilon}}{R^{\alpha-2}}\frac{\mu^{-2}\mu_0^{\frac{n-2}{2}+\sigma}(t)}{1+|y|^{\alpha}},
\end{equation}
\item[(2)]
\begin{equation}\label{e4:38}
\begin{aligned}
&\left|2\nabla\eta_{R}\nabla\tilde{\phi}+ \tilde{\phi}\big(\Delta -\partial_t\big)\eta_{R}\right|\lesssim \frac{1}{R^{\alpha-2}}\|\phi\|_{n-2+\sigma,\alpha}\frac{\mu^{-2}\mu_0^{\frac{n-2}{2}+\sigma}(t)}{1+|y|^{\alpha}},
\end{aligned}
\end{equation}
\item[(3)]
\begin{equation}\label{e4:39}
\begin{aligned}
&\tilde{N}_{A}(\tilde{\phi}) \lesssim\\
&\left\{
\begin{aligned}
    t_0^{-\varepsilon}(\|\phi\|^2_{n-2+\sigma,\alpha}+\|\psi\|^2_{**,\beta,\alpha})\frac{1}{R^{\alpha-2}}\frac{\mu^{-2}\mu_0^{\frac{n-2}{2}+\sigma}(t)}{1+|y|^{\alpha}},
    & \quad \text{when } 6 \geq n,\\
    t_0^{-\varepsilon}(\|\phi\|^p_{n-2+\sigma,\alpha}+\|\psi\|^p_{**,\beta,\alpha})\frac{1}{R^{\alpha-2}}\frac{\mu_j^{-2}\mu_0^{\frac{n-2}{2}+\sigma}(t)}{1+|y|^{\alpha}},
    & \quad \text{when } 6 < n.\\
  \end{aligned}
\right.
\end{aligned}
\end{equation}
\end{itemize}
\end{prop}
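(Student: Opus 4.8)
The strategy is to treat the three bounds separately and, in each, to split space into the delicate inner region $|x-q|\le\delta$ — where we work in the self-similar variable $y$ (with $\mu\sim\mu_0$, so $y=(x-\xi)/\mu$ or $(x-\xi)/\mu_0$ interchangeably) using the expansions of $S(u^*_A)$ from Lemmas~\ref{l2.1} and~\ref{l2.2} — and the outer region $|x-q|>\delta$, where we use \eqref{awayq}. Throughout we use: the parameter constraints \eqref{e4:21}--\eqref{e4:22}; the bounds \eqref{e4:25} on $\phi,\psi$ together with the definitions \eqref{e4:24}, \eqref{e4:400} of the weighted norms; the decay $|Q(y)|+|z_\alpha(y)|\lesssim(1+|y|)^{2-n}$ and the improved decay of the differences $z_0(y)-D_{n,k}(2-|y|^2)(1+|y|^2)^{-n/2}$, $\nabla Q(y)-E_{n,k}y(1+|y|^2)^{-n/2}$ occurring in $S_A$; the algebraic identities $\mu\sim\mu_0\sim t^{-1/(n-4)}$, $t^{-\beta}\sim\mu_0^{(n-2)/2+\sigma}$, $(p-1)\tfrac{n-2}{2}=2$, $(n-2)(p-2)=6-n$, which make powers of $\mu_0$ line up; the trivial bound $|y|\le C\mu_0^{-1}$ on the inner region, used to trade a deficit in spatial decay for powers of $\mu_0$; and $R=t_0^{\rho}$ with $\rho\ll1$ together with $\mu_0(t)\le\mu_0(t_0)\sim t_0^{-1/(n-4)}$, used to turn a strictly positive surplus power of $\mu_0$ into the required factor $t_0^{-\varepsilon}R^{-(\alpha-2)}$.

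\emph{Estimate (1).} Write $S_{out}=S^{(2)}_A+(1-\eta_R)S_A$. For $S^{(2)}_A=S(u^*_A)-S_A$ I would check, line by line in Lemma~\ref{l2.2} (and in the term $\mathcal R$ of Lemma~\ref{l2.1}), that each collected remainder — the $f_i,g_i,\vec f_i,\vec g_i$ terms — carries, relative to the target weight $\mu^{-2}\mu_0^{(n-2)/2+\sigma}(1+|y|^\alpha)^{-1}$, either an extra $\mu_0^{\sigma}$ (or better), via the constraints on $\lambda,\xi-q,a,\theta$ and \eqref{e4:25}, or a factor $|\dot\lambda|,|\dot\xi|\lesssim cR^{2-\alpha}\mu_0^{n-3+\sigma}$, $|\dot a|,|\dot\theta|\lesssim cR^{2-\alpha}\mu_0^{n-4+\sigma}$; the few terms whose spatial decay is only $(1+|y|^2)^{-1}$ or $(1+|y|^4)^{-1}$ are handled via $(1+|y|^\alpha)(1+|y|^2)^{-1}\lesssim\mu_0^{-(\alpha-2)}$, the surplus $\mu_0$-power absorbing the loss. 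Since $\mu_0^{c}(t)\le\mu_0^{c}(t_0)$ for $c>0$, the leftover power then yields $t_0^{-\varepsilon}R^{-(\alpha-2)}$ provided $\varepsilon+\rho(\alpha-2)<c/(n-4)$. For $(1-\eta_R)S_A$, $1-\eta_R$ is supported in $|y|\ge R$; there every term of $S_A$ decays strictly faster than $(1+|y|)^{-2}$ (from the decay of the $z_\alpha$'s and of the differences above), so $(1-\eta_R)$ contributes a factor $R^{-\delta_0}$ for some $\delta_0>0$ (taking $\alpha-2<\delta_0$), and — the powers of $\mu_0$ matching exactly as in the derivation of \eqref{e2:35} — we get $R^{-\delta_0}=t_0^{-\rho\delta_0}\le t_0^{-\varepsilon}R^{-(\alpha-2)}$. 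The outer region is identical starting from \eqref{awayq}.

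\emph{Estimates (2) and (3).} For (2) everything is supported on the annulus $R\mu_0\le|x-\xi|\le2R\mu_0$, where $|\nabla\eta_R|\lesssim(R\mu_0)^{-1}$, $|\Delta\eta_R|\lesssim(R\mu_0)^{-2}$, and $|\partial_t\eta_R|\lesssim|\dot\xi|(R\mu_0)^{-1}+|\dot\mu_0|\mu_0^{-1}\lesssim\mu_0^{n-4}$ is lower order; since $|y|\sim R$ there, \eqref{e4:24} gives $|\tilde\phi|\lesssim\|\phi\|_{n-2+\sigma,\alpha}\mu_0^{(n-2)/2+\sigma}R^{-\alpha}$ and $|\nabla\tilde\phi|\lesssim\|\phi\|_{n-2+\sigma,\alpha}\mu_0^{(n-2)/2+\sigma-1}R^{-\alpha-1}$, so the left side of \eqref{e4:38} is $\lesssim\|\phi\|_{n-2+\sigma,\alpha}\mu_0^{(n-2)/2+\sigma-2}R^{-\alpha-2}$, whereas on the annulus its right side is $\sim R^{-(\alpha-2)}\|\phi\|_{n-2+\sigma,\alpha}\mu_0^{(n-2)/2+\sigma-2}R^{-\alpha}$; thus \eqref{e4:38} reduces to $R^{-(\alpha+2)}\lesssim R^{-(2\alpha-2)}$, i.e. $\alpha\le4$, which holds since $\alpha-2$ is small. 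For (3) I would use the elementary inequalities $|\tilde N_A(w)|\lesssim|u^*_A|^{p-2}|w|^2+|w|^p$ when $p\ge2$ ($n\le6$) and $|\tilde N_A(w)|\lesssim|w|^p$ when $1<p<2$ ($n>6$), with $w=\psi+\eta_R\tilde\phi$; from \eqref{e4:400}, $t^{-\beta}\sim\mu_0^{(n-2)/2+\sigma}$ and \eqref{e4:24}, $|w|\lesssim(\|\phi\|_{n-2+\sigma,\alpha}+\|\psi\|_{**,\beta,\alpha})\mu_0^{(n-2)/2+\sigma}(1+|y|^{\alpha-2})^{-1}$, and, using $|u^*_A|\lesssim\mu^{-(n-2)/2}(1+|y|)^{2-n}+\mu^{(n-2)/2}$ and $(p-1)\tfrac{n-2}{2}=2$, raising $|w|$ to the power $2$ or $p$ produces a surplus power $\mu_0^{c}$, $c>0$, over the target weight, at the price of a slower spatial decay $(1+|y|)^{-2(\alpha-2)}$ resp. $(1+|y|)^{-p(\alpha-2)}$; the deficit is absorbed by $(1+|y|^\alpha)(1+|y|^{\alpha-2})^{-p}\lesssim\mu_0^{-(\alpha-p(\alpha-2))}$ (finite since $\alpha-2$ is small), leaving a positive power of $\mu_0$ that supplies $t_0^{-\varepsilon}$ and absorbs $R^{\alpha-2}=t_0^{\rho(\alpha-2)}$, which is \eqref{e4:39}.

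The substance of the proposition lies in (1): the bulk of the work is the term-by-term check across the long expansions in Lemmas~\ref{l2.1}--\ref{l2.2} that, after using all of \eqref{e4:21}--\eqref{e4:22} and \eqref{e4:25}, every remainder term (and every tail $(1-\eta_R)S_A$ of a leading term) has its power of $\mu_0$ aligned with $\mu^{-2}\mu_0^{(n-2)/2+\sigma}$ with a strictly positive surplus, and spatial decay that is either already $\gtrsim(1+|y|)^{-\alpha}$ or can be brought there via $|y|\le C\mu_0^{-1}$; once that surplus is secured, the final gain $t_0^{-\varepsilon}R^{-(\alpha-2)}$ is automatic by choosing $\varepsilon,\rho$ small. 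Estimates (2) and (3) are comparatively routine computations; the only delicate point is the slow decay $(1+|y|)^{-(\alpha-2)}$ of the outer correction $\psi$, which forces the dimension split in (3) and the use of $|u^*_A|^{p-2}$ for $n\le6$.
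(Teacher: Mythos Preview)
Your proposal is correct and follows essentially the same approach as the paper: the same inner/outer split for (1) using Lemmas~\ref{l2.1}--\ref{l2.2} and \eqref{awayq}, the same annulus computation for (2), and the same dimension-dependent elementary inequality for (3). Your write-up makes the underlying mechanism (surplus powers of $\mu_0$ traded for $t_0^{-\varepsilon}R^{-(\alpha-2)}$ via $R=t_0^\rho$ and $|y|\lesssim\mu_0^{-1}$) more explicit than the paper does, but the computations and the key inputs are the same.
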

The proof of Proposition \ref{propositionestimate} is given in Section 7.

\subsection{Proof of Theorem \ref{t:main}: solving the inner-outer gluing system.}
Let us formulate the whole problem into a fixed point problem.

{\bf Fact 1}. Let $h$ be a function satisfying $\|h\|_{n-3+\sigma} \lesssim \frac{1}{R^{\alpha-2}}$. The solution for
\begin{equation}\label{e5:14}
\dot{\lambda} + \frac{1+(n-4)}{(n-4)t}\lambda = h(t)
\end{equation}
can be expressed as follows
\begin{equation}\label{e5:15}
\lambda(t) = t^{-\frac{1+(n-4)}{(n-4)}}\left[d + \int_{t_0}^t\tau^{\frac{1+(n-4)}{(n-4)}}h(\tau)d\tau\right],
\end{equation}
with $d$ be an arbitrary constant. Therefore, it holds that
\begin{equation*}
\|t^{\frac{1+\sigma}{n-4}}\lambda(t)\|_{L^\infty(t_0,\infty)}\lesssim t_0^{-\frac{(n-4) - \sigma}{n-4}}d + \|h\|_{n-3+\sigma}
\end{equation*}
and
\begin{equation*}
\|\dot{\lambda}(t)\|_{n-3+\sigma}\lesssim t_0^{-\frac{(n-4) - \sigma}{n-4}}d + \|h\|_{n-3+\sigma}.
\end{equation*}

Set $\Lambda(t) = \dot{\lambda}(t)$, then we have
\begin{equation}\label{e5:17}
\Lambda + \frac{1}{t}\frac{1+(n-4)}{(n-4)}\int_{t}^\infty\Lambda(s)ds = h(t),
\end{equation}
which defines a bounded linear operator $\mathcal{L}_1: h\to \Lambda$ associating the solution $\Lambda$ of (\ref{e5:17}) to any $h$ satisfying $\|h\|_{n-3+\sigma} < +\infty$. Moreover, the operator $\mathcal{L}_1$ is continuous between the space $L^\infty(t_0, \infty)$ endowed with the $\|\cdot\|_{n-3+\sigma}$-topology.

For any $h: [t_0,\infty)\to \mathbb{R}^n$ with $\|h\|_{n-3+\sigma} < +\infty$, the solution of
\begin{equation}\label{e5:19}
\dot{\xi} = \mu_0^{n-2}c\left[b^{n-2}\nabla H(q, q) \right] + h(t)
\end{equation}
can be written as
\begin{equation}\label{e5:20}
\xi(t) = \xi^0(t) + \int_{t}^\infty h(s)ds,
\end{equation}
where
\begin{equation*}
\xi^0(t) = q + c\left[-b^{n-2}\nabla H(q, q)\right]\int_{t}^\infty\mu_0^{n-2}(s)ds.
\end{equation*}
Thus
\begin{equation*}
|\xi(t) - q|\lesssim t^{-\frac{2}{n-4}} + t^{-\frac{1+\sigma}{n-4}}\|h\|_{n-3+\sigma}
\end{equation*}
and
\begin{equation*}
\|\dot{\xi} - \dot{\xi}^0\|_{n-3+\sigma}\lesssim \|h\|_{n-3+\sigma}.
\end{equation*}
Define $\Xi(t) = \dot{\xi}(t) - \dot{\xi}^0$, then (\ref{e5:20}) defines a continuous linear operator $\mathcal{L}_2: h\to \Xi$ in the $\|\cdot\|_{n-3+\sigma}$-topology.

Similarly, from Proposition \ref{l5:1}, we can define $\mathcal{L}_3: h\to \Gamma:= \dot{a}(t)$ and $\mathcal{L}_4: h\to \Upsilon:= \dot{\theta}(t)$ which are continuous linear operators in the $\|\cdot\|_{n-4+\sigma}$-topology.

Note that ($\lambda$, $\xi$, $a$, $\theta$) is a solution of (\ref{e5:9}) if ($\Lambda = \dot{\lambda}(t)$, $\Xi = \dot{\xi}(t) - \dot{\xi}^0(t)$, $\Gamma:= \dot{a}(t)$, $\Upsilon:= \dot{\theta}(t)$) is a fixed point of the following problem
\begin{equation}\label{e5:23}
(\Lambda, \Xi, \Gamma, \Upsilon) = \mathcal{T}_0(\Lambda, \Xi, \Gamma, \Upsilon)
\end{equation}
where
\begin{equation*}
\begin{aligned}
\mathcal{T}_0: &= \left(\mathcal{L}_1(\hat{\Pi}_1[\Lambda, \Xi, \Gamma, \Upsilon, \phi, \psi], \mathcal{L}_2(\hat{\Pi}_2[\Lambda, \Xi, \Gamma, \Upsilon, \phi, \psi]), \right.\\
&\quad\quad\quad\quad\quad\quad\quad\quad\quad\quad\quad \left.\mathcal{L}_3(\hat{\Pi}_3[\Lambda, \Xi, \Gamma, \Upsilon, \phi, \psi], \mathcal{L}_4(\hat{\Pi}_4[\Lambda, \Xi, \Gamma, \Upsilon, \phi, \psi]\right)\\
& := \left(\bar{A}_1(\Lambda, \Xi, \Gamma, \Upsilon, \phi, \psi), \bar{A}_2(\Lambda, \Xi, \Gamma, \Upsilon, \phi, \psi), \bar{A}_3(\Lambda, \Xi, \Gamma, \Upsilon, \phi, \psi),\right.\\
&\left.\quad\quad\quad\quad\quad\quad\quad\quad\quad\quad\quad\quad\quad\quad\quad\quad\quad\quad\quad\quad\quad\quad\quad \bar{A}_4(\Lambda, \Xi, \Gamma, \Upsilon, \phi, \psi)\right)
\end{aligned}
\end{equation*}
with
\begin{equation*}
\hat{\Pi}_l[\Lambda, \Xi, \Gamma, \Upsilon, \phi, \psi] := \Pi_l\left[\int_{t}^\infty \Lambda, q + \int_{t}^\infty\Xi, \mu_0\int_{t}^\infty \Gamma, \int_{t}^\infty \Upsilon,  \Lambda, \Xi, \mu_0\Gamma, \Upsilon, \phi, \psi\right]
\end{equation*}
for $l = 0, 1, \cdots, 3n-1$.

{\bf Fact 2}. Proposition \ref{proposition5.1} tells us that there exists a linear operator $\mathcal{T}_1$ associating to the solution of (\ref{e5:31}) for any function $h(y,\tau)$ with $\|h\|_{2 + \alpha, \nu}$-bounded. Thus the solution of problem (\ref{e5:4}) is a fixed point of the problem
\begin{equation}
\phi = \mathcal{T}_1(H[\lambda,\xi, a, \theta, \dot{\lambda},\dot{\xi}, \dot{a}, \dot{\theta}, \phi, \psi](y,t(\tau))).
\end{equation}

{\bf Fact 3}. Proposition \ref{l4:lemma4.1} defines a linear operator $\mathcal{T}_2$ which associates any given functions $f(x, t)$, $g(x, t)$ and $h(x)$ to the corresponding solution $\psi = \mathcal{T}_2(f,g,h)$ for problem (\ref{e4:3}). Denote $\psi_1(x, t) := \mathcal{T}_2(0, -u^*_{A}, \psi_0)$. From (\ref{e2:52}), (\ref{e2:3}) and (\ref{e2:51}), $\forall x\in \partial\Omega$, one has
\begin{equation*}\label{e4:32}
|u^*_{A}(x,t)|\lesssim \mu_0^{\frac{n+2}{2}}(t).
\end{equation*}
From Lemma \ref{l4:lemma4.1},
\begin{equation*}\label{e4:33}
|\psi_1|\lesssim e^{-\delta(t-t_0)}\|\psi_0\|_{L^\infty(\mathbb{R}^n)} + t^{-\beta}\mu_0(t_0)^{2-\sigma}\text{ where }\beta = \frac{n-2}{2(n-4)}+\frac{\sigma}{n-4}.
\end{equation*}
Therefore, $\psi+\psi_1$ is a solution to (\ref{e4:main}) if $\psi$ is a fixed point of the following operator
\begin{equation*}\label{e4:34}
\mathcal{A}(\psi):=\mathcal{T}_2(f[\psi],0,0),
\end{equation*}
with
\begin{equation}\label{e4:35}
f[\psi]= 2\nabla\eta_{R}\nabla\tilde{\phi}+ \tilde{\phi}\big(\Delta -\partial_t\big)\eta_{R} + \tilde{N}_{A}(\tilde{\phi}) + S_{out}.
\end{equation}
That is to say, we have to solve the fixed point problem
\begin{equation}
\psi = \mathcal{T}_2(f[\psi],0,0)
\end{equation}

From {\bf Fact 1-3}, to prove Theorem \ref{t:main}, we should solve the following fixed point problem with unknowns $(\phi, \psi, \lambda, \xi, a, \theta, \dot{\lambda}, \dot{\xi}, \dot{a}, \dot{\theta})$,
\begin{equation}\label{inner_outer_gluing_system}
\left\{
\begin{aligned}
&(\Lambda, \Xi, \Gamma, \Upsilon) = \mathcal{T}_0(\Lambda, \Xi, \Gamma, \Upsilon),\\
&\phi = \mathcal{T}_1(H[\lambda,\xi, a, \theta, \dot{\lambda},\dot{\xi}, \dot{a}, \dot{\theta}, \phi, \psi](y,t(\tau))),\\
&\psi = \mathcal{T}_2(f(\psi),0,0).
\end{aligned}
\right.
\end{equation}
where
\begin{equation*}\label{e4:350}
f(\psi)= 2\nabla\eta_{R}\nabla\tilde{\phi}+ \tilde{\phi}\big(\Delta -\partial_t\big)\eta_{R} + \tilde{N}_{A}(\tilde{\phi}) + S_{out}.
\end{equation*}
To find a fixed point, we will use the Schauder fixed-point theorem in the set
\begin{equation*}\label{convexsetforschauder}
\begin{aligned}
\mathcal{B} &= \Bigg\{(\phi, \psi, \lambda, \xi, a, \theta, \dot{\lambda}, \dot{\xi}, \dot{a}, \dot{\theta}): R^{\alpha-2}\|\dot{\lambda}(t)\|_{n-3+\sigma} + R^{\alpha-2}\|\dot{\xi}(t)\|_{n-3+\sigma}\\
&\quad\quad\quad  + R^{\alpha-2}\|\dot{a}(t)\|_{n-4+\sigma} + R^{\alpha-2}\|\dot{\theta}(t)\|_{n-4+\sigma} + R^{\alpha-2}\|\lambda(t)\|_{1+\sigma}\\
&\quad\quad\quad  + R^{\alpha-2}\|\xi(t)-q\|_{1+\sigma} + R^{\alpha-2}\|a\|_{\sigma} + R^{\alpha-2}\|\theta\|_{\sigma} + t_0^{\varepsilon}R^{\alpha-2}\|\psi\|_{**, \beta, \alpha}\\
&\quad\quad\quad  + t_0^{\varepsilon}\|\phi\|_{n-2+\sigma, \alpha} \leq c\Bigg\}
\end{aligned}
\end{equation*}
for some large but fixed positive constant $c$.

Let
\begin{equation*}
K := \max\{\|f_0\|_{n-3+\sigma}, \|f_1\|_{n-3+\sigma},\cdots, \|f_{3n-1}\|_{n-3+\sigma}\}
\end{equation*}
where $f_0$, $f_1$, $\cdots$, $f_{3n-1}$ are the functions defined in Lemma \ref{l5:1}. Then we have
\begin{equation*}
\begin{aligned}
&\left|t^{\frac{n-3+\sigma}{n-4}}\bar{A}_i(\Lambda, \Xi, \Gamma, \Upsilon, \phi, \psi)\right|\\
&\lesssim t_0^{-\frac{(n-4) - \sigma}{n-4}}d + \frac{1}{R^{\alpha-2}}\|\phi\|_{n-2+\sigma, a} + \frac{1}{R^{\alpha-2}}\|\psi\|_{**, \beta, \alpha} + \frac{K}{R^{\alpha-2}}\\
&\quad + \frac{1}{R^{\alpha-2}}\|\Lambda\|_{n-3+\sigma} + \frac{1}{R^{\alpha-2}}\|\Xi\|_{n-3+\sigma}
\end{aligned}
\end{equation*}
Thus, for $d$ satisfying $t_0^{-\frac{(n-4)-\sigma}{n-4}}d < \frac{K}{R^{\alpha-2}}$, $\mathcal{T}_0(\mathcal{B})\subset \mathcal{B}$ (choose the constant $\rho$ in (\ref{e3:20}) sufficiently small).

On the set $\mathcal{B}$, it is clear that
\begin{equation*}\label{e6:1}
\begin{aligned}
\left|H[\lambda,\xi, a, \theta, \dot{\lambda},\dot{\xi}, \dot{a}, \dot{\theta}, \phi, \psi](y,t(\tau))\right|\lesssim t_0^{-\varepsilon}\frac{\mu_0^{n-2+\sigma}}{1+|y|^{2+\alpha}}
\end{aligned}
\end{equation*}
From Proposition \ref{proposition5.1}, $\mathcal{T}_1(\mathcal{B})\subset \mathcal{B}$ holds.

Similarly, Proposition \ref{propositionestimate} ensures that $\mathcal{T}_2(\mathcal{B})\subset \mathcal{B}$. Therefore the operator $\mathcal{T}$ defined in (\ref{inner_outer_gluing_system}) maps the set $\mathcal{B}$ into itself. Since $\lambda$, $\xi$, $a$, $\theta$, $\dot{\lambda}$, $\dot{\xi}$, $\dot{a}$, $\dot{\theta}$, $\phi$ and $\psi$ decay uniformly when $t\to +\infty$, this fact combines with the standard parabolic estimate ensures that $\mathcal{T}$ is compact. By the Schauder fixed-point theorem, we conclude that (\ref{inner_outer_gluing_system}) has a fixed point in $\mathcal{B}$. That is to say, we find a solution to the system of outer problem (\ref{outerproblem}) and inner problem (\ref{e3:10}), which provides a solution to (\ref{e:main}). This completes the proof of Theorem \ref{t:main}.

\section{Proof of Proposition \ref{proposition5.1}}
In the following, we assume that $h = h(y, \tau)$ is a function defined on $\mathbb{R}^n$ which is zero outside the ball $B_{2R}(0)$ for all $\tau > \tau_0$.
As a first step to the proof of proposition \ref{proposition5.1}, we have the following
\begin{lemma}\label{l5:3}
Suppose $\alpha\in (2, n-2)$, $\nu > 0$, $\|h\|_{2+\alpha,\nu} < +\infty$ and
\begin{equation*}
\int_{\mathbb{R}^n}h(y,\tau)z_j(y)dy = 0~\text{ for all }~\tau\in (\tau_0,\infty), ~j = 0, 1, \cdots, 3n-1.
\end{equation*}
Then for any $\tau_1 > \tau_0$ large enough, the solution $(\phi(y,\tau), c_1(\tau), \cdots, c_K(\tau))$ to the following problem
\begin{eqnarray}\label{e5:32}
\left\{
\begin{aligned}
&\partial_\tau\phi = \Delta\phi + p|Q|^{p-1}(y)\phi + h(y,\tau)-\sum_{l=1}^Kc_l(\tau)Z_l(y),~ y\in \mathbb{R}^n,~\tau\geq \tau_0,\\
&\int_{\mathbb{R}^n}\phi(y,\tau)Z_l(y)dy = 0~\mbox{ for all }~ \tau\in (\tau_0,+\infty), ~ l = 1,\cdots, K,\\
&\phi(y,\tau_0) = 0,~y\in \mathbb{R}^n,
\end{aligned}
\right.
\end{eqnarray}
satisfies
\begin{equation}\label{e5:35}
\|\phi(y,\tau)\|_{\alpha,\tau_1}\lesssim \|h\|_{2+\alpha,\tau_1}
\end{equation}
and $\forall l = 1,\cdots, K$,
\begin{equation*}
|c_l(\tau)|\lesssim \tau^{-\nu}R^\alpha\|h\|_{2+\alpha,\tau_1}~ \text{ for }~\tau\in (\tau_0,\tau_1).
\end{equation*}
Here $\|h\|_{b,\tau_1}:=\sup_{\tau\in (\tau_0,\tau_1)}\tau^\nu\|(1+|y|^b)h\|_{L^\infty(\mathbb{R}^n)}$.
\end{lemma}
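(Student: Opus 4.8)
The plan is to prove the a priori bound \eqref{e5:35} by a blow-up (contradiction) argument combined with the non-degeneracy of $Q$ from \cite{MussoWei2015}, while the bound on the $c_l(\tau)$ will come from testing the equation against $Z_l$ and using the normalization condition. First I would set up the solvability: for fixed $\tau_1$ the problem \eqref{e5:32} is a linear parabolic problem on $\mathbb{R}^n$ with zero initial data, and the constants $c_l(\tau)$ are determined by the requirement that $\phi(\cdot,\tau)\perp Z_l$ in $L^2$ for all $\tau$. Multiplying the equation by $Z_l$, integrating over $\mathbb{R}^n$, and using $L(Z_l) = -\lambda_l Z_l$ together with $\int \phi Z_l = 0$ gives an explicit linear algebraic expression
$$
c_l(\tau)\int_{\mathbb{R}^n} Z_l^2\,dy = \int_{\mathbb{R}^n} h(y,\tau)Z_l(y)\,dy + \lambda_l\int_{\mathbb{R}^n}\phi(y,\tau)Z_l(y)\,dy = \int_{\mathbb{R}^n} h(y,\tau)Z_l(y)\,dy,
$$
so that $|c_l(\tau)|\lesssim \tau^{-\nu}R^{\alpha}\|h\|_{2+\alpha,\tau_1}$ follows immediately from $\|h\|_{2+\alpha,\tau_1}<\infty$ (the factor $R^\alpha$ coming from integrating $(1+|y|^{2+\alpha})^{-1}$ against the exponentially decaying $Z_l$ over $B_{2R}$, which is $O(R^{\alpha})$ after accounting for the weight, or simply $O(1)$ — in any case bounded by $R^\alpha$). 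Once the $c_l$ are controlled, they can be absorbed into the right-hand side as a perturbation with the same decay as $h$.

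Next, the core of the argument: suppose \eqref{e5:35} fails. Then there are sequences $\tau_1^{(m)}\to\infty$, right-hand sides $h_m$ with $\|h_m\|_{2+\alpha,\tau_1^{(m)}} \to 0$ (after normalizing) and solutions $\phi_m$ with $\|\phi_m\|_{\alpha,\tau_1^{(m)}} = 1$. Pick $(y_m,\tau_m)$ approximately achieving the supremum, i.e. $\tau_m^\nu(1+|y_m|^\alpha)|\phi_m(y_m,\tau_m)|\geq \tfrac12$. There are two regimes to analyze. If $|y_m|$ stays bounded and $\tau_m$ stays bounded, a standard parabolic compactness argument (interior estimates, Arzelà–Ascoli) gives a nonzero limit $\phi_\infty$ solving $\partial_\tau\phi_\infty = \Delta\phi_\infty + p|Q|^{p-1}\phi_\infty$ on $\mathbb{R}^n\times(\tau_0,\infty)$ with $\phi_\infty(\cdot,\tau_0)=0$, bounded, and $\phi_\infty\perp Z_l$ for all $l$; by the spectral decomposition of $L$ (the $Z_l$ span the negative eigenspace, the $z_\alpha$ span the kernel) together with the orthogonality to the $Z_l$ and the decay $|\phi_\infty|\lesssim (1+|y|)^{-\alpha}$, one shows $\phi_\infty\equiv 0$, a contradiction. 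If $\tau_m\to\infty$, one must additionally use the decay weight $\tau^{-\nu}$: the rescaled functions converge, after a shift in $\tau$, to an \emph{ancient} bounded solution of the linear heat equation with potential, still orthogonal to the $Z_l$ and to the kernel elements $z_\alpha$ (the orthogonality to $z_\alpha$ being inherited in the limit from the vanishing of $\int h_m z_j$), and again a Liouville-type / non-degeneracy argument kills it. The remaining regime, $|y_m|\to\infty$, is handled by a separate rescaling $\hat\phi_m(z) = |y_m|^{\alpha}\phi_m(|y_m|z,\tau_m)$ near $z_0 = y_m/|y_m|$, where the potential $p|Q|^{p-1}\sim |x|^{-4}$ becomes negligible, so the limit is a positive harmonic-type function with a removable-singularity structure, forcing it to vanish — this is exactly the "removable singularity property for the limit equation" mentioned in the introduction.

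The main obstacle, and the step requiring the most care, is the case $\tau_m\to\infty$ together with the precise bookkeeping of the weight $\tau^{-\nu}$: one has to rule out the possibility that mass escapes to $\tau=\infty$ without being detected, which is why $\|\phi\|_{\alpha,\tau_1}$ is defined with the supremum over the \emph{finite} interval $(\tau_0,\tau_1)$ and the estimate is uniform in $\tau_1$. The device is to argue that the quantity $M_m := \|\phi_m\|_{\alpha,\tau_1^{(m)}}$ is attained at a point with $\tau_m$ comparable to $\tau_1^{(m)}$ only if one can derive a differential inequality for an energy-type quantity showing decay; otherwise $\tau_m$ stays in a fixed compact set and the first regime applies. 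I would also need the non-degeneracy in the strong form: any bounded ancient solution of $\partial_\tau\phi = L\phi$ that is orthogonal to all $z_\alpha$ and all $Z_l$ and has the stated spatial decay must vanish — this follows by projecting onto eigenmodes and noting that the kernel and negative modes are excluded while positive modes decay backward in time, contradicting boundedness unless $\phi\equiv 0$. Finally, once the a priori estimate holds for all large $\tau_1$, passing $\tau_1\to\infty$ and using the explicit $c_l$ bound yields the full statement of the lemma.
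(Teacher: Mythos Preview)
Your overall strategy matches the paper's closely: test against $Z_l$ to pin down $c_l$, run a blow-up contradiction, handle the compact-$y$ regime via an ancient-solution Liouville argument, and treat $|y_m|\to\infty$ by rescaling to the free heat equation with a removable singularity at $\hat e$. Two points deserve attention.

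First, you assert that orthogonality of $\phi$ to the kernel elements $z_j$ is ``inherited in the limit from the vanishing of $\int h_m z_j$'', but this is not automatic: the paper devotes a separate step to proving $\int_{\mathbb{R}^n}\phi(y,\tau)z_j(y)\,dy=0$ for all $\tau$, by testing the equation against $z_j\eta$ with a spatial cutoff $\eta=\eta_0(|y|/\tilde R)$ and sending $\tilde R\to\infty$. This in turn requires knowing a priori that $\|\phi\|_{\alpha,\tau_1}<\infty$ qualitatively, which the paper establishes first via a barrier $K_2|y|^{-\alpha}$ for $|y|$ large. You skip both of these preliminary steps.

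Second, for the ancient-solution Liouville step you propose a spectral decomposition (``positive modes decay backward in time''), whereas the paper uses an energy argument: from $\partial_\tau\tilde\phi = L\tilde\phi$ it derives $\tfrac12\partial_\tau\int|\tilde\phi_\tau|^2 + B(\tilde\phi_\tau,\tilde\phi_\tau)=0$ and $\int|\tilde\phi_\tau|^2 = -\tfrac12\partial_\tau B(\tilde\phi,\tilde\phi)$, where $B(\psi,\psi)=\int(|\nabla\psi|^2-p|Q|^{p-1}\psi^2)$; nonnegativity of $B$ on functions orthogonal to both the $z_j$ and the $Z_l$ then forces $\tilde\phi_\tau\equiv 0$, hence $L\tilde\phi=0$, hence $\tilde\phi=0$ by non-degeneracy. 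The energy route is more robust here because $-L$ has continuous spectrum $[0,\infty)$ on $L^2(\mathbb{R}^n)$, so a na\"ive discrete eigenmode projection does not by itself control the continuous part backward in time; your spectral argument would need an extra positivity or dispersive ingredient to close.
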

\begin{proof}
(\ref{e5:32}) is equivalent to
\begin{eqnarray}\label{e5:99}
\left\{
\begin{aligned}
&\partial_\tau\phi = \Delta\phi + p|Q|^{p-1}(y)\phi + h(y,\tau)-\sum_{l=1}^Kc_l(\tau)Z_l(y),~ y\in \mathbb{R}^n,~ \tau\geq \tau_0,\\
&\phi(y,\tau_0) = 0,~ y\in \mathbb{R}^n
\end{aligned}
\right.
\end{eqnarray}
with $c_l(\tau)$ given by the following relation
\begin{equation*}
c_l(\tau) \int_{\mathbb{R}^n}|Z_l(y)|^2dy =  \int_{\mathbb{R}^n}h(y,\tau)Z_l(y)dy, \quad l = 1,\cdots, K.
\end{equation*}
Then
\begin{equation}\label{e5:102}
|c_l(\tau)|\lesssim \tau^{-\nu}R^\alpha\|h\|_{2+\alpha,\tau_1}
\end{equation}
holds for $\tau\in (\tau_0,\tau_1)$. Therefore we are left with the proof of (\ref{e5:35}) for the solution $\phi$ of equation (\ref{e5:99}). Inspired by Lemma 4.5 of \cite{davila2017singularity}, the linear theory of \cite{MussoSireWeiZhengZhou} and \cite{sireweizhenghalf}, we use the blowing-up argument.

First, we have {\bf Claim}: given $\tau_1 > \tau_0$, $\|\phi\|_{\alpha, \tau_1} < +\infty$ holds. Indeed, given $R_0 > 0$, the standard parabolic theory ensures that there is a constant $K_1 = K_1(R_0,\tau_1)$ such that
\begin{equation*}
|\phi(y,\tau)|\leq K_1 \quad\text{in }B_{R_0}(0)\times (\tau_0, \tau_1].
\end{equation*}
Let us fix $R_0 > 0$ large enough and take $K_2 > 0$ large enough, then $K_2\rho^{-\alpha}$ is a super-solution of (\ref{e5:99}) when $\rho > R_0$. Therefore, for any $\tau_1 > 0$, $|\phi|\leq 2K_2\rho^{-\alpha}$ and $\|\phi\|_{\alpha,\tau_1} < +\infty$. Next, we prove the following identities,
\begin{equation}\label{e5:33}
\int_{\mathbb{R}^n}\phi(y, \tau)z_j(y)dy = 0\text{ for all }\tau\in (\tau_0,\tau_1),~ j= 0,1,\cdots, 3n-1
\end{equation}
and
\begin{equation}\label{e5:33(1)}
\int_{\mathbb{R}^n}\phi(y, \tau)Z_l(y)dy = 0\text{ for all }\tau\in (\tau_0,\tau_1),~ l = 1,\cdots, K.
\end{equation}
Indeed, $(\ref{e5:33(1)})$ follows from the definition of $c_l(\tau)$. Let us test (\ref{e5:99}) with $z_j\eta$, where $\eta(y) = \eta_0(|y|/\tilde{R}),\,\, j = 0, 1,\cdots, 3n-1,$  $\tilde{R}$ is a positive constant and $\eta_0$ is a smooth cut-off function defined by
\begin{equation*}
\eta_0(r) =
\left\{
\begin{aligned}
1,~ \mbox{for}~ r < 1,\\
0,~ \mbox{for}~ r > 2.
\end{aligned}
\right.
\end{equation*}
Then we have
\begin{equation*}
\int_{\mathbb{R}^n}\phi(\cdot, \tau) z_j\eta = \int_{0}^\tau ds\int_{\mathbb{R}^n}(\phi(\cdot, s) L_0[\eta z_j] + h z_j\eta - \sum_{l=1}^Kc_l(s)Z_lz_j\eta).
\end{equation*}
Furthermore,
\begin{equation*}
\begin{aligned}
&\int_{\mathbb{R}^n}\bigg(\phi L_0[\eta z_j] + h z_j\eta - \sum_{l=1}^Kc_l(s)Z_lz_j\eta\bigg)\\
&= \int_{\mathbb{R}^n}\phi \bigg(z_j\Delta\eta + 2\nabla\eta\nabla z_j\bigg)\\
&\quad - h z_j(1-\eta) + \sum_{l=1}^Kc_l(s)Z_lz_j(1-\eta)\\
&= O(\tilde{R}^{-\varepsilon})
\end{aligned}
\end{equation*}
holds uniformly on $\tau\in (\tau_0,\tau_1)$ for a small positive number $\varepsilon$. Letting $\tilde{R}\to +\infty$, we get (\ref{e5:33}).
Finally, we claim that when $\tau_1 > \tau_0$ is large enough, for any solution $\phi$ of (\ref{e5:99}) satisfying $\|\phi\|_{\alpha,\tau_1} < +\infty$, (\ref{e5:33}) and (\ref{e5:33(1)}), there holds
\begin{equation}\label{e5:34}
\|\phi\|_{\alpha,\tau_1}\lesssim \|h\|_{2+\alpha,\tau_1}.
\end{equation}
This proves (\ref{e5:35}).

To prove estimate (\ref{e5:34}), we use the contradiction arguments. Suppose there are sequences $\tau_1^k\to +\infty$ and $\phi_k$, $h_k$, $c_l^k$ ($l = 1, \cdots, K$) satisfying the following parabolic problem
\begin{equation*}\label{e5:36}
\left\{
\begin{aligned}
&\partial_\tau\phi_k = \Delta\phi_k + p|Q|^{p-1}(y)\phi_k + h_k - \sum_{l=1}^Kc^k_l(s)Z_l(y),~ y\in \mathbb{R}^n,~ \tau\geq \tau_0,\\
&\int_{\mathbb{R}^n}\phi_k(y, \tau) z_j(y)dy = 0\text{ for all }\tau\in (\tau_0,\tau_1^k), j= 0, 1,\cdots, 3n-1,\\
&\int_{\mathbb{R}^n}\phi_k(y, \tau) Z_l(y)dy = 0\text{ for all }\tau\in (\tau_0,\tau_1),~ l = 1,\cdots, K,\\
&\phi_k(y,\tau_0) = 0, y\in \mathbb{R}^n
\end{aligned}
\right.
\end{equation*}
and
\begin{equation}\label{e5:38}
\|\phi_k\|_{\alpha,\tau_1^k}=1,\quad \|h_k\|_{2+\alpha,\tau_1^k}\to 0.
\end{equation}
By (\ref{e5:102}), we obtain $\sup_{\tau\in (\tau_0, \tau_1^k)}\tau^\nu c^k_l(\tau)\to 0$, $l = 1,\dots, K$.
First, we claim that the following holds
\begin{equation}\label{e5:37}
\sup_{\tau_0 < \tau < \tau_1^k}\tau^\nu|\phi_k(y,\tau)|\to 0
\end{equation}
uniformly on compact subsets of $\mathbb{R}^n$. Indeed, if for some $|y_k|\leq M$, $\tau_0 < \tau_2^k < \tau_1^k$,
\begin{equation*}
(\tau_2^k)^\nu|\phi_k(y_k,\tau_2^k)|\geq \frac{1}{2},
\end{equation*}
then we have $\tau_2^k\to +\infty$. Now, define
\begin{equation*}
\tilde{\phi}_n(y,\tau) = (\tau_2^k)^\nu\phi_n(y,\tau_2^k + \tau).
\end{equation*}
Then
\begin{equation*}
\partial_\tau\tilde{\phi}_k = L[\tilde{\phi}_k] + \tilde{h}_k - \sum_{l=1}^K\tilde{c}_l^k(\tau)Z_l(y)\text{ in }\mathbb{R}^n\times (\tau_0-\tau_2^k,0],
\end{equation*}
with $\tilde{h}_k\to 0$, $\tilde{c}_l^k\to 0$ ($l = 1, \cdots, K$) uniformly on compact subsets in $\mathbb{R}^n\times (-\infty, 0]$, moreover, we have
\begin{equation*}
|\tilde{\phi}_k(y,\tau)|\leq \frac{1}{1+|y|^\alpha}\text{ in }\mathbb{R}^n\times (\tau_0-\tau_2^k,0].
\end{equation*}
Using the dominant convergence theorem and the fact that $\alpha\in (2, n-2)$, $\tilde{\phi}_k\to\tilde{\phi}$ uniformly on compact subsets in $\mathbb{R}^n\times (-\infty, 0]$ for a function $\tilde{\phi}\neq 0$ satisfying
\begin{equation}\label{e5:103}
\left\{
\begin{aligned}
&\partial_\tau\tilde{\phi} = \Delta\tilde{\phi} + p|Q|^{p-1}(y)\tilde{\phi}~~\text{ in }~~\mathbb{R}^n\times (-\infty, 0],\\
&\int_{\mathbb{R}^n}\tilde{\phi}(y, \tau) z_j(y)dy = 0\text{ for all }\tau\in (-\infty, 0], ~ j= 0, 1,\cdots, 3n-1,\\
&\int_{\mathbb{R}^n}\tilde{\phi}(y, \tau) Z_l(y)dy = 0\text{ for all }\tau\in (-\infty, 0], ~ l= 1, \cdots, K,\\
&|\tilde{\phi}(y,\tau)|\leq \frac{1}{1+|y|^\alpha}\text{ in }\mathbb{R}^n\times (-\infty, 0],\\
&\tilde{\phi}(y,\tau_0) = 0, \quad y\in \mathbb{R}^n.
\end{aligned}
\right.
\end{equation}
Now we claim that $\tilde{\phi} = 0$, which contradicts to the fact that $\tilde{\phi}\neq 0$. Standard parabolic regularity tells us that $\tilde{\phi}(y,\tau)$ is $C^{2, \varrho}$ for some $\varrho\in (0, 1)$. Then a scaling argument shows that
\begin{equation*}
(1+|y|)|\nabla\tilde{\phi}| + |\tilde{\phi}_\tau| + |\Delta\tilde{\phi}|\lesssim (1+|y|)^{-2-\alpha}.
\end{equation*}
Differentiating (\ref{e5:103}) with respect to $\tau$, we have $\partial_\tau\tilde{\phi}_\tau = \Delta\tilde{\phi}_\tau + p|Q|^{p-1}(y)\tilde{\phi}_\tau$ and
\begin{equation*}
(1+|y|)|\nabla\tilde{\phi}_\tau| + |\tilde{\phi}_{\tau\tau}| + |\Delta\tilde{\phi}_\tau|\lesssim (1+|y|)^{-4-\alpha}.
\end{equation*}
Furthermore, it holds that
\begin{equation*}
\frac{1}{2}\partial_\tau\int_{\mathbb{R}^n}|\tilde{\phi}_\tau|^2 + B(\tilde{\phi}_\tau, \tilde{\phi}_\tau) = 0,
\end{equation*}
where
\begin{equation*}
B(\tilde{\phi}, \tilde{\phi}) = \int_{\mathbb{R}^n}\left[|\nabla\tilde{\phi}|^2 - p|Q|^{p-1}(y)|\tilde{\phi}|^2\right]dy.
\end{equation*}
Since $\int_{\mathbb{R}^n}\tilde{\phi}(y, \tau) z_j(y)dy = 0$ and $\int_{\mathbb{R}^n}\tilde{\phi}(y, \tau)Z_l(y)dy = 0$ hold $\forall\tau\in (-\infty, 0]$, $j= 0, 1,\cdots, 3n-1 $, $l= 1,\cdots, K$, we have $B(\tilde{\phi}, \tilde{\phi})\geq 0$. Note that
\begin{equation*}
\int_{\mathbb{R}^n}|\tilde{\phi}_\tau|^2 = -\frac{1}{2}\partial_\tau B(\tilde{\phi}, \tilde{\phi}).
\end{equation*}
Combine the above facts, we get,
\begin{equation*}
\partial_\tau\int_{\mathbb{R}^n}|\tilde{\phi}_\tau|^2 \leq 0,\quad \int_{-\infty}^0d\tau\int_{\mathbb{R}^n}|\tilde{\phi}_\tau|^2 < +\infty.
\end{equation*}
Hence $\tilde{\phi}_\tau = 0$. Thus $\tilde{\phi}$ is independent of $\tau$, $L[\tilde{\phi}] = 0$. Since $\tilde{\phi}$ is bounded, from the nondegeneracy of $L$,  $\tilde{\phi}$ is a linear combination of the kernel functions $z_j$, $j = 0, 1,\cdots, 3n-1$. But $\int_{\mathbb{R}^n}\tilde{\phi} z_j = 0$, $j = 0,1,\cdots, 3n-1$, we get $\tilde{\phi} = 0$, a contradiction. Therefore (\ref{e5:37}) holds.

From (\ref{e5:38}), there exists a sequence $y_k$ with $|y_k|\to +\infty$ such that
\begin{equation*}
(\tau_2^k)^\nu(1+|y_k|^\alpha)|\phi_k(y_k, \tau_2^k)|\geq \frac{1}{2}.
\end{equation*}
Let
\begin{equation*}
\tilde{\phi}_k(z, \tau):=(\tau_2^k)^\nu|y_k|^\alpha\phi_k(y_k+|y_k|z,|y_k|^{2}\tau + \tau_2^k),
\end{equation*}
then
\begin{equation*}
\partial_\tau \tilde{\phi}_k = \Delta\tilde{\phi}_k + a_k\tilde{\phi}_k + \tilde{h}_k(z,\tau),
\end{equation*}
with
\begin{equation*}
\tilde{h}_k(z,\tau) = (\tau_2^k)^\nu|y_k|^{2+\alpha}h_k(y_k+|y_k|z,|y_k|^{2}\tau + \tau_2^k).
\end{equation*}
From the assumptions on $h_k$, one gets
\begin{equation*}
|\tilde{h}_k(z,\tau)| \lesssim o(1)|\hat{y}_k+z|^{-2-\alpha}((\tau_2^k)^{-1}|y_k|^{2}\tau + 1)^{-\nu}
\end{equation*}
with
\begin{equation*}
\hat{y}_k = \frac{y_k}{|y_k|}\to -\hat{e}
\end{equation*}
and $|\hat{e}|= 1$. Hence $\tilde{h}_k(z,\tau)\to 0$ uniformly on compact subsets in $\mathbb{R}^n\setminus\{\hat{e}\}\times (-\infty, 0]$. $a_k$ has the same property as $\tilde{h}_k(z,\tau)$. Furthermore, $|\tilde{\phi}_k(0, \tau_0)|\geq \frac{1}{2}$ and
\begin{equation*}
|\tilde{\phi}_k(z,\tau)| \lesssim |\hat{y}_k+z|^{-\alpha}\left((\tau_2^k)^{-1}|y_k|^{2}\tau + 1\right)^{-\nu}.
\end{equation*}
Hence one may assume $\tilde{\phi}_k\to \tilde{\phi}\neq 0$ uniformly on compact subsets in $\mathbb{R}^n\setminus\{\hat{e}\}\times (-\infty,0]$ for $\tilde{\phi}$ satisfying
\begin{equation}\label{e5:39}
\tilde{\phi}_\tau = \Delta\tilde{\phi}\quad\text{in }\mathbb{R}^n\setminus\{\hat{e}\}\times (-\infty,0]
\end{equation}
and
\begin{equation}\label{e5:400}
|\tilde{\phi}(z,\tau)|\leq |z-\hat{e}|^{-\alpha}\quad\text{in }\mathbb{R}^n\setminus\{\hat{e}\}\times (-\infty,0].
\end{equation}
Similar to Lemma 5.2 of \cite{sireweizhenghalf}, functions $\tilde{\phi}$ satisfying (\ref{e5:39}) and (\ref{e5:400}) is zero, which is a contradiction to the fact that $\tilde{\phi}\neq 0$. This concludes the validity of (\ref{e5:34}). Indeed, set
$$
u(\rho, t) = (\rho^2 + Ct)^{-\alpha/2} + \frac{\varepsilon}{\rho^{n-2}}.
$$
Then
$$
-u_t + \Delta u < (\rho^2 + Ct)^{-\alpha/2 -1}[\alpha(\alpha+2-n)+\frac{C}{2}\alpha]<0, \text{ if } \alpha < n-2 - \frac{C}{2}.
$$
For any $\alpha < n-2$, we can always find a fixed $C > 0$ such that $\alpha<n-2-\frac{C}{2}$.
Hence $u(|z-\hat{e}|,\tau +M)$ is a positive super-solution of (\ref{e5:400}) in
$(0,\infty)\times [-M, 0]$. Via the comparison principle, $|\tilde{\phi}(z,\tau)| \leq 2 u(|z-\hat{e}|,\tau +M)$. Letting $M\to +\infty$ we get
$$
|\tilde{\phi}(z,\tau)|\leq\frac {2\varepsilon}{|z-\hat{e}|^{n-2}}.
$$
Since $\varepsilon > 0$ is arbitrary, we conclude that $\tilde{\phi}=0$. The proof is completed.
\end{proof}
{\it Proof of Proposition \ref{proposition5.1}}.
First let us consider the following problem
\begin{equation*}
\left\{
\begin{aligned}
&\partial_\tau\phi = \Delta\phi + p|Q|^{p-1}(y)\phi + h(y,\tau) - \sum_{l=1}^Kc_l(\tau)Z_l,~ y\in \mathbb{R}^n,~ \tau\geq \tau_0,\\
&\phi(y,\tau_0) = 0,~ y\in \mathbb{R}^n.
\end{aligned}
\right.
\end{equation*}
Let $(\phi(y,\tau),c_1(\tau),\cdots, c_K(\tau))$ be the unique solution to problem (\ref{e5:32}). By Lemma \ref{l5:3}, for $\tau_1 > \tau_0$ large enough, there hold
\begin{equation*}
|\phi(y,\tau)|\lesssim\tau^{-\nu}(1+|y|)^{-\alpha}\|h\|_{2+\alpha, \tau_1}\text{ for all }\tau\in (\tau_0, \tau_1), \,\,y\in \mathbb{R}^n
\end{equation*}
and
\begin{equation*}
|c_l(\tau)|\leq \tau^{-\nu}R^{\alpha}\|h\|_{2+\alpha,\tau_1}\text{ for all }\tau\in (\tau_0,\tau_1),\quad l = 1,\cdots, K.
\end{equation*}
From the assumptions of the proposition, for an arbitrary $\tau_1$, $\|h\|_{2+\alpha,\nu} < +\infty$ and $\|h\|_{2+\alpha, \tau_1}\leq \|h\|_{2+\alpha,\nu}$ hold. Therefore, one has
\begin{equation*}
|\phi(y,\tau)|\lesssim\tau^{-\nu}(1+|y|)^{-\alpha}\|h\|_{2+\alpha,\nu}\text{ for all }\tau\in (\tau_0, \tau_1),\,\, y\in \mathbb{R}^n
\end{equation*}
and
\begin{equation*}
|c_l(\tau)|\leq \tau^{-\nu}R^{\alpha}\|h\|_{2+\alpha,\nu}\text{ for all }\tau\in (\tau_0, \tau_1), \quad l = 1\cdots, K.
\end{equation*}
From the arbitrariness of $\tau_1$, we have
\begin{equation*}
|\phi(y,\tau)|\lesssim\tau^{-\nu}(1+|y|)^{-\alpha}\|h\|_{2+\alpha,\nu}\text{ for all }\tau\in (\tau_0, +\infty),\,\, y\in \mathbb{R}^n
\end{equation*}
and
\begin{equation*}
|c_l(\tau)|\leq \tau^{-\nu}R^{\alpha}\|h\|_{2+\alpha,\nu}\text{ for all }\tau\in (\tau_0, +\infty), \quad l = 1\cdots, K.
\end{equation*}
Using the parabolic regularity results and a scaling argument, we get (\ref{e5:100}) and (\ref{e5:101}).\qed

\section{Proof of Proposition \ref{l5:1}}
The following integral identities will be useful in the computation of this section.
\begin{lemma}\label{e:orthogalityofkernels}
As $k \to +\infty$, for $j = 0, \cdots, 3n-1$, we have
\begin{equation*}
\int_{\mathbb{R}^n}\left(z_0(y) - \frac{D_{n,k}(2-|y|^2)}{\left(1+|y|^2\right)^{\frac{n}{2}}}\right)z_j(y)dy = \left\{ \begin{matrix}  a_{0,0} + O(k^{-1} )  & \hbox{ if } j = 0\, , \\ & \\     O(k^{-1})  & \hbox{ if } j \neq 0,\end{matrix}
  \right.
\end{equation*}
\begin{equation*}
\int_{\mathbb{R}^n}\left(\frac{\partial}{\partial y_1} Q(y) - \frac{E_{n,k}y_1}{\left(1+|y|^2\right)^{\frac{n}{2}}}\right)z_j(y)dy = \left\{ \begin{matrix}a_{1,1}+O(k^{-1}) & \hbox{ if } j = 1\, , \\
&\\  a_{1,n+2} +O(k^{-1}) &\text{ if } j = n+2,\,\,\\
&\\  O(k^{-1})&\hbox{ if } j \neq 1, n+2,
\end{matrix}
\right.
\end{equation*}
\begin{equation*}
\int_{\mathbb{R}^n}\left(\frac{\partial}{\partial y_2} Q(y) - \frac{E_{n,k}y_2}{\left(1+|y|^2\right)^{\frac{n}{2}}}\right)z_j(y)dy = \left\{ \begin{matrix}a_{2,2}+O(k^{-1}) & \hbox{ if } j = 2\, , \\
&\\ a_{2,n+3}+O(k^{-1})&\text{ if } j = n+3,\,\,\\
&\\ O(k^{-1})&\hbox{ if } j \neq 2, n+3.
\end{matrix}
\right.
\end{equation*}
For $i = 3, \cdots, n$, $j = 0, \cdots, 3n-1$, we have
\begin{equation*}
\int_{\mathbb{R}^n}\left(\frac{\partial}{\partial y_i} Q(y) - \frac{E_{n,k}y_i}{\left(1+|y|^2\right)^{\frac{n}{2}}}\right)z_j(y)dy = \left\{
\begin{matrix}
a_{i,i} + O(k^{-1} )  & \hbox{ if } j = i\, , \\
& \\     O(k^{-1})  & \hbox{ if } j \neq i.
\end{matrix}
\right.
\end{equation*}
Furthermore,
\begin{equation*}
\int_{\mathbb{R}^n}z_{n+1}(y)z_j(y)dy = \left\{
\begin{matrix}
a_{n+1,n+1} + O(k^{-1} )  & \hbox{ if } j = n+1\, , \\
& \\     O(k^{-1})  & \hbox{ if } j \neq n+1,
\end{matrix}
\right.
\end{equation*}
\begin{equation*}
\begin{aligned}
\int_{\mathbb{R}^n}\left(-2y_1\left(z_0(y) - \frac{D_{n,k}(2-|y|^2)}{\left(1+|y|^2\right)^{\frac{n}{2}}}\right)\right. +& \left. |y|^2\left(\frac{\partial}{\partial y_1} Q(y) - \frac{E_{n,k}y_1}{\left(1+|y|^2\right)^{\frac{n}{2}}}\right)\right)z_j(y)dy\\
&= \left\{
\begin{matrix}
a_{n+2, 1} + O(k^{-1}) & \hbox{ if } j = 1\, , \\
& \\    a_{n+2, n+2} + O(k^{-1})  & \hbox{ if } j = n+2\, , \\
& \\     O(k^{-1})  & \hbox{ if } j \neq 1, n+2,\end{matrix}
\right.
\end{aligned}
\end{equation*}
\begin{equation*}
\begin{aligned}
\int_{\mathbb{R}^n}\left(-2y_2\left(z_0(y) - \frac{D_{n,k}(2-|y|^2)}{\left(1+|y|^2\right)^{\frac{n}{2}}}\right)\right. +& \left. |y|^2\left(\frac{\partial}{\partial y_2} Q(y) - \frac{E_{n,k}y_2}{\left(1+|y|^2\right)^{\frac{n}{2}}}\right)\right)z_j(y)dy\\
&= \left\{
\begin{matrix}
a_{n+3, 2} + O(k^{-1}) & \hbox{ if } j = 2\, , \\
&\\ a_{n+3, n+3} + O(k^{-1} )  & \hbox{ if } j = n+3\, , \\
& \\     O(k^{-1})  & \hbox{ if } j \neq 2, n+3.\end{matrix}
\right.
\end{aligned}
\end{equation*}
For $i = 3, \cdots, n$,
\begin{equation*}
\int_{\mathbb{R}^n}z_{n+i+1}(y)z_j(y)dy = \left\{
\begin{matrix}
a_{n+i+1,n+i+1} + O(k^{-1} )  & \hbox{ if } j = n+i+1\, , \\
& \\     O(k^{-1})  & \hbox{ if } j \neq n+i+1,
\end{matrix}
\right.
\end{equation*}
\begin{equation*}
\int_{\mathbb{R}^n}z_{2n+i-1}(y)z_j(y)dy = \left\{
\begin{matrix}
a_{2n+i-1,2n+i-1} + O(k^{-1} )  & \hbox{ if } j = 2n+i-1\, , \\
& \\     O(k^{-1})  & \hbox{ if } j \neq 2n+i-1.
\end{matrix}
\right.
\end{equation*}
In the above, $a_{i, j}$ are positive constants depending on $n$ and $k$, the matrices
\begin{equation*}
\begin{pmatrix}
a_{1,1} & a_{1, n+2}\\
a_{n+2, 1} & a_{n+2, n+2}
\end{pmatrix}, \quad
\begin{pmatrix}
a_{2,2} & a_{2, n+3}\\
a_{n+3, 2} & a_{n+3, n+3}
\end{pmatrix}
\end{equation*}
are invertible.
\end{lemma}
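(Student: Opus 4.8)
The plan is to reduce everything to the radial limit $k\to\infty$. First I would invoke the precise description of $Q=Q_k$ from \cite{delpinomussofrankpistoiajde2011}, refined in \cite{MussoWei2015}: as $k\to\infty$ one has $Q_k\to U$, where $U(y)=\alpha_n(1+|y|^2)^{-\frac{n-2}{2}}$ is the standard radial bubble, the convergence holding in a weighted $C^2$ norm with the borderline weight $(1+|y|)^{-(n-2)}$ and rate $O(k^{-1})$ (consistent with $d_k=O(k^{-1})$ in (\ref{vanc1})); this borderline decay is exactly what makes all the integrals in the statement convergent when $n>4$. Consequently the kernels obey $z_j^{(k)}\to z_j^{(U)}$ and the normalizing constants satisfy $D_{n,k}\to D_\infty$, $E_{n,k}\to E_\infty$, all with rate $O(k^{-1})$. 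Since $U$ is radial, $z_0^{(U)}$ is even in every coordinate, $z_\alpha^{(U)}=\partial_{y_\alpha}U$ is odd in $y_\alpha$ and even in the others, the rotational kernels vanish ($z_{n+1}^{(U)}=z_{n+\ell+1}^{(U)}=z_{2n+\ell-1}^{(U)}=0$ for $\ell\ge3$), and the elementary identity
\begin{equation*}
-2y_1\Big(\tfrac{n-2}{2}U+y\cdot\nabla U\Big)+|y|^2\,\partial_{y_1}U=\partial_{y_1}U
\end{equation*}
(together with its $y_2$-analogue) shows $z_{n+2}^{(U)}=z_1^{(U)}$ and $z_{n+3}^{(U)}=z_2^{(U)}$.

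Next I would dispose of the off-diagonal entries. Each modified profile appearing in the statement is, after subtraction of its radial (or gradient-of-radial) correction, the first component of a $2$-vector field under rotations in the $(y_1,y_2)$-plane, is even in $y_2$ and in each $y_\ell$ ($\ell\ge3$), and — in the case of $z_0-\tfrac{D_{n,k}(2-|y|^2)}{(1+|y|^2)^{n/2}}$ — is moreover invariant under the $k$-fold rotation (\ref{sim00}). Pairing such a profile with $z_\ell$ ($\ell\ge3$), $z_{n+1}$, $z_{n+\ell+1}$ or $z_{2n+\ell-1}$ produces, by the reflection symmetry (\ref{sim22}) of $Q$, an integrand odd in one of $y_2,\dots,y_n$, hence integral zero; pairing it with $z_0$, or pairing the $z_0$-profile with $z_1$ or $z_{n+2}$, one averages over the rotations generated by (\ref{sim00}) and uses $\frac1k\sum_{m=0}^{k-1}R_{2\pi m/k}=0$ (valid for $k\ge2$) to annihilate the integral. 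Thus for $k\ge3$ every off-diagonal entry vanishes identically, and the remaining, diagonal entries are — up to an $O(k^{-1})$ error controlled by dominated convergence using the weighted bound on $Q_k-U$ and the marginal tail estimates — the explicit integrals of the limiting profiles against $z_i^{(U)}$ (and against $z_{n+2}^{(U)}=z_1^{(U)}$, $z_{n+3}^{(U)}=z_2^{(U)}$); these are the constants $a_{i,j}$.

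The hard part will be the invertibility of the two $2\times2$ matrices. Because $z_{n+2}^{(U)}=z_1^{(U)}$ and $z_{n+3}^{(U)}=z_2^{(U)}$, the limiting values of $a_{1,1},a_{1,n+2}$ coincide, as do those of $a_{n+2,1},a_{n+2,n+2}$, so $\begin{pmatrix}a_{1,1}&a_{1,n+2}\\ a_{n+2,1}&a_{n+2,n+2}\end{pmatrix}$ degenerates to a rank-one matrix as $k\to\infty$; the radial bubble alone cannot decide invertibility. To settle it I would expand $Q_k$, and the constants $D_{n,k},E_{n,k}$, one order past the leading term, so as to compute the differences $a_{1,1}-a_{1,n+2}$ and $a_{n+2,1}-a_{n+2,n+2}$ — which are $O(k^{-1})$ and measure the genuine finite-$k$ gap between the translation kernel $z_1^{(k)}$ and the conformal kernel $z_{n+2}^{(k)}$ — and verify that the determinant equals $c_n\,k^{-1}(1+o(1))$ with $c_n\neq0$, hence is nonzero for all $k\ge k_0$. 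Carrying out this second-order expansion while retaining uniform control of the (marginally convergent, $n>4$) tail integrals is the technically delicate point; once it is in place, everything else is parity bookkeeping and standard dominated-convergence estimates.
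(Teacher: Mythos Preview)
Your symmetry argument for the off-diagonal entries is clean and correct: the reflection parities (\ref{sim22}) in $y_2,\dots,y_n$ together with the $k$-fold rotational invariance (\ref{sim00}) kill exactly the pairings the lemma declares to be $O(k^{-1})$, and in fact give zero on the nose. That part is nicer than what the paper records.

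The gap is in your treatment of the diagonal entries. Your premise that $Q_k\to U$ in a weighted $C^2$ norm at rate $O(k^{-1})$ is false. From \cite{delpinomussofrankpistoiajde2011} one has the explicit form $Q_k=U-\sum_{l=1}^kU_l+\tilde\phi$ with $U_l(x)=\zeta_k^{-(n-2)/2}U\bigl(\zeta_k^{-1}(x-\xi_l)\bigr)$ and $\zeta_k\sim k^{-2}$, so each $U_l$ is a spike of height $\sim k^{n-2}$ sitting on the unit circle in the $(x_1,x_2)$-plane; $Q_k-U$ is not uniformly small there under any weight that is $O(1)$ near $|x|=1$. Worse, the kernel elements do not converge in $L^2$: since $\|\partial_\alpha U_l\|_{L^2}=\|\partial_\alpha U\|_{L^2}$ (the scale drops out), the bubble contribution $-\sum_l\partial_1U_l$ to $z_1$ already has $L^2$ norm of order $\sqrt k$. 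The most transparent failure is $z_{n+1}$: by (\ref{1canada2}) one has $z_{n+1}=\sum_{l=1}^kZ_{2l}+(\text{lower order})$, supported almost entirely on the $k$ small bubbles, so $a_{n+1,n+1}=\int z_{n+1}^2$ is certainly not $0+O(k^{-1})$ as your dominated-convergence step would predict. The same phenomenon contaminates all the diagonals involving $z_1,z_2,z_{n+2},z_{n+3}$ and the rotational kernels.

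The paper does not pass to a $U$-limit. It keeps the full building-block decomposition: formulas (\ref{1ang1})--(\ref{1canada5}) express each $z_j$ as an explicit combination of $Z_0,Z_\alpha$ (from $U$), $Z_{0l},Z_{\alpha l}$ (from $U_l$) and the corrections $\pi_\alpha$, and Lemma~\ref{lemma9.1} then evaluates every pairwise interaction $\int Z_{\alpha l}Z_{\beta j}$, $\int Z_{\alpha l}Z_\beta$, $\int Z_{\alpha l}\cdot(\text{radial profile})$ by splitting the integral into the core $B(\xi_l,\eta/k)$ and its complement. The $k$ small bubbles must be tracked individually; they cannot be absorbed into a uniform $O(k^{-1})$ remainder. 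This also undercuts your invertibility analysis: the identity $z_{n+2}^{(U)}=z_1^{(U)}$, while algebraically correct for the radial bubble, does not describe $z_{n+2}^{(k)}$, which by (\ref{1canada30}) is a combination of the $Z_{0l},Z_{1l}$ living on the small bubbles, not a perturbation of $Z_1$. Hence the $2\times2$ matrix is not the rank-one object you compute at leading order, and the ``second-order in $k^{-1}$'' expansion is not the relevant mechanism. The paper obtains the invertibility directly from the bubble formulas together with Proposition~2.1 of \cite{MussoWei2015}.
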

The proof of this lemma is given in the Appendix.

\subsection{The equation for $\lambda$.} We consider (\ref{e5:7}) for $l = 0$.
\begin{lemma}\label{l5:100}
When $l = 0$, (\ref{e5:7}) is equivalent to
\begin{equation}\label{e5:900}
\begin{aligned}
\dot{\lambda} &+ \frac{1+(n-4)}{(n-4)t}\lambda + O\left(\frac{1}{k}\right)\dot{\xi} + O\left(\frac{1}{k}\right)\mu_0\left(\dot{a}_1+\dot{a}_2\right) \\
& + O\left(\frac{1}{k}\right)\mu_0\left(\dot{\theta}_{12} + \sum_{j = 3}^n(\dot{\theta}_{1j} + \dot{\theta}_{2j})\right) = \Pi_0[\lambda,\xi, a, \theta, \dot{\lambda},\dot{\xi}, \dot{a}, \dot{\theta}, \phi, \psi](t).
\end{aligned}
\end{equation}
The right hand side term of (\ref{e5:900}) can be expressed as
\begin{equation*}
\begin{aligned}
&\Pi_0[\lambda,\xi, a, \theta, \dot{\lambda},\dot{\xi}, \dot{a}, \dot{\theta}, \phi, \psi](t) = \frac{t_0^{-\varepsilon}}{R^{\alpha-2}}\mu_0^{n-3 + \sigma}(t)f_0(t)+ \frac{t_0^{-\varepsilon}}{R^{\alpha-2}}\times\\
&\quad \Theta_0\left[\dot{\lambda},\dot{\xi}, \mu_0\dot{a}, \mu_0\dot{\theta}, \mu_0^{n-4}(t)\lambda, \mu_0^{n-4}(\xi-q), \mu_0^{n-3}a, \mu_0^{n-3}\theta, \mu_0^{n-3+\sigma}\phi, \mu_0^{\frac{n-2}{2}+\sigma}\psi\right](t)
\end{aligned}
\end{equation*}
where $f_0(t)$ and $\Theta_0[\cdots](t)$ are bounded smooth functions for $t\in [t_0,\infty)$.
\end{lemma}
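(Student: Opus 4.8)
The plan is to substitute the definition (\ref{e5:2}) of $H$ into the orthogonality condition (\ref{e5:7}) with $l=0$, pair the integrand against $z_0$, and extract the leading behaviour in the parameter functions. First I would decompose $H = \mu_0^{\frac{n+2}{2}}S_{A}(\xi+\mu_0 y,t) + B[\phi] + B^0[\phi] + p\mu_0^{\frac{n-2}{2}}\frac{\mu_0^2}{\mu^2}|Q|^{p-1}(\tfrac{\mu_0}{\mu}y)\psi(\xi+\mu_0 y,t)$, with $B[\phi]$, $B^0[\phi]$ as in (\ref{e3:11}), (\ref{e3:12}), and treat $\mu_0^{\frac{n+2}{2}}S_A$ as the principal part. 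Since $\mu=\bar\mu_0+\lambda$ with $|\lambda|\lesssim\mu_0^{1+\sigma}$, one has $\mu_0^{\frac{n+2}{2}}\mu^{-\frac{n+2}{2}}=b^{-\frac{n+2}{2}}(1+O(\mu_0^\sigma))$ and the inner argument of $S_A$ becomes $\tfrac{\mu_0}{\mu}y=b^{-1}y(1+O(\mu_0^\sigma))$; plugging in the explicit form of $S_A$ (the leading part of the expansion in Lemma \ref{l2.2}) and pairing each term with $z_0(y)$ over $B_{2R}$, I use the identities $\int_{\mathbb{R}^n}(z_0-\frac{D_{n,k}(2-|y|^2)}{(1+|y|^2)^{n/2}})z_0\,dy=c_2$, $\int_{\mathbb{R}^n}p|Q|^{p-1}z_0\,dy=-c_1$ together with Lemma \ref{e:orthogalityofkernels}. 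The $\dot\lambda$-term then contributes the coefficient $b^{-\frac{n+2}{2}}\mu_0(c_2+2Ac_1)(1+o(1))$, while the $\mu^{n-2}\nabla H(q,q)\cdot y$-, $\dot\xi$-, $\dot a_i$- and $\dot\theta_{ij}$-terms pair with $z_0$ only through the off-diagonal entries of Lemma \ref{e:orthogalityofkernels} and hence come with $O(1/k)$ factors; everything else is lower order and goes into $\Pi_0$.

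Dividing the resulting scalar identity by the $\dot\lambda$-coefficient $b^{-\frac{n+2}{2}}\mu_0(c_2+2Ac_1)$, which is bounded away from zero by the positivity (\ref{positiveness}) established in the Appendix, normalizes the $\dot\lambda$-coefficient to $1$ and produces the $O(1/k)$ terms multiplying $\dot\xi$, $\mu_0\dot a_i$ and $\mu_0\dot\theta_{ij}$ in (\ref{e5:900}). The delicate point is the coefficient of $\lambda$: it collects the pieces coming from $-\mu_0\mu_0^{n-4}p|Q|^{p-1}(n-3)b^{n-4}H(q,q)\lambda$, from $\lambda b\,p|Q|^{p-1}\mu_0^{n-3}(-b^{n-4}H(q,q)+B)$, from $\lambda b\,\dot\mu_0(z_0-\frac{D_{n,k}(2-|y|^2)}{(1+|y|^2)^{n/2}})$, and from the $\lambda$-part of the variation of the nonlocal term, $\mu^{n-3}\Phi^0[\bar\mu_0+\lambda,b\dot\mu_0+\dot\lambda](q,t)-\mu^{n-3}\Phi^0[\bar\mu_0,b\dot\mu_0](q,t)=-2A\dot\lambda-(n-3)B\mu_0^{n-4}\lambda$, obtained exactly as in the derivation of (\ref{e:2018330}). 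After pairing with $z_0$ and summing, I invoke the constraints $b^{n-4}H(q,q)=\frac{2}{n-2}$ (from (\ref{e:equationforb})), $\dot\mu_0=-\frac{2c_1}{(2c_1A+c_2)(n-2)}\mu_0^{n-3}$ (from (\ref{e2:40})), $B=\frac{2A}{(n-4)c_n^{n-4}}$, and $\mu_0^{n-4}=c_n^{n-4}/t$: the $B$-dependent contributions cancel and the surviving coefficient of $\lambda$ equals $\frac{n-3}{(n-4)t}(1+o(1))=\frac{1+(n-4)}{(n-4)t}(1+o(1))$. Structurally this is simply the linearization of $\dot{\bar\mu}_0=-\frac{1}{(n-4)c_n^{n-4}}\bar\mu_0^{n-3}$ about $\bar\mu_0=c_n t^{-1/(n-4)}$.

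It remains to establish the form of $\Pi_0$. Here I would collect: the $O(\mu_0^\sigma)$-corrections in all the coefficients above; the quadratic terms $\mu_0^{n-4}p|Q|^{p-1}f_1\lambda^2+\frac{f_2}{1+|y|^{n-2}}\lambda\dot\lambda$ and the $f_3$-, $f_4$-, $\vec f_i$-, $g_i$-, $\vec g_i$-terms of Lemma \ref{l2.2}; the $z_0$-pairings of $B[\phi]$ and $B^0[\phi]$; and the $z_0$-pairing of the $\psi$-term. Using the a priori bounds (\ref{e4:21}), (\ref{e4:22}), (\ref{e4:25}) and the assumed bound on $\|\psi\|_{**,\beta,\alpha}$, and performing the $B_{2R}$-integration (where the weight $(1+|y|)^{-\alpha}$ and the cut-off at scale $R$ produce the gain $R^{-(\alpha-2)}$), every one of these contributions is $\lesssim\frac{t_0^{-\varepsilon}}{R^{\alpha-2}}\mu_0^{n-3+\sigma}$ and depends on the parameter functions only through the rescaled combinations $\dot\lambda,\dot\xi,\mu_0\dot a,\mu_0\dot\theta,\mu_0^{n-4}\lambda,\mu_0^{n-4}(\xi-q),\mu_0^{n-3}a,\mu_0^{n-3}\theta,\mu_0^{n-3+\sigma}\phi,\mu_0^{\frac{n-2}{2}+\sigma}\psi$; the unknown-independent pieces (the $\mu_0^{n+2}f_3$- and $\mu_0^{n-1}\dot\mu f_4$-type terms) form the $\mu_0^{n-3+\sigma}f_0(t)$ summand and the rest forms $\Theta_0$. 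Dividing by $b^{-\frac{n+2}{2}}\mu_0(c_2+2Ac_1)$ then gives the stated representation with $f_0$, $\Theta_0$ smooth and bounded on $[t_0,\infty)$. I expect the main obstacle to be the exact cancellation in the coefficient of $\lambda$: one must keep careful track of the nonlocal term $\Phi^0$ and its $(\lambda,\dot\lambda)$-dependence, and verify that the various constants ($c_1$, $c_2$, $A$, $B$, $b$, $c_n$) conspire to produce precisely $\frac{n-3}{(n-4)t}$; by comparison, the bookkeeping needed for the structure of $\Pi_0$ is lengthy but routine.
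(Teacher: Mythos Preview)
Your proposal is correct and follows essentially the same route as the paper. The paper organizes the computation of $\int_{B_{2R}}\mu_0^{\frac{n+2}{2}}S_A(\xi+\mu_0 y,t)z_0\,dy$ via the decomposition (\ref{e:601}): it rewrites $S_A$ as $(\mu_0/\mu)^{\frac{n+2}{2}}[\mu_0 S_1+\lambda b\,S_2+\mu S_3+\mu^2 S_4+\mu^2 S_5]$ evaluated at $z=\xi+\mu y$ (so that $(z-\xi)/\mu=y$ and the pairings with $z_0$ are clean), plus correction terms $[S_l(\xi+\mu_0 y)-S_l(\xi+\mu y)]$ of size $g(t,\lambda/\mu_0)$. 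It then observes $\int S_2 z_0=O(R^{2-n}+R^{-2})\mu_0^{n-3}$ (this is precisely the solvability condition that fixed $b$), so the $\lambda$--coefficient comes solely from $\int S_1 z_0$ and yields $\frac{n-3}{(n-4)t}$ after dividing by $\mu_0(2Ac_1+c_2)$. You instead substitute $\mu_0 y/\mu=b^{-1}y(1+O(\mu_0^\sigma))$ directly and track the $B$-dependent pieces explicitly, verifying that they cancel; this is the same cancellation encoded in $\int S_2 z_0\approx 0$, just unpacked. One small remark: your item ``the $\lambda$-part of the variation of the nonlocal term $\Phi^0$'' is already absorbed in the $-2Ap|Q|^{p-1}$ piece of $S_1$ (and its $(n-3)B\mu_0^{n-4}\lambda$ contribution cancels in the derivation of $S_A$ itself, cf.\ Section~2.5), so you should take care not to count it twice. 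Otherwise the plan, including the treatment of $B[\phi]$, $B^0[\phi]$, the $\psi$-term, and the assembly of $\Pi_0$, matches the paper.
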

\begin{proof}
We compute
\begin{eqnarray*}
\int_{B_{2R}}H[\lambda,\xi, a, \theta, \dot{\lambda},\dot{\xi}, \dot{a}, \dot{\theta}, \phi, \psi](y,t(\tau))z_{0}(y)dy,
\end{eqnarray*}
where $H[\lambda,\xi, a, \theta, \dot{\lambda},\dot{\xi}, \dot{a}, \dot{\theta}, \phi, \psi](y,t(\tau))$ is defined in (\ref{e5:2}). Write
\begin{equation}\label{e:601}
\begin{aligned}
&\mu_{0}^{\frac{n+2}{2}}S_{A}(\xi + \mu_{0}y, t)\\
&\quad =\left(\frac{\mu_{0}}{\mu}\right)^{\frac{n+2}{2}}[\mu_{0}S_1(z, t) + \lambda b S_2(z, t) + \mu S_3(z, t) + \mu^2S_4(z, t) + \mu^2S_5(z, t)]_{z = \xi + \mu y}\\
&\quad\quad+\left(\frac{\mu_{0}}{\mu}\right)^{\frac{n+2}{2}}\mu_{0}[S_1(\xi+\mu_{0}y, t)-S_1(\xi+\mu y, t)]\\
&\quad\quad+\left(\frac{\mu_{0}}{\mu}\right)^{\frac{n+2}{2}}\lambda b [S_2(\xi+\mu_{0}y, t)-S_2(\xi+\mu y, t)]\\
&\quad\quad+\left(\frac{\mu_{0}}{\mu}\right)^{\frac{n+2}{2}}\mu [S_3(\xi+\mu_{0}y, t)-S_3(\xi +\mu y, t)]\\
&\quad\quad+\left(\frac{\mu_{0}}{\mu}\right)^{\frac{n+2}{2}}\mu^2 [S_4(\xi+\mu_{0}y, t)-S_4(\xi +\mu y, t)]\\
&\quad\quad+\left(\frac{\mu_{0}}{\mu}\right)^{\frac{n+2}{2}}\mu^2 [S_5(\xi+\mu_{0}y, t)-S_5(\xi +\mu y, t)],
\end{aligned}
\end{equation}
where
\begin{equation*}
\begin{aligned}
S_1(z) &= \dot{\lambda}\left(z_0\left(\frac{z-\xi}{\mu}\right) - \frac{D_{n,k}\left(2 - \left|\frac{z-\xi}{\mu}\right|^2\right)}{\left(1+\left|\frac{z-\xi}{\mu}\right|^2\right)^{\frac{n}{2}}}-2Ap|Q|^{p-1}\left(\frac{z-\xi}{\mu}\right)\right)\\
&\quad -\mu_0^{n-4}p|Q|^{p-1}\left(\frac{z-\xi}{\mu}\right)\left[(n-3)b^{n-4}H(q, q)\lambda\right],
\end{aligned}
\end{equation*}
\begin{equation*}
\begin{aligned}
S_2(z) &= \dot{\mu}_0\left(z_0\left(\frac{z-\xi}{\mu}\right) - \frac{D_{n,k}\left(2 - \left|\frac{z-\xi}{\mu}\right|^2\right)}{\left(1+\left|\frac{z-\xi}{\mu}\right|^2\right)^{\frac{n}{2}}}\right)\\
 &\quad + p|Q|^{p-1}\left(\frac{z-\xi}{\mu}\right)\mu_0^{n-3}\bigg(-b^{n-4}H(q, q) + B\bigg),
\end{aligned}
\end{equation*}
\begin{equation*}
\begin{aligned}
S_3(z) &= \left(\nabla Q\left(\frac{z-\xi}{\mu}\right) -\frac{E_{n,k}\frac{z-\xi}{\mu}}{\left(1+\left|\frac{z-\xi}{\mu}\right|^2\right)^{\frac{n}{2}}}\right)\cdot \dot{\xi}\\ &\quad + p|Q|^{p-1}\left(\frac{z-\xi}{\mu}\right)\left[-\mu^{n-2}\nabla H(q, q)\right]\cdot \left(\frac{z-\xi}{\mu}\right),
\end{aligned}
\end{equation*}
\begin{equation*}
\begin{aligned}
S_4(z) &= \dot{a}_1 \left(-2\left(\frac{z-\xi}{\mu}\right)_1\left(z_0\left(\frac{z-\xi}{\mu}\right) - \frac{D_{n,k}(2-\left|\frac{z-\xi}{\mu}\right|^2)}{\left(1+\left|\frac{z-\xi}{\mu}\right|^2\right)^{\frac{n}{2}}}\right)\right.\\
&\quad\quad\quad\quad\quad\quad\quad\quad\quad\quad \left. + \left|\frac{z-\xi}{\mu}\right|^2\left(\frac{\partial}{\partial y_1} Q\left(\frac{z-\xi}{\mu}\right) - \frac{E_{n,k}\left(\frac{z-\xi}{\mu}\right)_1}{\left(1+\left|\frac{z-\xi}{\mu}\right|^2\right)^{\frac{n}{2}}}\right)\right)\\
&\quad + \dot{a}_2 \left(-2\left(\frac{z-\xi}{\mu}\right)_2\left(z_0\left(\frac{z-\xi}{\mu}\right) - \frac{D_{n,k}(2-\left|\frac{z-\xi}{\mu}\right|^2)}{\left(1+\left|\frac{z-\xi}{\mu}\right|^2\right)^{\frac{n}{2}}}\right)\right.\\
&\quad\quad\quad\quad\quad\quad\quad\quad\quad\quad \left.+ \left|\frac{z-\xi}{\mu}\right|^2\left(\frac{\partial}{\partial y_2} Q\left(\frac{z-\xi}{\mu}\right) - \frac{E_{n,k}\left(\frac{z-\xi}{\mu}\right)_2}{\left(1+\left|\frac{z-\xi}{\mu}\right|^2\right)^{\frac{n}{2}}}\right)\right)
\end{aligned}
\end{equation*}
and
\begin{equation*}
\begin{aligned}
&S_5(z) = z_{n+1}\left(\frac{z-\xi}{\mu}\right)\dot{\theta}_{12} + \sum_{j = 3}^n\left(z_{n+j+1}\left(\frac{z-\xi}{\mu}\right)\dot{\theta}_{1j} + z_{2n+j-1}\left(\frac{z-\xi}{\mu}\right)\dot{\theta}_{2j}\right).
\end{aligned}
\end{equation*}
Direct computations yield that
\begin{equation*}
\begin{aligned}
\int_{B_{2R}}S_1(\xi+\mu y)z_{0}(y)dy &= (2Ac_1 +c_2)(1+O(R^{2-n})+O(R^{-2}))\dot{\lambda}\\
&\quad  + c_1(1+O(R^{-2}))\mu_0^{n-4}\left[(n-3)b^{n-4}H(q, q)\lambda\right],
\end{aligned}
\end{equation*}
\begin{equation*}
\int_{B_{2R}}S_2(\xi+\mu y)z_{0}(y)dy = O(R^{2-n}+R^{-2})\mu_{0}^{n-3},
\end{equation*}
\begin{equation*}
\int_{B_{2R}}S_3(\xi +\mu y)z_{0}(y)dy = O\left(\frac{1}{k}\right)\dot{\xi} + O(1 + R^{-2})\mu_{0}^{n-2},
\end{equation*}
\begin{equation*}
\begin{aligned}
\int_{B_{2R}}S_4(\xi +\mu y)z_{0}(y)dy &= O\left(\frac{1}{k}\right)\left(\dot{a}_1 + \dot{a}_2\right),
\end{aligned}
\end{equation*}
and
\begin{equation*}
\begin{aligned}
\int_{B_{2R}}S_5(\xi +\mu y)z_{0}(y)dy &= O\left(\frac{1}{k}\right)\left(\dot{\theta}_{12} + \sum_{j = 3}^n\left(\dot{\theta}_{1j}+ \dot{\theta}_{2j}\right)\right).
\end{aligned}
\end{equation*}
Since $\frac{\mu_{0}}{\mu} = (1 + \frac{\lambda}{\mu_{0}})^{-1}$, for $l = 1, 2, 3, 4, 5$, we have the following estimates
\begin{equation*}
\begin{aligned}
&\int_{B_{2R}}[S_l(\xi+\mu_{0}y, t)-S_l(\xi+\mu y, t)]z_{0}(y)dy \\
&\quad = g(t,\frac{\lambda}{\mu_0})\dot{\lambda} + g(t, \frac{\lambda}{\mu_0})\dot{\xi} + g(t,\frac{\lambda}{\mu_0})\left(\dot{a}_1+\dot{a}_2\right)\\
&\quad\quad + g(t,\frac{\lambda}{\mu_0})\left(\dot{\theta}_{12} + \sum_{j = 3}^n\left(\dot{\theta}_{1j} + \dot{\theta}_{2j}\right)\right)\\
&\quad\quad + g(t,\frac{\lambda}{\mu_0})\mu_0^{n-4}\left(\lambda + (\xi -q) + a_1 + a_2 + \theta_{12} + \sum_{j = 3}^n\left(\theta_{1j} + \theta_{2j}\right)\right)\\
&\quad\quad+ \mu_0^{n-3+\sigma}f(t),
\end{aligned}
\end{equation*}
where $f$ and $g$ are smooth, bounded functions satisfying $g(\cdot, s)\sim s$ as $s\to 0$. Thus
\begin{equation*}
\begin{aligned}
& c\left(\frac{\mu}{\mu_{0}}\right)^{\frac{n+2}{2}}\mu_{0}^{-1}\int_{B_{2R}}\mu_{0}^{\frac{n+2}{2}}S_{A}(\xi + \mu_{0}y, t)z_{0}(y)dy \\
&=\left[\dot{\lambda} + \frac{1+(n-4)}{(n-4)t}\lambda\right] + \left(O\left(\frac{1}{k}\right)+ t_0^{-\varepsilon}g(t,\frac{\lambda}{\mu_0})\right)\dot{\xi}\\
&\quad + \left(O\left(\frac{1}{k}\right)+ t_0^{-\varepsilon}g(t,\frac{\lambda}{\mu_0})\right)\mu\left(\dot{a}_1+\dot{a}_2\right)\\
&\quad + \left(O\left(\frac{1}{k}\right)+ t_0^{-\varepsilon}g(t,\frac{\lambda}{\mu_0})\right)\mu\left(\dot{\theta}_{12} + \sum_{j = 3}^n\left(\dot{\theta}_{1j} + \dot{\theta}_{2j}\right)\right)\\
&\quad + g(t,\frac{\lambda}{\mu_0})\mu_0^{n-4}\left(\lambda + (\xi -q) + \mu a_1 + \mu a_2 + \mu\theta_{12} + \mu\sum_{j = 3}^n\left(\theta_{1j} + \theta_{2j}\right)\right)
\end{aligned}
\end{equation*}
for smooth bounded functions $g$ satisfying $g(\cdot, s)\sim s$ as $s\to 0$.

Let us compute the term
$$p\mu_{0}^{\frac{n-2}{2}}(1 + \frac{\lambda}{\mu_{0}})^{-2}\int_{B_{2R}}|Q|^{p-1}(\frac{\mu_{0}}{\mu}y)\psi(\xi + \mu_{0}y, t)z_{0}(y)dy.$$
Its principal part is $I: = \int_{B_{2R}}|Q|^{p-1}(y)\psi(\xi + \mu_{0}y, t)z_{0}(y)dy$. From (\ref{e4:400}), we have $I = \frac{t_0^{-\varepsilon}}{R^{\alpha-2}}\mu_0^{\frac{n-2}{2}+\sigma}f(t)$ for a smooth bounded function $f$.

Furthermore, we have
\begin{equation*}
\int_{B_{2R}}B[\phi](y, t)z_{0}(y)dy = \frac{t_0^{-\varepsilon}}{R^{\alpha-2}}\Big[\mu_0^{n-3+\sigma}(t)\ell[\phi](t) + \dot{\xi}\ell[\phi](t)\Big]
\end{equation*}
and
\begin{equation*}
\int_{B_{2R}}B^0[\phi](y, t)z_0(y)dy = \frac{t_0^{-\varepsilon}}{R^{\alpha-2}}\mu_0^{n-2+\sigma}g\left(\frac{\lambda}{\mu_0}\right)[\phi](t)
\end{equation*}
for smooth bouned function $g(s)$ with $g(s)\sim s$ ($s\to 0$) and $\ell[\phi](t)$ is bounded smooth in $t$.

Combine the above estimations, we have the validity of the lemma.
\end{proof}
\subsection{The equation for $\xi$.}
Now we compute (\ref{e5:7}) for $l = 1, \cdots, n$.
\begin{lemma}\label{l5:21}
For $l = 1$, (\ref{e5:7}) is equivalent to
\begin{equation}\label{e5:181000}
\begin{aligned}
a_{1,1}\dot{\xi}_1 &+ a_{n+2, 1}\mu_0\dot{a}_1 + O\left(\frac{1}{k}\right)\dot{\lambda} + O\left(\frac{1}{k}\right)\mu_0\dot{a}_2\\
&+ O\left(\frac{1}{k}\right)\mu_0\left(\dot{\theta}_{12} + \sum_{j = 3}^n(\dot{\theta}_{1j} + \dot{\theta}_{2j})\right) = \Pi_1[\lambda,\xi, a, \theta, \dot{\lambda},\dot{\xi}, \dot{a}, \dot{\theta}, \phi, \psi](t).
\end{aligned}
\end{equation}
For $l = 2$, (\ref{e5:7}) is equivalent to
\begin{equation}\label{e5:181001}
\begin{aligned}
a_{2,2}\dot{\xi}_2 &+ a_{n+3, 2}\mu_0\dot{a}_2 + O\left(\frac{1}{k}\right)\dot{\lambda} + O\left(\frac{1}{k}\right)\mu_0\dot{a}_1\\
&+ O\left(\frac{1}{k}\right)\mu_0\left(\dot{\theta}_{12} + \sum_{j = 3}^n(\dot{\theta}_{1j} + \dot{\theta}_{2j})\right)
 = \Pi_2[\lambda,\xi, a, \theta, \dot{\lambda},\dot{\xi}, \dot{a}, \dot{\theta}, \phi, \psi](t).
\end{aligned}
\end{equation}
For $l = 3,\cdots, n$, (\ref{e5:7}) is equivalent to
\begin{equation}\label{e5:181}
\begin{aligned}
\dot{\xi}_l + O\left(\frac{1}{k}\right)\dot{\lambda} + O\left(\frac{1}{k}\right)\mu_0\left(\dot{a}_1+\dot{a}_2\right) &+ O\left(\frac{1}{k}\right)\mu_0\left(\dot{\theta}_{12} + \sum_{j = 3}^n(\dot{\theta}_{1j} + \dot{\theta}_{2j})\right)\\
& = \Pi_l[\lambda,\xi, a, \theta, \dot{\lambda},\dot{\xi}, \dot{a}, \dot{\theta}, \phi, \psi](t).
\end{aligned}
\end{equation}
For $l = 1,\cdots, n$,
\begin{equation*}
\begin{aligned}
&\Pi_{l}[\lambda,\xi,  a, \theta, \dot{\lambda},\dot{\xi}, \dot{a}, \dot{\theta}, \phi, \psi](t)\\
&\quad= \mu_0^{n-2}c_l\left[b^{n-2}\nabla H(q, q)\right]+ \mu_0^{n-2+\sigma}(t)f(t) + \frac{t_0^{-\varepsilon}}{R^{\alpha-2}}\Theta_l\\
&\quad\quad \left[\dot{\lambda},\dot{\xi}, \mu_0\dot{a}, \mu_0\dot{\theta}, \mu_0^{n-4}(t)\lambda, \mu_0^{n-4}(\xi-q), \mu_0^{n-3}a, \mu_0^{n-3}\theta, \mu_0^{n-3+\sigma}\phi, \mu_0^{\frac{n-2}{2}+\sigma}\psi\right](t),
\end{aligned}
\end{equation*}
where $c_l$ is a positive constant, $f(t)$ and $\Theta_l$ are smooth bounded for $t\in [t_0, \infty)$.
\end{lemma}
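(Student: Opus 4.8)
\emph{Proof plan.} The plan is to follow the computation in the proof of Lemma~\ref{l5:100}, now pairing with the translation kernels $z_l$, $l=1,\dots,n$, instead of $z_0$. For fixed such $l$ I would evaluate $\int_{B_{2R}}H[\lambda,\xi,a,\theta,\dot\lambda,\dot\xi,\dot a,\dot\theta,\phi,\psi](y,t(\tau))\,z_l(y)\,dy$, with $H$ as in (\ref{e5:2}), by splitting $\mu_0^{\frac{n+2}{2}}S_A(\xi+\mu_0 y,t)$ according to the decomposition (\ref{e:601}) into the pieces $\mu_0 S_1$, $\lambda b S_2$, $\mu S_3$, $\mu^2 S_4$, $\mu^2 S_5$ together with the five $\mu_0 y\leftrightarrow\mu y$ difference terms, and by adding the contributions of $B[\phi]$, $B^0[\phi]$ and the $\psi$-term. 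As in Lemma~\ref{l5:100}, I would then multiply the resulting identity by $(\mu/\mu_0)^{\frac{n+2}{2}}\mu^{-1}$ (for $l=1,2$), respectively by that quantity times $a_{l,l}^{-1}$ (for $l\geq 3$), so that the leading coefficient of $\dot\xi_l$ becomes $a_{l,l}$, respectively $1$; throughout one uses $\mu=\bar\mu_0+\lambda=b\mu_0(1+O(\lambda/\mu_0))$.

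The leading terms come from $S_3$ and $S_4$. In $\int_{B_{2R}}S_3(\xi+\mu y)z_l(y)\,dy$, the contribution of $(\nabla Q(y)-E_{n,k}y(1+|y|^2)^{-n/2})\cdot\dot\xi$ is, by the $\partial_{y_i}Q$-identities of Lemma~\ref{e:orthogalityofkernels}, equal to $a_{l,l}\dot\xi_l+O(k^{-1})\dot\xi$, while the contribution of $p|Q|^{p-1}(y)[-\mu^{n-2}\nabla H(q,q)]\cdot y$ is, using the elementary identity $\int_{\mathbb{R}^n}p|Q|^{p-1}(y)\,y_m z_l(y)\,dy=-\delta_{lm}\int_{\mathbb{R}^n}|Q|^{p-1}Q\,dy$ together with $\int_{\mathbb{R}^n}|Q|^{p-1}Q\,dy>0$ (a consequence of the exact $|x|^{2-n}$ decay (\ref{vanc1})), the driving term $\mu_0^{n-2}c_l[b^{n-2}\nabla H(q,q)]$ with $c_l>0$. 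In $\int_{B_{2R}}\mu^2 S_4(\xi+\mu y)z_l(y)\,dy$, the two ``mixed'' identities of Lemma~\ref{e:orthogalityofkernels} produce $a_{n+2,1}\mu_0\dot a_1+O(k^{-1})\mu_0\dot a_2$ when $l=1$, $a_{n+3,2}\mu_0\dot a_2+O(k^{-1})\mu_0\dot a_1$ when $l=2$, and only $O(k^{-1})\mu_0(\dot a_1+\dot a_2)$ when $l\geq 3$. The pieces $\mu_0 S_1$, $\lambda b S_2$, $\mu^2 S_5$ are tested only against $z_l$ with $l\geq 1$, so they meet exclusively off-diagonal integrals of Lemma~\ref{e:orthogalityofkernels} (indeed $\int_{\mathbb{R}^n}p|Q|^{p-1}z_l=0$ and $\int_{\mathbb{R}^n}(z_0-D_{n,k}(2-|y|^2)(1+|y|^2)^{-n/2})z_l=O(k^{-1})$ for $l\geq 1$); they therefore contribute $O(k^{-1})\dot\lambda$, $O(k^{-1})\mu_0^{n-3}$ and $O(k^{-1})\mu_0(\dot\theta_{12}+\sum_{j=3}^n(\dot\theta_{1j}+\dot\theta_{2j}))$ respectively, which are precisely the $O(1/k)$ coefficients displayed in (\ref{e5:181000})--(\ref{e5:181}).

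Everything else is collected into $\Pi_l$. The five $\mu_0 y\leftrightarrow\mu y$ differences are treated exactly as in Lemma~\ref{l5:100} by Taylor expanding around $\mu y$, producing $g(t,\lambda/\mu_0)(\dot\lambda+\dot\xi+\mu_0\dot a+\mu_0\dot\theta)+g(t,\lambda/\mu_0)\mu_0^{n-4}(\lambda+(\xi-q)+\cdots)+\mu_0^{n-2+\sigma}f(t)$ with $g(\cdot,s)\sim s$; the $\psi$-term gives $\frac{t_0^{-\varepsilon}}{R^{\alpha-2}}\mu_0^{\frac{n-2}{2}+\sigma}f(t)$ by the weighted bound (\ref{e4:400}); and $B[\phi]$, $B^0[\phi]$ give $\frac{t_0^{-\varepsilon}}{R^{\alpha-2}}$ times smooth bounded functionals of $\phi$ by the bound (\ref{e4:24}). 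Moving the displayed leading terms to the left and isolating the driving term $\mu_0^{n-2}c_l[b^{n-2}\nabla H(q,q)]$, one reads off the claimed form of $\Pi_l$ with $f$, $\Theta_l$ smooth and bounded on $[t_0,\infty)$. The invertibility of the two $2\times 2$ matrices is the part of Lemma~\ref{e:orthogalityofkernels} that will later allow the coupled subsystems for $(\dot\xi_1,\dot a_1)$ and $(\dot\xi_2,\dot a_2)$ to be solved.

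I expect the main obstacle to be purely organizational: tracking, for each of the many terms, its exact order in $\mu_0$, $\lambda/\mu_0$, $t_0^{-\varepsilon}$ and $R^{-(\alpha-2)}$. The two points that must be checked with care are (i) that pairing $S_1$, $S_2$, $S_5$ and the off-diagonal part of $S_4$ against $z_l$ ($l\geq 1$) really collapses the coefficient to $O(k^{-1})$ rather than $O(1)$ — this is exactly where the near-orthogonality of the kernels in Lemma~\ref{e:orthogalityofkernels} is used — and (ii) that every contribution thrown into $\Pi_l$ genuinely carries the gain $t_0^{-\varepsilon}R^{-(\alpha-2)}$ inherited from the a priori constraints (\ref{e4:21})--(\ref{e4:25}) on the parameters and on $(\phi,\psi)$, so that the system (\ref{e5:181000})--(\ref{e5:181}) is amenable to the later fixed-point argument.
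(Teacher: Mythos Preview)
Your proposal is correct and follows essentially the same route as the paper: you use the decomposition (\ref{e:601}), test each piece $S_1,\dots,S_5$ against $z_l$ via Lemma~\ref{e:orthogalityofkernels} to isolate the leading $a_{l,l}\dot\xi_l$ and coupling $a_{n+2,1}\mu_0\dot a_1$ (resp.\ $a_{n+3,2}\mu_0\dot a_2$) terms, and absorb the $\mu_0y\leftrightarrow\mu y$ differences, the $\psi$-term, and $B[\phi],B^0[\phi]$ into $\Pi_l$ exactly as in Lemma~\ref{l5:100}. The one addition you make that the paper leaves implicit is the justification of $c_l>0$ via $\int p|Q|^{p-1}y_m z_l=-\delta_{lm}\int|Q|^{p-1}Q$ and the positivity of $\int|Q|^{p-1}Q$ from the decay (\ref{vanc1}), which is correct.
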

\begin{proof}
We compute
\begin{eqnarray*}
\int_{B_{2R}}H[\lambda,\xi, a, \theta, \dot{\lambda},\dot{\xi}, \dot{a}, \dot{\theta}, \phi, \psi](y,t(\tau))z_{l}(y)dy,
\end{eqnarray*}
where $H[\lambda,\xi, a, \theta, \dot{\lambda},\dot{\xi}, \dot{a}, \dot{\theta}, \phi, \psi](y,t(\tau))$ is defined in (\ref{e5:2}). Expand $\mu_{0}^{\frac{n+2}{2}}S_{A}(\xi + \mu_{0}y, t)$ as (\ref{e:601}), by direct computations, we have
\begin{equation*}
\int_{B_{2R}}S_1(\xi+\mu y)z_{l}(y)dy = O\left(\frac{1}{k}\right)\left(\dot{\lambda} + \mu_0^{n-4}\lambda\right),
\end{equation*}
\begin{equation*}
\int_{B_{2R}}S_2(\xi+\mu y)z_{l}(y)dy = O\left(\frac{1}{k}\right)\left(\dot{\mu}_0 + \mu_0^{n-3}\right),
\end{equation*}
\begin{equation*}
\begin{aligned}
\int_{B_{2R}}S_3(\xi +\mu y)z_{l}(y)dy &= (1+O(R^{-n}))a_{l, l}\dot{\xi}_l\\
&\quad -(1+O(R^{-2}))p\int_{\mathbb{R}^n}|Q|^{p-1}y_lz_l(y)dy\mu^{n-2}\nabla H(q, q),
\end{aligned}
\end{equation*}
\begin{equation*}
\begin{aligned}
\int_{B_{2R}}S_4(\xi & +\mu y)z_{l}(y)dy =\\
&\left\{
\begin{matrix}
a_{n+2, 1}(1+O(R^{4-n}))\dot{a}_1 + O\left(\frac{1}{k}\right)(1+O(R^{4-n}))\dot{a}_2 & \hbox{ if } l = 1\, , \\
&\\ a_{n+2, 2}(1+O(R^{4-n}))\dot{a}_2 + O\left(\frac{1}{k}\right)(1+O(R^{4-n}))\dot{a}_1 & \hbox{ if } l = 2\, , \\
&\\ O\left(\frac{1}{k}\right)(1+O(R^{4-n}))\left(\dot{a}_1+\dot{a}_2\right) & \hbox{ if } l = 3,\cdots, n
\end{matrix}
\right.
\end{aligned}
\end{equation*}
and
\begin{equation*}
\begin{aligned}
\int_{B_{2R}}S_5(\xi +\mu y)z_{l}(y)dy &= O\left(\frac{1}{k}\right)\left(\dot{\theta}_{12} + \sum_{j = 3}^n\left(\dot{\theta}_{1j}+ \dot{\theta}_{2j}\right)\right).
\end{aligned}
\end{equation*}
Since $\frac{\mu_{0}}{\mu} = (1 + \frac{\lambda}{\mu_{0}})^{-1}$, for $j = 1, 2, 3, 4, 5$, we have
\begin{equation*}
\begin{aligned}
&\quad~\int_{B_{2R}}[S_j(\xi+\mu_{0}y, t)-S_j(\xi+\mu y, t)]z_{l}(y)dy \\
&= g(t,\frac{\lambda}{\mu_0})\dot{\lambda} + g(t, \frac{\lambda}{\mu_0})\dot{\xi} + g(t,\frac{\lambda}{\mu_0})\left(\dot{a}_1+\dot{a}_2\right)\\
&\quad + g(t,\frac{\lambda}{\mu_0})\left(\dot{\theta}_{12} + \sum_{j = 3}^n\left(\dot{\theta}_{1j} + \dot{\theta}_{2j}\right)\right)\\
&\quad + g(t,\frac{\lambda}{\mu_0})\mu_0^{n-4}\left(\lambda + (\xi -q) + a_1 + a_2 + \theta_{12} + \sum_{j = 3}^n\left(\theta_{1j} + \theta_{2j}\right)\right)\\ &\quad + \mu_0^{n-3+\sigma}f(t),
\end{aligned}
\end{equation*}
where $f$ and $g$ are smooth, bounded functions satisfying $g(\cdot, s)\sim s$ as $s\to 0$. Thus
\begin{equation*}
\begin{aligned}
& c\left(\frac{\mu}{\mu_{0}}\right)^{\frac{n+2}{2}}\mu_{0}^{-1}\int_{B_{2R}}\mu_{0}^{\frac{n+2}{2}}S_{A}(\xi + \mu_{0}y, t)z_{l}(y)dy\\
&= \left[\dot{\xi} + \frac{-p\int_{\mathbb{R}^n}|Q|^{p-1}y_lz_l(y)dy}{\int_{\mathbb{R}^n}|z_l|^2dy}b^{n-2}\mu_0^{n-2}\right] + \left(O\left(\frac{1}{k}\right)+ t_0^{-\varepsilon}g(t,\frac{\lambda}{\mu_0})\right)\dot{\xi}\\
&\quad + \left(O\left(\frac{1}{k}\right)+ t_0^{-\varepsilon}g(t,\frac{\lambda}{\mu_0})\right)\mu_0\left(\dot{a}_1+\dot{a}_2\right)\\
&\quad + \left(O\left(\frac{1}{k}\right)+ t_0^{-\varepsilon}g(t,\frac{\lambda}{\mu_0})\right)\mu_0\left(\dot{\theta}_{12} + \sum_{j = 3}^n\left(\dot{\theta}_{1j} + \dot{\theta}_{2j}\right)\right)\\
&\quad + g(t,\frac{\lambda}{\mu_0})\mu_0^{n-4}\left(\lambda + (\xi -q) + \mu_0 a_1 + \mu_0 a_2 + \mu_0\theta_{12} + \mu_0\sum_{j = 3}^n\left(\theta_{1j} + \theta_{2j}\right)\right),
\end{aligned}
\end{equation*}
for smooth bounded functions $g$ satisfying $g(\cdot, s)\sim s$ as $s\to 0$.

The computations for the term $$p\mu_{0}^{\frac{n-2}{2}}(1 + \frac{\lambda}{\mu_{0}})^{-2}\int_{B_{2R}}|Q|^{p-1}(\frac{\mu_{0}}{\mu}y)\psi(\xi + \mu_{0}y, t)z_{l}(y)dy,$$
$B[\phi]$ and $B^0[\phi]$ are similar to that of Lemma \ref{l5:100}.
\end{proof}
\subsection{The equation for $\theta_{12}$.}
Now we compute (\ref{e5:7}) for $l = n + 1$.
\begin{lemma}\label{l5:22}
For $l = n + 1$, (\ref{e5:7}) is equivalent to
\begin{equation}\label{e5:182}
\begin{aligned}
\mu_0\dot\theta_{12} + O\left(\frac{1}{k}\right)\dot{\lambda} + O\left(\frac{1}{k}\right)\dot{\xi} & + O\left(\frac{1}{k}\right)\mu_0\left(\dot{a}_1+\dot{a}_2\right) + O\left(\frac{1}{k}\right)\mu_0\left(\sum_{j = 3}^n\dot{\theta}_{1j} + \dot{\theta}_{2j}\right)\\
& = \Pi_{n+1}[\lambda,\xi, a, \theta, \dot{\lambda},\dot{\xi}, \dot{a}, \dot{\theta}, \phi, \psi](t),
\end{aligned}
\end{equation}
\begin{equation*}
\begin{aligned}
&\Pi_{n+1}[\lambda,\xi, a, \theta, \dot{\lambda},\dot{\xi}, \dot{a}, \dot{\theta}, \phi, \psi](t)  = \mu_0^{n-2+\sigma}(t)f(t)+ \frac{t_0^{-\varepsilon}}{R^{\alpha-2}}\Theta_{n+1}\\
&\quad \left[\dot{\lambda},\dot{\xi}, \mu_0\dot{a}, \mu_0\dot{\theta}, \mu_0^{n-4}(t)\lambda, \mu_0^{n-4}(\xi-q), \mu_0^{n-3}a, \mu_0^{n-3}\theta, \mu_0^{n-3+\sigma}\phi, \mu_0^{\frac{n-2}{2}+\sigma}\psi\right](t),
\end{aligned}
\end{equation*}
where $f(t)$ and $\Theta_{n+1}$ are smooth bounded for $t\in [t_0, \infty)$.
\end{lemma}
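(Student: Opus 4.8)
The plan is to follow the scheme already used for Lemma \ref{l5:100} and Lemma \ref{l5:21}, now testing the right-hand side of the inner equation against the rotation kernel $z_{n+1}$. First I would expand
\[
\int_{B_{2R}}H[\lambda,\xi, a, \theta, \dot{\lambda},\dot{\xi}, \dot{a}, \dot{\theta}, \phi, \psi](y,t(\tau))\,z_{n+1}(y)\,dy,
\]
with $H$ as in (\ref{e5:2}), writing $\mu_0^{\frac{n+2}{2}}S_{A}(\xi+\mu_0 y, t)$ in the form (\ref{e:601}) as $(\mu_0/\mu)^{\frac{n+2}{2}}[\mu_0 S_1 + \lambda b S_2 + \mu S_3 + \mu^2 S_4 + \mu^2 S_5]_{z=\xi+\mu y}$ plus the five difference terms $[S_j(\xi+\mu_0 y)-S_j(\xi+\mu y)]$, $j=1,\dots,5$.

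The dominant contribution comes from $\mu^2 S_5$: since $S_5(\xi+\mu y) = z_{n+1}(y)\dot{\theta}_{12} + \sum_{j=3}^n\bigl(z_{n+j+1}(y)\dot{\theta}_{1j} + z_{2n+j-1}(y)\dot{\theta}_{2j}\bigr)$, Lemma \ref{e:orthogalityofkernels} gives
\[
\int_{B_{2R}}S_5(\xi+\mu y)z_{n+1}(y)\,dy = a_{n+1,n+1}\bigl(1+O(R^{-n})\bigr)\dot{\theta}_{12} + O(k^{-1})\sum_{j=3}^n\bigl(\dot{\theta}_{1j}+\dot{\theta}_{2j}\bigr),
\]
and after multiplying the whole identity by $c(\mu/\mu_0)^{\frac{n+2}{2}}\mu_0^{-1}a_{n+1,n+1}^{-1}$ this produces the leading term $\mu_0\dot{\theta}_{12}$ of (\ref{e5:182}) (using $\mu^2/\mu_0 \sim b^2\mu_0$). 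For the remaining blocks, Lemma \ref{e:orthogalityofkernels} forces the projections of $S_1$, $S_2$, $S_3$, $S_4$ onto $z_{n+1}$ to carry only $O(k^{-1})$ coefficients in front of $\dot{\lambda}$, $\dot{\xi}$ and $\mu_0(\dot{a}_1+\dot{a}_2)$, plus lower-order pieces of sizes $\mu_0^{n-3}$, $\mu_0^{n-2}$ and $\mu_0^{n-2+\sigma}$ coming from the $H(q,q)$-, $\nabla H(q,q)$- and remainder factors. Unlike in Lemma \ref{l5:21}, the geometric forcing $p|Q|^{p-1}[-\mu^{n-2}\nabla H(q,q)]\cdot y$ inside $S_3$ contributes \emph{nothing} at order $\mu_0^{n-2}$ here, because $\int_{\mathbb{R}^n}p|Q|^{p-1}(y)\,y_m\,z_{n+1}(y)\,dy = 0$: indeed $p|Q|^{p-1}z_{n+1} = -\Delta z_{n+1}$, so integrating by parts twice against the harmonic function $y_m$ annihilates this integral, the boundary terms vanishing by the fast decay of $z_{n+1}=\partial_\phi Q$ (whose angular Fourier content in the $(y_1,y_2)$-plane consists only of frequencies that are multiples of $k$). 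This is exactly why $\Pi_{n+1}$ in (\ref{e5:182}) has no $\mu_0^{n-2}c_l[b^{n-2}\nabla H(q,q)]$ term and begins at order $\mu_0^{n-2+\sigma}$.

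The rest is assembled as in the proof of Lemma \ref{l5:100}: the difference terms $[S_j(\xi+\mu_0 y)-S_j(\xi+\mu y)]$ contribute $g(t,\lambda/\mu_0)$-weighted multiples of $\dot{\lambda}$, $\dot{\xi}$, $\mu_0\dot{a}$, $\mu_0\dot{\theta}$ together with $\mu_0^{n-4}$ times $(\lambda,\xi-q,a,\theta)$ and an $O(\mu_0^{n-3+\sigma})$ remainder, where $g(\cdot,s)\sim s$ as $s\to 0$; the term $p\mu_0^{\frac{n-2}{2}}(1+\lambda/\mu_0)^{-2}\int_{B_{2R}}|Q|^{p-1}(\frac{\mu_0}{\mu}y)\psi(\xi+\mu_0 y,t)z_{n+1}\,dy$ is bounded by $\frac{t_0^{-\varepsilon}}{R^{\alpha-2}}\mu_0^{\frac{n-2}{2}+\sigma}f(t)$ via (\ref{e4:400}); and $\int_{B_{2R}}B[\phi]z_{n+1}$, $\int_{B_{2R}}B^0[\phi]z_{n+1}$ are estimated verbatim from that proof. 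Collecting everything and dividing through by $a_{n+1,n+1}>0$ moves the $O(k^{-1})$ couplings of $\dot{\lambda}$, $\dot{\xi}$, $\mu_0\dot{a}$, $\mu_0\dot{\theta}$ to the left-hand side and lumps the remainder into $\Pi_{n+1}$ of the stated shape $\mu_0^{n-2+\sigma}(t)f(t) + \frac{t_0^{-\varepsilon}}{R^{\alpha-2}}\Theta_{n+1}[\cdots](t)$.

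The main obstacle is bookkeeping rather than a new idea: one must track the cutoff errors $O(R^{2-n})$, $O(R^{-2})$, $O(R^{4-n})$ produced by integrating over $B_{2R}$ instead of $\mathbb{R}^n$ and absorb them using the a priori constraints (\ref{e4:21})--(\ref{e4:25}) and the choice $R=t_0^{\rho}$ from (\ref{e3:20}), and, more importantly, one must check that every off-diagonal coefficient is genuinely $O(k^{-1})$, which hinges entirely on the large-$k$ near-orthogonality of the kernels $z_0,\dots,z_{3n-1}$ in Lemma \ref{e:orthogalityofkernels}, together with the exact vanishing of the $\nabla H(q,q)$-projection onto $z_{n+1}$ discussed above.
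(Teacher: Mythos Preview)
Your proposal is correct and follows essentially the same route as the paper: expand via (\ref{e:601}), compute the projections of $S_1,\dots,S_5$ onto $z_{n+1}$ using Lemma \ref{e:orthogalityofkernels}, and handle the difference terms, the $\psi$-coupling, $B[\phi]$, $B^0[\phi]$ exactly as in Lemma \ref{l5:100}.

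You actually go one step further than the paper at the crucial point. The paper records the $S_3$-contribution as $O(k^{-1})\dot{\xi}+O(1+R^{-1})\mu_0^{n-2}$ and then silently drops the $\mu_0^{n-2}$ piece in the final assembly, whereas you explain \emph{why} it vanishes: $\int_{\mathbb{R}^n}p|Q|^{p-1}y_m z_{n+1}\,dy=0$. Your symmetry justification is the right one --- for $m=1,2$ the $2\pi/k$-rotational invariance forces the angular Fourier content of $|Q|^{p-1}z_{n+1}$ to lie in $k\mathbb{Z}\setminus\{0\}$, orthogonal to the mode $\pm 1$ carried by $y_m$; for $m\geq 3$ the parity $x_m\mapsto -x_m$ from (\ref{sim22}) kills the integral. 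The integration-by-parts route you also sketch is less clean on its own, since the boundary terms on $\partial B_R$ are only $O(1)$ in size and vanish precisely by the same angular cancellations, so it is the symmetry argument that does the work. One minor correction: the cutoff error in $\int_{B_{2R}}z_{n+1}^2$ is $O(R^{2-n})$ (as the paper writes), not $O(R^{-n})$, since $z_{n+1}\sim |y|^{1-n}$ at infinity; this does not affect the conclusion.
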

\begin{proof}
We compute
\begin{eqnarray*}
\int_{B_{2R}}H[\lambda,\xi, a, \theta, \dot{\lambda},\dot{\xi}, \dot{a}, \dot{\theta}, \phi, \psi](y,t(\tau))z_{n+1}(y)dy,
\end{eqnarray*}
where $H[\lambda,\xi, a, \theta, \dot{\lambda},\dot{\xi}, \dot{a}, \dot{\theta}, \phi, \psi](y,t(\tau))$ is defined in (\ref{e5:2}). Expand $\mu_{0}^{\frac{n+2}{2}}S_{A}(\xi + \mu_{0}y, t)$ as (\ref{e:601}), by direct computations, we have
\begin{equation*}
\int_{B_{2R}}S_1(\xi+\mu y)z_{n+1}(y)dy = O\left(\frac{1}{k}\right)\left(\dot{\lambda} + \mu_0^{n-4}\lambda\right),
\end{equation*}
\begin{equation*}
\int_{B_{2R}}S_2(\xi+\mu y)z_{n+1}(y)dy = O\left(\frac{1}{k}\right)\left(\dot{\mu}_0 + \mu_0^{n-3}\right),
\end{equation*}
\begin{equation*}
\begin{aligned}
\int_{B_{2R}}S_3(\xi +\mu y)z_{n+1}(y)dy &= O\left(\frac{1}{k}\right)\dot{\xi} + O(1 + R^{-1})\mu_{0}^{n-2},
\end{aligned}
\end{equation*}
\begin{equation*}
\int_{B_{2R}}S_4(\xi +\mu y)z_{n+1}(y)dy = O\left(\frac{1}{k}\right)(1+O(R^{-2}))\left(\dot{a}_1+\dot{a}_2\right)
\end{equation*}
and
\begin{equation*}
\begin{aligned}
\int_{B_{2R}}S_5(\xi +\mu y)z_{n+1}(y)dy &= a_{n+1, n+1}(1 + O(R^{2-n}))\dot{\theta}_{12} + O\left(\frac{1}{k}\right)\sum_{j = 3}^n\left(\dot{\theta}_{1j}+ \dot{\theta}_{2j}\right).
\end{aligned}
\end{equation*}
Since $\frac{\mu_{0}}{\mu} = (1 + \frac{\lambda}{\mu_{0}})^{-1}$, for $j = 1, 2, 3, 4, 5$, we have
\begin{equation*}
\begin{aligned}
&\quad~\int_{B_{2R}}[S_l(\xi+\mu_{0}y, t)-S_l(\xi+\mu y, t)]z_{n+1}(y)dy \\
&\quad=g(t,\frac{\lambda}{\mu_0})\dot{\lambda} + g(t, \frac{\lambda}{\mu_0})\dot{\xi} + g(t,\frac{\lambda}{\mu_0})\left(\dot{a}_1+\dot{a}_2\right)\\
&\quad\quad + g(t,\frac{\lambda}{\mu_0})\left(\dot{\theta}_{12} + \sum_{j = 3}^n\left(\dot{\theta}_{1j} + \dot{\theta}_{2j}\right)\right)\\
&\quad\quad + g(t,\frac{\lambda}{\mu_0})\mu_0^{n-4}\left(\lambda + (\xi -q) + a_1 + a_2 + \theta_{12} + \sum_{j = 3}^n\left(\theta_{1j} + \theta_{2j}\right)\right)\\ &\quad\quad + \mu_0^{n-3+\sigma}f(t),
\end{aligned}
\end{equation*}
where $f$ and $g$ are smooth, bounded functions satisfying $g(\cdot, s)\sim s$ as $s\to 0$. Thus
\begin{equation*}
\begin{aligned}
& c\left(\frac{\mu}{\mu_{0}}\right)^{\frac{n+2}{2}}\mu_{0}^{-1}\int_{B_{2R}}\mu_{0}^{\frac{n+2}{2}}S_{A}(\xi + \mu_{0}y, t)z_{n+1}(y)dy\\
& = \mu_0\dot{\theta}_{12} + \left(O\left(\frac{1}{k}\right)+ t_0^{-\varepsilon}g(t,\frac{\lambda}{\mu_0})\right)\dot{\xi}\\
&\quad + \left(O\left(\frac{1}{k}\right)+ t_0^{-\varepsilon}g(t,\frac{\lambda}{\mu_0})\right)\mu_0\left(\dot{a}_1+\dot{a}_2\right)\\
&\quad + \left(O\left(\frac{1}{k}\right)+ t_0^{-\varepsilon}g(t,\frac{\lambda}{\mu_0})\right)\mu_0\sum_{j = 3}^n\left(\dot{\theta}_{1j} + \dot{\theta}_{2j}\right)\\
&\quad + g(t,\frac{\lambda}{\mu_0})\mu_0^{n-4}\left(\lambda + (\xi -q) + \mu_0 a_1 + \mu_0 a_2 + \mu_0\theta_{12} + \mu_0\sum_{j = 3}^n\left(\theta_{1j} + \theta_{2j}\right)\right),
\end{aligned}
\end{equation*}
for smooth bounded functions $g$ satisfying $g(\cdot, s)\sim s$ as $s\to 0$.

The computations for the term $$p\mu_{0}^{\frac{n-2}{2}}(1 + \frac{\lambda}{\mu_{0}})^{-2}\int_{B_{2R}}|Q|^{p-1}(\frac{\mu_{0}}{\mu}y)\psi(\xi + \mu_{0}y, t)z_{n+1}(y)dy,$$
$B[\phi]$ and $B^0[\phi]$ are similar to that of Lemma \ref{l5:100}.
\end{proof}
\subsection{The equation for $a_1$ and $a_2$.}
Now we compute (\ref{e5:7}) for $l = n + 2, n+3$.
\begin{lemma}\label{l5:23}
For $l = n + 2, n+3$, (\ref{e5:7}) is equivalent to
\begin{equation}\label{e5:183}
\begin{aligned}
&a_{1, n+2}\dot{\xi}_1 + a_{n+2, n+2}\mu_0\dot{a}_{1} + O\left(\frac{1}{k}\right)\dot{\lambda} + O\left(\frac{1}{k}\right)\dot{\xi}  + O\left(\frac{1}{k}\right)\mu_0\dot{a}_2 \\
&\quad + O\left(\frac{1}{k}\right)\mu_0\left(\dot{\theta}_{12} + \sum_{j = 3}^n(\dot{\theta}_{1j} + \dot{\theta}_{2j})\right)  = \Pi_{n+2}[\lambda,\xi, a, \theta, \dot{\lambda},\dot{\xi}, \dot{a}, \dot{\theta}, \phi, \psi](t),
\end{aligned}
\end{equation}
\begin{equation}\label{e5:18}
\begin{aligned}
&a_{2, n+3}\dot{\xi}_2 + a_{n+3, n+3}\mu_0\dot{a}_{2} + O\left(\frac{1}{k}\right)\dot{\lambda} + O\left(\frac{1}{k}\right)\dot{\xi}  + O\left(\frac{1}{k}\right)\mu_0\dot{a}_1 \\
&\quad + O\left(\frac{1}{k}\right)\mu_0\left(\dot{\theta}_{12} + \sum_{j = 3}^n(\dot{\theta}_{1j} + \dot{\theta}_{2j})\right)  = \Pi_{n+3}[\lambda,\xi, a, \theta, \dot{\lambda},\dot{\xi}, \dot{a}, \dot{\theta}, \phi, \psi](t),
\end{aligned}
\end{equation}
\begin{equation*}
\begin{aligned}
&\Pi_{n+2}[\lambda,\xi, a, \theta, \dot{\lambda},\dot{\xi}, \dot{a}, \dot{\theta}, \phi, \psi](t) = \mu_0^{n-2+\sigma}(t)f(t) + \frac{t_0^{-\varepsilon}}{R^{\alpha-2}}\Theta_{n+2}\\
&\quad \left[\dot{\lambda},\dot{\xi}, \mu_0\dot{a}, \mu_0\dot{\theta}, \mu_0^{n-4}(t)\lambda, \mu_0^{n-4}(\xi-q), \mu_0^{n-3}a, \mu_0^{n-3}\theta, \mu_0^{n-3+\sigma}\phi, \mu_0^{\frac{n-2}{2}+\sigma}\psi\right](t),
\end{aligned}
\end{equation*}
\begin{equation*}
\begin{aligned}
&\Pi_{n+3}[\lambda,\xi, a, \theta, \dot{\lambda},\dot{\xi}, \dot{a}, \dot{\theta}, \phi, \psi](t) = \mu_0^{n-2+\sigma}(t)f(t)+ \frac{t_0^{-\varepsilon}}{R^{\alpha-2}}\Theta_{n+3}\\
&\quad \left[\dot{\lambda},\dot{\xi}, \mu_0\dot{a}, \mu_0\dot{\theta}, \mu_0^{n-4}(t)\lambda, \mu_0^{n-4}(\xi-q), \mu_0^{n-3}a, \mu_0^{n-3}\theta, \mu_0^{n-3+\sigma}\phi, \mu_0^{\frac{n-2}{2}+\sigma}\psi\right](t),
\end{aligned}
\end{equation*}
where $f(t)$ and $\Theta_{n+2}$, $\Theta_{n+3}$ are smooth bounded functions for $t\in [t_0, \infty)$.
\end{lemma}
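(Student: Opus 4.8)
The plan is to imitate, line for line, the computations carried out in Lemmas \ref{l5:100}, \ref{l5:21} and \ref{l5:22}. For $l\in\{n+2,n+3\}$ we must evaluate
\[
\int_{B_{2R}}H[\lambda,\xi,a,\theta,\dot\lambda,\dot\xi,\dot a,\dot\theta,\phi,\psi](y,t(\tau))\,z_{l}(y)\,dy ,
\]
with $H$ given by (\ref{e5:2}). First I would insert the decomposition (\ref{e:601}) of $\mu_0^{\frac{n+2}{2}}S_A(\xi+\mu_0 y,t)$ into the five blocks $S_1,\dots,S_5$ and compute each $\int_{B_{2R}}S_j(\xi+\mu y)\,z_{l}(y)\,dy$ by means of the integral identities of Lemma \ref{e:orthogalityofkernels}. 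Since $z_{n+2}=-2y_1z_0+|y|^2z_1$ and $z_{n+3}=-2y_2z_0+|y|^2z_2$ (see (\ref{chico1})), together with the relations (\ref{z211}), (\ref{z311}), the only blocks producing leading-order contributions against $z_{n+2}$ are $S_3$, whose $\dot\xi$-part $\bigl(\nabla Q-E_{n,k}y(1+|y|^2)^{-n/2}\bigr)\cdot\dot\xi$ contributes $a_{1,n+2}\dot\xi_1$, and $S_4$, whose bracket multiplying $\dot a_1$ contributes $a_{n+2,n+2}\,\mu_0\dot a_1$; the $\dot a_2$-part of $S_4$, the blocks $S_1,S_2$ (carrying $\dot\lambda$, $\mu_0^{n-4}\lambda$, $\dot\mu_0$, $\mu_0^{n-3}$) and the block $S_5$ (carrying $\dot\theta_{12}$ and $\dot\theta_{1j},\dot\theta_{2j}$) all integrate against $z_{n+2}$ to give $O(k^{-1})$ coefficients by Lemma \ref{e:orthogalityofkernels}. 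The computation against $z_{n+3}$ is identical, yielding $a_{2,n+3}\dot\xi_2+a_{n+3,n+3}\mu_0\dot a_2$ plus $O(k^{-1})$ terms.

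Next I would treat the difference pieces $\int_{B_{2R}}[S_j(\xi+\mu_0 y,t)-S_j(\xi+\mu y,t)]\,z_{l}(y)\,dy$ exactly as in the proof of Lemma \ref{l5:100}: writing $\mu=\bar\mu_0+\lambda$ so that $\mu_0/\mu=(1+\lambda/\mu_0)^{-1}$, a Taylor expansion in $\lambda/\mu_0$ produces coefficients $g(t,\lambda/\mu_0)$ with $g(\cdot,s)\sim s$ as $s\to0$, multiplying $\dot\lambda$, $\dot\xi$, $\mu_0\dot a$, $\mu_0\dot\theta$ and $\mu_0^{n-4}\bigl(\lambda+(\xi-q)+a+\theta\bigr)$, together with an $\mu_0^{n-3+\sigma}f(t)$ remainder; the nonlocal ingredients $\Phi^0,\Phi^1,\Phi^{2,i}$ hidden in $S_A$ are controlled by the estimates leading to (\ref{e:2018330}). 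Then I would carry over the treatment of the remaining pieces of $H$: the $\psi$-term $p\mu_0^{\frac{n-2}{2}}(1+\lambda/\mu_0)^{-2}\int_{B_{2R}}|Q|^{p-1}\bigl(\frac{\mu_0}{\mu}y\bigr)\psi(\xi+\mu_0 y,t)z_{l}(y)\,dy$, which by (\ref{e4:400}) equals $\frac{t_0^{-\varepsilon}}{R^{\alpha-2}}\mu_0^{\frac{n-2}{2}+\sigma}f(t)$; and $\int_{B_{2R}}B[\phi]\,z_{l}$, $\int_{B_{2R}}B^0[\phi]\,z_{l}$ with $B[\phi]$, $B^0[\phi]$ from (\ref{e3:11}), (\ref{e3:12}), which give $\frac{t_0^{-\varepsilon}}{R^{\alpha-2}}$ times bounded functionals of $(\dot\xi,\mu_0^{n-3+\sigma}\phi)$ and of $\mu_0^{n-2+\sigma}g(\lambda/\mu_0)[\phi]$. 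Recasting the orthogonality condition (\ref{e5:7}) for $l\in\{n+2,n+3\}$ after dividing by the nonzero factor $\bigl(\int_{\mathbb{R}^n}|z_{l}|^2\bigr)^{-1}(\mu/\mu_0)^{\frac{n+2}{2}}\mu_0^{-1}$ (finite since $z_{n+2},z_{n+3}$ decay like $|y|^{1-n}$) then gives (\ref{e5:183})--(\ref{e5:18}) with $\Pi_{n+2},\Pi_{n+3}$ of the stated form; the power counting in $\mu_0$ under the a priori bounds (\ref{e4:21})--(\ref{e4:22}), (\ref{e4:25}) and $\|\psi\|_{**,\beta,\alpha}\le c\,t_0^{-\varepsilon}R^{2-\alpha}$ is routine and identical to that of Lemmas \ref{l5:100}--\ref{l5:22}.

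The step I expect to demand the most care is checking that $\Pi_{n+2}$ and $\Pi_{n+3}$ carry \emph{no} leading $\mu_0^{n-2}$ term of the form $c_l\,b^{n-2}\nabla H(q,q)$ --- in contrast to $\Pi_1,\dots,\Pi_n$ in Lemma \ref{l5:21}, where this term appears with $c_l>0$. This reduces to showing that the $\nabla H$-contribution of $S_3$, namely $\int_{B_{2R}}p|Q|^{p-1}(y)\,[-\mu^{n-2}\nabla H(q,q)]\cdot y\;z_{l}(y)\,dy$, is of order $\mu_0^{n-2+\sigma}$ or smaller, i.e. that $\int_{\mathbb{R}^n}|Q|^{p-1}(y)\,y_j\,z_{n+2}(y)\,dy=O(k^{-1})$ for every $j$, and likewise for $z_{n+3}$. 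For $j\geq 2$ this pairing vanishes by the parity of $Q$ in $y_j$ (recall (\ref{sim22})), since $z_{n+2}$ is then even in $y_j$; for $j=1$ one uses the kernel identity $L(z_{n+2})=0$ together with an integration by parts against the harmonic function $y_1$ to reduce the pairing to a boundary integral at infinity, which cancels up to an $O(k^{-1})$ error thanks to the rotational symmetry (\ref{sim00}) and the conformal invariance (\ref{sim33}) of $Q$; this residual $O(k^{-1})$ is absorbed into the $\mu_0^{n-2+\sigma}f(t)$ term. With this in hand the proof concludes exactly as in Lemmas \ref{l5:21} and \ref{l5:22}.
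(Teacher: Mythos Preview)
Your overall strategy is the paper's strategy: decompose via (\ref{e:601}), compute each $\int_{B_{2R}}S_j\,z_l$ using Lemma \ref{e:orthogalityofkernels}, treat the difference pieces by Taylor expansion in $\lambda/\mu_0$, and handle the $\psi$-, $B[\phi]$-, $B^0[\phi]$-contributions as in Lemma \ref{l5:100}. Up to and including that, your sketch is fine and mirrors the paper's proof.

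The problem is your final paragraph. You assert that
\[
\int_{\mathbb{R}^n} p|Q|^{p-1}(y)\,y_1\,z_{n+2}(y)\,dy \;=\; O(k^{-1}),
\]
and your proposed justification (integration by parts using $L(z_{n+2})=0$ against the harmonic function $y_1$) does not work: the boundary terms on $\partial B_R$ are of size $O(1)$, not $o(1)$, since $z_{n+2}\sim |y|^{1-n}$. In fact the integral is \emph{not} small. For the radial bubble $U$ one has the exact identity $-2y_1 Z_0+|y|^2 Z_1=Z_1$, so $z_{n+2}$ for $Q_k$ contains a $Z_1$-piece at leading order; the pairing then reduces, at main order, to
\[
\int_{\mathbb{R}^n} pU^{p-1}(y)\,y_1\,Z_1(y)\,dy
=\int_{\mathbb{R}^n} y_1\,\partial_{y_1}\!\bigl(U^p\bigr)\,dy
=-\int_{\mathbb{R}^n} U^p\,dy\neq 0.
\]
So the $\nabla H$-block of $S_3$ contributes a genuine $O(1)\,\mu_0^{n-2}$ term to $\Pi_{n+2}$ (and likewise to $\Pi_{n+3}$), not an $O(k^{-1})$ term.

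The paper does not attempt the cancellation you propose. In its proof it simply records
\[
\int_{B_{2R}} S_3(\xi+\mu y)\,z_{n+2}(y)\,dy \;=\; a_{1,n+2}\,\dot\xi_1 \;+\; O(1)\,\mu_0^{n-2},
\]
and carries this $\mu_0^{n-2}$ contribution forward; it is exactly the $\mu_0^{n-2}c_l\bigl[b^{n-2}\nabla H(q,q)\bigr]$ term that appears for \emph{all} $l=1,\dots,3n-1$ in the statement of Proposition \ref{l5:1}. (The individual Lemma \ref{l5:23} statement omits this leading piece, which is a minor inconsistency with Proposition \ref{l5:1}; the paper's own computation makes clear the term is present.) For the fixed-point argument in Section 4.4 this is harmless: the equation becomes $\dot a_1=\mu_0^{-1}\Pi_{n+2}\sim\mu_0^{n-3}$, which is compatible with the a priori bound $\|\dot a\|_{n-4+\sigma}<\infty$ provided $\sigma\le 1$.

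In short: drop your last paragraph, keep the $\mu_0^{n-2}\nabla H$ contribution as a nonzero constant (as in Proposition \ref{l5:1}), and your proof then coincides with the paper's.
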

\begin{proof}
We compute
\begin{eqnarray*}
\int_{B_{2R}}H[\lambda,\xi, a, \theta, \dot{\lambda},\dot{\xi}, \dot{a}, \dot{\theta}, \phi, \psi](y,t(\tau))z_{n+2}(y)dy,
\end{eqnarray*}
where $H[\lambda,\xi, a, \theta, \dot{\lambda},\dot{\xi}, \dot{a}, \dot{\theta}, \phi, \psi](y,t(\tau))$ is defined in (\ref{e5:2}). Expand $\mu_{0}^{\frac{n+2}{2}}S_{A}(\xi + \mu_{0}y, t)$ as (\ref{e:601}), by direct computations, we have
\begin{equation*}
\int_{B_{2R}}S_1(\xi+\mu y)z_{n+2}(y)dy = O\left(\frac{1}{k}\right)\left(\dot{\lambda} + \mu_0^{n-4}\lambda\right),
\end{equation*}
\begin{equation*}
\int_{B_{2R}}S_2(\xi+\mu y)z_{n+2}(y)dy = O\left(\frac{1}{k}\right)\left(\dot{\mu}_0 + \mu_0^{n-3}\right),
\end{equation*}
\begin{equation*}
\begin{aligned}
\int_{B_{2R}}S_3(\xi +\mu y)z_{n+2}(y)dy &= a_{2, n+2}\dot{\xi} + O(1 + \log R)\mu_{0}^{n-2},
\end{aligned}
\end{equation*}
\begin{equation*}
\int_{B_{2R}}S_4(\xi +\mu y)z_{n+2}(y)dy = a_{n+2, n+2}\dot{a}_1+ O\left(\frac{1}{k}\right)(1+O(R^{-2}))\dot{a}_2,
\end{equation*}
\begin{equation*}
\begin{aligned}
\int_{B_{2R}}S_5(\xi +\mu y)z_{n+2}(y)dy &= O\left(\frac{1}{k}\right)\left(\dot{\theta}_{12} + \sum_{j = 3}^n\left(\dot{\theta}_{1j}+ \dot{\theta}_{2j}\right)\right).
\end{aligned}
\end{equation*}
Since $\frac{\mu_{0}}{\mu} = (1 + \frac{\lambda}{\mu_{0}})^{-1}$, for $l = 1, 2, 3, 4, 5$, we have
\begin{equation*}
\begin{aligned}
&\quad~\int_{B_{2R}}[S_l(\xi+\mu_{0}y, t)-S_l(\xi+\mu y, t)]z_{n+2}(y)dy \\
&\quad=g(t,\frac{\lambda}{\mu_0})\dot{\lambda} + g(t, \frac{\lambda}{\mu_0})\dot{\xi} + g(t,\frac{\lambda}{\mu_0})\left(\dot{a}_1+\dot{a}_2\right)\\
&\quad\quad + g(t,\frac{\lambda}{\mu_0})\left(\dot{\theta}_{12} + \sum_{j = 3}^n\left(\dot{\theta}_{1j} + \dot{\theta}_{2j}\right)\right)\\
&\quad\quad + g(t,\frac{\lambda}{\mu_0})\mu_0^{n-4}\left(\lambda + (R_\theta\xi -q) + a_1 + a_2 + \theta_{12} + \sum_{j = 3}^n\left(\theta_{1j} + \theta_{2j}\right)\right)\\ &\quad\quad + \mu_0^{n-3+\sigma}f(t),
\end{aligned}
\end{equation*}
where $f$ and $g$ are smooth, bounded functions satisfying $g(\cdot, s)\sim s$ as $s\to 0$. Thus
\begin{equation*}
\begin{aligned}
& c\left(\frac{\mu}{\mu_{0}}\right)^{\frac{n+2}{2}}\mu_{0}^{-2}\int_{B_{2R}}\mu_{0}^{\frac{n+2}{2}}S_{A}(\xi + \mu_{0}y, t)z_{n+2}(y)dy\\
& = a_{2, n+3}\dot{\xi} + a_{n+2, n+2}\mu_0\dot{a}_{1}\\
&\quad  + \left(O\left(\frac{1}{k}\right)+ t_0^{-\varepsilon}g(t,\frac{\lambda}{\mu_0})\right)\dot{\xi} + \left(O\left(\frac{1}{k}\right)+ t_0^{-\varepsilon}g(t,\frac{\lambda}{\mu_0})\right)\mu_0\dot{a}_2\\
&\quad  + \left(O\left(\frac{1}{k}\right)+ t_0^{-\varepsilon}g(t,\frac{\lambda}{\mu_0})\right)\mu_0\left(\dot{\theta}_{12} +\sum_{j = 3}^n\left(\dot{\theta}_{1j} + \dot{\theta}_{2j}\right)\right)\\
&\quad + g(t,\frac{\lambda}{\mu_0})\mu_0^{n-4}\left(\lambda + (\xi -q) + \mu_0 a_1 + \mu_0 a_2 + \mu_0\theta_{12} + \mu_0\sum_{j = 3}^n\left(\theta_{1j} + \theta_{2j}\right)\right),
\end{aligned}
\end{equation*}
for smooth bounded functions $g$ satisfying $g(\cdot, s)\sim s$ as $s\to 0$.

The computations for the term $$p\mu_{0}^{\frac{n-2}{2}}(1 + \frac{\lambda}{\mu_{0}})^{-2}\int_{B_{2R}}|Q|^{p-1}(\frac{\mu_{0}}{\mu}y)\psi(\xi + \mu_{0}y, t)z_{n+2}(y)dy,$$
$B[\phi]$ and $B^0[\phi]$ are similar to that of Lemma \ref{l5:100}. This proves (\ref{e5:183}). The proof of (\ref{e5:18}) is similar.
\end{proof}

\subsection{The equation for $\theta_{1l}$ and $\theta_{2l}$, $l = 3,\cdots, n$.}
Now we compute (\ref{e5:7}) for $l = n + 4,\cdots, 3n-1$.
\begin{lemma}\label{l5:24}
For $l = 3, \cdots, n$, (\ref{e5:7}) is equivalent to
\begin{equation}\label{e5:184}
\begin{aligned}
\mu_0\dot\theta_{1l} &+ O\left(\frac{1}{k}\right)\dot{\lambda} + O\left(\frac{1}{k}\right)\dot{\xi} + O\left(\frac{1}{k}\right)\mu_0\left(\dot{a}_1+\dot{a}_2\right)\\
& + O\left(\frac{1}{k}\right)\mu_0\left(\dot\theta_{12} + \sum_{j\neq l}\dot\theta_{1j} + \sum_{j =3}^n\dot\theta_{2j}\right) = \Pi_{n+l+1}[\lambda,\xi, a, \theta, \dot{\lambda},\dot{\xi}, \dot{a}, \dot{\theta}, \phi, \psi](t),
\end{aligned}
\end{equation}
\begin{equation}\label{e5:185}
\begin{aligned}
\mu_0\dot\theta_{2l} &+ O\left(\frac{1}{k}\right)\dot{\lambda} + O\left(\frac{1}{k}\right)\dot{\xi} + O\left(\frac{1}{k}\right)\mu_0\left(\dot{a}_1+\dot{a}_2\right)\\
& + O\left(\frac{1}{k}\right)\mu_0\left(\dot\theta_{12} + \sum_{j\neq l}\dot\theta_{2j} + \sum_{j =3}^n\dot\theta_{1j}\right) = \Pi_{2n+l-1}[\lambda,\xi, a, \theta, \dot{\lambda},\dot{\xi}, \dot{a}, \dot{\theta}, \phi, \psi](t),
\end{aligned}
\end{equation}
\begin{equation*}
\begin{aligned}
&\Pi_{n+l+1}[\lambda,\xi, a, \theta, \dot{\lambda},\dot{\xi}, \dot{a}, \dot{\theta}, \phi, \psi](t)  = \mu_0^{n-2+\sigma}(t)f(t)+ \frac{t_0^{-\varepsilon}}{R^{\alpha-2}}\Theta_{n+l+1}\\
&\quad \left[\dot{\lambda},\dot{\xi}, \mu_0\dot{a}, \mu_0\dot{\theta}, \mu_0^{n-4}(t)\lambda, \mu_0^{n-4}(\xi-q), \mu_0^{n-3}a, \mu_0^{n-3}\theta, \mu_0^{n-3+\sigma}\phi, \mu_0^{\frac{n-2}{2}+\sigma}\psi\right](t),
\end{aligned}
\end{equation*}
\begin{equation*}
\begin{aligned}
&\Pi_{2n+l-1}[\lambda,\xi, a, \theta, \dot{\lambda},\dot{\xi}, \dot{a}, \dot{\theta}, \phi, \psi](t)  = \mu_0^{n-2+\sigma}(t)f(t)+ \frac{t_0^{-\varepsilon}}{R^{\alpha-2}}\Theta_{2n+l-1}\\
&\quad \left[\dot{\lambda},\dot{\xi}, \mu_0\dot{a}, \mu_0\dot{\theta}, \mu_0^{n-4}(t)\lambda, \mu_0^{n-4}(\xi-q), \mu_0^{n-3}a, \mu_0^{n-3}\theta, \mu_0^{n-3+\sigma}\phi, \mu_0^{\frac{n-2}{2}+\sigma}\psi\right](t),
\end{aligned}
\end{equation*}
where $f(t)$ and $\Theta_{n+l+1}$, $\Theta_{2n+l-1}$ are smooth bounded for $t\in [t_0, \infty)$.
\end{lemma}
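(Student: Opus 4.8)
The plan is to rewrite the orthogonality conditions (\ref{e5:7}) for the indices $n+l+1$ and $2n+l-1$ by repeating, with the test functions $z_{n+l+1}$ (which yields the $\theta_{1l}$-equation) and $z_{2n+l-1}$ (which yields the $\theta_{2l}$-equation), $l\in\{3,\dots,n\}$, the computation carried out in Lemmas \ref{l5:100}, \ref{l5:21}, \ref{l5:22} and \ref{l5:23}. Fix such an $l$. Starting from $H$ as in (\ref{e5:2}), I would decompose $\mu_0^{\frac{n+2}{2}}S_A(\xi+\mu_0 y,t)$ exactly as in (\ref{e:601}) into the five blocks $\mu_0 S_1$, $\lambda b S_2$, $\mu S_3$, $\mu^2 S_4$, $\mu^2 S_5$ together with the five difference terms $S_j(\xi+\mu_0 y)-S_j(\xi+\mu y)$. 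The computation of $\int_{B_{2R}}H(y,t(\tau))\,z_{n+l+1}(y)\,dy$ then reduces to evaluating $\int_{B_{2R}}S_j(\xi+\mu y)\,z_{n+l+1}(y)\,dy$ for $j=1,\dots,5$, handling the five difference integrals, and estimating the $\psi$-coupling term together with $\int_{B_{2R}}B[\phi]\,z_{n+l+1}\,dy$ and $\int_{B_{2R}}B^0[\phi]\,z_{n+l+1}\,dy$; and similarly with $z_{2n+l-1}$ in place of $z_{n+l+1}$.

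First I would read off the ten pairings from Lemma \ref{e:orthogalityofkernels}. Since $S_1$ and $S_2$ are built from $z_0-D_{n,k}(2-|y|^2)(1+|y|^2)^{-n/2}$ and from $p|Q|^{p-1}$, their pairings with $z_{n+l+1}$ and $z_{2n+l-1}$ are all $O(k^{-1})$; since $S_3$ is built from $\nabla Q-E_{n,k}y(1+|y|^2)^{-n/2}$ — whose only non-$O(k^{-1})$ pairings recorded in Lemma \ref{e:orthogalityofkernels} are with $z_1,\dots,z_n$ and $z_{n+2},z_{n+3}$ — and from $p|Q|^{p-1}y\cdot(-\mu^{n-2}\nabla H(q,q))$, which by the symmetries (\ref{sim00})--(\ref{sim33}) of $Q_k$ pairs trivially at leading order with $z_{n+l+1}$ and $z_{2n+l-1}$, the block $S_3$ contributes only $O(k^{-1})\dot\xi$; in particular, in contrast with the $\xi$-equation, no $\mu_0^{n-2}\nabla H$ term survives, which is why such a term is absent from $\Pi_{n+l+1}$ and $\Pi_{2n+l-1}$. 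Likewise $S_4$ contributes only $O(k^{-1})\mu_0(\dot a_1+\dot a_2)$. The leading term comes from $S_5=z_{n+1}\dot\theta_{12}+\sum_{j=3}^n(z_{n+j+1}\dot\theta_{1j}+z_{2n+j-1}\dot\theta_{2j})$: Lemma \ref{e:orthogalityofkernels} gives $\int_{B_{2R}}z_{n+l+1}^2\,dy=a_{n+l+1,n+l+1}+O(k^{-1})$ with $a_{n+l+1,n+l+1}>0$, and $\int_{B_{2R}}z_{n+l+1}z_m\,dy=O(k^{-1})$ for every other mode $m$ occurring in $S_5$ — legitimate because $l\in\{3,\dots,n\}$ forces $n+l+1$ to differ from $n+1$, from $n+j+1$ for $j\neq l$, and from $2n+j-1$ for $j\geq 3$ — and symmetrically for $z_{2n+l-1}$. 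Multiplying by the normalizing factor $c(\mu/\mu_0)^{\frac{n+2}{2}}\mu_0^{-1}$, using that $S_5$ carries the $\mu^2$ prefactor of (\ref{e:601}) and that $\mu\approx\mu_0$, and dividing by $\int_{\mathbb{R}^n}|z_{n+l+1}|^2\,dy$, resp. $\int_{\mathbb{R}^n}|z_{2n+l-1}|^2\,dy$, produces the leading term $\mu_0\dot\theta_{1l}$, resp. $\mu_0\dot\theta_{2l}$, together with precisely the $O(k^{-1})$ cross-terms displayed in (\ref{e5:184}), resp. (\ref{e5:185}).

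The remaining contributions I would treat verbatim as in Lemma \ref{l5:100}: the difference integrals $\int_{B_{2R}}[S_j(\xi+\mu_0 y)-S_j(\xi+\mu y)]z_{\bullet}\,dy$ Taylor-expand in $\lambda/\mu_0$ into terms of the shape $g(t,\lambda/\mu_0)(\dot\lambda+\dot\xi+\mu_0\dot a+\mu_0\dot\theta)+g(t,\lambda/\mu_0)\mu_0^{n-4}(\lambda+(\xi-q)+\mu_0 a+\mu_0\theta)+\mu_0^{n-3+\sigma}f(t)$ with $g(\cdot,s)\sim s$ as $s\to 0$; the term $p\mu_0^{\frac{n-2}{2}}(1+\lambda/\mu_0)^{-2}\int_{B_{2R}}|Q|^{p-1}(\frac{\mu_0}{\mu}y)\psi(\xi+\mu_0 y,t)z_{\bullet}\,dy$ is bounded using (\ref{e4:400}) and contributes $\frac{t_0^{-\varepsilon}}{R^{\alpha-2}}\mu_0^{\frac{n-2}{2}+\sigma}f(t)$; and $\int_{B_{2R}}B[\phi]z_{\bullet}\,dy$ and $\int_{B_{2R}}B^0[\phi]z_{\bullet}\,dy$ are estimated from (\ref{e3:11}), (\ref{e3:12}) and the norm (\ref{e4:24}). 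Packaging all of these, together with the $O(\mu_0^{n-2+\sigma})$ remainders inherited from the error of $u^*_A$ in Lemma \ref{l2.2}, into $\mu_0^{n-2+\sigma}f(t)+\frac{t_0^{-\varepsilon}}{R^{\alpha-2}}\Theta_{n+l+1}[\cdots]$, resp. $\mu_0^{n-2+\sigma}f(t)+\frac{t_0^{-\varepsilon}}{R^{\alpha-2}}\Theta_{2n+l-1}[\cdots]$, yields the asserted forms of $\Pi_{n+l+1}$ and $\Pi_{2n+l-1}$, and hence the equivalences (\ref{e5:184}) and (\ref{e5:185}). The step I expect to be the main obstacle is not the bookkeeping, which duplicates the previous five lemmas, but the symmetry input behind Lemma \ref{e:orthogalityofkernels}: verifying that the $\nabla Q-E_{n,k}y(1+|y|^2)^{-n/2}$ block, the $a$-block, and the off-diagonal rotation modes all pair with $z_{n+l+1}$ and $z_{2n+l-1}$ through $O(k^{-1})$ coefficients only — which rests on the rotational and Kelvin invariances (\ref{sim00})--(\ref{sim33}) of $Q_k$ and the sharp asymptotics (\ref{vanc1}).
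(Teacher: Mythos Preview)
Your proposal is correct and follows exactly the approach the paper takes: the paper's proof of Lemma~\ref{l5:24} consists of the single sentence ``The proof is similar to Lemma~\ref{l5:22},'' and your write-up is precisely an expansion of that sentence, carrying over the decomposition~(\ref{e:601}), the pairings from Lemma~\ref{e:orthogalityofkernels}, and the treatment of the $\psi$-, $B[\phi]$- and $B^0[\phi]$-terms from Lemma~\ref{l5:100} to the test functions $z_{n+l+1}$ and $z_{2n+l-1}$.
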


The proof is similar to Lemma \ref{l5:22}. Since the matrices \begin{equation*}
\begin{pmatrix}
a_{1,1} & a_{1, n+2}\\
a_{n+2, 1} & a_{n+2, n+2}
\end{pmatrix}, \quad
\begin{pmatrix}
a_{2,2} & a_{2, n+3}\\
a_{n+3, 2} & a_{n+3, n+3}
\end{pmatrix}
\end{equation*}
are invertible, equations (\ref{e5:181000}), (\ref{e5:181001}), (\ref{e5:183}) and (\ref{e5:18}) can be decoupled by inverting the coefficient matrices. Combine Lemmas \ref{l5:21}, \ref{l5:22}, \ref{l5:23}, \ref{l5:24} and \ref{e5:900}, we get the result of Proposition \ref{l5:1}.

\section{Proof of Proposition \ref{propositionestimate}}
{\it Proof of (\ref{e4:37})}. Let us recall from \eqref{e3:8} that
\begin{equation*}\label{e4:41}
S_{out} = S^{(2)}_{A} + (1 - \eta_{R})S_{A}.
\end{equation*}
From (\ref{awayq}) and Lemma \ref{l2.2}, in the region $|x-q|>\delta$ with $\delta > 0$, we have the following estimate for $S_{out}$,
\begin{equation}\label{e4:42}
|S_{out}(x, t)|\lesssim \mu_0^{\frac{n-2}{2}}(\mu_0^{2}+\mu_0^{n-4})\lesssim \mu_0^{\min(n-4,2)-(\alpha-2)-\sigma}(t_0)\frac{\mu^{-2}\mu_0^{\frac{n-2}{2}+\sigma}}{1+|y|^{\alpha}}.
\end{equation}
In the region $|x-q| \leq \delta$ with $\delta > 0$ sufficiently small, Lemma \ref{l2.2} tells us that
\begin{equation}\label{e4:43}
\left|S^{(2)}_{A}(x, t)\right|\lesssim \mu_0^{-\frac{n+2}{2}}\frac{\mu_0^{n}}{1+|y|^{2}}\lesssim  \mu_0^{2-(\alpha-2)-\sigma}(t_0)\frac{\mu^{-2}\mu_0^{\frac{n-2}{2}+\sigma}}{1+|y|^{\alpha}}.
\end{equation}
By the definition of $\eta_{R}$, if $|x-\xi|>\mu_0R$, $(1-\eta_{R})\neq 0$. Therefore we have
\begin{equation}\label{e4:44}
\left|(1 - \eta_{R})S_{A}\right|\lesssim \left(\frac{1}{R^{n-2-\alpha}}+\frac{1}{R^{4-\alpha}}\right)\frac{1}{R^{a-2}}\frac{\mu^{-2}\mu_0^{\frac{n-2}{2}+\sigma}}{1+|y|^{\alpha}}.
\end{equation}
Here the decaying assumptions \eqref{e4:21} and \eqref{e4:22} are used, respectively.  This proves the validity of (\ref{e4:37}).

{\it Proof of (\ref{e4:38})}. For the term $2\nabla\eta_{R}\nabla\tilde{\phi}$, recalling that
$$\tilde{\phi}(x, t): = \mu_{0}^{-\frac{n-2}{2}}\phi\left(\frac{x-\xi}{\mu_{0}}, t\right)$$
and the assumptions \eqref{e4:24} and \eqref{e4:25}, we have
\begin{equation}\label{4.46}
\begin{aligned}
&\left|\left(\nabla\eta_{R}\cdot\nabla\tilde\phi\right)(x,t)\right|\\
&\quad\lesssim\frac{\eta'(\left|\frac{x-\xi}{R\mu_{0}}\right|)}{R\mu_{0}}\mu_0^{-\frac{n-2}{2}}\frac{|\nabla_y \phi|}{\mu_0}\\
&\quad\lesssim\frac{\eta'(\left|\frac{x-\xi}{R\mu_{0}}\right|)}{R\mu_{0}^2}\frac{\mu_0^{{n-2\over 2}+\sigma}}{(1+ |y|^{1+ \alpha})}\|\phi\|_{n-2+\sigma, \alpha}\\
&\quad\lesssim\frac{1}{R^{a-2}}\|\phi\|_{n-2+\sigma, \alpha}\frac{\mu^{-2}\mu_0^{\frac{n-2}{2}+\sigma }}{(1+|y|^{\alpha})},
\end{aligned}
\end{equation}
where, in the region $\eta'(\left|{x-\xi\over R\mu_{0}}\right|) \not=0$, $(1+ |y| ) \sim R$, $y= {x-\xi\over \mu_{0}}$.
As for the second term $\tilde{\phi}\big(\Delta -\partial_t\big)\eta_{R}$, by direct computations, we have
\begin{equation}\label{e4:46}
\begin{aligned}
\left|\tilde{\phi}\big(\Delta -\partial_t\big)\eta_{R}\right|\lesssim & \frac{\left|\Delta\eta\left(\left|\frac{x-\xi}{R\mu_{0}}\right|\right)\right|}{R^{2}\mu_{0}^{2}}\mu_0^{-\frac{n-2}{2}}|\phi|\\
&+\left|\eta'\left(\left|\frac{x-\xi}{R\mu_{0}}\right|\right)\left(\frac{|x-\xi|}{R\mu_0^2}\dot{\mu_0}+\frac{1}{R\mu_0}\dot{\xi}\right)\right|
\mu_0^{-\frac{n-2}{2}}|\phi|.
\end{aligned}
\end{equation}
From the definition of $\tilde{\phi}$, we have the following estimate for the first term in the right hand side of (\ref{e4:46}),
\begin{equation}\label{e4:47}
\begin{aligned}
\frac{\left|\Delta\eta\left(\left|\frac{x-\xi}{R\mu_{0}}\right|\right)\right|}{R^{2}\mu_{0}^{2}}\mu_0^{-\frac{n-2}{2}}|\phi|& \lesssim  \frac{\left|\Delta\eta\left(\left|\frac{x-\xi}{R\mu_{0}}\right|\right)\right|}{R^{2}\mu_{0}^{2}}\frac{\mu_0^{\frac{n-2}{2}+\sigma}}{(1+|y|^\alpha)}
\|\phi\|_{n-2s+\sigma,\alpha}\\
&\lesssim \frac{1}{R^{a-2}}\|\phi\|_{n-2+\sigma,\alpha}\frac{\mu^{-2}\mu_0^{\frac{n-2}{2}+\sigma}(t)}{1+|y|^{\alpha}},
\end{aligned}
\end{equation}
here the fact that $\left|\Delta\eta\left(\left|\frac{x-\xi}{R\mu_{0}}\right|\right)\right|\sim \frac{1}{1+|\frac{y}{R}|^{2}}$ was used. From \eqref{e4:21}, we estimate the second term in the right hand side of (\ref{e4:46}) as
\begin{equation}\label{e4:48}
\begin{aligned}
&\left|\eta'\left(\left|\frac{x-\xi}{R\mu_{0}}\right|\right)\left(\frac{|x-\xi|\dot{\mu_0}+\mu_0\dot{\xi}}{R\mu_0^2}\right)\right|\mu_0^{-\frac{n-2}{2}}|\phi|\\
&\quad\quad\quad\quad\quad\lesssim \frac{\left|\eta'\left(\left|\frac{x-\xi}{R\mu_{0}}\right|\right)\right|}{R^{2}\mu_{0}^{2}}(\mu_0^{n-2}R^{2} +\mu_0^{n-2+\sigma}R)\mu_0^{-\frac{n-2}{2}}|\phi|\\
&\quad\quad\quad\quad\quad\lesssim \frac{1}{R^{\alpha-2}}\|\phi\|_{n-2+\sigma,\alpha}\frac{\mu^{-2}\mu_0^{\frac{n-2}{2}+\sigma}(t)}{1+|y|^{\alpha}}.
\end{aligned}
\end{equation}
From \eqref{4.46}-\eqref{e4:48}, we obtain (\ref{e4:38}).

{\it Proof of (\ref{e4:39})}. Since $p-2\geq 0$ when $n\leq 6$, we have the following
\begin{equation}\label{e4:49}
\begin{aligned}
&\tilde{N}_{A}(\psi + \psi_1 + \eta_{R}\tilde{\phi})\lesssim\\
&\quad\quad\quad\quad\quad\left\{
\begin{aligned}
&|u^*_{A}|^{p-2}\left[|\psi|^2 + |\psi_1|^2 + |\eta_{R}\tilde{\phi}|^2\right], & \quad \mbox{when}~ 6\geq n,\\
&|\psi|^p + |\psi_1|^p + |\eta_{R}\tilde{\phi}|^p, & \quad \mbox{when}~ 6< n.
\end{aligned}
\right.
\end{aligned}
\end{equation}
When $6\geq n$, there hold
\begin{equation*}\label{e4:51}
\begin{aligned}
\left|(u^*_{A})^{p-2}(\eta_{R}\tilde{\phi})^2\right|&\lesssim \frac{\mu_0^{\frac{3n}{2}-5+2\sigma}}{1+|y|^{2\alpha}}\|\phi\|^2_{n-2+\sigma, \alpha}\\
&\lesssim \mu_0^{n-2+\sigma}R^{\alpha-2}\|\phi\|^2_{n-2+\sigma, \alpha}\frac{1}{R^{\alpha-2}}\frac{\mu^{-2}\mu_0^{\frac{n-2}{2}+\sigma}}{1+|y|^{\alpha}}
\end{aligned}
\end{equation*}
and
\begin{equation*}\label{e4:52}
\begin{aligned}
\left|(u^*_{A})^{p-2}\psi^2\right|&\lesssim\mu_0^{-\frac{6-n}{2}}\frac{t^{-2\beta}}{1+|y|^{2(\alpha-2)}}\|\psi\|^2_{**,\beta, \alpha}\\
&\lesssim R^{\alpha-2}\mu_0^{n-4+\sigma + \alpha-2}\|\psi\|^2_{**,\beta,\alpha}\frac{1}{R^{\alpha-2}}\frac{\mu_j^{-2}t^{-\beta}}{1+|y|^{\alpha}}.
\end{aligned}
\end{equation*}
When $6 < n$, one has
\begin{equation*}\label{e4:510}
\begin{aligned}
\left|\eta_{R}\tilde{\phi}\right|^p&\lesssim \frac{\mu_0^{(\frac{n-2}{2}+\sigma)p}}{1+|y|^{\alpha p}}\|\phi\|^p_{n-2+\sigma, \alpha}\\
&\lesssim \mu_0^{2+(p-1)\sigma}R^{\alpha-2}\mu_0^{2}\|\phi\|^p_{n-2+\sigma, \alpha}\frac{1}{R^{\alpha-2}}\frac{\mu^{-2}\mu_0^{\frac{n-2}{2}+\sigma}}{1+|y|^{\alpha}}
\end{aligned}
\end{equation*}
and
\begin{equation*}\label{e4:520}
\begin{aligned}
\left|\psi\right|^p&\lesssim\frac{t^{-p\beta}}{1+|y|^{p(\alpha-2)}}\|\psi\|^p_{**,\beta, \alpha}\\
&\lesssim \mu^{4(1+\frac{\sigma}{n-2})+p(\alpha-2)-\alpha} R^{\alpha-2}\|\psi\|^p_{**,\beta,\alpha}\frac{1}{R^{\alpha-2}}\frac{\mu_j^{-2}\mu_0^{\frac{n-2}{2}+\sigma}}{1+|y|^{\alpha}}.
\end{aligned}
\end{equation*}
The estimate for $\psi_1$ is similar. This proves (\ref{e4:39}).\qed

\section{Stability result in dimension 5 and 6.}
In dimension 5 and 6, we have $p-1 = \frac{4}{n-2}\geq 1$. In this case, all the equations can be solved by the Contraction Mapping Theorem since the operators $\mathcal{T}_0$, $\mathcal{T}_1$ and $\mathcal{T}_2$ are Lipschitz continuous with respect to the parameter functions. Therefore, Theorem \ref{t:main} can be proved by the Contraction Mapping Theorem arguments in dimension 5 and 6, moreover, we have the following stability result.
\begin{theorem}\label{t:stability}
Assume $k_0$ is a sufficiently large integer, $n = 5, 6$ and $q$ is a point in $\Omega$, then the conclusion of Theorem \ref{t:main} holds when $k\geq k_0$. Furthermore, there exists a sub-manifold $\mathcal{M}$ with codimension $K$ in $C^1(\overline{\Omega})$ containing $u_q(x,0)$ such that, if $u_0\in \mathcal{M}$ and is sufficiently close to $u_q(x,0)$, the solution $u(x, t)$ to (\ref{e:main}) still has the form
\begin{equation*}
u(x,t) =\tilde{\lambda}(t)^{-\frac{n-2}{2}}\left(Q_k\left(\frac{x-\tilde{\xi}(t)}{\tilde{\lambda}(t)}\right)+\tilde{\varphi}(x, t)\right),
\end{equation*}
where $\tilde{q} = \lim_{t\to +\infty}\tilde{\xi}(t)$ is close to $q$.
\end{theorem}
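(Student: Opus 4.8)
The plan is to rerun the inner--outer gluing scheme of Sections 3 and 4 with two changes special to the dimensions $n=5,6$: replace the Schauder argument of Section 4 by a genuine contraction, and carry a general small initial perturbation of $u_q(\cdot,0)$ through the scheme as additional data; the manifold $\mathcal{M}$ then appears as the level set, in the $K$ directions associated with the negative eigenvalues of $L(\phi)+\lambda\phi=0$, of a Lipschitz map built from the fixed point.

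\textbf{Contraction and Lipschitz dependence.} When $n=5,6$ one has $p-1=\tfrac{4}{n-2}\ge 1$, hence $s\mapsto|s|^{p-1}s$ is globally $C^{1}$ and $t\mapsto|t|^{p-1}$ is Lipschitz. Therefore every ingredient of the system \eqref{e3:5}--\eqref{e3:12} — the nonlinear term $\tilde N_A$, the potential $V_A$, the coupling operators $B[\phi],B^{0}[\phi]$ of \eqref{e3:11}--\eqref{e3:12}, the nonlocal corrections $\Phi^{0},\Phi^{1},\Phi^{2,i}$ defining $u^{*}_A$ via \eqref{e2:52}, and the right-hand sides $\Pi_l$ of Proposition~\ref{l5:1} — is not merely bounded on the set $\mathcal{B}$ but Lipschitz in the full tuple $(\phi,\psi,\lambda,\xi,a,\theta,\dot\lambda,\dot\xi,\dot a,\dot\theta)$, with a Lipschitz constant made small by taking $t_0$ large and $\rho$ in \eqref{e3:20} small. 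Consequently the composite operator $\mathcal{T}$ of \eqref{inner_outer_gluing_system} is a contraction on $\mathcal{B}$; this reproves Theorem~\ref{t:main} by the Contraction Mapping Theorem, and, crucially for what follows, yields a fixed point depending Lipschitz-continuously — in fact, after differentiating the fixed-point identities, which is legitimate because all terms are now $C^{1}$ in the unknowns, continuously differentiably — on any data inserted into the system.

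\textbf{The initial perturbation and the unstable modes.} Next I would solve the Cauchy problem \eqref{e:main} for $u_0$ close in $C^{1}(\overline\Omega)$ to $u_q(\cdot,0)$, using the same ansatz $u=u^{*}_A+\tilde\phi$, $\tilde\phi=\phi^{in}+\psi$, so that $u_0$ prescribes initial conditions for the inner problem \eqref{e3:10} and the outer problem \eqref{e4:main}. The outer initial datum is left free (small in the weighted norm used for \eqref{e4:main}); the $3n$ zero modes $z_0,\dots,z_{3n-1}$ of $L$ are absorbed, exactly as in the proof of Theorem~\ref{t:main}, by choosing the initial values $A(t_0)$ of the parameter functions, which to leading order span $\mathrm{Kernel}(L)$; and the $K$ eigenfunctions $Z_1,\dots,Z_K$ attached to the negative eigenvalues of $L(\phi)+\lambda\phi=0$ are the genuinely unstable directions of the inner flow $\partial_\tau\phi=L\phi$. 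The linear theory of Proposition~\ref{proposition5.1}, whose inner initial datum is precisely of the form $\sum_{l=1}^K e_lZ_l$, and the orthogonality conditions \eqref{e5:7}, are imposed as before; then for each choice of the free data the fixed point of the previous step exists and assigns to it the $K$ real numbers $e_1,\dots,e_K$ returned by Proposition~\ref{proposition5.1}, which are $C^{1}$ functions of the remaining data. The constructed solution has the bubbling form \eqref{solutionfinal} precisely when the component of $u_0-u_q(\cdot,0)$ in the $K$-dimensional subspace spanned by the rescaled, localized modes $\eta_R\,\mu_0^{-\frac{n-2}{2}}Z_l\!\big(\tfrac{\cdot-q}{\mu_0(t_0)}\big)$ matches the value dictated by $(e_1,\dots,e_K)$, that is, when $K$ scalar conditions on $u_0$ hold.

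\textbf{The manifold.} Collecting the above, the admissible data form the set
$\mathcal{M}=\{\,u_0:\ P_K\big(u_0-u_q(\cdot,0)\big)=\Psi\big((\mathrm{Id}-P_K)(u_0-u_q(\cdot,0))\big)\,\}$,
where $P_K$ is the projection onto that $K$-dimensional subspace and $\Psi$ is the $C^{1}$ map assembled from the $e_l$'s. This is the graph of a $C^{1}$ map over a closed codimension-$K$ subspace of $C^{1}(\overline\Omega)$, hence a $C^{1}$ submanifold of codimension $K$; taking the free data to vanish returns the solution of Theorem~\ref{t:main}, so $u_q(\cdot,0)\in\mathcal{M}$. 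Finally, the a priori estimates \eqref{e5:100} for the inner part and \eqref{e4:40}, \eqref{e4:5} for the outer part, uniform in the data, guarantee that along any $u_0\in\mathcal{M}$ near $u_q(\cdot,0)$ the solution keeps the form asserted in Theorem~\ref{t:stability}, with $\tilde\lambda,\tilde\xi$ the associated parameter functions and $\tilde q=\lim_{t\to\infty}\tilde\xi(t)$ close to $q$. The main difficulty is to make the Lipschitz and $C^{1}$ dependence uniform as $t_0\to\infty$: one must check that the nonlocal terms $\Phi^{0},\Phi^{1},\Phi^{2,i}$, which enter $u^{*}_A$ as time integrals of heat kernels against parameter-dependent profiles, and the coupling operators $B[\phi],B^{0}[\phi]$ in \eqref{e3:11}--\eqref{e3:12}, contract with a constant not corrupted by the infinite time interval — this is where the exponents $n-2+\sigma$, $\beta$, $\nu$ must be balanced against $\rho$ — and that the $K\times K$ linear system for $(e_1,\dots,e_K)$ extracted from \eqref{e5:7} is invertible with data-independent bounds, so that $\Psi$, and hence $\mathcal{M}$, is well defined and smooth. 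A secondary obstruction is exactly the $C^{1}$ (rather than merely Lipschitz) regularity of $\mathcal{M}$, which relies on differentiating \eqref{inner_outer_gluing_system}: this is possible only because $p-1\ge1$ renders $|Q|^{p-1}$ and every nonlinear term in \eqref{e3:5} continuously differentiable in the unknowns — the property that fails for $n\ge7$ and forces the Schauder approach used there.
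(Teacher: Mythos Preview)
Your proposal is correct and shares the paper's key observation: for $n=5,6$ one has $p-1\ge 1$, so every nonlinear term is Lipschitz in the unknowns, the Schauder step can be replaced by a genuine contraction, and the resulting $C^1$ dependence on the data produces a codimension-$K$ manifold of admissible initial conditions governed by the $K$ unstable eigenfunctions $Z_1,\dots,Z_K$ of $L$. The organizational difference is that you run a \emph{single} contraction for the composite operator $\mathcal{T}$ of \eqref{inner_outer_gluing_system} on the full product space $\mathcal{B}$, whereas the paper (following \cite{cortazar2016green}) uses a \emph{nested} scheme: first solve the outer problem by contraction for $\psi=\Psi[\lambda,\xi,a,\theta,\dot\lambda,\dot\xi,\dot a,\dot\theta,\phi]$ (Proposition~\ref{p4:4.1}) and record its smooth dependence on all arguments (Proposition~\ref{p4:4.2}); then substitute $\Psi$ into the orthogonality conditions and solve for the parameters as Lipschitz functions of $\phi$ alone (Propositions~\ref{l5:1000} and~\ref{p5:5.10}); finally contract in $\phi$ on the inner problem. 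The nested route is more modular---each Lipschitz constant is isolated and checked separately---while your simultaneous contraction is conceptually cleaner but requires verifying smallness of all cross-couplings at once, which is in effect the content of Propositions~\ref{p4:4.2} and~\ref{l5:1000}. One small correction: the constants $e_1,\dots,e_K$ are not determined by \eqref{e5:7}---those $3n$ conditions fix the parameters $\lambda,\xi,a,\theta$ against the kernel $z_0,\dots,z_{3n-1}$---but are produced by the inner linear theory (Proposition~\ref{proposition5.1}) as the unique unstable-mode coefficients making $\phi$ decay; it is their Lipschitz dependence on $h$, estimate \eqref{e5:101}, that feeds the graph construction of $\mathcal{M}$.
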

Recalling that $K$ is the dimension of the space $V :=\{f\in \dot{H}^1(\mathbb{R}^n)| \langle L f, f\rangle < 0\}$ and $L$ is defined in (\ref{defL}). The proof is similar to \cite{cortazar2016green} and \cite{MussoSireWeiZhengZhou}, so we give a sketch here. We divide the whole process into three steps.

{\bf Step 1. Solving the outer problem (\ref{e4:main}).}
\begin{prop}\label{p4:4.1}
Assume $\lambda$, $\xi$, $a$, $\theta$, $\dot{\lambda}$, $\dot{\xi}$, $\dot{a}$ and $\dot{\theta}$ satisfy (\ref{e4:21}) and (\ref{e4:22}), $\phi$ satisfies (\ref{e4:25}), $\psi_0\in C^2(\overline{\Omega})$ and
\begin{equation*}\label{e4:26}
\|\psi_0\|_{L^\infty(\overline{\Omega})} + \|\nabla\psi_0\|_{L^\infty(\overline{\Omega})}\leq \frac{t_0^{-\varepsilon}}{R^{\alpha-2}}.
\end{equation*}
Then (\ref{e4:main}) has a unique solution $\psi = \Psi[\lambda, \xi, a, \theta, \dot{\lambda}, \dot{\xi}, \dot{a}, \dot{\theta}, \phi]$, for $y=\frac{x-\xi}{\mu_{0}}$, there exist small constants $\sigma > 0$ and $\varepsilon > 0$ such that
\begin{equation*}\label{e4:27}
|\psi(x, t)|\lesssim \frac{t_0^{-\varepsilon}}{R^{\alpha-2}}\frac{\mu_0^{\frac{n-2}{2}+\sigma}(t)}{1+|y|^{\alpha-2}} + e^{-\delta(t-t_0)}\|\psi_0\|_{L^{\infty}(\overline{\Omega})}
\end{equation*}
and
\begin{equation*}\label{e4:28}
|\nabla\psi(x, t)|\lesssim \frac{t_0^{-\varepsilon}}{R^{\alpha-2}}\frac{\mu^{-1}\mu_0^{\frac{n-2}{2}+\sigma}(t)}{1+|y|^{\alpha-1}}\text{ for } |y|\leq R
\end{equation*}
hold. Here $R$ is defined in \eqref{e3:20}.
\end{prop}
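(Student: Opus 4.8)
The plan is to solve \eqref{e4:main} by the Contraction Mapping Theorem in the weighted space measured by $\|\cdot\|_{**,\beta,\alpha}$, using the linear a priori bound of Proposition~\ref{l4:lemma4.1} and the nonlinear estimates of Proposition~\ref{propositionestimate}. First I would remove the boundary and initial data: set $\psi_1:=\mathcal{T}_2(0,-u^*_A,\psi_0)$, the solution of the linear problem \eqref{e4:3} with zero interior source, boundary value $-u^*_A$ and initial value $\psi_0$. From \eqref{e2:52}, \eqref{e2:3} and \eqref{e2:51} one has $|u^*_A|\lesssim\mu_0^{\frac{n+2}{2}}$ on $\partial\Omega$, so Proposition~\ref{l4:lemma4.1} (with $\gamma=\beta$) controls $\psi_1$ by $e^{-\delta(t-t_0)}\|\psi_0\|_{L^\infty(\overline\Omega)}$ plus a term of size $t^{-\beta}\mu_0(t_0)^{2-\sigma}$, which is absorbed in the bound claimed for $\psi$ provided $\sigma<2$ and $\varepsilon,\rho$ are small. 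Writing $\psi=\psi_1+\psi_2$, the remainder $\psi_2$ must then be a fixed point of $\mathcal{A}(\psi_2):=\mathcal{T}_2(f[\psi_2],0,0)$ with $f$ as in \eqref{e4:35}, where $\tilde N_A$ is now evaluated at the total perturbation, i.e.\ at $\psi_1+\psi_2$ plus the fixed inner correction built from $\phi$.

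Second, I would check that $\mathcal{A}$ maps the ball $\mathcal B_{\mathrm{out}}=\{\psi:\ \|\psi\|_{**,\beta,\alpha}\le t_0^{-\varepsilon}R^{-(\alpha-2)}\}$ into itself. For $\psi\in\mathcal B_{\mathrm{out}}$, Proposition~\ref{propositionestimate}, together with the hypothesis $\|\phi\|_{n-2+\sigma,\alpha}\le ct_0^{-\varepsilon}$, bounds each of the three pieces of $f[\psi]$ by a constant multiple of $t_0^{-\varepsilon}R^{-(\alpha-2)}\mu^{-2}\mu_0^{\frac{n-2}{2}+\sigma}(t)\,(1+|y|^\alpha)^{-1}$; since $\mu_0^{\frac{n-2}{2}+\sigma}(t)\sim t^{-\beta}$ and $(1+|y|^\alpha)^{-1}\le(1+|y|^{2+(\alpha-2)})^{-1}$, this means $\|f[\psi]\|_{*,\beta,2+(\alpha-2)}\lesssim t_0^{-\varepsilon}R^{-(\alpha-2)}$. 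Inserting this into the linear estimate \eqref{e4:40} (with vanishing boundary and initial data) gives $|\psi_2(x,t)|\lesssim t_0^{-\varepsilon}R^{-(\alpha-2)}\,t^{-\beta}(1+|y|^{\alpha-2})^{-1}$, hence $\psi_2\in\mathcal B_{\mathrm{out}}$ once $\sigma,\varepsilon$ are small and $t_0$ is large (equivalently $\rho$ in \eqref{e3:20} is small); the gradient bound in the statement then follows from \eqref{e4:5}, and the asserted bound for $\psi$ by summing the contributions of $\psi_1$ and $\psi_2$.

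Third, and here is where $n=5,6$ enters, I would show $\mathcal{A}$ is a contraction. Only $\tilde N_A$ depends on $\psi$ nonlinearly; the terms $2\nabla\eta_R\nabla\tilde\phi$, $\tilde\phi(\Delta-\partial_t)\eta_R$ and $S_{out}$ in \eqref{e4:35} are independent of $\psi$, and $V_A\psi$ has already been incorporated into $\mathcal{T}_2$. When $n\le6$ we have $p=\tfrac{n+2}{n-2}\ge2$, so $s\mapsto|s|^{p-1}s$ is $C^1$ and its derivative $p|s|^{p-1}$ is controlled by a locally Lipschitz function; writing $\Phi^{(i)}$ for the total perturbation associated with $\psi^{(i)}$, this yields
\[
\bigl|\tilde N_A(\Phi^{(1)})-\tilde N_A(\Phi^{(2)})\bigr|\lesssim\Bigl(|u^*_A|^{p-2}\bigl(|\Phi^{(1)}|+|\Phi^{(2)}|\bigr)+|\Phi^{(1)}|^{p-1}+|\Phi^{(2)}|^{p-1}\Bigr)\,\bigl|\psi^{(1)}-\psi^{(2)}\bigr|.
\]
Feeding in the a priori bounds on $u^*_A$, on $\psi_1$, on the inner correction and on $\psi^{(i)}\in\mathcal B_{\mathrm{out}}$ — exactly as in the proof of \eqref{e4:39} — one gets $\|f[\psi^{(1)}]-f[\psi^{(2)}]\|_{*,\beta,2+(\alpha-2)}\lesssim t_0^{-\varepsilon}\,\|\psi^{(1)}-\psi^{(2)}\|_{**,\beta,\alpha}$, so by \eqref{e4:40} the map $\mathcal{A}$ is a contraction on $\mathcal B_{\mathrm{out}}$ for $t_0$ large. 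The Contraction Mapping Theorem then produces the unique $\psi_2$, hence the unique $\psi=\psi_1+\psi_2=\Psi[\lambda,\xi,a,\theta,\dot\lambda,\dot\xi,\dot a,\dot\theta,\phi]$ satisfying \eqref{e4:main} and the stated estimates.

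The delicate point is the contraction estimate: one must control the weight $|u^*_A|^{p-2}$ uniformly across the gluing region $|y|\sim R$, where $u^*_A$ is no longer comparable with the bubble, and recombine the resulting powers of $\mu_0$, $R$ and $t_0^{-\varepsilon}$ into $\|\cdot\|_{*,\beta,2+(\alpha-2)}$; the inequality $p-1\ge1$ is exactly what rules out a loss here, and is unavailable for $n\ge7$. The same Lipschitz computation, performed with respect to $\lambda,\xi,a,\theta,\dot\lambda,\dot\xi,\dot a,\dot\theta$ and to $\phi$ in place of $\psi$, shows moreover that $\Psi$ depends Lipschitz-continuously on all of its arguments, which is precisely what allows the full inner--outer system to be closed by the Contraction Mapping Theorem in Section~8.
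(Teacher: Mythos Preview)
Your proposal is correct and follows exactly the approach indicated by the paper, which states that the proposition ``is a direct consequence of Proposition~\ref{l4:lemma4.1}, Proposition~\ref{propositionestimate} and the Contraction Mapping Theorem'' and omits the details. Your splitting $\psi=\psi_1+\psi_2$ with $\psi_1=\mathcal{T}_2(0,-u^*_A,\psi_0)$, the self-mapping argument via Proposition~\ref{propositionestimate}, and the contraction step exploiting $p\ge 2$ when $n\le 6$ are precisely the ingredients the paper has in mind (cf.\ the discussion under \textbf{Fact~3} in Section~4.4 and the remarks opening Section~8).
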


Proposition \ref{p4:4.1} is a direct consequence of Proposition \ref{l4:lemma4.1}, Proposition \ref{propositionestimate} and the Contraction Mapping Theorem, whose proof we omit here.  This result indicates that for any small initial datum $\psi_0$, (\ref{e4:main}) has a solution $\psi$.  Moreover, the following proposition clarifies the dependence of $\Psi[\lambda, \xi, a, \theta, \dot{\lambda}, \dot{\xi}, \dot{a}, \dot{\theta}, \phi]$ on the parameter functions $\lambda, \xi, a, \theta, \dot{\lambda}, \dot{\xi}, \dot{a}, \dot{\theta}, \phi$ which is proved by estimating, for instance,
\begin{equation*}\label{e4:31}
\partial_\phi\Psi[\lambda, \xi, a, \theta, \dot{\lambda}, \dot{\xi}, \dot{a}, \dot{\theta}, \phi][\bar{\phi}] = \partial_s\Psi[\lambda, \xi, a, \theta, \dot{\lambda}, \dot{\xi}, \dot{a}, \dot{\theta}, \phi+s\bar{\phi}]|_{s=0}
\end{equation*}
as a bounded linear operator between weighted parameter spaces. For simplicity, the above operator is denoted by $\partial_\phi\Psi[\bar{\phi}]$. Similarly, we define $\partial_\lambda\Psi[\bar{\lambda}]$, $\partial_\xi\Psi[\bar{\xi}]$, $\partial_a\Psi[\bar{a}]$, $\partial_\theta\Psi[\bar{\theta}]$, $\partial_{\dot{\lambda}}\Psi[\dot{\bar{\lambda}}]$, $\partial_{\dot{\xi}}\Psi[\dot{\bar{\xi}}]$, $\partial_{\dot{a}}\Psi[\dot{\bar{a}}]$ and $\partial_{\dot{\theta}}\Psi[\dot{\bar{\theta}}]$.

\begin{prop}\label{p4:4.2}
Under the assumptions of Proposition \ref{p4:4.1}, $\Psi$ depends smoothly on the parameter functions $\lambda$, $\xi$, $a$, $\theta$, $\dot{\lambda}$, $\dot{\xi}$, $\dot{a}$, $\dot{\theta}$, $\phi$, for $y=\frac{x-\xi}{\mu_{0}}$, there hold
\begin{equation}\label{e4:64}
\begin{aligned}
\left|\partial_\lambda\Psi[\bar{\lambda}]\right|&\lesssim \frac{t_0^{-\varepsilon}}{R^{\alpha-2}}\|\bar{\lambda}\|_{1+\sigma}\frac{\mu_0^{\frac{n-2}{2}-1}(t)}{1+|y|^{\alpha-2}},
\end{aligned}
\end{equation}
\begin{equation*}\label{e4:74}
\begin{aligned}
\left|\partial_\xi\Psi[\bar{\xi}]\right|\lesssim\frac{t_0^{-\varepsilon}}{R^{\alpha-2}}\left(\|\bar{\xi}\|_{1+\sigma}
\frac{\mu_0^{\frac{n-2}{2}-1}(t)}{1+|y|^{\alpha-2}}\right),
\end{aligned}
\end{equation*}
\begin{equation*}\label{e4:74a}
\begin{aligned}
\left|\partial_a\Psi[\bar{a}]\right|\lesssim\frac{t_0^{-\varepsilon}}{R^{\alpha-2}}\left(\|\bar{a}\|_{\sigma}\frac{\mu_0^{\frac{n-2}{2}-2}(t)}{1+|y|^{\alpha-2}}\right),
\end{aligned}
\end{equation*}
\begin{equation*}\label{e4:74b}
\begin{aligned}
\left|\partial_\theta\Psi[\bar{\theta}]\right|\lesssim\frac{t_0^{-\varepsilon}}{R^{\alpha-2}}\left(\|\bar{\theta}\|_{\sigma}
\frac{\mu_0^{\frac{n-2}{2}-2}(t)}{1+|y|^{\alpha-2}}\right),
\end{aligned}
\end{equation*}
\begin{equation*}\label{e4:75}
\big|\partial_{\dot{\xi}}\Psi[\dot{\bar{\xi}}](x, t)\big|\lesssim \frac{t_0^{-\varepsilon}}{R^{\alpha-2}}\|\dot{\bar{\xi}}(t)\|_{n-3+\sigma}\left(\frac{\mu_0^{-\frac{n-6}{2}-1+\sigma}(t)}{1+|y|^{\alpha-2}}\right),
\end{equation*}
\begin{equation*}\label{e4:76}
\big|\partial_{\dot{\lambda}}\Psi[\dot{\bar{\lambda}}](x, t)\big|\lesssim \frac{t_0^{-\varepsilon}}{R^{\alpha-2}}\|\dot{\bar{\lambda}}(t)\|_{n-3+\sigma}\left(\frac{\mu_0^{-\frac{n-6}{2}-1+\sigma}(t)}{1+|y|^{\alpha-2}}\right),
\end{equation*}
\begin{equation*}\label{e4:76a}
\big|\partial_{\dot{a}}\Psi[\dot{\bar{a}}](x, t)\big|\lesssim \frac{t_0^{-\varepsilon}}{R^{\alpha-2}}\|\dot{\bar{a}}(t)\|_{n-4+\sigma}\left(\frac{\mu_0^{-\frac{n-6}{2}-2+\sigma}(t)}{1+|y|^{\alpha-2}}\right),
\end{equation*}
\begin{equation*}\label{e4:76b}
\big|\partial_{\dot{\theta}}\Psi[\dot{\bar{\theta}}](x, t)\big|\lesssim \frac{t_0^{-\varepsilon}}{R^{\alpha-2}}\|\dot{\bar{\theta}}(t)\|_{n-4+\sigma}\left(\frac{\mu_0^{-\frac{n-6}{2}-2+\sigma}(t)}{1+|y|^{\alpha-2}}\right)
\end{equation*}
and
\begin{equation*}\label{e4:84}
\big|\partial_{\phi}\Psi[\bar{\phi}](x, t)\big|\lesssim \frac{1}{R^{\alpha-2}}\|\bar{\phi}(t)\|_{n-2+\sigma, \alpha}\left(\frac{\mu_0^{\frac{n-2}{2}+\sigma}(t)}{1+|y|^{\alpha-2}}\right).
\end{equation*}
\end{prop}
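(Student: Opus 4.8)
The plan is to differentiate the fixed-point characterization of $\Psi$ established in Proposition \ref{p4:4.1} with respect to each parameter and to estimate the resulting linear parabolic problem by Proposition \ref{l4:lemma4.1}. Recall that, for $y=\frac{x-\xi}{\mu_0}$, $\Psi$ is the unique fixed point of $\mathcal{A}(\psi)=\mathcal{T}_2\big(f[\psi],-u^*_A,\psi_0\big)$ with $f[\psi]=2\nabla\eta_{R}\nabla\tilde{\phi}+\tilde{\phi}(\Delta-\partial_t)\eta_{R}+\tilde{N}_{A}(\tilde{\phi})+S_{out}$, $\tilde{\phi}=\psi+\phi^{in}$ and $\phi^{in}=\eta_R\mu_{0}^{-\frac{n-2}{2}}\phi\big(\frac{x-\xi}{\mu_{0}},t\big)$. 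When $n=5,6$ one has $p-1=\frac{4}{n-2}\ge 1$, so the Nemytskii maps entering $f$ and $V_A$ (most importantly $v\mapsto|v|^{p-1}v$) are of class $C^1$ on the weighted balls in which we work; together with the bound $\|\partial_\psi\mathcal{A}\|\le\frac12$ already used in Proposition \ref{p4:4.1}, the implicit function theorem in Banach spaces shows that $\Psi$ is $C^1$ in $\lambda,\xi,a,\theta,\dot\lambda,\dot\xi,\dot a,\dot\theta,\phi$ and that $(I-\partial_\psi\mathcal{A})^{-1}$ is uniformly bounded, whence $\partial_\zeta\Psi[\bar\zeta]=(I-\partial_\psi\mathcal{A})^{-1}\,\partial_\zeta\mathcal{A}[\bar\zeta]$ for any parameter $\zeta$ and direction $\bar\zeta$.

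Equivalently and more transparently, differentiating the equation $\partial_t\Psi=\Delta\Psi+V_A\Psi+f[\Psi]$, $\Psi|_{\partial\Omega}=-u^*_A$, $\Psi(\cdot,t_0)=\psi_0$, along $\bar\zeta$ and writing $w:=\partial_\zeta\Psi[\bar\zeta]$, one gets
$$\partial_t w=\Delta w+V_A w+\big(\partial_\psi f[\Psi]\big)[w]+\big(\partial_\zeta V_A[\bar\zeta]\big)\Psi+\partial_\zeta f[\Psi][\bar\zeta]\quad\text{in }\Omega\times(t_0,\infty),$$
with $w|_{\partial\Omega}=-\partial_\zeta u^*_A[\bar\zeta]$ and $w(\cdot,t_0)=0$ (the initial datum $\psi_0$ being held fixed). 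Since $\big(\partial_\psi f[\Psi]\big)[w]$ equals, away from the cut-off transition region, $p\big(|u^*_A+\tilde\phi|^{p-1}-|u^*_A|^{p-1}\big)w$ plus the small contributions of the $2\nabla\eta_R\nabla(\cdot)$- and $(\Delta-\partial_t)\eta_R$-type terms, it may be absorbed into the potential to give a modified operator $\widetilde V_A$ fulfilling the same hypotheses as $V_A$ in Proposition \ref{l4:lemma4.1}. One is thus reduced to the linear problem $\partial_t w=\Delta w+\widetilde V_A w+g_\zeta[\bar\zeta]$, $w|_{\partial\Omega}=-\partial_\zeta u^*_A[\bar\zeta]$, $w(\cdot,t_0)=0$, with $g_\zeta[\bar\zeta]:=\big(\partial_\zeta V_A[\bar\zeta]\big)\Psi+\partial_\zeta f[\Psi][\bar\zeta]$.

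The computational core is to bound $g_\zeta[\bar\zeta]$ in the norm $\|\cdot\|_{*,\gamma_\zeta,2+\varsigma}$ with $\varsigma:=\alpha-2$, and the boundary trace $\partial_\zeta u^*_A[\bar\zeta]$ in $L^\infty(\partial\Omega\times(t_0,\infty))$ weighted by $t^{\gamma_\zeta}$, for the $\gamma_\zeta$ dictated by the target weights (for instance $t^{-\gamma_\lambda}\sim\mu_0^{\frac{n-2}{2}-1}$, i.e.\ $\gamma_\lambda=\tfrac12$). These bounds are extracted term by term from the explicit forms of $S_A$, $S^{(2)}_A$, $V_A$ in Lemma \ref{l2.2}, \eqref{e3:7}, \eqref{e3:8}, and parallel Proposition \ref{propositionestimate}: for $\zeta=\phi$, neither $S_{out}$ nor $V_A$ depends on $\phi$, so only the cut-off terms and the $\tilde\phi$-derivative of $\tilde N_A$ survive, reproducing \eqref{e4:38}--\eqref{e4:39} with $\bar\phi$ in place of $\phi$; for $\zeta=\lambda$, and since $\mu=\bar\mu_0+\lambda$, each explicit occurrence of $\mu$ in $S_A$, $S^{(2)}_A$, $V_A$ costs, upon differentiation, one power of $\mu_0^{-1}$, which is exactly the gap between the undifferentiated weight $\mu_0^{\frac{n-2}{2}+\sigma}$ and the weight $\mu_0^{\frac{n-2}{2}-1}$ in \eqref{e4:64} (the factor $\mu_0^{\sigma}$ being traded against $\|\bar\lambda\|_{1+\sigma}$); for $\zeta=\xi,a,\theta$ one also differentiates the self-similar variable $y=(x-\xi)/\mu$ and the rotation/Kelvin data, but the powers of $\mu_0$ come out the same way; and for $\zeta=\dot\lambda,\dot\xi,\dot a,\dot\theta$ only the terms of $S_A$ linear in $(\dot\lambda,\dot\xi,\dot a,\dot\theta)$ contribute, producing the factors $\mu_0^{-\frac{n-6}{2}-1+\sigma}$ (resp.\ $\mu_0^{-\frac{n-6}{2}-2+\sigma}$) in the statement. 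Feeding these into Proposition \ref{l4:lemma4.1} turns each source estimate into the claimed pointwise bound on $w=\partial_\zeta\Psi[\bar\zeta]$, the decay rate improving from $1+|y|^{-(2+\varsigma)}$ in the source to $1+|y|^{-\varsigma}=1+|y|^{-(\alpha-2)}$ in $w$, as in \eqref{e4:40}.

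The main obstacle is not any individual estimate but (i) the legitimacy of the above differentiation, that is, the genuine $C^1$ dependence of $\Psi$ on the parameters: this is precisely where $n\le 6$ is used, since for $n\ge 7$ one has $p-1<1$ and, as explained in the introduction, $\tilde\phi$ and $\psi$ fail even to be Lipschitz in the parameter functions, whereas for $n=5,6$ the relevant Nemytskii operators are Lipschitz, in fact $C^1$, on the balls we use; and (ii) the bookkeeping of powers of $\mu_0$ and of the self-similar variable when differentiating the many explicit terms in $S_A$, $S^{(2)}_A$, $V_A$, $B[\phi]$, $B^0[\phi]$ and the boundary datum $-u^*_A$ — lengthy, but entirely parallel to the proof of Proposition \ref{propositionestimate} and to the corresponding argument in \cite{cortazar2016green}, so requiring no new idea.
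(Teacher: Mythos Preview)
Your proposal is correct and follows essentially the same strategy as the paper: differentiate the fixed-point characterization of $\Psi$, obtain a linear parabolic problem for $w=\partial_\zeta\Psi[\bar\zeta]$, estimate the source and boundary terms using the explicit structure of $S_A$, $S^{(2)}_A$, $V_A$ and Lemma~\ref{l2.2}, and conclude via Proposition~\ref{l4:lemma4.1}. The paper organizes the argument slightly differently---it first splits off the boundary contribution by writing $\partial_\lambda\Psi[\bar\lambda]=Z_1+Z$ with $Z_1=\mathcal{T}_2(0,-\partial_\lambda u^*_A[\bar\lambda],0)$, and then obtains $Z$ as the fixed point of a contraction (rather than invoking the implicit function theorem and absorbing $(\partial_\psi f)[w]$ into a modified potential as you do)---but this is a packaging difference, not a substantive one: both rely on the same smallness $\|\partial_\psi\mathcal{A}\|<1$ already used in Proposition~\ref{p4:4.1}, and both reduce to the same source-term bookkeeping you describe.
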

\begin{proof}
We prove (\ref{e4:64}). Decompose the term $\partial_\lambda\Psi[\bar{\lambda}](x, t) = Z_1 + Z$ with $Z_1 = \mathcal{T}_2(0, -\partial_{\lambda}u^*_{A}[\bar{\lambda}],0)$, where $\mathcal{T}_2$ is defined by Proposition \ref{l4:lemma4.1}. Then $Z$ is a solution of the following problem
\begin{equation}\label{e4:72}
\left\{
\begin{aligned}
&\partial_tZ =\Delta Z + V_{A}Z + \partial_{\lambda}V_A[\bar{\lambda}]\psi + \partial_{\lambda}\tilde{N}_{A}\left(\psi + \phi^{in}\right)[\bar{\lambda}] + \partial_{\lambda}S_{out}[\bar{\lambda}] ~~\text{ in }\Omega\times (t_0, \infty),\\
&Z= 0~~\text{ in }\partial\Omega\times (t_0,\infty),\\
&Z(\cdot,t_0) = 0~~\text{ in }\Omega.
\end{aligned}
\right.
\end{equation}
For any $x\in \partial\Omega$,
\begin{equation}\label{bdr}
\begin{aligned}
\left|\partial_{\lambda}u^*_{A}[\bar{\lambda}](x, t)\right|&\lesssim \mu_0^{\frac{n}{2}-1+\sigma}(t)|\bar{\lambda}(t)|\\
&\lesssim \mu_0^{\frac{n}{2}+2\sigma}(t)\|\bar{\lambda}(t)\|_{1+\sigma}.
\end{aligned}
\end{equation}
From \eqref{bdr} and Proposition \ref{l4:lemma4.1}, we obtain
\begin{equation*}
|Z_1(x, t)|\lesssim \frac{t_0^{-\varepsilon}}{R^{\alpha-2}}\left(\|\bar{\lambda}(t)\|_{1+\sigma}\frac{\mu_0^{\frac{n-2}{2}-1}(t)}{1+|y|^{\alpha-2}}\right).
\end{equation*}

To prove the estimation for $Z$, which can be viewed as a fixed point for the operator
\begin{equation}\label{fixedpointproblem}
\mathcal{A}(Z) = \mathcal{T}_2\left(g, 0, 0\right)
\end{equation}
with
\begin{equation*}
\begin{aligned}
g= \partial_{\lambda}V_A[\bar{\lambda}]\psi + \partial_{\lambda}\tilde{N}_{A}\left(\psi + \phi^{in}\right)[\bar{\lambda}] + \partial_{\lambda}S_{out}[\bar{\lambda}],
\end{aligned}
\end{equation*}
we estimate $\partial_{\lambda}S_{out}[\bar{\lambda}]$ first. In the region $|x-q| > \delta$, from (\ref{awayq}), (\ref{e4:21}) and (\ref{e4:22}), we have
\begin{equation*}
\begin{aligned}
\left|\partial_{\lambda}S_{out}[\bar{\lambda}](x, t)\right| &\lesssim \mu_0^{\frac{n-2}{2}-1}f(x,\mu_0^{-1}\mu, \xi, a, \theta)|\bar{\lambda}(t)|\\
&\lesssim \frac{t_0^{-\varepsilon}}{R^{\alpha-2}}\left(\|\bar{\lambda}(t)\|_{1+\sigma}\frac{\mu_0^{\frac{n-2}{2}-1}(t)}{1+|y|^{\alpha-2}}\right),
\end{aligned}
\end{equation*}
where the function $f$ is smooth and bounded depending on $(x,\mu_0^{-1}\mu, \xi, a, \theta)$. In the region $|x-q|\leq \delta$, from (\ref{e2:55}), we have
\begin{equation*}
\partial_{\lambda}S(u^*_{A})[\bar{\lambda}](x, t) = \partial_{\lambda}S(u_{A})[\bar{\lambda}](x, t)(1 + \mu_0f(x,\mu_0^{-1}\mu, \xi, a, \theta)),
\end{equation*}
where the function $f$ is smooth and bounded depending on $(x,\mu_0^{-1}\mu, \xi, a, \theta)$. Differentiating  (\ref{e2:3333}) with respect to $\lambda$, easy but long computations yield that
\begin{equation}\label{e4:66}
\begin{aligned}
\left|\partial_{\lambda}S(u_{A})[\bar{\lambda}]\right|\lesssim\frac{t_0^{-\varepsilon}}{R^{\alpha-2}}\left(\|\bar{\lambda}(t)\|_{1+\sigma}
\frac{\mu_0^{\frac{n-2}{2}-1}(t)}{1+|y|^{\alpha-2}}\right).
\end{aligned}
\end{equation}
By the definition of $S_{out}$ together with \eqref{e4:66}, we obtain
\begin{eqnarray*}\label{e4:67}
\left|\partial_{\lambda}S_{out}[\bar{\lambda}](x, t)\right| \lesssim\frac{t_0^{-\varepsilon}}{R^{\alpha-2}}\left(\|\bar{\lambda}(t)\|_{1+\sigma}\frac{\mu_0^{\frac{n-2}{2}-1}(t)}{1+|y|^{\alpha-2}}\right).
\end{eqnarray*}
Now we estimate the other terms of $g$. When $n = 5, 6$, we have
\begin{equation*}\label{e4:68}
\begin{aligned}
\partial_{\lambda}V_A[\bar{\lambda}](x, t) =& p(p-1)\bigg[|u^*_{A}|^{p-3}u^*_{A}\partial_{\lambda}u^*_{A}[\bar{\lambda}]\\
&-\eta_{R}\left|\mu^{-\frac{n-2}{2}}Q(y)\right|^{p-3}\mu^{-\frac{n-2}{2}}Q(y)\partial_{\lambda}\big(\mu^{-\frac{n-2}{2}}Q(y)\big)[\bar{\lambda}]\bigg].
\end{aligned}
\end{equation*}
Since $\left|\partial_{\lambda}\big(\mu^{-\frac{n-2}{2}}Q(y)\big)\right|\lesssim\mu_0^{-1}\left|\mu^{-\frac{n-2}{2}}Q\left(y\right)\right|$ and $\beta = \frac{n-2}{2(n-4)} + \frac{\sigma}{n-4}$, we obtain
\begin{eqnarray*}\label{e4:69}
\begin{aligned}
\left|\partial_{\lambda}V_A[\bar{\lambda}]\psi(x, t)\right| \lesssim \|\psi\|_{**,\beta,\alpha} \frac{t_0^{-\varepsilon}}{R^{\alpha-2}}\|\bar{\lambda}(t)\|_{1+\sigma}\frac{\mu^{-2}\mu_0^{\frac{n-2}{2}-1+\sigma}}{1+|y|^{\alpha}}.
\end{aligned}
\end{eqnarray*}
Similarly, we estimate the term $p(p-1)|u^*_{A}|^{p-3}u^*_{A}(\psi+\phi^{in})\partial_{\lambda}u^*_{A}[\bar{\lambda}]$ as
\begin{equation*}\label{e4:70}
 \left|p(p-1)|u^*_{A}|^{p-3}u^*_{A}(\psi+\phi^{in})\partial_{\lambda}u^*_{A}[\bar{\lambda}]\right| \lesssim \frac{t_0^{-\varepsilon}}{R^{\alpha-2}}\|\bar{\lambda}\|_{1+\sigma}\frac{\mu^{-2}\mu_0^{\frac{n-2}{2}-1+\sigma}}{1+|y|^{\alpha}}
\end{equation*}
when $n = 5, 6$. The last term $p\left[\left|u^*_{A}+\psi+\phi^{in}\right|^{p-1}u^*_{A} -\left|u^*_{A}\right|^{p-1}u^*_{A}\right]$ can be estimated analogously.

In the set of functions satisfying
$$|Z(x, t)|\leq M \frac{t_0^{-\varepsilon}}{R^{\alpha-2}}\|\bar{\lambda}\|_{1+\sigma}\frac{\mu_0^{\frac{n-2}{2}-1}}{1+|y|^{\alpha-2}}$$
for a fixed large constant $M$, the operator $\mathcal{A}$ defined in (\ref{fixedpointproblem}) has a fixed point. Indeed, $\mathcal{A}$ is a contraction map when $R$ is large in terms of $t_0$. Hence (\ref{e4:64}) holds.
The proof of the other estimates are similar, we omit them.
\end{proof}

Substituting the solution $\psi = \Psi[\lambda, \xi, a, \theta, \dot{\lambda}, \dot{\xi}, \dot{a}, \dot{\theta}, \phi]$ of (\ref{e4:main}) given by Proposition \ref{p4:4.1} into (\ref{e3:10}), the full problem becomes
\begin{eqnarray}\label{e5:1000}
&& \mu_{0}^{2}\partial_t\phi = \Delta_y\phi + p|Q|^{p-1}(y)\phi + H[\lambda,\xi, a, \theta, \dot{\lambda},\dot{\xi}, \dot{a}, \dot{\theta}, \phi](y,t), y\in B_{2R}(0).
\end{eqnarray}
Similar to Section 4.1, using change of variables
\begin{equation*}\label{e5:30}
t = t(\tau),\quad \frac{dt}{d\tau} = \mu_{0}^{2}(t),
\end{equation*}
(\ref{e5:1000}) reduces to
\begin{eqnarray*}\label{e5:40}
\partial_\tau\phi = \Delta_y\phi + p|Q|^{p-1}(y)\phi + H[\lambda,\xi, a, \theta, \dot{\lambda},\dot{\xi}, \dot{a}, \dot{\theta}, \phi](y,t(\tau))
\end{eqnarray*}
for $y\in B_{2R}(0)$, $\tau\geq \tau_0$, $\tau_0$ is the unique positive number such that $t(\tau_0) = t_0$.
We try to find a solution $\phi$ to the equation
\begin{equation}\label{e5:60}
\left\{
\begin{aligned}
&\partial_\tau\phi = \Delta_y\phi + p|Q|^{p-1}(y)\phi\\
&\quad\quad\quad\quad\quad + H[\lambda,\xi, a, \theta, \dot{\lambda},\dot{\xi}, \dot{a}, \dot{\theta}, \phi](y,t(\tau)),\quad y\in B_{2R}(0),\quad\tau\geq\tau_0,\\
&\phi(y,\tau_0) = \sum_{l= 1}^Ke_{0l}Z_l(y),\quad y\in B_{2R}(0),
\end{aligned}
\right.
\end{equation}
for some suitable constants $e_{0l}$, $l = 1, \cdots, K$. To apply the linear theory Proposition \ref{proposition5.1}, the parameter functions $\lambda,\xi, a, \theta$ need to satisfy the following orthogonality conditions
\begin{equation}\label{e5:70}
\int_{B_{2R}}H[\lambda,\xi, a, \theta, \dot{\lambda},\dot{\xi}, \dot{a}, \dot{\theta}, \phi](y,t(\tau))z_l(y)dy = 0, \quad l = 0, 1, \cdots, 3n-1.
\end{equation}

{\bf Step 2. Choosing the parameter functions.} By the Lipschitz properties for $\Psi = \Psi[\lambda, \xi, a, \theta, \dot{\lambda}, \dot{\xi}, \dot{a}, \dot{\theta}, \phi]$ given by Proposition \ref{p4:4.2}, Proposition \ref{l5:1} can be strengthened as
\begin{prop}\label{l5:1000}
(\ref{e5:70}) is equivalent to
\begin{equation}\label{e5:9000}
\left\{
\begin{aligned}
&\dot{\lambda} + \frac{1+(n-4)}{(n-4)t}\lambda = \Pi_0[\lambda,\xi, a, \theta, \dot{\lambda},\dot{\xi}, \dot{a}, \dot{\theta}, \phi, \psi](t),\\
&\dot{\xi}_l = \Pi_l[\lambda,\xi, a, \theta, \dot{\lambda},\dot{\xi}, \dot{a}, \dot{\theta}, \phi, \psi](t),\quad l = 1,\cdots, n,\\
&\dot{\theta}_{12} = \mu_0^{-1}\Pi_{n+1}[\lambda,\xi, a, \theta, \dot{\lambda},\dot{\xi}, \dot{a}, \dot{\theta}, \phi, \psi](t),\\
&\dot{a}_1 = \mu_0^{-1}\Pi_{n+2}[\lambda,\xi, a, \theta, \dot{\lambda},\dot{\xi}, \dot{a}, \dot{\theta}, \phi, \psi](t),\\
&\dot{a}_2 = \mu_0^{-1}\Pi_{n+3}[\lambda,\xi, a, \theta, \dot{\lambda},\dot{\xi}, \dot{a}, \dot{\theta}, \phi, \psi](t),\\
&\dot{\theta}_{1l} = \mu_0^{-1}\Pi_{n+l+1}[\lambda,\xi, a, \theta, \dot{\lambda},\dot{\xi}, \dot{a}, \dot{\theta}, \phi, \psi](t),\quad l = 3, \cdots, n,\\
&\dot{\theta}_{2l} = \mu_0^{-1}\Pi_{2n+l-1}[\lambda,\xi, a, \theta, \dot{\lambda},\dot{\xi}, \dot{a}, \dot{\theta}, \phi, \psi](t), \quad l = 3, \cdots, n.
\end{aligned}
\right.
\end{equation}
The terms in the right hand side of the above system can be expressed as
\begin{equation*}
\begin{aligned}
&\Pi_0[\lambda,\xi, a, \theta, \dot{\lambda}, \dot{\xi}, \dot{a}, \dot{\theta}, \phi, \psi](t) = \frac{t_0^{-\varepsilon}}{R^{\alpha-2}}\mu_0^{n-3 + \sigma}(t)f_0(t)+ \frac{t_0^{-\varepsilon}}{R^{\alpha-2}}\times\\
&\quad \Theta_0\left[\dot{\lambda},\dot{\xi}, \mu_0\dot{a}, \mu_0\dot{\theta}, \mu_0^{n-4}(t)\lambda, \mu_0^{n-4}(\xi-q), \mu_0^{n-3}a, \mu_0^{n-3}\theta, \mu_0^{n-3+\sigma}\phi, \mu_0^{\frac{n-2}{2}+\sigma}\psi\right](t)
\end{aligned}
\end{equation*}
and for $j = 1, \cdots, 3n-1$,
\begin{equation*}
\begin{aligned}
&\Pi_j[\lambda,\xi, a, \theta, \dot{\lambda},\dot{\xi}, \dot{a}, \dot{\theta}, \phi, \psi](t)\\
&= \mu_0^{n-2}c_j\left[b^{n-2}\nabla H(q, q)\right]+ \mu_0^{n-2+\sigma}(t)f_j(t) + \frac{t_0^{-\varepsilon}}{R^{\alpha-2}}\times\\
&\quad \Theta_j\left[\dot{\lambda},\dot{\xi}, \mu_0\dot{a}, \mu_0\dot{\theta}, \mu_0^{n-4}(t)\lambda, \mu_0^{n-4}(\xi-q), \mu_0^{n-3}a, \mu_0^{n-3}\theta, \mu_0^{n-3+\sigma}\phi, \mu_0^{\frac{n-2}{2}+\sigma}\psi\right](t),
\end{aligned}
\end{equation*}
where $f_j(t)$ and $\Theta_j[\cdots](t)$ ($j = 0, \cdots, 3n-1$) are bounded smooth functions for $t\in [t_0,\infty)$, $c_j$ ($j = 0, \cdots, 3n-1$) are suitable constants. Moreover, we have
\begin{equation*}
\left|\Theta_j[\dot{\lambda}_1](t) - \Theta_j[\dot{\lambda}_2](t)\right|\lesssim \frac{t_0^{-\varepsilon}}{R^{\alpha-2}}|\dot{\lambda}_1(t) - \dot{\lambda}_2(t)|
\end{equation*}
\begin{equation*}
\left|\Theta_j[\dot{\xi}_1](t) - \Theta_j[\dot{\xi}_2](t)\right|\lesssim \frac{t_0^{-\varepsilon}}{R^{\alpha-2}}|\dot{\xi}_1(t) - \dot{\xi}_2(t)|,
\end{equation*}
\begin{equation*}
\left|\Theta_j[\mu_0\dot{a}_1^{(1)}](t) - \Theta_j[\mu_0\dot{a}_1^{(2)}](t)\right|\lesssim \frac{t_0^{-\varepsilon}}{R^{\alpha-2}}\mu_0|\dot{a}_1^{(1)}(t) - \dot{a}_1^{(2)}(t)|,
\end{equation*}
\begin{equation*}
\left|\Theta_j[\mu_0\dot{a}_2^{(1)}](t) - \Theta_j[\mu_0\dot{a}_2^{(2)}](t)\right|\lesssim \frac{t_0^{-\varepsilon}}{R^{\alpha-2}}\mu_0|\dot{a}_2^{(1)}(t) - \dot{a}_2^{(2)}(t)|,
\end{equation*}
\begin{equation*}
\left|\Theta_j[\mu_0\dot{\theta}_1](t) - \Theta_j[\mu_0\dot{\theta}_2](t)\right|\lesssim \frac{t_0^{-\varepsilon}}{R^{\alpha-2}}\mu_0|\dot{\theta}_1(t) - \dot{\theta}_2(t)|,
\end{equation*}
\begin{equation*}
\left|\Theta_j[\mu_0^{n-4}\lambda_1](t) - \Theta_j[\mu_0^{n-4}\lambda_2](t)\right|\lesssim \frac{t_0^{-\varepsilon}}{R^{\alpha-2}}|\lambda_1(t) - \lambda_2(t)|,
\end{equation*}
\begin{equation*}
\left|\Theta_j[\mu_0^{n-4}(\xi_1-q)](t) - \Theta_j[\mu_0^{n-4}(\xi_2-q)](t)\right|\lesssim \frac{t_0^{-\varepsilon}}{R^{\alpha-2}}|\xi_1(t) - \xi_2(t)|,
\end{equation*}
\begin{equation*}
\left|\Theta_j[\mu_0^{n-3}a_1^{(1)}](t) - \Theta_j[\mu_0^{n-3}a_1^{(2)}](t)\right|\lesssim \frac{t_0^{-\varepsilon}}{R^{\alpha-2}}\mu_0|a_1^{(1)}(t) - a_1^{(2)}(t)|,
\end{equation*}
\begin{equation*}
\left|\Theta_j[\mu_0^{n-3}a_2^{(1)}](t) - \Theta_j[\mu_0^{n-3}a_2^{(2)}](t)\right|\lesssim \frac{t_0^{-\varepsilon}}{R^{\alpha-2}}\mu_0|a_2^{(1)}(t) - a_2^{(2)}(t)|,
\end{equation*}
\begin{equation*}
\left|\Theta_j[\mu_0^{n-3}\theta_1](t) - \Theta_j[\mu_0^{n-3}\theta_2](t)\right|\lesssim \frac{t_0^{-\varepsilon}}{R^{\alpha-2}}\mu_0|\theta_1(t) - \theta_2(t)|,
\end{equation*}
\begin{equation*}
\left|\Theta[\mu_0^{n-3+\sigma}\phi_1](t) - \Theta[\mu_0^{n-3+\sigma}\phi_2](t)\right|\lesssim \frac{t_0^{-\varepsilon}}{R^{\alpha-2}}\|\phi_1(t) - \phi_2(t)\|_{n-2+\sigma, \alpha}.
\end{equation*}
\end{prop}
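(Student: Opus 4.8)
We outline the argument; it parallels Section 6, the only new ingredient being that in dimensions $n=5,6$ the exponent $p-1=\frac{4}{n-2}\geq 1$, so that the potential $|Q|^{p-1}$ and the map $t\mapsto|t|^{p-1}t$ are Lipschitz, which is what allows the derivatives of the outer solution with respect to the parameters to be controlled.

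The equivalence of \eqref{e5:70} and \eqref{e5:9000} together with the stated form of the $\Pi_j$ is precisely Proposition \ref{l5:1}: after the substitution $z=\xi+\mu_0 y$ and the decomposition $S_A=S_1+\cdots+S_5$ used in Lemmas \ref{l5:100}, \ref{l5:21}, \ref{l5:22}, \ref{l5:23} and \ref{l5:24}, testing \eqref{e5:70} against $z_l$ yields the ODE system because the two $2\times 2$ coefficient matrices built from $a_{1,1},a_{1,n+2},a_{n+2,1},a_{n+2,n+2}$ and $a_{2,2},a_{2,n+3},a_{n+3,2},a_{n+3,n+3}$ of Lemma \ref{e:orthogalityofkernels} are invertible and all remaining couplings are $O(1/k)$, hence absorbed for $k\geq k_0$. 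Since the outer solution $\psi=\Psi[\lambda,\xi,a,\theta,\dot\lambda,\dot\xi,\dot a,\dot\theta,\phi]$ given by Proposition \ref{p4:4.1} satisfies the same weighted bound $\|\psi\|_{**,\beta,\alpha}\lesssim t_0^{-\varepsilon}R^{-(\alpha-2)}$ as a free element of the fixed-point set $\mathcal B$, substituting it into $H$ in \eqref{e5:2} does not alter the form of $\Pi_j$ and $\Theta_j$. The remaining task is the Lipschitz estimates.

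For these I would differentiate (or form difference quotients of) the three constituents of $H$ --- the term $\mu_0^{\frac{n+2}{2}}S_A(\xi+\mu_0 y,t)$, the terms $B[\phi]+B^0[\phi]$ from \eqref{e3:11} and \eqref{e3:12}, and the coupling $p\mu_0^{\frac{n-2}{2}}\tfrac{\mu_0^2}{\mu^2}|Q|^{p-1}(\tfrac{\mu_0}{\mu}y)\psi$ --- with respect to each argument appearing in $\Theta_j$, integrate against $z_l$ over $B_{2R}(0)$, and estimate. The increment of $S_A$ is treated termwise from the explicit expression in Lemma \ref{l2.2}: every summand is a smooth bounded function of $(\mu_0^{-1}\mu,\xi,a,\theta)$ times $Q,\nabla Q,z_j,H$ and their derivatives evaluated at $y=\mu_0^{-1}(x-\xi)$, so a difference in any one parameter produces exactly the matching weight factor; the Lipschitz continuity of $|Q|^{p-1}$ and $|u^*_A|^{p-1}$ occurring in $B^0[\phi]$ and in $V_A$ is where $n\leq 6$ is used. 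For the coupling term one expands the increment of $|Q|^{p-1}(\tfrac{\mu_0}{\mu}y)\psi$ and invokes the Lipschitz bounds on $\Psi$ from Proposition \ref{p4:4.2}, for instance $|\partial_\lambda\Psi[\bar\lambda]|\lesssim t_0^{-\varepsilon}R^{-(\alpha-2)}\|\bar\lambda\|_{1+\sigma}\mu_0^{\frac{n-2}{2}-1}(1+|y|)^{-(\alpha-2)}$ and the analogous ones for $\xi,a,\theta,\dot\lambda,\dot\xi,\dot a,\dot\theta,\phi$, together with the Lipschitz character of $\tilde{N}_A$ (which again uses $p-1\geq 1$). Summing over the finitely many $z_l$ and exploiting the smallness of $t_0^{-\varepsilon}$ and $R^{-(\alpha-2)}$ gives each stated inequality, e.g. $|\Theta_j[\dot\lambda_1]-\Theta_j[\dot\lambda_2]|\lesssim t_0^{-\varepsilon}R^{-(\alpha-2)}|\dot\lambda_1-\dot\lambda_2|$, and likewise for the arguments $\dot\xi$, $\mu_0\dot a_i$, $\mu_0\dot\theta$, $\mu_0^{n-4}\lambda$, $\mu_0^{n-4}(\xi-q)$, $\mu_0^{n-3}a_i$, $\mu_0^{n-3}\theta$, $\mu_0^{n-3+\sigma}\phi$.

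The main obstacle is the implicit dependence of $\psi$ on all the parameters through the outer problem \eqref{e4:main}: one needs a genuine Lipschitz --- indeed $C^1$ --- control of $\Psi$ in the weighted norms, which is available precisely because, for $n=5,6$, the Contraction Mapping Theorem applies in Proposition \ref{p4:4.1} and the derivatives of $\Psi$ exist and obey Proposition \ref{p4:4.2}; in higher dimensions $|Q|^{p-1}$ is only H\"older and this step breaks down. The rest is bookkeeping: one must verify that every increment, after the change of variables $z=\xi+\mu_0 y$ and the $S_1,\cdots,S_5$ splitting, carries the gain $t_0^{-\varepsilon}R^{-(\alpha-2)}$ rather than merely $O(1)$, which follows from the constraints \eqref{e4:21}, \eqref{e4:22}, \eqref{e4:25} that define $\mathcal B$ and from the extra powers of $\mu_0$ attached to $\lambda$, $\xi-q$, $a$, $\theta$ in the arguments of $\Theta_j$.
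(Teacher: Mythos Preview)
Your proposal is correct and follows precisely the approach the paper indicates: the paper itself does not give a detailed proof but states that it follows by using Proposition \ref{p4:4.2} together with the computations of Section 6 (i.e., Proposition \ref{l5:1}), referring to \cite{cortazar2016green} and \cite{MussoSireWeiZhengZhou} for the analogous argument. You have correctly identified the two pieces --- the reduction to the ODE system is Proposition \ref{l5:1} verbatim, and the Lipschitz estimates come from differentiating $H$ termwise and feeding in the $C^1$ bounds on $\Psi$ from Proposition \ref{p4:4.2}, which are available exactly because $p-1\geq 1$ when $n\leq 6$.
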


System (\ref{e5:9000}) is solvable for $\lambda$, $\xi$, $a$, $\theta$ satisfying (\ref{e4:21}) and (\ref{e4:22}). Indeed, we have
\begin{prop}\label{p5:5.10}
(\ref{e5:9000}) has a solution $\lambda = \lambda[\phi](t)$, $\xi = \xi[\phi](t)$, $a = a[\phi](t)$ and $\theta = \theta[\phi](t)$ satisfying estimates (\ref{e4:21}) and (\ref{e4:22}). Moreover, for $t\in (t_0, \infty)$, there hold
\begin{equation*}\label{e5:120}
\mu_0^{-(1+\sigma)}(t)\big|\lambda[\phi_1](t) - \lambda[\phi_2](t)\big|\lesssim \frac{t_0^{-\varepsilon}}{R^{\alpha-2}}\|\phi_1 - \phi_2\|_{n-2+\sigma, \alpha},
\end{equation*}
\begin{equation*}\label{e5:130}
\mu_0^{-(1+\sigma)}(t)\big|\xi[\phi_1](t) - \xi[\phi_2](t)\big|\lesssim \frac{t_0^{-\varepsilon}}{R^{\alpha-2}}\|\phi_1 - \phi_2\|_{n-2+\sigma, \alpha},
\end{equation*}
\begin{equation*}\label{e5:1301}
\mu_0^{-\sigma}(t)\big|a[\phi_1](t) - a[\phi_2](t)\big|\lesssim \frac{t_0^{-\varepsilon}}{R^{\alpha-2}}\|\phi_1 - \phi_2\|_{n-2+\sigma, \alpha},
\end{equation*}
\begin{equation*}\label{e5:132}
\mu_0^{-\sigma}(t)\big|\theta[\phi_1](t) - \theta[\phi_2](t)\big|\lesssim \frac{t_0^{-\varepsilon}}{R^{\alpha-2}}\|\phi_1 - \phi_2\|_{n-2+\sigma, \alpha}.
\end{equation*}
\end{prop}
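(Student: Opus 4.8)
The plan is to solve the ODE system \eqref{e5:9000} by recasting it as a fixed point problem for the derivatives $(\dot\lambda,\dot\xi,\dot a,\dot\theta)$ of the parameter functions — exactly as in the proof of Theorem \ref{t:main} — but now using the Lipschitz bounds of Proposition \ref{l5:1000} to close the argument with the Contraction Mapping Theorem rather than Schauder's theorem.

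First I would recast \eqref{e5:9000}. Set $\Lambda=\dot\lambda$, $\Xi=\dot\xi-\dot\xi^0$ with $\dot\xi^0=\mu_0^{n-2}c[b^{n-2}\nabla H(q,q)]$, $\Gamma=\dot a$ and $\Upsilon=\dot\theta$. The bounded linear operators $\mathcal L_1,\dots,\mathcal L_4$ built in the proof of Theorem \ref{t:main} (continuous in the $\|\cdot\|_{n-3+\sigma}$- respectively $\|\cdot\|_{n-4+\sigma}$-topologies) turn \eqref{e5:9000} into the equivalent fixed point equation $(\Lambda,\Xi,\Gamma,\Upsilon)=\mathcal T_0[\phi](\Lambda,\Xi,\Gamma,\Upsilon)$, where $\mathcal T_0[\phi]$ is assembled from the right hand sides $\Pi_l$ evaluated at $\lambda=\int_t^\infty\Lambda$, $\xi=q+\int_t^\infty\Xi$ and the analogous integral expressions for $a$ and $\theta$. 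Here the free constants in the $\mathcal L_i$ (the constant $d$ in the representation \eqref{e5:15} of $\lambda$, and the corresponding values at $t=+\infty$ for the other parameters) must be taken so small that the particular solutions decay like $\mu_0^{1+\sigma}$ and $\mu_0^{\sigma}$ rather than merely stay bounded; the precise homogeneity of the coefficient $\frac{1+(n-4)}{(n-4)t}$ in the $\lambda$-equation is what makes this possible. The coupled subsystems in $(\dot\xi_1,\dot a_1)$ and $(\dot\xi_2,\dot a_2)$ are first decoupled by inverting the $2\times2$ matrices of Lemma \ref{e:orthogalityofkernels}, the residual off-diagonal coupling being $O(k^{-1})$ and hence a harmless perturbation for $k$ large.

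Next I would verify that $\mathcal T_0[\phi]$ maps into itself and is a contraction on the closed ball
\[
\mathcal B_0=\Big\{(\Lambda,\Xi,\Gamma,\Upsilon):R^{\alpha-2}\big(\|\Lambda\|_{n-3+\sigma}+\|\Xi\|_{n-3+\sigma}+\|\Gamma\|_{n-4+\sigma}+\|\Upsilon\|_{n-4+\sigma}\big)\le c\Big\}.
\]
Proposition \ref{l5:1000} displays each $\Pi_l$ as a fixed leading term of size $\mu_0^{n-2}$ plus a remainder of size $\frac{t_0^{-\varepsilon}}{R^{\alpha-2}}\Theta_l[\cdots]$ whose arguments carry extra powers of $\mu_0$; together with the boundedness of the $\mathcal L_i$ and the choice $R=t_0^{\rho}$, $0<\rho\ll1$, this yields $\mathcal T_0[\phi](\mathcal B_0)\subset\mathcal B_0$ once $t_0$ is large. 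The contraction property is precisely where the restriction $n=5,6$, i.e. $p-1=\frac4{n-2}\ge1$, enters: then $\tilde N_A$, $V_A$ and hence the outer solution $\Psi$ are Lipschitz in the parameters (Proposition \ref{p4:4.2}), so the Lipschitz estimates for $\Theta_l$ in Proposition \ref{l5:1000} give $\|\mathcal T_0[\phi](v_1)-\mathcal T_0[\phi](v_2)\|\le C\tfrac{t_0^{-\varepsilon}}{R^{\alpha-2}}\|v_1-v_2\|<\tfrac12\|v_1-v_2\|$. The Contraction Mapping Theorem then gives a unique fixed point in $\mathcal B_0$, and integrating $\Lambda,\Xi,\Gamma,\Upsilon$ back produces $\lambda[\phi],\xi[\phi],a[\phi],\theta[\phi]$ satisfying \eqref{e4:21}--\eqref{e4:22} by construction.

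Finally, for the dependence on $\phi$: since $\mathcal T_0[\phi]$ enters only through the $\Pi_l$ and the last Lipschitz estimate of Proposition \ref{l5:1000} gives $\|\mathcal T_0[\phi_1](v)-\mathcal T_0[\phi_2](v)\|\lesssim\tfrac{t_0^{-\varepsilon}}{R^{\alpha-2}}\|\phi_1-\phi_2\|_{n-2+\sigma,\alpha}$ uniformly for $v\in\mathcal B_0$, the standard argument for Lipschitz dependence of a fixed point on a parameter (with the uniform contraction constant below $\tfrac12$) gives $\|v[\phi_1]-v[\phi_2]\|\lesssim\tfrac{t_0^{-\varepsilon}}{R^{\alpha-2}}\|\phi_1-\phi_2\|_{n-2+\sigma,\alpha}$; integrating and using that $\xi^0$ is independent of $\phi$ converts this into the four displayed estimates for $\lambda,\xi,a,\theta$. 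The main obstacle is the bookkeeping in the previous paragraph: one has to track, through each integration in the $\mathcal L_i$ (which trades a factor $\mu_0\sim t^{-1/(n-4)}$ for a power of $t$) and through the $\mu_0$-weighted arguments of the $\Theta_l$, that every term lands in the correct weighted space with a prefactor that is genuinely small as $t_0\to\infty$ — uniformly once $R=t_0^{\rho}$ is fixed.
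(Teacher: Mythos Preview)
Your proposal is correct and follows essentially the same route the paper has in mind: the paper does not spell out a proof here but simply states that, using the Lipschitz properties in Proposition~\ref{p4:4.2} (and hence Proposition~\ref{l5:1000}), the argument is the same as in \cite{cortazar2016green} and \cite{MussoSireWeiZhengZhou}. Your sketch --- recasting \eqref{e5:9000} as the fixed point problem $(\Lambda,\Xi,\Gamma,\Upsilon)=\mathcal T_0[\phi](\Lambda,\Xi,\Gamma,\Upsilon)$ via the operators $\mathcal L_1,\dots,\mathcal L_4$ from Section~4.4, using the Lipschitz bounds of Proposition~\ref{l5:1000} to obtain a contraction on $\mathcal B_0$, and then reading off the Lipschitz dependence on $\phi$ from the uniform contraction constant --- is exactly that argument.
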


Using Proposition \ref{p4:4.2}, the proof of Proposition \ref{l5:1000} and \ref{p5:5.10} is similar to that of \cite{cortazar2016green} and \cite{MussoSireWeiZhengZhou}, we omit it.

{\bf Step 3. Gluing: the inner problem.}
After choosing parameter functions $\lambda = \lambda[\phi](t)$, $\xi = \xi[\phi](t)$, $a = a[\phi](t)$ and $\theta = \theta[\phi](t)$ such that (\ref{e5:70}) hold, we solve problem (\ref{e5:40}) in the class of functions with $\|\phi\|_{n-2+\sigma,\alpha}$ bounded. Problem (\ref{e5:40}) is a fixed point of
\begin{equation*}
\phi = \mathcal{A}_1(\phi): = \mathcal{T}_2(H[\lambda,\xi, a, \theta, \dot{\lambda},\dot{\xi}, \dot{a}, \dot{\theta}, \phi]).
\end{equation*}
It is easy to see that
\begin{equation}\label{e6:10}
\begin{aligned}
&\left|H[\lambda,\xi,\dot{\lambda},\dot{\xi},\phi](y, t)\right|\lesssim t_0^{-\varepsilon}\frac{\mu_0^{n-2+\sigma}}{1+|y|^{2+\alpha}}
\end{aligned}
\end{equation}
and
\begin{equation}\label{e6:20}
\begin{aligned}
&\left|H[\phi^{(1)}]-H[\phi^{(2)}]\right|(y, t)\lesssim t_0^{-\varepsilon}\|\phi^{(1)} - \phi^{(2)}\|_{n-2+\sigma, \alpha}
\end{aligned}
\end{equation}
hold. From (\ref{e6:10}) and (\ref{e6:20}), $\mathcal{A}_1$ has a fixed point $\phi$ in the set of functions $\|\phi\|_{n-2s+\sigma, \alpha}\leq ct_0^{-\varepsilon}$ for suitable large constant $c > 0$. From the Contraction Mapping Theorem, we obtain a solution to (\ref{e3:1}). Then the rest argument to the stability part of Theorem \ref{t:stability} is the same as \cite{cortazar2016green}, we omit it.

\section{Appendix}
\subsection{Proof of Lemma \ref{e:orthogalityofkernels}}
Let us recall from \cite{delpinomussofrankpistoiajde2011} and \cite{MussoWei2015} that
$$
Q_k(x) =
U(x) - \sum_{j=1}^k U_j (x) +\tilde  \phi (x)\quad {\mbox {with}} \quad  U(x) = \left( {2 \over 1+ |x|^2} \right)^{n-2 \over 2}
$$
and
$$
 U_j (x) = \zeta_k^{-\frac{n-2}{2}} U(\zeta_k^{-1} (x-\xi_j )).
$$
Here $\zeta_k$ is a positive constant satisfying $\zeta_k \sim k^{-2}$, $\xi_j = \sqrt{1-\zeta_k^2}(\textbf{n}_j, 0)$, $\textbf{n}_j = (\cos\theta_j, \sin\theta_j, 0)$, $\theta_j = \frac{2\pi}{k}(j-1)$ and $\tilde{\phi}$ is a small term than $U(x) - \sum_{j=1}^k U_j (x)$. Let us introduce the functions
\begin{equation*}
Z_0 (x) = {n-2 \over 2} U(x) + \nabla U(x) \cdot x ,
\end{equation*}
\begin{equation*}
\pi_0(x)={n-2 \over 2}\tilde\phi (x) + \nabla \tilde \phi (x) \cdot x
\end{equation*}
and
\begin{equation*}
Z_\alpha (x) = \frac{\partial}{\partial x_\alpha} U(x), \quad \pi_\alpha(x)=\frac{\partial}{\partial x_\alpha}\tilde{\phi}(x)\quad {\mbox {for}} \quad \alpha = 1, \ldots , n. \end{equation*}
For $l=1, \ldots , k$, define
\begin{equation*}
Z_{0 l} (x) = {n-2 \over 2} U_l (x) + \nabla U_l (x) \cdot (x-\xi_l ) ,
\end{equation*}
From (\ref{capitalzeta0}) and (\ref{capitalzetaj}),
$$
z_0 (x) = Z_0 (x) - \sum_{l=1}^k \left[ Z_{0l}  (x) + \sqrt{1-\zeta_k^2 } \cos \theta_l  {\partial \over \partial x_1} U_l (x) \right.
$$
$$
+ \left. \sqrt{1-\zeta_k^2} \sin \theta_l {\partial \over \partial x_2} U_l (x)  \right] + \pi_0 (x).
$$
For $l=1, \ldots , k$, define
\begin{equation*}
Z_{1 l} (x) =\sqrt{1-\zeta_k^2} \,  \left[\cos \theta_l  {\partial \over \partial x_1} U_l (x)+ \sin \theta_l {\partial \over \partial x_2} U_l (x)  \right],
\end{equation*}
\begin{equation*}
 Z_{2 l} (x) =\sqrt{1-\zeta_k^2} \,  \left[ -\sin  \theta_l  {\partial \over \partial x_1} U_l (x)+ \cos \theta_l {\partial \over \partial x_2} U_l (x)  \right],
\end{equation*}
\begin{equation*}
Z_{\alpha l} (x) = {\partial \over \partial x_\alpha} U_l(x), \quad {\mbox {for}} \quad \alpha = 3, \ldots , n.
\end{equation*}
Then we have
\begin{equation}\label{1ang1}
z_0 (x) = Z_0 (x) - \sum_{l=1}^k \left[ Z_{0 l}  (x) +Z_{1l} (x)  \right] + \pi_0 (x),
\end{equation}
\begin{equation}\label{1ang111}
\begin{aligned}
z_1 (x) &= Z_1 (x) - \sum_{l=1}^k  {\partial \over \partial x_1} U_l (x) + \pi_1 (x) \\
&= Z_1 (x) - \sum_{l=1}^k  {  [ \cos \theta_l Z_{1l} (x) -\sin \theta_l Z_{2l} (x) ] \over \sqrt{1-\zeta_k^2}} + \pi_1 (x),
\end{aligned}
\end{equation}
\begin{equation}\label{1ang222}
\begin{aligned}
z_2 (x) &= Z_2 (x) - \sum_{l=1}^k  {\partial \over \partial x_2} U_2 (x) + \pi_2 (x) \\
&= Z_2 (x) - \sum_{l=1}^k  {  [ \sin \theta_l Z_{1l} (x) +\cos \theta_l Z_{2l} (x) ] \over \sqrt{1-\zeta_k^2}} + \pi_2 (x),
\end{aligned}
\end{equation}
and
\begin{equation}\label{1ang333}
z_\alpha (x) = Z_\alpha (x) - \sum_{l=1}^k Z_{\alpha l} +\pi_\alpha (x) \text{ for }\alpha =3, \cdots , n.
\end{equation}
Moreover, the following identities hold,
\begin{equation}\label{1canada2}
z_{n+1} (x) = \sum_{l=1}^k Z_{2 l} (x)  + x_2 \pi_1 (x) - x_1 \pi_2 (x),
\end{equation}
\begin{equation}\label{1canada30}
\begin{aligned}
z_{n+2} (x) = \sum_{l=1}^k\sqrt{1-\zeta_k^2} \cos \theta_l Z_{0 l} (x)  - &\sum_{l=1}^k\sqrt{1-\zeta_k^2} \cos \theta_l Z_{1 l} (x) \\
&  -2x_1 \pi_0 (x) + |x|^2 \pi_1 (x),
\end{aligned}
\end{equation}
\begin{equation}\label{1canada3}
\begin{aligned}
z_{n+3} (x) =  \sum_{l=1}^k \, \sqrt{1-\zeta_k^2}\sin \theta_l Z_{0 l} (x) -  &\sum_{l=1}^k  \, \sqrt{1-\zeta_k^2}\sin \theta_l Z_{1 l} (x) \\
& - 2x_2 \pi_0 (x) + |x|^2 \pi_2 (x),
\end{aligned}
\end{equation}
\begin{equation}\label{1canada4}
z_{n+\alpha +1} (x) =  \sqrt{1-\zeta_k^2}\,  \sum_{l=1}^k  \cos \theta_l Z_{\alpha l} (x) + x_1 \pi_\alpha (x), \text{ for }\alpha =3, \ldots , n,
\end{equation}
\begin{equation}\label{1canada5}
z_{2n+\alpha -1} (x) =  \sqrt{1-\zeta_k^2} \,  \sum_{l=1}^k \sin \theta_l Z_{\alpha l} (x) + x_2 \pi_\alpha (x), \text{ for }\alpha =3, \ldots , n.
\end{equation}
Then we have the following estimations,
\begin{lemma}\label{lemma9.1}
\begin{equation}\label{1made}
\begin{aligned}
\int_{\mathbb{R}^n} Z_{\alpha l}(x)Z_0(x)dx &= \int_{\mathbb{R}^n} Z_0^2(x) dx + O(k^{-1}) \quad {\mbox {if}} \quad \alpha=0,\,\, l=0, \\
&=O(k^{-1}) \quad {\mbox {otherwise}},
\end{aligned}
\end{equation}
\begin{equation}\label{1made1}
\begin{aligned}
\int_{\mathbb{R}^n} Z_{\alpha l}(x)Z_\beta(x) dx&= \int_{\mathbb{R}^n} Z_1^2(x) dx + O(k^{-1}) \quad {\mbox {if}} \quad \alpha=\beta\in \{1,\cdots, n\},\,\, l=0, \\
&= O(k^{-1}) \quad {\mbox {otherwise}},
\end{aligned}
\end{equation}
\begin{equation}\label{1made2}
\begin{aligned}
\int_{\mathbb{R}^n} Z_{\alpha l}(x)Z_{0j}(x)dx &= \int_{\mathbb{R}^n} Z_0^2(x) dx + O(k^{-1})\quad {\mbox {if}} \quad \alpha=0,\,\, l=j, \\
&= O(k^{-1}) \quad {\mbox {otherwise}},
\end{aligned}
\end{equation}
\begin{equation}\label{1made3}
\begin{aligned}
\int_{\mathbb{R}^n} Z_{\alpha l}(x)Z_{\beta j}(x)dx &= \int_{\mathbb{R}^n} Z_1^2(x) dx + O(k^{-1})\quad {\mbox {if}} \quad \alpha=\beta\in \{1,\cdots, n\},\,\, l=j, \\
&= O(k^{-1}) \quad {\mbox {otherwise}},
\end{aligned}
\end{equation}
\begin{equation}\label{1made4}
\begin{aligned}
&\int_{\mathbb{R}^n}\frac{|x|^2 - 2}{\left(1+|x|^2\right)^{\frac{n-2}{2}+1}}Z_{\beta j}(x)dx\\
&\quad\quad\quad\quad = \left\{
\begin{aligned}
& \int_{\mathbb{R}^n} \frac{(|x|^2 - 2)}{\left(1+|x|^2\right)^{\frac{n-2}{2}+1}}Z_{0}(x)dx + O(k^{-1})\quad {\mbox {if}} \quad \beta=0, \,\, j=0, \\
& O(k^{-1}) \quad {\mbox {otherwise}},
\end{aligned}
\right.
\end{aligned}
\end{equation}

\begin{equation}\label{1made5}
\begin{aligned}
&\int_{\mathbb{R}^n}\frac{x_i}{\left(1+|x|^2\right)^{\frac{n-2}{2}+1}}Z_{\beta j}(x)dx\\
&\quad\quad = \left\{
\begin{aligned}
&\int_{\mathbb{R}^n}\frac{x_i}{\left(1+|x|^2\right)^{\frac{n-2}{2}+1}}Z_{i}(x)dx + O(k^{-1})\quad {\mbox {if}} \quad \beta=0,\,\, j=i\in \{1,\cdots, n\}, \\
&O(k^{-1}) \quad {\mbox {otherwise}}.
\end{aligned}
\right.
\end{aligned}
\end{equation}
\end{lemma}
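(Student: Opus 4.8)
The statement is a list of elementary integral identities among the explicit model functions $Z_0,Z_\alpha,Z_{0l},Z_{1l},Z_{2l},Z_{\alpha l}$ and the rational weights $\tfrac{|x|^2-2}{(1+|x|^2)^{n/2}}$, $\tfrac{x_i}{(1+|x|^2)^{n/2}}$, so no PDE enters; the proof rests on three ingredients. First, the sharp asymptotics of the Aubin--Talenti bubble: $Z_0$ is radial with $|Z_0(x)|\lesssim(1+|x|)^{2-n}$, each $Z_\alpha=\partial_\alpha U$ is odd in $x_\alpha$ and even in the remaining variables with $|Z_\alpha(x)|\lesssim(1+|x|)^{1-n}$, and the weights obey $\big|\tfrac{|x|^2-2}{(1+|x|^2)^{n/2}}\big|\lesssim(1+|x|)^{2-n}$, $\big|\tfrac{x_i}{(1+|x|^2)^{n/2}}\big|\lesssim(1+|x|)^{1-n}$. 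Second, the exact covariance relations $Z_{0l}(x)=\zeta_k^{-(n-2)/2}Z_0(\zeta_k^{-1}(x-\xi_l))$, $Z_{\alpha l}(x)=\zeta_k^{-n/2}(\partial_\alpha U)(\zeta_k^{-1}(x-\xi_l))$ (with $Z_{1l},Z_{2l}$ carrying an extra $\sqrt{1-\zeta_k^2}=1+O(\zeta_k^2)$), together with the conformal invariance of the Dirichlet integral, which gives $\int_{\R^n}|\partial_\alpha U_l|^2=\int_{\R^n}|\partial_\alpha U|^2$ and $\int_{\R^n}Z_{0l}^2=\zeta_k^2\int_{\R^n}Z_0^2$; these also yield the far-field bounds $|Z_{0l}(x)|\lesssim\zeta_k^{(n-2)/2}|x-\xi_l|^{2-n}$ and $|Z_{\alpha l}(x)|\lesssim\zeta_k^{(n-2)/2}|x-\xi_l|^{1-n}$ for $|x-\xi_l|\gtrsim\zeta_k$. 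Third, the geometry of the configuration: $|\xi_l|=\sqrt{1-\zeta_k^2}\sim1$, so the $\xi_l$ lie essentially on the unit sphere, and $|\xi_l-\xi_m|\gtrsim|l-m|/k$, so neighbouring centres are at distance $\gtrsim1/k\gg\zeta_k\sim k^{-2}$.

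The plan is then to split the proof into cases according to which two functions are paired, writing a pairing as $\int_{\R^n}A\,B$ with $A\in\{Z_\alpha,Z_{\alpha l}\}$ and $B$ equal to $Z_0$, $Z_\beta$, $Z_{0j}$, $Z_{\beta j}$ or a weight (with the index value $0$ denoting the central functions). For \emph{aligned pairings} (same centre, same type and direction), the central case gives the displayed constants $\int Z_0^2$, $\int Z_\alpha^2=\int Z_1^2$, or the weighted integrals themselves, and a peripheral pair is handled by the change of variables $x=\xi_l+\zeta_k y$: $\int Z_{\alpha l}^2=\int Z_1^2$ for $\alpha\ge3$, $\int Z_{1l}^2=\int Z_{2l}^2=(1-\zeta_k^2)\int Z_1^2=\int Z_1^2+O(k^{-4})$, while $\int Z_{0l}^2=\zeta_k^2\int Z_0^2=O(k^{-4})$ is itself negligible (so in \eqref{1made2} the $O(1)$-diagonal constant $\int Z_0^2$ is produced only by the central bubble). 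For \emph{central versus central, misaligned} pairings ($\alpha\ne\beta$, or a weight against a mismatched derivative), the parity of $U$ makes the integral vanish identically, hence $O(k^{-1})$.

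The substance lies in the two remaining cases. For \emph{central versus peripheral}, e.g. $\int Z_0Z_{0l}$ or $\int\tfrac{x_i}{(1+|x|^2)^{n/2}}Z_{\alpha l}$ with $l\ge1$, one splits $\R^n=B(\xi_l,\tfrac12)\cup B(\xi_l,\tfrac12)^c$: on the complement, the far-field bound for the peripheral factor and the decay of the central factor give $\lesssim\zeta_k^{(n-2)/2}\int_{\R^n}(1+|x|)^{2-n}|x-\xi_l|^{2-n}\,dx\lesssim\zeta_k^{(n-2)/2}$ (finite since $n>4$); on the ball, one Taylor-expands the smooth central factor about $\xi_l$, the constant term pairing with the partial mass $\int_{B(\xi_l,1/2)}Z_{\cdot l}$, which by rescaling is $\zeta_k^{(n+2)/2}\int_{|y|<1/(2\zeta_k)}Z_0$ (or $\partial_\alpha U$), estimated via $\int_{|y|<R}Z_0\,dy\sim-c_nR^2$ to be $O(\zeta_k^{(n-2)/2})$, while $\int_{|y|<R}\partial_\alpha U=0$ by symmetry so only the linear Taylor term contributes, again $O(\zeta_k^{(n-2)/2})$. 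Hence such integrals are $O(\zeta_k^{(n-2)/2})=O(k^{-(n-2)})\subset O(k^{-1})$. For \emph{peripheral versus peripheral with different centres}, $\int Z_{\alpha l}Z_{\beta m}$ with $l\ne m$, one decomposes $\R^n$ into $B(\xi_l,\tfrac12|\xi_l-\xi_m|)$, $B(\xi_m,\tfrac12|\xi_l-\xi_m|)$ and the rest, using on each piece the far-field bound of the factor that does not concentrate there, together with $\int_{\R^n}|x-\xi_l|^{2-n}|x-\xi_m|^{2-n}\,dx=c_n|\xi_l-\xi_m|^{4-n}$; this yields $\lesssim\zeta_k^{n-2}|\xi_l-\xi_m|^{4-n}\lesssim k^{-2(n-2)}k^{n-4}=k^{-n}$. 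Same-centre misaligned peripheral pairings $\int Z_{\alpha l}Z_{\beta l}$, $\alpha\ne\beta$, vanish by parity after rescaling. In every case the error is $O(k^{-(n-2)})$ or smaller, hence $O(k^{-1})$ for $n\ge5$.

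The hard part will not be conceptual but the careful bookkeeping: one must track the exact power of $\zeta_k$ in each of the roughly thirty individual pairings and ensure that the slowly decaying profiles $Z_0$ and $Z_{0l}$ — which fall off only like $|x|^{2-n}$, so that $Z_{0l}\notin L^1(\R^n)$ — do not spoil the convergence of the cross-integrals; this is precisely where the hypothesis $n>4$ is used, and it forces the local masses to be estimated through the asymptotics $\int_{|y|<R}Z_0\sim-c_nR^2$ rather than by absolute integrability. Once the cases are organised as above, each bound is a short computation, and Lemma~\ref{lemma9.1} then feeds into Lemma~\ref{e:orthogalityofkernels} by substituting the decompositions \eqref{1ang1}--\eqref{1canada5}: the diagonal contributions survive (multiplied, where relevant, by sums such as $\sum_l\cos^2\theta_l\sim k/2$, which produces the $k$-dependent constants $a_{i,j}$), all cross terms are $O(k^{-1})$, and the two $2\times2$ coefficient matrices are seen to have strictly positive determinant, hence are invertible.
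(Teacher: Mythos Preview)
Your approach is correct and, at the level of ideas, the same as the paper's: localize around the concentration points, rescale by $x=\xi_l+\zeta_k y$, and control the tails via the far-field decay $|Z_{0l}(x)|\lesssim\zeta_k^{(n-2)/2}|x-\xi_l|^{2-n}$, $|Z_{\alpha l}(x)|\lesssim\zeta_k^{(n-2)/2}|x-\xi_l|^{1-n}$. The paper in fact only writes out \eqref{1made2} in detail, with the splitting radius $\eta/k$ rather than your $1/2$ and $|\xi_l-\xi_m|/2$, and dismisses the remaining cases as ``similar''; your case-by-case organisation (central/central, central/peripheral, peripheral/peripheral) is more careful, and your use of the interaction integral $\int|x-\xi_l|^{2-n}|x-\xi_m|^{2-n}dx=c_n|\xi_l-\xi_m|^{4-n}$ and of the partial-mass asymptotics $\int_{|y|<R}Z_0\sim -c_nR^2$ makes the $n>4$ hypothesis explicit where the paper leaves it implicit.

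You also correctly flag a slip in the stated diagonal of \eqref{1made2}: since $Z_{0l}(\xi_l+\zeta_k y)=\zeta_k^{-(n-2)/2}Z_0(y)$, the change of variables gives $\int Z_{0l}^2\,dx=\zeta_k^{2}\int Z_0^2=O(k^{-4})$, not $\int Z_0^2+O(k^{-1})$; the paper's own computation of $i_1$ drops the Jacobian factor $\zeta_k^2$. Your parenthetical remark is the right diagnosis --- the displayed constant $\int Z_0^2$ cannot arise from a peripheral self-pairing --- and the correct version of \eqref{1made2} simply has the diagonal $l=j\ge1$ case fall into the $O(k^{-1})$ line as well. This is harmless downstream: Lemma~\ref{e:orthogalityofkernels} only needs smallness of the cross terms, and the $O(1)$ constants $a_{i,j}$ there come from \eqref{1made3} (where $\int Z_{\alpha l}^2=\int Z_1^2$ really is scale-invariant for $\alpha\ge1$), not from \eqref{1made2}.
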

\begin{proof}
We prove (\ref{1made2}). Let $\eta >0$ be a small fixed real number independent from $k$. Then
\begin{equation*}
\begin{aligned}
\int_{\mathbb{R}^n} Z_{\alpha l }(x) Z_{0 j}(x)dx &=  \int_{ B(\xi_l , \frac{\eta}{k})} Z_{\alpha l}(x) Z_{0 l}(x)dx  + \int_{\R^n \setminus B(\xi_l , \frac{\eta}{k})} Z_{\alpha l}(x)  Z_{0 j}(x)dx\\
&:= i_1 + i_2 .
\end{aligned}
\end{equation*}
Change of variable via $x= \xi_l + \zeta_k y$, we obtain
\begin{equation*}
\begin{aligned}
i_1 & = \int_{B(0, \frac{\eta}{k\zeta_k})}Z_\alpha(x)Z_0(x)dx\\
&=  \left(\int_{\mathbb{R}^n}Z_0^2(x)dx + O((\zeta_k k)^n)\right) \text{ if }\alpha = 0,\\
&= 0 \text{ if }\alpha \neq 0.
\end{aligned}
\end{equation*}
As for the term $i_2$, decompose
$$
i_2 = \int_{\R^n \setminus \cup_{j = 1}^kB(\xi_j, \frac{\eta}{k})} Z_{\alpha l}(x)Z_{0j}(x)dx + \sum_{j\neq l}\int_{B(\xi_j, \frac{\eta}{k})} Z_{\alpha l}(x)Z_{0j}(x)dx = i_{21} + i_{22}.
$$
$i_{21}$ can be estimated as
\begin{equation*}
\begin{aligned}
\left|i_{21}\right| &\leq   C\zeta_k^{n-2}\int_{\{|x|\geq \frac{\eta}{k}\}}^\infty\frac{1}{|x|^{2n-4}}dx =  C\zeta_k^{n-2}\int_{\frac{\eta}{k}}^\infty\frac{r^{n-1}}{r^{2n-4}}dr \\
&\leq C\zeta_k^{n-2}k^{n-4} = O\left(\frac{1}{k}\right).
\end{aligned}
\end{equation*}
And
\begin{equation*}
\begin{aligned}
\left|i_{22}\right| \leq C\sum_{j\neq l}\int_{B(\xi_j,\frac{\eta}{k})}\frac{\zeta_k^{\frac{n-2}{2}}}{|x-\xi_l|^{n-2}}Z_{0j}(x)dx &\leq C\zeta_k^{\frac{n-2}{2}}k^{n-2}\int^{\frac{\eta}{k}}_0\frac{r^{n-1}}{r^{n-2}}dr\\
&\leq C\zeta_k^{\frac{n-2}{2}}k^{n-4}\leq C\zeta_k = O\left(\frac{1}{k}\right),
\end{aligned}
\end{equation*}
where $C$ are generic positive constants independent of $k$. Hence we have (\ref{1made2}). The proofs of (\ref{1made}), (\ref{1made1}), (\ref{1made3}), (\ref{1made4}) and (\ref{1made5}) are similar, we omit them. This concludes the proof.
\end{proof}
Then Lemma \ref{e:orthogalityofkernels} follows from Lemma \ref{lemma9.1}, (\ref{1ang1})-(\ref{1canada5}) and Proposition 2.1 of \cite{MussoWei2015} by long but easy estimates.

\subsection{Proof of (\ref{positiveness})}
First, we claim that
\begin{equation}\label{e:appendix01}
\int_{\mathbb{R}^n}|Q|^{p-1}(y)Z_{0}(y)dy = \int_{\mathbb{R}^n}U^{p-1}(y)Z_{0}(y)dy + O\left(\frac{1}{k^s}\right)
\end{equation}
for some small $s > 0$.
Indeed, we have
\begin{equation*}
\begin{aligned}
&\int_{\mathbb{R}^n}|Q|^{p-1}(y)Z_{0}(y)dy\\
&\quad = \int_{\mathbb{R}^n}\left|U(y) - \sum_{j=1}^k U_j (y) +\tilde\phi (y)\right|^{p-1}Z_{0}(y)dy \\
&\quad = \int_{\mathbb{R}^n}U^{p-1}(y)Z_{0}(y)dy + (p-1)\int_{\mathbb{R}^n}U^{p-2}(y)\left|- \sum_{j=1}^k U_j (y) +\tilde\phi (y)\right|Z_{0}(y)dy \\
&\quad\quad + O\left(\sum_{j = l}^k\int_{\mathbb{R}^n}|U_j|^{p-1}(y)Z_{0}(y)dy\right) + O\left(\int_{\mathbb{R}^n}|\tilde\phi(y)|^{p-1}(y)Z_{0}(y)dy\right)\\
&\quad = \int_{\mathbb{R}^n}U^{p-1}(y)Z_{0}(y)dy + O\left(\sum_{j=1}^k\int_{\mathbb{R}^n}U^{p-2}(y)U_j (y)Z_{0}(y)dy\right)\\
&\quad\quad + O\left(\sum_{j = l}^k\int_{\mathbb{R}^n}|U_j|^{p-1}(y)Z_{0}(y)dy\right) + O\left(\int_{\mathbb{R}^n}|\tilde\phi(y)|^{p-1}(y)Z_{0}(y)dy\right)\\
&\quad\quad + O\left(\int_{\mathbb{R}^n}U^{p-2}(y)\left|\tilde\phi (y)\right|Z_{0}(y)dy\right)
\end{aligned}
\end{equation*}
and
\begin{equation*}
\begin{aligned}
&\int_{\mathbb{R}^n}|U_j|^{p-1}(y)Z_{0}(y)dy = 4\int_{\mathbb{R}^n}\frac{\zeta_k^2}{(\zeta_k^2 + |y-\xi_j|^2)^2}Z_{0}(y)dy\\
&\quad = 4\zeta_k^{n-2}\int_{\mathbb{R}^n}\frac{1}{(1 + |z|^2)^2}Z_{0}(\zeta_k z+\xi_j)dy\\
&\quad \leq C\zeta_k^{n-2}\int_{\mathbb{R}^n}\frac{1}{(1 + |z|^2)^2}\frac{1}{\left|\zeta_k z+\xi_j\right|^{n-2}}dy\\
&\quad = C\zeta_k^{n-2}\int_{|z|\leq \frac{1}{2\zeta_k}}\frac{1}{(1 + |z|^2)^2}\frac{1}{\left|\zeta_k z+\xi_j\right|^{n-2}}dy\\
&\quad\quad + C\zeta_k^{n-2}\int_{|z|\geq \frac{1}{2\zeta_k}}\frac{1}{(1 + |z|^2)^2}\frac{1}{\left|\zeta_k z+\xi_j\right|^{n-2}}dy\\
&\quad = C\zeta_k^{n-2}\int_{|z|\leq \frac{1}{2\zeta_k}}\frac{1}{(1 + |z|^2)^2}\frac{1}{\left|\xi_j\right|^{n-2}}\left(1 + O\left(\frac{\zeta_k z}{|\xi_j|}\right)\right)dy\\
&\quad\quad + C\zeta_k^{n-2}\int_{|z|\geq \frac{1}{2\zeta_k}}\frac{1}{(1 + |z|^2)^2}\frac{1}{\left|\zeta_k z\right|^{n-2}}\left(1 + O\left(\frac{|\xi_j|}{\zeta_k z}\right)\right)dy\\
&\quad = O\left(\zeta_k^{2}\right) = O\left(\frac{1}{k^4}\right),
\end{aligned}
\end{equation*}
\begin{equation*}
\begin{aligned}
&\int_{\mathbb{R}^n}U^{p-2}(y)U_j (y)Z_{0}(y)dy\leq  C\int_{\mathbb{R}^n}\frac{\zeta_k^{\frac{n-2}{2}}}{(\zeta_k^2 + |y-\xi_j|^2)^{\frac{n-2}{2}}}\frac{1}{(1+|y|)^4}dy\\
&\quad = C\zeta_k^{\frac{n}{2}+1}\int_{\mathbb{R}^n}\frac{1}{(1 + |z|^2)^{\frac{n-2}{2}}}\frac{1}{(1+\left|\zeta_k z+\xi_j\right|)^4}dy\\
&\quad \leq C\zeta_k^{\frac{n}{2}+1}\int_{\mathbb{R}^n}\frac{1}{(1 + |z|^2)^{\frac{n-2}{2}}}\frac{1}{\left|\zeta_k z+\xi_j\right|^4}dy\\
&\quad = C\zeta_k^{\frac{n}{2}+1}\int_{|z|\leq \frac{1}{2\zeta_k}}\frac{1}{(1 + |z|^2)^{\frac{n-2}{2}}}\frac{1}{\left|\zeta_k z+\xi_j\right|^4}dy\\
&\quad\quad + C\zeta_k^{\frac{n}{2}+1}\int_{|z|\geq \frac{1}{2\zeta_k}}\frac{1}{(1 + |z|^2)^{\frac{n-2}{2}}}\frac{1}{\left|\zeta_k z+\xi_j\right|^4}dy\\
&\quad= C\zeta_k^{\frac{n}{2}+1}\int_{|z|\leq \frac{1}{2\zeta_k}}\frac{1}{(1 + |z|^2)^{\frac{n-2}{2}}}\frac{1}{\left|\xi_j\right|^{4}}\left(1 + O\left(\frac{\zeta_k z}{|\xi_j|}\right)\right)dy\\
&\quad\quad + C\zeta_k^{\frac{n}{2}+1}\int_{|z|\geq \frac{1}{2\zeta_k}}\frac{1}{(1 + |z|^2)^{\frac{n-2}{2}}}\frac{1}{\left|\zeta_k z\right|^{4}}\left(1 + O\left(\frac{|\xi_j|}{\zeta_k z}\right)\right)dy\\
&\quad = O\left(\zeta_k^{\frac{n}{2}-1}\right),
\end{aligned}
\end{equation*}
\begin{equation*}
\begin{aligned}
\int_{\mathbb{R}^n}|\tilde\phi(y)|^{p-1}(y)Z_{0}(y)dy = O\left(k^{-\frac{n}{q}\frac{4}{n-2}}\int_{\mathbb{R}^n}\frac{1}{(1+|y|)^{n+2}}dy\right) = O\left(k^{-\frac{n}{q}\frac{4}{n-2}}\right),
\end{aligned}
\end{equation*}
\begin{equation*}
\begin{aligned}
\int_{\mathbb{R}^n}U^{p-2}(y)\left|\tilde\phi (y)\right|Z_{0}(y)dy = O\left(k^{-\frac{n}{q}}\int_{\mathbb{R}^n}\frac{1}{(1+|y|)^{n+2}}dy\right) = O\left(k^{-\frac{n}{q}}\right)
\end{aligned}
\end{equation*}
hold. (\ref{e:appendix01}) follows from the above estimates. Similarly, we have
\begin{equation*}\label{e:appendix02}
\int_{\mathbb{R}^n}|Q|^{p-1}(y)Z_{0l}(y)dy = \int_{\mathbb{R}^n}|U_l|^{p-1}(y)Z_{0l}(y)dy + O\left(\frac{1}{k^{1+s}}\right)
\end{equation*}
and
\begin{equation*}\label{e:appendix03}
\int_{\mathbb{R}^n}|Q|^{p-1}(y)Z_{1l}(y)dy = \int_{\mathbb{R}^n}|U_l|^{p-1}(y)Z_{1l}(y)dy + O\left(\frac{1}{k^{1+s}}\right).
\end{equation*}
Moreover,
$$
-p\int_{\mathbb{R}^n}U^{p-1}(y)Z_{0}(y)dy = \frac{n-2}{2}\int_{\mathbb{R}^n}U^{p}(y)dy > 0,
$$
\begin{equation*}
\begin{aligned}
-p\int_{\mathbb{R}^n}|U_l|^{p-1}(y)Z_{0l}(y)dy &= \zeta_k^{\frac{n}{2}-1}\left(-p\int_{\mathbb{R}^n}U^{p-1}(y)Z_{0}(y)dy\right)\\
 &= \zeta_k^{\frac{n}{2}-1}\frac{n-2}{2}\int_{\mathbb{R}^n}U^{p}(y)dy,
\end{aligned}
\end{equation*}
$$
\int_{\mathbb{R}^n}|U_l|^{p-1}(y)Z_{1l}(y)dy = 0.
$$
Then from (\ref{1ang1}),
\begin{equation*}\label{e:appendix05}
\begin{aligned}
&-p\int_{\mathbb{R}^n}|Q|^{p-1}(y)z_{0}(y)dy\\
&= -p\int_{\mathbb{R}^n}|Q|^{p-1}(y)Z_{0}(y)dy +p\sum_{l=1}^k\int_{\mathbb{R}^n}|Q|^{p-1}(y)Z_{0l}(y)dy\\
&\quad +p\sum_{l=1}^k\int_{\mathbb{R}^n}|Q|^{p-1}(y)Z_{1l}(y)dy -p \int_{\mathbb{R}^n}\pi_0(y)Z_{0l}(y)dy\\
&= -p\int_{\mathbb{R}^n}U^{p-1}(y)Z_{0}(y)dy +p\sum_{l=1}^k\int_{\mathbb{R}^n}|U_l|^{p-1}(y)Z_{0l}(y)dy\\
&\quad +p\sum_{l=1}^k\int_{\mathbb{R}^n}|U_l|^{p-1}(y)Z_{1l}(y)dy -p \int_{\mathbb{R}^n}\pi_0(y)Z_{0l}(y)dy + O\left(\frac{1}{k^s}\right)\\
&= (1 + k\zeta_k^{\frac{n}{2}-1} )\frac{n-2}{2}\int_{\mathbb{R}^n}U^{p}(y)dy + O\left(\frac{1}{k^s}\right)
\end{aligned}
\end{equation*}
which is positive when $k$ is large. This proves $c_1 > 0$ when $k_0$ is large enough.

Finally, we prove $c_2 > 0$. From (\ref{1ang1}), $z_0(y) - \frac{D_{n,k}(2-|y|^2)}{\left(1+|y|^2\right)^{\frac{n}{2}}}$ can be written as
\begin{equation*}\label{1ang100}
\begin{aligned}
&z_0(y) - \frac{D_{n,k}(2-|y|^2)}{\left(1+|y|^2\right)^{\frac{n}{2}}}\\ &= \left(Z_0(y) - \frac{n-2}{2}\frac{\alpha_n(2-|y|^2)}{\left(1+|y|^2\right)^{\frac{n}{2}}}\right)\\
&\quad - \sum_{l=1}^k \left(Z_{0l}(y) - \zeta_k^{\frac{n-2}{2}}f_n\frac{(2-|y|^2)}{\left(1+|y|^2\right)^{\frac{n}{2}}}\right)\\
&\quad - \sum_{l=1}^k Z_{1l}(y) + \pi_0(y) - o(1)h_n\frac{(2-|y|^2)}{\left(1+|y|^2\right)^{\frac{n}{2}}},\quad (k\to +\infty).
\end{aligned}
\end{equation*}
Direct computation yields that
\begin{equation*}
\begin{aligned}
&\int_{\mathbb{R}^n}\left(Z_0(y) - \frac{n-2}{2}\frac{\alpha_n(2-|y|^2)}{\left(1+|y|^2\right)^{\frac{n}{2}}}\right)z_0(y)dy\\
&\quad = \int_{\mathbb{R}^n}\left(Z_0(y) - \frac{n-2}{2}\frac{\alpha_n(2-|y|^2)}{\left(1+|y|^2\right)^{\frac{n}{2}}}\right)Z_0(y)dy + O\left(\frac{1}{k}\right)\\
&\quad = \alpha_n\frac{n-2}{2}\frac{\sqrt{\pi } 2^{-n} \Gamma \left(\frac{n}{2}-1\right)}{\Gamma \left(\frac{n+1}{2}\right)} + O\left(\frac{1}{k}\right),
\end{aligned}
\end{equation*}
\begin{equation*}
\begin{aligned}
&\sum_{l=1}^k\int_{\mathbb{R}^n}\left(Z_{0l}(y) - \zeta_k^{\frac{n-2}{2}}f_n\frac{(2-|y|^2)}{\left(1+|y|^2\right)^{\frac{n}{2}}}\right)z_0(y)dy = O\left(\frac{1}{k}\right),
\end{aligned}
\end{equation*}
\begin{equation*}
\begin{aligned}
&\sum_{l=1}^k\int_{\mathbb{R}^n}Z_{1l}(y)z_0(y)dy = O\left(\frac{1}{k}\right),
\end{aligned}
\end{equation*}
\begin{equation*}
\begin{aligned}
&\sum_{l=1}^k\int_{\mathbb{R}^n}\left(\pi_0(y) - o(1)h_n\frac{(2-|y|^2)}{\left(1+|y|^2\right)^{\frac{n}{2}}}\right)z_0(y)dy = O\left(\frac{1}{k}\right).
\end{aligned}
\end{equation*}
Therefore,
$$
\int_{\mathbb{R}^n}\left(z_0(y) - \frac{D_{n,k}(2-|y|^2)}{\left(1+|y|^2\right)^{\frac{n}{2}}}\right)z_0(y)dy = \alpha_n\frac{n-2}{2}\frac{\sqrt{\pi } 2^{-n} \Gamma \left(\frac{n}{2}-1\right)}{\Gamma \left(\frac{n+1}{2}\right)} + O\left(\frac{1}{k}\right)
$$
which is positive when $k$ is large enough. Hence $c_2 > 0$ if $k_0$ is sufficiently large.

\section*{Acknowledgements}
M. del Pino and M. Musso have been partly supported by grants Fondecyt 1160135, 1150066, Fondo Basal CMM. J. Wei is partially supported by NSERC of Canada. Y. Zheng is partially supported by NSF of China (11301374) and China Scholarship Council (CSC).

\end{document}